\documentclass[a4paper, 11pt]{article}

\usepackage[utf8]{inputenc}
\usepackage{amsmath}
\usepackage{amsthm}
\usepackage{amsfonts}
\usepackage{amssymb}
\usepackage{amstext}
\usepackage{dsfont}
\usepackage{graphicx}
\usepackage{comment}
\usepackage{color}
\usepackage[colorlinks]{hyperref}
\usepackage{epigraph}
\usepackage[left=3cm,top=2cm,bottom=3cm,right=2.5cm,nohead,nofoot]{geometry}
\usepackage{tikz}
\usepackage{tkz-graph}
\usepackage{tkz-berge}
\usetikzlibrary{arrows,shapes,matrix}

\graphicspath{{Figures/}{.}}%
\makeatletter%
\makeatother

\title{ Whittaker processes and Landau-Ginzburg potentials for flag manifolds }
\author{Reda \textsc{Chhaibi}        \footnote{\texttt{reda.chhaibi@math.uzh.ch}} } %\footnotemark
\date{}

%allows breaking multiline environments in amsmath commands
\allowdisplaybreaks[4]

%Operators
%-Algebra

\DeclareMathOperator{\hw}{hw}

\DeclareMathOperator{\wt}{wt}

%-Probability

\DeclareMathOperator{\ch}{ \textrm{ch} }
\DeclareMathOperator{\eqlaw}{\stackrel{\Lc}{=}}

\DeclareMathOperator{\Id}{Id}

%Shortcuts
\def\half{\frac{1}{2}}

%Ordinals
\def\N{{\mathbb N}}
\def\Z{{\mathbb Z}}
\def\Q{{\mathbb Q}}
\def\R{{\mathbb R}}
\def\C{{\mathbb C}}

%Probability
\def\P{{\mathbb P}}
\def\E{{\mathbb E}}

%Caligraphed letters
\def\Ac{{\mathcal A}}
\def\Bc{{\mathcal B}}
\def\Cc{{\mathcal C}}
\def\Dc{{\mathcal D}}

\def\Fc{{\mathcal F}}
\def\Gc{{\mathcal G}}

\def\Kc{{\mathbb K}}
\def\Lc{{\mathcal L}}
\def\Oc{{\mathcal O}}
\def\Pc{{\mathcal P}}

\def\Rc{{\mathcal R}}

\def\Tc{{\mathcal T}}

%Fraktur
\def\afrak{{\mathfrak a}}
\def\Bfrak{{\mathfrak B}}
\def\bfrak{{\mathfrak b}}
\def\gfrak{{\mathfrak g}}
\def\hfrak{{\mathfrak h}}

%Environnments
\setlength{\footskip}{2cm}

\newtheorem{thm}{Theorem}[section]
\newtheorem{proposition}[thm]{Proposition}
\newtheorem{corollary}[thm]{Corollary}

\newtheorem{definition}[thm]{Definition}
\newtheorem{example}[thm]{Example}
\newtheorem{lemma}[thm]{Lemma}
\newtheorem{properties}[thm]{Properties}

\newtheorem{rmk}[thm]{Remark}
\newtheorem{notation}{Notation}[section]

\numberwithin{equation}{section}

\begin{document}
\maketitle

% --------------------------------------------------------------------
% Abstract
\begin{abstract}
The Robinson-Schensted (RS) correspondence and its variants naturally give rise to integrable dynamics of non-intersecting particle systems.

In previous work, the author exhibited a RS correspondence for geometric crystals by constructing a Littelmann path model, in general Lie type. Since this representation-theoretic map takes as input a continuous path on a Euclidian space, the natural starting measure is the Wiener measure. In this paper, we characterize the measures induced by Brownian motion through the RS map. 

On the one hand, the highest weight in the output is a remarkable Markov process (the Whittaker process), and can be interpreted as a weakly non-intersecting particle system which deforms Brownian motion in a Weyl chamber. One the other hand, the measure induced on geometric crystals is given by the Landau-Ginzburg potential for complete flag manifolds which appear in mirror symmetry. The measure deforms the uniform measure on string polytopes.

Whittaker functions, which appear as volumes of geometric crystals, play the role of characters in the theory.
\end{abstract}
{\bf MSC 2010 subject classifications:} 60B15, 60D05, 60J45, 20G05.\\
{\bf Keywords:} Brownian motion, Hypoelliptic diffusions, Whittaker process as a weakly non-intersecting particle system, Quantum Toda Hamiltonian, Landau-Ginzburg potentials for complete flag manifolds.

% --------------------------------------------------------------------
% Table of contents
\newpage
\tableofcontents

% --------------------------------------------------------------------
\newpage
\section{Introduction}
Let $\afrak \approx \R^r$ be a Euclidean space endowed with the action of a finite crystallographic reflection group $W$. The fundamental domain of action for $W$ is the Weyl chamber which we denote by $C$:
$$C := \left\{ x \in \afrak \ | \ \forall \alpha \in \Delta, \alpha(x) > 0 \right\}$$
where $\Delta$ are the simple roots. The example to have in mind is the type $A_{r-1}$ where the symmetric group on $r$ elements $S_r$ acts on $\afrak$ by permuting coordinates. In this case, the Weyl chamber is $C_{A_{r-1}} = \left\{ x \in \R^{r} | x_1 > x_2 > \dots > x_{r} \right\}$ and a Markov process in $C_{A_{r-1}}$ is interpreted as a non-intersecting particle system. Other Lie types give other cones which are interpreted as other non-intersection conditions.

Now, consider a sub-Markovian Brownian motion $X$ on $\afrak$ with infinitesimal generator:
\begin{align}
\label{eq:schrodinger_operator_V}
H = & \half \Delta - V(x) 
\end{align}
where $V: \afrak \rightarrow \R_{+} \bigsqcup \{ \infty \}$ is the killing potential. If $V(x) = \infty \mathds{1}_{ \left\{x \notin C \right\} }$, the underlying Brownian motion is killed immediately upon touching $\partial C$. By conditioning the process to survive, one obtains Brownian motion conditioned to remain in the Weyl chamber. We will call this case the strictly non-intersecting setting or the tropical setting for the following reason. From the works of Biane, Bougerol and O'Connell \cite{bib:BBO}, \cite{bib:BBO2}, Brownian motion in Weyl chambers appears through an algebraic construction as the Pitman transform of a standard Brownian motion. The Pitman transform is of representation-theoretic significance and involves the tropical semi-ring $\left( +, - , \min \right)$. A continuous RS correspondence is implicit in their work.

In this paper, we are concerned with algebraic constructions of non-intersecting particle systems coming from the geometric RS correspondence exhibited in \cite{bib:chh14a} by constructing a Littelmann path model for geometric crystals. The sub-Markovian process that appears has infinitesimal generator as in \eqref{eq:schrodinger_operator_V} with $V$ being the Toda potential:
$$ V(x) = \sum_{\alpha \in \Delta} \frac{\langle \alpha, \alpha \rangle}{2} e^{-\alpha(x)}.$$
Notice that the killing rate increases dramatically as the particles leave the Weyl chamber, while it is very low inside. Therefore, the process conditioned to survive is interpreted as a weakly non-intersecting particle system. We will see that the killing rate can be tuned in order to recover the non-intersecting setting.

\paragraph{On the strictly non-intersecting/tropical setting:}
As a general background, O'Connell considers in \cite{bib:OC03} a random walk as input for the usual RS correspondence, and proves that the shape has a Markovian evolution - a discrete version of Dyson's Brownian motion. In his construction, the Pieri rule can be interpreted as giving the transition probabilities for a random walk in the cone $C_{A_{r-1}}\left( \N \right) := \left\{ x \in \N^{r} | x_1 > x_2 > \dots > x_{r} \right\}$, killed upon hitting the boundary of $\partial C_{A_{r-1}}\left( \N \right)$.

Moving from a discrete to a continuous setting, recall that Dyson's Brownian motion is the process obtained from the spectrum $\left( \Lambda_t ; t \geq 0 \right)$ of a Hermitian Brownian motion, hence its importance in random matrix theory (\cite{bib:AGZ10} and references therein). It has the same distribution as Brownian motion conditioned in the sense of Doob to remain in the cone $C_{A_{r-1}}$, a very natural non-intersecting particle system. Moreover a classical fact is that the eigenvalues of principal minors interlace, forming a Gelfand-Tsetlin pattern. Not only that, the conditional distribution of these interlaced eigenvalues given $\{ \Lambda_t = \Lambda \}$ is the uniform measure on the Gelfand-Tsetlin polytope with top row $\Lambda$ \cite{bib:Baryshnikov}.

For a generalization to arbitrary Lie type, the starting point is the representation-theoretic reinterpretation of the RS correspondence in the language of crystals. We refer to the section 10 of \cite{bib:chh14a} for a zoology of RS correspondences. Let $\gfrak$ be a complex semi-simple Lie algebra and $\afrak \approx \R^r$ be the real part of the Cartan subalgebra. With probabilistic applications in mind, an extremely convenient way of encoding the combinatorics of representation theory of (the Langlands dual of) $\gfrak$ is the Littelmann path model \cite{bib:Littelmann95} where crystal elements are piece-wise linear paths in $\afrak$. Because concatenation of paths is a model for tensor products of crystals, the tensor product dynamic becomes a simple random walk. To such a simple walk up to a finite time, one can associate bijectively the pair made of the highest weight path and an abstract crystal element. This is the true nature of the RS correspondence. In the large sense, non-intersecting particle 
systems appear when observing the highest weight in a tensor product dynamic.

Let $T>0$ be a fixed time horizon. Continuous counterparts of the Littelmann path model were constructed in \cite{bib:BBO}, \cite{bib:BBO2} giving a crystal structure on $\Cc_0\left( [0, T], \afrak \right)$, the set of continuous paths starting at $0$. A major role is played by the Pitman transform:
$$ \Pc_{w_0}: \Cc_0\left( [0, T], \afrak \right) \rightarrow \Cc_0\left( [0, T], C \right). $$
It was understood that the highest weight of a Littelmann crystal generated by a path $\left(\pi_s\right)_{0 \leq s \leq t}$ is the endpoint of $\left( \Pc_{w_0} \pi \right)_{0 \leq s \leq t}$. One of their main results is that the Pitman transform $\Pc_{w_0}$ of a Brownian motion is a Brownian motion conditioned to remain in the Weyl chamber. We are interested in such algebraic constructions of non-intersecting particle systems, and we argue that such a miraculous Markov property is proof that Brownian motion in a Weyl chamber is canonical. Moreover (\cite{bib:BBO2}), the set of paths $\pi \in \Cc_0\left( [0, T], \afrak \right)$ with fixed Pitman transform $\eta$ and $\eta(T) = \lambda$ is exactly parametrized by a polytope $\Delta_{\bf i}(\lambda)$ called the string polytope. The polytope depends on a choice of reduced word ${\bf i}$ and the elements in such a polytope are called string coordinates. In particular for type $A_{r-1}$, the Pitman transform folds a standard Brownian motion in $\R^r$ into the 
cone $C_{A_{r-1}}$, giving Dyson's Brownian motion. Also, string polytopes can be picked to be a Gelfand-Tsetlin pattern \cite{bib:Littelmann98}, for specific choices of reduced words. We will recover these results as a degeneration of the Whittaker process and the Landau-Ginzburg potentials. 

\paragraph{Weakly non-intersecting/geometric setting:}
A geometric analogue of Dyson's Brownian motion, which we call the Whittaker process, has been exhibited by O'Connell in \cite{bib:OConnell}. From a particle system point of view, the Whittaker process in type $A_{r-1}$ can be thought of as a system of $r$ weakly non-intersecting particles - in the sense that repulsion is insufficient to force the process to remain in the cone $C_{A_{r-1}}$. Furthermore, random matrix behavior does remain. For example it is proved in theorem 1.6.1 \cite{bib:BorCor} that, as $n\rightarrow \infty$ the first particle properly rescaled converges in law to the famous Tracy-Widom distribution. O'Connell's construction relies crucially on Givental's integral formula for Whittaker functions \cite{bib:Givental}. Such a formula is relevant to mirror symmetry as the integrand is the Landau-Ginzburg potential for complete flag manifolds, which Givental related to the quantum cohomology ring of flag manifolds. From the point of view of probability theory, this potential serves as an 
analogue of the uniform distribution on Gelfand-Tsetlin patterns. 

In the present paper, we complete the work of \cite{bib:OConnell} by treating the problem for general Lie type and with a uniform approach. Moreover, our approach is designed to force the appearance of the Landau-Ginzburg potential rather than using it as an ingredient. The objects at hand will be properly defined in the preliminaries section. The first step was to realize that the underlying theory of crystals is Berenstein and Kazhdan's geometric crystals \cite{bib:BK00}, \cite{bib:BK04}. To that end, we developed a geometric Littelmann path model in the paper \cite{bib:chh14a} that lays the foundational material for this probabilistic work. It allows to realize Berenstein and Kazhdan's geometric crystals as paths in $\Cc_0\left( [0, T], \afrak \right)$. Such an algebraic structure on the set of Brownian paths is key in our analysis. It identifies the geometric Pitman transform:
$$\Tc_{w_0}: \Cc_0\left( [0, T], \afrak \right) \longrightarrow \Cc_0\left( (0, T], \afrak \right)$$
as the natural invariant under crystal actions: It gives the highest weight path in the \emph{geometric} crystal structure. Unlike its tropical counterpart, the geometric Pitman transform does not fold paths into the Weyl chamber $C$. Moreover, the set of paths $\pi \in \Cc_0\left( [0, T], \afrak \right)$ with fixed geometric Pitman transform $\eta$ and $\eta(T) = \lambda$ is naturally identified with a geometric object $\Bc\left( \lambda \right)$. The morphism of crystals allowing this identification is denoted by $p$. The union $\Bc := \bigsqcup_{ \lambda \in \afrak} \Bc(\lambda)$ is a subset of ``lower triangular'' matrices in a Lie group. In the end, we obtained as Theorem 10.4 in \cite{bib:chh14a} the geometric RS correspondence which is a map:
$$ \begin{array}{cccc}
  RS : & \Cc_0\left( [0, T], \afrak \right) & \longrightarrow & \left\{ (x, \eta) \in \Bc \times \Cc\left( (0,T], \afrak \right) \right\}\\
       &         \pi                        &   \mapsto       & \left( p\left( \pi \right), \left( \left( \Tc_{w_0} \pi \right) (t); 0 < t \leq T \right) \right)
   \end{array}.
 $$

The idea is to consider the measure induced on crystals by uniform paths in a path model. In the discrete Littelmann path model, taking the uniform probability measure on finite paths induces the uniform measure on the generated crystal and transition probabilities are expressed in terms of Littlewood-Richardson coefficients. This is particularly clear in \cite{bib:LLP}, where the authors revisit the discrete case using the Littelmann path model for finite type and generalize their construction in \cite{bib:LLP2} to the Kac-Moody setting. In the continuous setting of \cite{bib:BBO2}, the uniform measure on finite paths is replaced by the Wiener measure: a ``uniformly chosen path'' is nothing but Brownian motion. It is the essence of \cite{bib:BBO2}, and this idea proves to be even more fruitful in the geometric setting.

\paragraph{Results:} We consider a Brownian motion $W^{(\mu)}$ in $\afrak$ with drift $\mu \in \afrak$. Our main constributions are a description of the canonical measures induced by Brownian motion through the RS correspondence.
\begin{itemize}
 \item The process $\left( \Lambda_t^{(\mu)} = \Tc_{w_0}\left( W^{(\mu)} \right)_t ; \ 0 \leq s \leq t \right)$ is a Markov process, the Whittaker process for general Lie type (Theorem \ref{thm:highest_weight_is_markov}). Its infinitesimal generator is given by a Doob transform of the quantum Toda Hamiltonian using one of its eigenfunctions, a Whittaker function. In a sense, we prove that the quantum Toda Hamiltonian is the geometric infinitesimal version of the Littlewood-Richardson rule.
 
 \item We will examine the law of $p\left( \left(W_{s} \right)_{ 0 \leq s \leq t} \right)$ conditionally to its geometric Pitman transform being fixed and with endpoint $\lambda$. This probability measure on $\Bc(\lambda)$ will be the canonical measure on a geometric crystal appropriately normalized into a probability measure.
 
 \item The Archimedean Whittaker function will have two interpretations. From the point of view of probability theory, it gives the survival probability of a Brownian motion killed under the Toda potential. From the point of view of representation theory, it appears as the character of a geometric crystal. In a sense, it encodes its volume because it is the integral of the Landau-Ginzburg potential.
 
 \item The previous results tropicalize to the results of \cite{bib:BBO} and \cite{bib:BBO2}. More precisely, the Whittaker function degenerates to Kirillov's orbital integral and the Whittaker process degenerates to Brownian motion in the Weyl chamber. This is obtained via a simple time rescaling.
\end{itemize}

From a representation-theoretic perspective, we study the path crystal $\Bc_t = \langle \left(W_{s} \right)_{ 0 \leq s \leq t}\rangle$ as a random object. By analogy with the Young tableaux growth obtained in the classical RS correspondence \cite{bib:OC03}, one can think of $\Bc_t$ as a dynamical object growing with time. We start by preliminaries in section \ref{section:preliminaries} that define the mathematical objects we will need. We will allow ourselves to be less explicit in the definition of the reference measure in Landau-Ginzburg potentials, in order not to dwell on parametrizations of geometric crystals. Then we give precise statements of the results in section \ref{section:main_results}.

% --------------------------------------------------------------------
\section{Preliminaries}
\label{section:preliminaries}

\subsection{Classical Lie theory}

\paragraph{On the structure of Lie algebras (\cite{bib:Humphreys72}):}
Let $\left( \gfrak, \left[ \cdot, \cdot \right] \right)$ be a complex semi-simple Lie algebra of rank $r$. The Cartan subalgebra is a maximal abelian subalgebra denoted by $\hfrak \approx \C^r$. The set of roots can be written as a disjoint union of positive and negative roots $\Phi = \Phi^+ \bigsqcup \Phi^-$, with $\Phi^- := -\Phi^+$. This choice uniquely determines $\Delta = (\alpha_i)_{i \in I} \subset \Phi^+$ a simple system such that every positive root is a sum with positive integer coefficients of simple roots - and reciprocally, a simple system uniquely determines a positive system. Moreover, the simple system $\Delta$ forms a basis of $\hfrak^*$. When convenient, we will index simple roots by $I = \left\{ 1, 2, \dots, r \right\}$. The choice of a simple system $\Delta$ fixes an open Weyl chamber:
$$C := \left\{ x \in \afrak \ | \ \forall \alpha \in \Delta, \alpha(x) > 0 \right\}.$$
The Weyl co-vector is half the sum of positive co-roots:
\begin{align}
\label{eq:def_weyl_covector}
\rho^\vee := & \frac{1}{2} \sum_{ \beta \in \Phi^+ } \beta^\vee.
\end{align}

The Cartan subalgebra has a decomposition $\hfrak = \afrak + i \afrak$ with $\afrak$ chosen to be the real subspace of $\hfrak$ where roots are real valued. By Cartan's criterion, since $\gfrak$ is semi-simple, the Killing form is non-degenerate. Its restriction to $\hfrak$ is in fact a scalar product written $\langle \cdot, \cdot \rangle$. In the identification of $\hfrak$ and $\hfrak^*$ thanks to the Killing form, it is customary to write the coroot $\beta^\vee$ as $\beta^\vee = \frac{2 \beta}{\left( \beta, \beta \right)}$ for $\beta \in \Phi$.

For each positive root $\beta \in \Phi^+$, we can choose an $\mathfrak{sl}_2$-triplet $(e_\beta, f_\beta, h_\beta = \beta^\vee) \in \gfrak_\beta \times \gfrak_{-\beta} \times \hfrak$ such that $[e_\beta, f_\beta] = h_\beta$. These determine a pinning given by a family of Lie algebra homomorphisms $\phi_\beta: \mathfrak{sl}_2 \rightarrow \gfrak$. $(e_\alpha, f_\alpha, h_\alpha)_{\alpha \in \Delta}$ is the set of simple $\mathfrak{sl}_2$-triplets, also known as Chevalley generators. 

\paragraph{On the structure of Lie groups (\cite{bib:Springer09}):}
Let $G$ be a simply-connected complex semi-simple Lie group with Lie algebra $\gfrak$.  The Langlands dual $G^\vee$ is the adjoint semi-simple complex Lie group with Lie algebra $\gfrak^\vee$. The maximal torus with Lie algebra $\hfrak$ is denoted by $H$. Our choice of positive roots yields a choice of opposite Borel subgroups $B$ and $B^+$, whose unipotent radicals are respectively $N$ and $U$. $N$ and $U$ are the lower and upper unipotent subgroups, while $B$ and $B^+$ are the lower and upper Borel subgroups. The exponential map is denoted by $\exp: \gfrak \rightarrow G$. It lifts the homomorphisms $\left( \phi_\beta \right)_{\beta \in \Phi^+}$ from the Lie algebra $\gfrak$ to the group $G$: each $\phi_\beta$ gives rise at the group level to a Lie group homomorphism that embed $SL_2$ in $G$ and that will be denoted in the same way. The following notations are common for $t \in \C$:
\begin{align*}
 t^{h_\beta} & = e^{ \log(t) h_\beta} = \phi_\beta\left(  \left( \begin{array}{cc} t & 0 \\ 0 & t^{-1} \end{array}\right) \right), t \neq 0 \\
 x_\beta(t) & = e^{ t e_\beta} = \phi_\beta\left(  \left( \begin{array}{cc} 1 & t \\ 0 & 1 \end{array}\right) \right)\\
 y_\beta(t) & = e^{ t f_\beta} = \phi_\beta\left(  \left( \begin{array}{cc} 1 & 0 \\ t & 0 \end{array}\right) \right)\\
 x_{-\beta}(t) & = y_\beta(t) t^{-h_\beta} = \phi_\beta\left(  \left( \begin{array}{cc} t^{-1} & 0 \\ 1 & t \end{array}\right) \right)\\
\end{align*}

The Bruhat decomposition states that $G$ is the disjoint union of cells:
$$ G = \bigsqcup_{\omega \in W} B^+ \omega B^+  = \bigsqcup_{\tau \in W} B \tau B^+ $$
In the largest opposite Bruhat cell $B B^+ = N H U$, every element $g$ admits a unique Gauss decomposition in the form $g = n a u$ with $ n \in N$, $a \in H$, $u \in U$. In the sequel, we will write $g = [g]_- [g]_0 [g]_+$, $[g]_- \in N$, $[g]_0 \in H$ and $[g]_+ \in U$ for the Gauss decomposition. Also $[g]_{-0} := [g]_- [g]_0$ and $[g]_{0+} := [g]_0 [g]_+$.

\paragraph{Weyl group:} 
The Weyl group of $G$ is defined as $W := \textrm{Norm}_G( H )/ H $, $\textrm{Norm}_G( H )$ being the normalizer of $H$ in $G$. It acts on the torus $H$ by conjugation and hence on $\hfrak$. To every linear form $\beta \in \hfrak^*$, define the associated reflection $s_\beta$ on $\hfrak$ with:
$$\forall \lambda \in \hfrak,  s_\beta \lambda := \lambda - \beta\left( \lambda \right) \beta^\vee. $$
The reflections $(s_\alpha)_{\alpha \in \Delta}$ are called simple reflections and they generate a finite Coxeter group isomorphic to $W$. For $w \in W$, a reduced expression is given by writing $w$ as product of simple reflections with minimal length:
$$ w = s_{i_1} s_{i_2} \dots s_{i_{\ell}}.$$
A reduced word is such a tuple ${\bf i} = \left( i_1, \dots, i_{\ell} \right)$ and the set of reduced words for $w \in W$ is denoted by $R(w)$. Since all reduced expressions have necessarily the same length, it defines unambiguously the length function $\ell: W \rightarrow \N$. The unique longest element is denoted by $w_0$ and in all the following, we will use the integer $m=\ell(w_0)$.

A common set of representatives in $G$ for the generating reflections $(s_i)_{i \in I} \subset W$ is:
$$ \bar{s}_i := \phi_i\left(  \left( \begin{array}{cc} 0 & -1 \\ 1 & 0 \end{array}\right) \right) = e^{-e_i} e^{f_i} e^{-e_i} = e^{f_i} e^{-e_i} e^{f_i}.$$
Another common choice is:
$$ \bar{\bar{s}}_i := \bar{s}_i^{-1} = \phi_i\left(  \left( \begin{array}{cc} 0 & 1 \\ -1 & 0 \end{array}\right) \right) = e^{e_i} e^{-f_i} e^{e_i} = e^{-f_i} e^{e_i} e^{-f_i}.$$

Following lemma 2.3 in \cite{bib:KacPeterson}, these Weyl group representatives satisfy the braid relations, which allows to define unambiguously $\bar{w} = \bar{u} \bar{v}$ if $w = uv$ and $\ell(w) = \ell(u) + \ell(v)$. However they do not form a presentation of the Weyl group, since for example $ (\bar{s}_i)^2 = \phi_i(-id) \neq id $. The representative $\bar{w}_0$ of the longest element $w_0$ will be very often used.

\begin{rmk}
\label{rmk:sln_example}
The reader unfamiliar with Lie groups can have in mind the example of $SL_n(\C)$, of rank $r = n-1$. The following matrices can be chosen as Chevalley generators. If $E_{i,j} = \left( \delta_{i,r} \delta_{j,s} \right)_{1 \leq r, s \leq n}$ are the usual elementary matrices, then $h_i = E_{i,i} - E_{i+1,i+1}$, $e_i = E_{i,i+1}$, and $f_i = E_{i+1,i}$. $\hfrak$ is the set of complex diagonal matrices with zero trace, which we identify with $\left\{ x \in \C^n \ | \ \sum x_i = 0 \right\}$. Then $H$ is the set of diagonal matrices with determinant $1$. $N$ (resp. $U$) is the set of lower (resp. upper) triangular unipotent matrices. We have:
$$ \forall t \in \C, y_i(t) = \textrm{Id} + t E_{i+1,i}, \ x_i(t) = \textrm{Id} + t E_{i,i+1}.$$

The Weyl group $W$ is the group of permutation matrices and acts on $\hfrak$ by permuting coordinates. In the identification with the symmetric group acting on $n$ elements, the reflections $s_i$ are identified with transpositions $\left( i  \ i+1 \right)$. The longest word $w_0$ reorders the elements $1, 2, \dots, n$ in decreasing order.

In the case of $GL_n$, the second Bruhat decomposition is known in linear algebra as the LPU decomposition which states that every invertible matrix can be decomposed into the product of a lower triangular matrix $L$, a permutation matrix $P$ and an upper triangular matrix $U$. $P$ is unique. The largest opposite Bruhat cell corresponds to $P = id$. It is dense as it is the locus where all principal minors are non-zero.
\end{rmk}

\paragraph{Involutions}:
Since $w_0 \in W$ transforms all simple positive roots to simple negative roots, there is an involution on $\Delta$ (or equivalently the index set $I$) denoted by $*$ such that:
$$ \forall \alpha \in \Delta, \beta^* = -w_0 \alpha .$$

We define the following group antimorphisms of $G$ by their actions on a torus element $a \in H$ and the one-parameters subgroups generated by the Chevalley generators. For convenience, we also give their action at the level of the Lie algebra $\gfrak$.
\begin{itemize}
 \item The transpose:
       $$ \begin{array}{ccc}
           a^T = a          & x_i(t)^T     = y_i(t) & y_i(t)^T     = x_i(t)
          \end{array}
       $$
       $$ \forall \alpha \in \Delta, 
          \begin{array}{ccc}
           h_\alpha^T = h_\alpha &  e_\alpha^T = f_\alpha & f_\alpha^T = e_\alpha
          \end{array}
       $$
 \item The positive inverse or Kashiwara involution (\cite{bib:Kashiwara91} (1.3)):
       $$ \begin{array}{ccc}
           a^\iota = a^{-1} & x_i(t)^\iota = x_i(t) & y_i(t)^\iota = y_i(t)
          \end{array}
       $$
       $$ \forall \alpha \in \Delta, 
          \begin{array}{ccc}
           h_\alpha^\iota = -h_\alpha &  e_\alpha^\iota = e_\alpha & f_\alpha^\iota = f_\alpha
          \end{array}
       $$
 \item Sch\"utzenberger involution:
       $$S(x) = \bar{w}_0 \left( x^{-1} \right)^{\iota T} \bar{w}_0^{-1} = \bar{w}_0^{-1} \left( x^{-1} \right)^{\iota T} \bar{w}_0$$
       It acts as ( relation 6.4 in \cite{bib:BZ01}):
       $$ S\left( x_{i_1}(t_1) \dots x_{i_\ell}(t_\ell) \right) = x_{i_\ell^*}(t_\ell) \dots x_{i_1^*}(t_1)$$
       $$ \forall \alpha \in \Delta, 
          \begin{array}{ccc}
           S(h_\alpha) = h_{\alpha^*} &  S(e_\alpha) = e_{\alpha*} & S(f_\alpha) = f_{\alpha*} .
          \end{array}
       $$
       Notice that $S = \iota \circ S \circ \iota$.
\end{itemize}

\subsection{Probability theory and processes on Lie groups}
The usual framework is the probability triplet $( \Omega, \Ac, \P)$ with $\Omega$ the sample space, $\Ac$ the set of events and $\P$ our working probability measure. Equality in law between random variables or processes will be denoted by $\stackrel{\mathcal{L}}{=}$.

For every process $X$ (or even a deterministic path) taking its values in a Euclidian space, we will use the notation $X^{(\mu)}_t := X_t + \mu t$. Also, we use $X^{x_0} := X + x_0$. Unless otherwise stated, the absence of superscript will indicate that the process is starting at zero. The filtration generated by $X$ is denoted by:
$$ \Fc_t^X := \sigma\left( X_s ; s \leq t \right)$$

Since $\afrak$ is made into an Euclidean space thanks to the Killing form $\langle \cdot, \cdot \rangle$, there is a natural notion of Brownian motion on $\afrak$. Indeed, $\langle \cdot, \cdot \rangle$ is a scalar product once restricted to $\afrak$ and the induced norm is denoted by $||\cdot||$. Consider a Brownian motion with drift $\mu$ and starting position $x_0 \in \afrak$:
$$ X_t^{(x_0, \mu)} := x_0 + X_t + \mu \ t \ .$$

One obtains a left-invariant Brownian motion $B_t(X^{(\mu)})$ on $B$ by solving the following stochastic differential equation driven by any semimartingale $X$ starting at $0$. The symbol $\circ$ indicates that the SDE has to be understood in the Stratonovitch sense:
\begin{align}
\label{lbl:process_B_sde_stratonovich}
\left\{ \begin{array}{ll}
dB_t(X^{(\mu)}) = B_t(X^{(\mu)}) \circ \left( \sum_{\alpha \in \Delta} \half \langle \alpha, \alpha \rangle f_\alpha dt + dX^{(\mu)}_t\right) \\
 B_0(X^{(\mu)}) = \textrm{ Id }
\end{array} \right.
\end{align}

This SDE was first introduced in \cite{bib:BBO} and its flow is intimately related to the geometric RS correspondence, as we will see in the next subsection. It can be explicitly solved:
\begin{align}
\label{eq:process_B_explicit}
B_t(X^{(\mu)}) & = 
\left( \sum_{ \substack{k \geq 0 \\ i_1, \dots, i_k } } f_{i_1} \dots f_{i_k} \int_{ t \geq t_k \geq \dots \geq t_1 \geq 0}  \prod_{j=1}^k dt_j \frac{|| \alpha_{i_j} ||^2}{2} e^{ -\alpha_{i_j}(X^{(\mu)}_{t_j}) } \right) e^{X^{(\mu)}_t}
\end{align} 

Notice that the flow makes sense for any deterministic continuous path and we take eq. \eqref{eq:process_B_explicit} as a definition if $X$ is not starting at $0$. Clearly, $B_t(X^{(\mu)})$ has an $NA$ decomposition is given by:
\begin{align}
\label{eq:process_N_explicit}
N_t(X^{(\mu)}) & = 
\sum_{ \substack{k \geq 0 \\ i_1, \dots, i_k } } \int_{ t \geq t_k \geq \dots \geq t_1 \geq 0} e^{ -\alpha_{i_1}(X^{(\mu)}_{t_1}) \dots -\alpha_{i_k}(X^{(\mu)}_{t_k}) } \frac{|| \alpha_{i_1} ||^2}{2} \dots \\
               & \quad \quad
\dots \frac{|| \alpha_{i_k} ||^2}{2} f_{i_1} \dots f_{i_k} dt_1 \dots dt_k
\end{align} 
\begin{align}
\label{eq:process_A_explicit}
A_t\left( X^{(\mu)} \right) & = e^{X^{(\mu)}_t}
\end{align} 

\begin{rmk}
\label{rmk:ADE}
This is a slightly modified version of the flow defined in subsection 5.2 of \cite{bib:chh14a} and the results we will invoke carry verbatim. The presence of half squared norms $\frac{|| \alpha ||^2}{2}$ is here to account for the fact that time does not flow in the same fashion for all roots and probability measures have nicer expressions in this setting. In the simply-laced groups ($ADE$-types), one can choose all roots to be the same length, hence choosing $\frac{|| \alpha ||^2}{2} = 1$ for all $\alpha$.
\end{rmk}

\begin{rmk}
In the $A_1$ case only, we opt out of the normalization made in the previous remark. Indeed, the classical choice for the only root is $\alpha = 2$. Hence the factor $\frac{|| \alpha ||^2}{2} = 2$ in the following example of $SL_2$.
\end{rmk}

\begin{example}[$SL_2$ - $A_1$ type]
Let $X^{(\mu)}$ be a Brownian motion with drift $\mu$ on $\R$. The SDE is:
$$ dB_t(X^{(\mu)}) = B_t(X^{(\mu)}) \circ \begin{pmatrix} dX^{(\mu)}_t & 0\\ 2 dt & -dX^{(\mu)}_t \end{pmatrix}$$
and its solution is:
$$ B_t(X^{(\mu)}) = \begin{pmatrix} e^{X^{(\mu)}_t} & 0\\ e^{X^{(\mu)}_t} \int_0^t 2 e^{-2 X^{(\mu)}_s} \ ds & e^{-X^{(\mu)}_t} \end{pmatrix}$$
\end{example}

\begin{example}[$SL_n$ - $A_{n-1}$ type]
As in remark \ref{rmk:sln_example}, let $X$ be a Brownian motion with drift $\mu$ on $\left\{ x \in \R^n \ | \ \sum x_i = 0 \right\}$. For notational reasons, we drop the superscript $(\mu)$ and put indices as exponents. The SDE becomes:
$$ dB_t(X^{(\mu)}) = B_t(X^{(\mu)}) \circ \begin{pmatrix} dX^1_t & 0      & 0      & \cdots     & 0     \\
                                                          dt     & dX^2_t & 0      & \ddots     & \vdots\\
                                                          0      & dt     & \ddots & \ddots     & 0     \\
                                                          \vdots & \ddots & \ddots & dX^{n-1}_t & 0     \\
                                                          0      & \cdots & 0      & dt         & dX^n_t\\
                                           \end{pmatrix}$$
and its solution $B_t(X^{(\mu)})$ is given by:
$$ 
		    \begin{pmatrix} e^{X^1_t}                             & 0                                     & 0      & \cdots        \\
				    e^{X^1_t} \int_0^t e^{X^2_s-X^1_s} ds & e^{X^2_t}                             & 0      & \cdots        \\
				    e^{X^1_t} \int_0^t e^{X^2_{s_1}-X^1_{s_1}} ds_1 \int_0^{s_1} e^{X^2_{s_2}-X^1_{s_2}} ds_2 &
				    e^{X^2_t} \int_0^t e^{X^3_s-X^2_s} ds &
				    e^{X^3_t}                             & \ddots                 \\
				    \vdots                                & \vdots                                & \ddots & \ddots  
		      \end{pmatrix}$$
\end{example}

Let $B\left( \R \right)$ be the set of real points of the Borel subgroup $B$ and $\bfrak\left(\R\right) = T_e B\left(\R\right)$ its Lie algebra. If $\Cc^\infty\left( B\left( \R \right) \right)$ is the space of continuously differentiable functions on the real solvable group $B\left( \R \right)$, every $X \in \bfrak\left( \R \right)$ can be seen as a left-invariant vector field on $B\left( \R \right)$. It acts as a derivation on $\Cc^\infty\left( B\left( \R \right) \right)$ via the Lie derivative:
$$ \forall f \in \Cc^{\infty}(B\left( \R \right)), \forall b \in B\left( \R \right), \Lc_X f(b) = \frac{d}{dt}\left( f(g e^{t X}) \right)_{|t=0}$$
When convenient, it is customary to write $X$ instead of $\Lc_X$. For example, the Laplace operator on $A$ is defined using the orthonormal basis $\left(V_1, \dots, V_n\right)$ by:
$$ \Delta_\afrak :=  \sum_{i=1}^n V_i^2 $$

\begin{definition}
\label{def:hypoelliptic_operator}
\index{$\Dc^{(\mu)}$: Infinitesimal generator of a hypoelliptic Brownian motion on $B$}
For $\mu \in \afrak$, define $\Dc^{(\mu)}$ to be the left-invariant differential operator on the solvable group $B$ given by:
$$ \Dc^{(\mu)} := \half \Delta_\afrak + \langle \mu, \nabla \rangle + \half \sum_{\alpha \in \Delta} \langle \alpha, \alpha \rangle \Lc_{f_\alpha}$$
where $\langle \mu, \nabla \rangle$ stands for the first order differential $\Lc_\mu$.
\end{definition}

Such an operator is exactly the Casimir element in Kostant's Whittaker model, as explained in the appendix of \cite{bib:chh14a}. We adopted this notation so that $\half \Delta_\afrak + \langle \mu, \nabla \rangle$ is the infinitesimal generator of (multiplicative) Brownian motion with drift $\mu$ on $A$. The following proposition is obtained by applying theorem 1.2, chapter 5, in Ikeda and Watanabe \cite{bib:IkedaWatanabe89}, which is a standard reference for stochastic analysis on manifolds. For more details, we refer to \cite{bib:thesis} where we explain how to proceed using only linear algebra, differential calculus and Euclidian Brownian motion.

\begin{proposition}
\label{proposition:inf_generator}
The operator $\Dc^{(\mu)}$ is the infinitesimal generator of the hypoelliptic Brownian motion $\left( B_t(W^{(\mu)}); t\geq 0 \right)$ driven by $W^{(\mu)}$, a Euclidian Brownian motion on $\afrak$ with drift $\mu$.
\end{proposition}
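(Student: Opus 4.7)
The plan is to reduce the hypoelliptic SDE \eqref{lbl:process_B_sde_stratonovich} to a standard Stratonovich SDE on the manifold $B(\R)$ driven by independent real Brownian motions, and then invoke the classical theorem (Ikeda--Watanabe \cite{bib:IkedaWatanabe89}, Ch. 5 Thm. 1.2) that expresses the infinitesimal generator as a sum of squares of the driving vector fields plus a drift.

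Concretely, I first fix an orthonormal basis $(V_1, \dots, V_r)$ of $\afrak$ for $\langle \cdot, \cdot \rangle$ and decompose the driving Brownian motion as $W^{(\mu)}_t = \mu\, t + \sum_{i=1}^r B^i_t\, V_i$, where $(B^i)_{i=1}^r$ are independent standard real Brownian motions. Substituting into \eqref{lbl:process_B_sde_stratonovich} and viewing each $V_i$, $f_\alpha$ and $\mu$ as left-invariant vector fields on $B$ via the pushforward by left translations, the SDE takes the manifold-valued Stratonovich form
\begin{align*}
dB_t \;=\; \Lc_{Y_0}(B_t)\, dt \;+\; \sum_{i=1}^r \Lc_{V_i}(B_t) \circ dB^i_t, \qquad Y_0 := \mu + \sum_{\alpha \in \Delta} \tfrac{1}{2}\langle \alpha, \alpha \rangle f_\alpha,
\end{align*}
with $B_0 = \mathrm{Id}$. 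The $dt$-term is already in Itô form (constant diffusion coefficient with respect to a process of bounded variation), and the quadratic covariations are $d\langle B^i, B^j\rangle_t = \delta_{ij}\, dt$.

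Next I apply the standard result: for such a Stratonovich SDE on a smooth manifold, the generator applied to $f \in \Cc^\infty(B(\R))$ is
\begin{align*}
Lf \;=\; \Lc_{Y_0} f \;+\; \tfrac{1}{2} \sum_{i=1}^r \Lc_{V_i}\bigl(\Lc_{V_i} f\bigr).
\end{align*}
Unpacking $\Lc_{Y_0} = \Lc_\mu + \tfrac{1}{2}\sum_\alpha \langle \alpha, \alpha\rangle \Lc_{f_\alpha}$ and recognising $\tfrac{1}{2}\sum_i \Lc_{V_i}^2$ as $\tfrac{1}{2}\Delta_\afrak$ by the definition given just before the proposition, one reads off $L = \Dc^{(\mu)}$.

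The only real care needed, and the step I would regard as the main obstacle, is the justification that the Itô--Stratonovich conversion contributes no extra correction beyond the $\frac{1}{2}\sum_i \Lc_{V_i}^2$ term. The $dt$-coefficient $Y_0$ is of bounded variation and therefore needs no correction; for the Brownian part one uses that $V_i$ and $V_j$ are commuting elements of the abelian Lie subalgebra $\afrak$, so that the left-invariant vector fields $\Lc_{V_i}$ and $\Lc_{V_j}$ commute as derivations and the symmetric correction $\frac{1}{2}\sum_{i,j} \delta_{ij} \Lc_{V_i}\Lc_{V_j}$ reduces cleanly to $\frac{1}{2}\sum_i \Lc_{V_i}^2$. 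No commutation between the horizontal fields $V_i$ and the vertical fields $f_\alpha$ is required. A fully rigorous argument, following the reduction to Euclidean Brownian motion sketched in \cite{bib:thesis}, proceeds by applying Itô's formula in a chart around each $B_t$ and checking that the contributions match $\Dc^{(\mu)} f$ at $\mathrm{Id}$; left-invariance then extends the identity to all of $B(\R)$.
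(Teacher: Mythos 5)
Your proposal is correct and follows the same route as the paper, which simply invokes Theorem 1.2 of Chapter 5 in Ikeda--Watanabe after viewing \eqref{lbl:process_B_sde_stratonovich} as a left-invariant Stratonovich SDE; you have merely filled in the decomposition of $W^{(\mu)}$ along an orthonormal basis and the identification of the sum-of-squares generator $\Lc_{Y_0} + \tfrac{1}{2}\sum_i \Lc_{V_i}^2$ with $\Dc^{(\mu)}$. The only cosmetic point is that your worry about the It\^o--Stratonovich correction is moot: the formula $L = A_0 + \tfrac12\sum_i A_i^2$ for a Stratonovich SDE requires no commutation among the driving fields, so the appeal to $\afrak$ being abelian is unnecessary (though harmless).
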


\begin{rmk}
Let
$$ V_0 = \mu + \half \sum_{\alpha \in \Delta} \langle \alpha, \alpha \rangle f_\alpha$$
This allows us to write the infinitesimal generator in the usual ``sums of squares'' form:
$$ \Dc^{(\mu)} = \half \sum_{i=1}^n V_i^2 + V_0$$
We refer to $\left( B_t\left( W^{(\mu)} \right) ; t \geq 0 \right)$ as our hypoelliptic Brownian motion on $B$ driven by $W^{(\mu)}$, because its infinitesimal generator satisfies the (parabolic) H\"ormander condition \cite{bib:Hormander}: When taking the vector fields $\left( V_1, \dots, V_n \right)$ and their iterated Lie brackets with the family $\left\{ V_0, V_1, \dots, V_n \right\}$, one generates indeed all of $\bfrak\left( \R \right) = T_e B\left( \R \right)$.

Although we will not make use of this fact, it is reassuring to know that it has a smooth transition kernel.
\end{rmk}

\subsection{Geometric crystals}
\label{subsection:geom_crystals}

A geometric crystal in the sense of Berenstein and Kazhdan is a topological set $L$ endowed with structural maps and crystal actions $\left( e^c_\alpha; c \in \R \right)_{\alpha \in \Delta}$ (Definition 3.1 in \cite{bib:chh14a}). Among these structural maps, there is a weight map $\wt: L \rightarrow \afrak$. Thanks to the crystal actions, one defines an action of Weyl group $W$ for which the weight map is equivariant:
$$ \forall x \in L, \forall w \in W, \ \wt( w \cdot x ) = w \wt(x) $$
The notation $\langle \pi \rangle$ is for the connected component generated by $\pi \in L$. This is not to be confused with the topological notion of connectedness: Two elements are in the same component if they are related by successive applications of crystal actions. The following notations are extracted from \cite{bib:chh14a}, where these objects are presented and studied in more detail.

\paragraph{The group picture:} Define the geometric Lusztig variety and the geometric Kashiwara variety as:
$$ U_{>0}^{w_0} := U              \cap B w_0 B \cap G_{\geq 0}$$ 
$$ C_{>0}^{w_0} := U \bar{w_0} U  \cap B       \cap G_{\geq 0}$$ 
Each one of these varieties possesses parametrizations indexed by reduced words ${\bf i} \in R(w_0)$ which are denoted respectively by $x_{\mathbf{i}}: \R_{>0}^{\ell(w_0)} \rightarrow U_{>0}^{w_0}$ and $x_{-\mathbf{i}}: \R_{>0}^{\ell(w_0)} \rightarrow C_{>0}^{w_0}$.

\begin{definition}[Geometric crystals]
\label{def:geom_crystal}
Define the geometric crystal of highest weight $\lambda \in \afrak$ as the set:
$$\Bc\left(\lambda\right) := C_{>0}^{w_0} e^{\lambda}$$
The union of all highest weight crystals will be denoted by $\Bc$, which is nothing but the set of totally positive elements in $B$:
$$ \Bc := \bigsqcup_{ \lambda \in \afrak} \Bc(\lambda) = B_{\geq 0}$$
\end{definition}

The weight map $\wt: \Bc \rightarrow \afrak$ and the highest weight map $\hw: \Bc \rightarrow \afrak$ are defined for $b \in \Bc$ as:
\begin{align}
\label{eq:def_wt}
\wt(b) = & \log [b]_0
\end{align}

\begin{align}
\label{eq:def_hw}
\hw(b) = & \log [ \bar{w}_0^{-1} b]_0
\end{align}

\paragraph{Path model:} The Littelmann path model developed in \cite{bib:chh14a} details a geometric crystal structure on the set of paths in $\Cc_0\left( [0,T], \afrak \right)$. In this case, the weight of $\pi$ is its endpoint $\pi(T)$. The fundamental invariant under crystal actions is the highest weight path and is given by the geometric Pitman transform $\Tc_{w_0}$, which is part of the following larger family.
\begin{definition}
\label{def:pitman_transform}
For every $w \in W$, define an associated the path transform $\Tc_{w}: \Cc\left( [0,T], \afrak\right) \rightarrow \Cc\left( (0,T], \afrak\right)$ from either of the following. Both definition are equivalent:
\begin{itemize}
 \item For $t>0$:
       $$ \left(\Tc_{w} \pi\right)(t) := \log \left[ \bar{w}^{-1} B_t(\pi) \right]_0$$
 \item For every reduced word ${ \bf i } \in R(w)$ such that $\ell = \ell(w)$:
       $$ \Tc_{w} := \Tc_{\alpha_{i_1}} \circ \dots \Tc_{\alpha_{i_\ell}}$$
       where
       $$ \Tc_{\alpha}\left( \pi \right)(t) := \pi(t) + \log\left[ \int_0^t e^{-\alpha\left( \pi(s) \right)} \frac{\| \alpha \|^2}{2} ds \right] \alpha^\vee $$
\end{itemize}
\end{definition}
From \cite{bib:chh14a} (Section 9, Isomorphism results), the connected component $\langle \pi \rangle$ can be entirely recovered from the knowledge of $\left( \Tc_{w_0} \pi \right)_{0 < t \leq T}$ and its isomorphism class depends only on $\lambda = \hw(\pi) = \Tc_{w_0} \pi(T)$. Moreover, if $h>0$, then as a consequence of the Laplace method:
$$ h \Tc_{\alpha}\left( h^{-1}\pi \right)(t) \stackrel{h \rightarrow 0}{\longrightarrow} \Pc_{\alpha}\left( \pi \right)(t) := \pi(t) - \inf_{0 \leq s \leq t} \alpha\left( \pi(s) \right) \alpha^\vee $$
so that:
\begin{align}
\label{eq:limit_pitman}
\Pc_{w} & = \lim_{h \rightarrow \infty} h \Tc_{w} h^{-1}
\end{align}
are the Pitman transforms introduced in \cite{bib:BBO}. Also, there is a morphism of geometric crystals $p: \Cc_0\left( [0,T], \afrak \right) \rightarrow \Bc$ that is given by the flow in eq. \eqref{eq:process_B_explicit}. If $\pi \in \Cc_0\left( [0,T], \afrak \right)$, then $p(\pi) = B_T(\pi)$. It projects the geometric path model onto the group picture. Thus geometric RS correspondence takes the form:

\begin{thm}[Geometric Robinson-Schensted correspondence]
\label{thm:dynamic_rs_correspondence}
For each $T>0$, we have a bijection:
$$ \begin{array}{cccc}
  RS : & \Cc_0\left( [0, T], \afrak \right) & \longrightarrow & \left\{ (x, \eta) \in \Bc \times \Cc^{high}_{w_0}\left( (0,T], \afrak \right) \ | \ \hw(x) = \eta(T) \right\}\\
       &         \pi                      &   \mapsto         & \left( B_T\left( \pi \right), \left(\Tc_{w_0} \pi_t; 0 < t \leq T \right) \right)
   \end{array}
 $$
where $\Cc^{high}_{w_0}\left( (0,T], \afrak \right)$ are the paths in $\afrak$ with a certain asymptotic behavior at time zero.
\end{thm}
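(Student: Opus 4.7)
The plan is to derive the statement directly from the foundational framework of \cite{bib:chh14a}, where the geometric crystal structure on path space is constructed and the morphism $p$ is shown to be compatible with $\Tc_{w_0}$. I would begin by verifying that $RS$ is well-defined into the claimed target, i.e.\ the compatibility $\hw(B_T(\pi)) = (\Tc_{w_0}\pi)(T)$. Unpacking definitions, $\hw(B_T(\pi)) = \log[\bar{w}_0^{-1} B_T(\pi)]_0$, which coincides with the value of $\Tc_{w_0}\pi$ at $t = T$ by the first bullet of Definition \ref{def:pitman_transform}. Hence $RS(\pi)$ indeed lies in the asserted set.

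Bijectivity would then follow from the isomorphism results of Section 9 of \cite{bib:chh14a}. The path $\Tc_{w_0}\pi$ is the highest-weight representative of the connected component $\langle \pi \rangle$ and is invariant under the crystal actions, so two paths sharing the same Pitman transform lie in the same component. For $\lambda = (\Tc_{w_0}\pi)(T)$, the morphism $p$ restricts to an isomorphism of geometric crystals from $\langle \pi \rangle$ onto $\Bc(\lambda)$. Injectivity is then immediate: if $RS(\pi_1) = RS(\pi_2)$, equality of the Pitman transforms puts both paths in the same component, and since $p$ is an isomorphism there, $p(\pi_1) = p(\pi_2)$ forces $\pi_1 = \pi_2$. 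For surjectivity, given $(x, \eta)$ with $\hw(x) = \eta(T) = \lambda$, I would take the inverse isomorphism $\Bc(\lambda) \to \langle \pi \rangle$ to produce a preimage $\pi$ satisfying $p(\pi) = x$ and $\Tc_{w_0}\pi = \eta$.

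The main obstacle is pinning down the target class $\Cc^{high}_{w_0}((0,T], \afrak)$ precisely: both verifying that $\Tc_{w_0}\pi$ falls into it and showing that every admissible $\eta$ arises from a path. The nontrivial point is the small-time asymptotic as $t \to 0^+$: each simple Pitman transform $\Tc_{\alpha_i}$ adds a logarithmic correction $\log\bigl(\int_0^t e^{-\alpha_i(\pi(s))} \tfrac{\|\alpha_i\|^2}{2}\, ds\bigr)\, \alpha_i^\vee$, which diverges as $t \to 0^+$. One must track this divergence through the $\ell(w_0)$-fold composition defining $\Tc_{w_0}$, match the outcome with the asymptotic profile encoded in the definition of $\Cc^{high}_{w_0}$ from \cite{bib:chh14a}, and verify that the inverse construction used for surjectivity respects this class. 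Once the asymptotics are nailed down, the bijection drops out from the group-level statement and the isomorphism of path components with $\Bc(\lambda)$.
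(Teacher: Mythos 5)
Your proposal is correct and follows essentially the same route as the paper, which does not reprove this statement but imports it as Theorem 10.4 of \cite{bib:chh14a}: well-definedness from $\hw(B_T(\pi)) = \log[\bar{w}_0^{-1}B_T(\pi)]_0 = (\Tc_{w_0}\pi)(T)$, and bijectivity from the Section 9 isomorphism results (the highest weight path determines the component $\langle\pi\rangle$, and $p$ restricts to an isomorphism $\langle\pi\rangle \to \Bc(\lambda)$). You also correctly identify the only genuinely technical point left open, namely the precise characterization of $\Cc^{high}_{w_0}$ and the small-time asymptotics of $\Tc_{w_0}\pi$, which the present paper likewise leaves to the cited reference.
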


\subsection{Landau-Ginzburg potentials}
\label{subsection:LG_potentials}
A Landau-Ginzburg potential is a measure on $\Bc\left( \lambda \right)$ of the form:
\begin{align}
\label{def:LG_potentials} 
 & e^{-f_B(x) } \omega(dx)
\end{align}
where $\omega$ is a reference measure of the form $\prod_{j=1}^m \frac{dt_j}{t_j}$ in the appropriate choice of coordinates, and $f_B$ is the following superpotential map. It uses the standard character $\chi: U \rightarrow \C$ defined as:
\begin{align}
\label{eq:def_st_character}
\chi   := & \sum_{\alpha \in \Delta} \chi_\alpha
\end{align}
where $\chi_\alpha: U \rightarrow \C$ are the elementary additive unipotent characters given by:
$$ \forall t \in \C, \forall \alpha, \beta \in \Delta, \chi_\alpha( e^{t e_\beta} ) = t \delta_{\alpha, \beta}.$$

\begin{definition}
\label{def:f_B}
Define the superpotential $f_B$ on $\Bc$ as the map:
$$
\begin{array}{cccc}
f_B: &          \Bc           & \rightarrow & \R_{>0} \\
     &   x = z \bar{w_0} t u  & \mapsto     & \chi(z) + \chi(u)
\end{array}
$$
where in the decomposition $x = z \bar{w_0} t u$, we have $z \in U^{w_0}_{>0}$, $u \in U^{w_0}_{>0}$ and $t \in A$.
\end{definition}

\begin{properties}
\label{properties:superpotential_properties}
For $x \in \Bc$:
$$ f_B\left( x \right) = f_B\left( x^\iota \right)
                       = f_B \circ S \left( x \right)$$
$$ \forall w \in W, f_B\left( w \cdot x \right) = f_B\left( x \right)$$
\end{properties}
\begin{proof}
The first relations are immediate from the definition \ref{def:f_B} of $f_B$ and that of the involutions $\iota$ and $S$. The invariance under the Weyl group action on the crystal can be checked easily on the reflections $\left( s_\alpha, \alpha \in \Delta \right)$ and has been done in \cite{bib:BK00}.
\end{proof}

\begin{example}
\label{lbl:example_superpotential_rank1}
In rank one case, by writing for $x \in \Bc(\lambda)$:
$$ x = \begin{pmatrix} t & 0 \\ 1 & t^{-1} \end{pmatrix} e^{\lambda \alpha^\vee}
     = \begin{pmatrix} 1 & t  \\ 0 & 1 \end{pmatrix}
       \begin{pmatrix} 0 & -1 \\ 1 & 0 \end{pmatrix}
       \begin{pmatrix} e^{\lambda} & 0 \\ 0 & e^{-\lambda} \end{pmatrix}
       \begin{pmatrix} 1 & \frac{e^{-2\lambda}}{t} \\ 0 & 1 \end{pmatrix}
       $$
We find the superpotential for $G/B \approx P^1\left( \C \right)$ (see \cite{bib:Rietsch11}):
$$f_B\left(x\right) = \chi\left( \begin{pmatrix} 1 & t  \\ 0 & 1 \end{pmatrix} \right) + 
                      \chi\left( \begin{pmatrix} 1 & \frac{e^{-2\lambda}}{t} \\ 0 & 1 \end{pmatrix} \right)
                    = t + \frac{e^{-2 \lambda}}{t}$$
\end{example}

These seemingly simple and innocent ingredients have appeared in two quite different circumstances.
\begin{itemize}
 \item(Geometric crystals) Berenstein and Kazhdan use the map $f_B$ to ``cut'' a discrete free crystal $\Bfrak^{free}(\lambda)$ obtained by tropicalizing $\Bc(\lambda)$ and its structural maps. Then they obtain normal Kashiwara crystals by setting (\cite{bib:BK06}):
  \begin{align}
  \label{eq:BK_pruning}
  \Bfrak\left( \lambda \right) & = \left\{b \in \Bfrak^{free}(\lambda) \ | \ [f_B]_{trop}(b) \geq 0 \right\} 
  \end{align}

  The surprise lies in the fact that a simple function like $f_B$ encodes exactly the string cones. As stated in the introduction of \cite{bib:BK04}, exhibiting such a function, along with its properties, proves a corollary of the local Langlands conjectures. More will be said on the tropicalization procedure when needed in section \ref{section:degenerations}.

 \item(Mirror symmetry) For a smooth projective Fano manifold $X$, one can form the small quantum cohomology ring. Mirror symmetry predicts the existence of a complex manifold $\Rc$, the ``mirror'', along with a Landau-Ginzburg potential $e^{-W(x)} \omega(dx)$ such that the quantum cohomology ring of $X$ is obtained as the Jacobi ring of $W$ over the mirror $\Rc$. The superpotential $W$ must depend smoothly on quantum parameters in $H^2(X, \C)$. For $X = \P^1(\C) \approx G/B$, with $G=SL_2$, one finds that $\Rc = \C^*$ and:
$$ W(t) = t + \frac{q}{t}$$
where $q \in H^2(X, \C) \approx \C$. This is exactly $f_B(x)$ if $x \in \Bc(\lambda)$ has Lusztig parameter $t$ and $q=e^{-2\lambda}$ (see example \ref{lbl:example_superpotential_rank1} ). More generally for all flag manifolds $X = G/B$, $H^2(X, \C) \approx \hfrak$. In this case, Rietsch \cite{bib:Rietsch07} has provided a construction of the mirror $\Rc$ and has proved that the superpotential is indeed the one in definition \ref{def:f_B}.

We have no explanation as to why mirror symmetry appears while investigating geometric crystals with probabilistic considerations and we will not try to push into that direction. However, the strength of our approach is that the Landau-Ginzburg potential $e^{-f_B(x)} \omega(dx)$ appear as a canonical measure induced by Brownian motion. It does come out as the result of a computation.
\end{itemize}

% --------------------------------------------------------------------
\section{Main results}
\label{section:main_results}

Having in mind the geometric RS correspondence, it is natural to look for a description of the highest weight process $\left( \Tc_{w_0}\left( W \right)_s ; s \leq t \right)$ and the distribution of the random crystal element $B_t(W^{(\mu)})$ conditionnally to the highest weight being fixed. Both aspects in this description have known analogues in the classical case of Young tableaux (see O'Connell \cite{bib:OC03}): the dynamic of the $Q$ tableau, or equivalently the shape, is Markovian and the distribution of the $P$ tableau conditionnally to the shape being fixed is the uniform measure on semi-standard tableaux. Our two main theorems are the geometric analogues.

\subsection{Whittaker process as the highest weight process}

The Whittaker process is obtained via the following algebraic construction as the geometric Pitman transform of Brownian motion, which is also the second component of the RS correspondence \ref{thm:dynamic_rs_correspondence} with the Wiener measure as input.
\begin{thm}
\label{thm:highest_weight_is_markov}
Let $W^{(\mu)}$ be a Brownian motion with drift $\mu$ in the Cartan subalgebra $\afrak \approx \R^r$, then
$$\Lambda_t := \hw\left( B_t\left( W^{(\mu)} \right) \right)  = \Tc_{w_0} W^{(\mu)}_t$$
is a diffusion process with infinitesimal generator
$$ \Lc = \psi_\mu^{-1}\left( \frac{1}{2} \Delta - \sum_{\alpha \in \Delta} \frac{1}{2} \langle \alpha, \alpha \rangle e^{-\alpha\left( x \right)}  - \frac{ \langle \mu, \mu \rangle }{2} \right) \psi_\mu = \frac{1}{2} \Delta + \nabla \log\left(\psi_\mu\right) \cdot \nabla $$
where $\psi_\mu$ is the Whittaker function.
\end{thm}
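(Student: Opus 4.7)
The strategy is to exploit that $\Lambda_t = \hw\bigl(B_t(W^{(\mu)})\bigr)$ is a deterministic functional of the Markov process $B_t(W^{(\mu)})$ on the Borel group, whose infinitesimal generator $\Dc^{(\mu)}$ is identified in Proposition \ref{proposition:inf_generator} as (the image of) the Casimir in Kostant's Whittaker model. The Markovianity of the projection and the stated generator should follow from a Kostant-type radial reduction, implemented probabilistically via the Rogers-Pitman Markov function theorem.

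First, I would invoke Kostant's theorem identifying the radial part of the Casimir along $N$-orbits on $B$, once twisted by the standard character $\chi$ of \eqref{eq:def_st_character}, with the quantum Toda Hamiltonian
$$ H = \frac{1}{2}\Delta_\afrak - \sum_{\alpha \in \Delta} \frac{\langle \alpha, \alpha \rangle}{2} e^{-\alpha(x)}. $$
This is essentially what is recalled in the appendix of \cite{bib:chh14a}. In particular the Whittaker function $\psi_\mu$ satisfies the eigenvalue equation $H \psi_\mu = \frac{\langle \mu, \mu \rangle}{2} \psi_\mu$, and integration against $e^{-\chi}$ along $N$-orbits is the intertwiner at the algebraic level.

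Second, I would introduce the candidate Markov kernel $K$ sending $\lambda \in \afrak$ to the normalized Landau-Ginzburg measure on $\Bc(\lambda)$,
$$ K(\lambda, dx) = \frac{e^{\langle \mu, \wt(x) \rangle - f_B(x)}}{\psi_\mu(\lambda)} \omega(dx), $$
whose total mass is $\psi_\mu(\lambda)$ by a Givental-type representation. The core step is the semigroup intertwining $K P_t^B = Q_t^{\Lambda} K$, where $P_t^B$ is the semigroup of $B_t(W^{(\mu)})$ and $Q_t^\Lambda$ is generated by $\Lc$. Infinitesimally, this is the identity
$$ K\bigl(\lambda,\, \Dc^{(\mu)}(f \circ \hw)\bigr) = (\Lc f)(\lambda)\, \psi_\mu(\lambda), $$
which is Kostant's radial reduction applied to $f \cdot \psi_\mu$ and then divided by $\psi_\mu(\lambda)$. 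The drift $\mu$ is absorbed cleanly because $K$ weights the Landau-Ginzburg density by $e^{\langle \mu, \wt(x) \rangle}$; equivalently, one may first treat the case $\mu = 0$ and then perform a Girsanov transformation on $W^{(\mu)}$, which produces precisely the $h$-transform by $\psi_\mu$. Once the intertwining holds, the Rogers-Pitman Markov function theorem certifies that $\Lambda_t$ is itself Markov with semigroup $Q_t^\Lambda$, and the gradient form of $\Lc$ follows from expanding
$$ \psi_\mu^{-1} \Delta \psi_\mu = \Delta \log \psi_\mu + \| \nabla \log \psi_\mu \|^2 $$
and using the Toda eigenvalue equation to cancel the $V$ and constant terms.

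The main obstacle is the Kostant reduction itself: verifying that $\Dc^{(\mu)}$, after averaging over $N$ with the weights prescribed by $K$, produces the Toda Hamiltonian conjugated by $\psi_\mu$. The subtlety comes from the Stratonovich-to-It\^o corrections in \eqref{lbl:process_B_sde_stratonovich} and from the fact that the $f_\alpha$-drift in $\Dc^{(\mu)}$ acts at the Lie-algebra level, so its reduction to $\afrak$ via $\hw$ must generate precisely the exponentials $e^{-\alpha(\hw(b))}$ of the Toda potential. A clean way to organize this calculation is to parametrize $\Bc(\lambda)$ by Lusztig coordinates associated with a reduced word for $w_0$, and to use the iterative decomposition $\Tc_{w_0} = \Tc_{\alpha_{i_1}} \circ \cdots \circ \Tc_{\alpha_{i_m}}$ from Definition \ref{def:pitman_transform}, reducing the full reduction to an inductive chain of one-dimensional Matsumoto-Yor-type identities for each simple Pitman transform.
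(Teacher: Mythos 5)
Your overall architecture (kernel $=$ normalized Landau--Ginzburg measure on the fibre $\Bc(\lambda)$, Rogers--Pitman, Doob $h$-transform by $\psi_\mu$, Girsanov to move the drift) is the right one and matches Sections \ref{section:intertwined_markov_kernels}--\ref{section:asymptotics} of the paper in spirit, but two essential steps are missing and one is misstated. First, the intertwining you propose to verify is too weak: the identity $\Kc\bigl(\lambda,\Dc^{(\mu)}(f\circ\hw)\bigr)=(\Lc f)(\lambda)$ only tests the kernel against $\hw$-measurable observables, which is the Dynkin-type necessary condition; Dynkin's criterion demonstrably fails here, and Rogers--Pitman requires $\Kc_\mu\circ P_t=Q_t\circ\Kc_\mu$ against \emph{arbitrary} bounded $f$ on $\Bc$, equivalently the filtering statement $\E\bigl(f(C_\mu(x_0)B_t(W^{(\mu)}))\mid\Fc_t^{X},X_t=x\bigr)=\E\bigl(f(C_\mu(x))\bigr)$. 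Kostant's radial reduction gives you the scalar eigenfunction equation $H\psi_\mu=\tfrac12\langle\mu,\mu\rangle\psi_\mu$ (this is Lemma \ref{lemma:equivalence_toda_harmonicity}), not the full kernel intertwining, and an infinitesimal verification would in addition require controlling boundary terms of an integration by parts over the non-compact fibre $\Bc(\lambda)$, which you do not address. The paper instead proves the filtering identity (Theorem \ref{thm:markovian_filtering}) by a pathwise group-theoretic computation: the exact decomposition $C_\mu(x_0)B_t(W^{(\mu)})=b_x^T\circ\Theta^{-1}\bigl(N_\infty(X^{x_0}_{t+\cdot}-X^{x_0}_t)\bigr)$, independence of $N_\infty(X^{x_0}_{t+\cdot}-X^{x_0}_t)$ from $\Fc_t$ under the reference measure, and an explicit Radon--Nikodym density; the Markov property for $\mu\in C$ and finite $x_0$ is itself obtained first through the Poisson-boundary analysis of Section \ref{section:markovian_points} (Theorems \ref{thm:markovian_points} and \ref{thm:whittaker_process_g}), which is what singles out the character $e^{-\chi^-}$, and hence $\psi_\mu$, in the first place.

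Second, even granting the intertwining, Rogers--Pitman does not apply directly to $\Lambda_t=\hw(B_t(W^{(\mu)}))$: the criterion needs the big process to start from the entrance law $\Kc_\mu(\lambda,\cdot)$ for some $\lambda\in\afrak$, whereas $B_0(W^{(\mu)})=\Id$ is not of that form and $\Lambda$ enters from ``$-\infty$'' (it lives on $(0,T]$ only). The paper's proof therefore has a third, non-trivial component: start the process at $C_\mu(x_0)$ with $x_0=\zeta-2M\rho^\vee$ and let $M\to\infty$, using a Laplace-method analysis and the existence, uniqueness and zero weight of the minimizer $m_\zeta$ of $f_B$ (Theorem \ref{thm:superpotential_minimum}) to show $C_\mu(x_0)\to\Id$ and $x_0+T_{\Gamma_\mu(x_0)}\to\Tc_{w_0}$ in probability. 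Your proposal contains no substitute for this limiting argument, nor for the integrability estimate (Theorem \ref{thm:superpotential_estimate}) needed to make $\Kc_\mu$ a well-defined Markov kernel for all $\mu$. Finally, your closing suggestion to reduce everything to a chain of rank-one Matsumoto--Yor identities along $\Tc_{w_0}=\Tc_{\alpha_{i_1}}\circ\cdots\circ\Tc_{\alpha_{i_m}}$ is not viable as stated: the intermediate processes $\Tc_{s_{i_1}\cdots s_{i_k}}W^{(\mu)}$ are not individually Markov, so the one-dimensional identities do not simply compose.
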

\begin{proof}[Outline of proof]
In section \ref{section:markovian_points}, we prove a weaker version as theorem \ref{thm:whittaker_process_g} for $\mu \in C$ and a finite starting point $x_0$. We strenghten that result in section \ref{section:intertwined_markov_kernels} to all $\mu \in \afrak$. And finally we take $x_0$ to ``$-\infty$'' in subsection \eqref{subsection:entrance_point}.
\end{proof}

\begin{rmk}
From the point of view of particle systems, if $W^{(\mu)}$ is the process with generator $\frac{1}{2} \Delta + \langle \mu, \nabla \rangle - \sum_{\alpha \in \Delta} \frac{1}{2} \langle \alpha, \alpha \rangle e^{-\alpha\left( x \right)}$, i.e it is the Brownian motion with drift $\mu \in C$ and killing measure following the Toda potential then for $x \in \afrak$, $\psi_\mu(x)$ is proportional to $ e^{ \langle \mu, \lambda \rangle }\P_x\left( W^{(\mu)} \textrm{ survives} \right)$. Then, the Whittaker process is interpreted as the sub-Markovian Brownian motion $W^{(\mu)}$ conditioned to survive.

From the point of view of representation theory, this theorem says that the highest weight process is Markov. Because concatenation of paths models tensor products of crystals in the path model (Theorem 6.10 in \cite{bib:chh14a}), and Brownian motion has independent increments, we are observing a tensor product dynamic. Therefore, the evolution of the highest weight tells us how tensor products decompose and the quantum Toda Hamiltonian is an infinitesimal version of the Littlewood-Richardson rule.

Another pedantic restatement would be ``the isomorphism class of crystals generated by Brownian motion is Markovian''. 
\end{rmk}

This theorem reduces to the Matsumoto-Yor theorem (\cite{bib:MY00-1, bib:MY00-2}) in the case of $SL_2$ and the theorem by O'Connell (\cite{bib:OConnell}) in the case of $GL_n$ (type $A_n$). Note that since O'Connell's contruction has an application to a semi-discrete polymer model, one expects other Lie types to be related to different geometries. In section \ref{section:degenerations}, we will see that this theorem is a geometric lifting of theorem 5.6 in \cite{bib:BBO}, which represents the highest weight process in the continuous Littelmann path model as a Brownian motion in the Weyl chamber. Because of the underlying representation theory, we argue that the Whittaker process is the canonical weakly non-intersecting particle system, in the same way as Dyson's Brownian motion is the natural strictly non-intersecting particle system.

\subsection{Landau-Ginzburg potentials as a conditionnal distribution}

Landau-Ginzburg potentials are indeed what appear when considering a random crystal element in $\Bc$ conditioned to have its highest weight being $\lambda$. More precisely, we observe the first component of the RS correspondence (Theorem \ref{thm:dynamic_rs_correspondence}).
\begin{thm}
\label{thm:canonical_measure}
Let $W^{(\mu)}$ be a BM with drift $\mu$ in $\afrak$ and fix $t>0$. The distribution of $B_t(W^{(\mu)})$ conditionally to the $\sigma$-algebra $\Fc_t^\Lambda$  and $\Lambda_t = \Tc_{w_0} \left( W^{(\mu)} \right)(t) = \lambda$ depends only on $\lambda$ and is given by:
\begin{align}
\P\left( B_t\left( W^{(\mu)} \right) \in dx \ | \ \Fc^\Lambda_t, \Lambda_t = \lambda \right)  & = \frac{1}{\psi_\mu\left( \lambda \right)} e^{ \langle \mu, \wt(x) \rangle - f_B(x) } \omega(dx)
\end{align}
with $\psi_\mu\left( \lambda \right)$ the normalizing constant.
\end{thm}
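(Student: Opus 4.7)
The overall strategy is to compute the joint law of the pair $(B_t(W^{(\mu)}), \Lambda|_{(0,t]})$ under Wiener measure and divide by the marginal law of $\Lambda$, which is supplied by Theorem \ref{thm:highest_weight_is_markov}. The geometric Robinson-Schensted correspondence (Theorem \ref{thm:dynamic_rs_correspondence}) is the natural arena: it realises each Brownian path as a pair $(x,\eta)$ with $x \in \Bc$, $\eta$ a highest weight trajectory and $\hw(x)=\eta(T)$. Pushing Wiener measure through $RS$ yields a joint law whose $\eta$-marginal is the Whittaker process; what remains is to identify the conditional law of $x$ given $\eta$, and to prove it is supported on $\Bc(\lambda)$ with the stated density.

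First I would reduce to $\mu=0$ via Girsanov's theorem. The Radon-Nikodym derivative of the law of $W^{(\mu)}$ relative to that of $W$ on $\mathcal{F}_t^W$ is $\exp(\langle \mu, W_t\rangle - t\langle\mu,\mu\rangle/2)$, which equals $\exp(\langle \mu, \wt(B_t(W))\rangle - t\langle\mu,\mu\rangle/2)$ since \eqref{eq:process_A_explicit} gives $\wt(B_t(W)) = W_t$. Provided the result is established for $\mu=0$ with normalisation $\psi_0(\lambda)=\int_{\Bc(\lambda)} e^{-f_B(x)}\omega(dx)$, the general case follows by tilting: the conditional density picks up the factor $e^{\langle \mu, \wt(x)\rangle}$, the deterministic $e^{-t\langle\mu,\mu\rangle/2}$ together with the Doob shift appearing in Theorem \ref{thm:highest_weight_is_markov} get absorbed into the new normalisation, and one recognises $\psi_\mu(\lambda)=\int_{\Bc(\lambda)} e^{\langle \mu, \wt(x)\rangle - f_B(x)}\omega(dx)$ as Givental's integral formula for the Whittaker function in general Lie type.

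Before computing densities I would verify that for $\mu=0$ the conditional law depends only on $\Lambda_t$, not on its past. This follows from the path crystal structure recalled in subsection \ref{subsection:geom_crystals}: the connected component $\langle \pi\rangle$ is determined by $\Tc_{w_0}\pi$, and its isomorphism class depends only on the endpoint $\lambda$, so the fiber of $RS$ over any admissible trajectory $\eta$ with $\eta(t)=\lambda$ is $\Bc(\lambda)$. Combined with the Markov property of $B_t(W)$ from Proposition \ref{proposition:inf_generator}, this reduces the theorem to the identification of a single $\lambda$-indexed family of measures on $\Bc(\lambda)$.

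The main obstacle is this last identification: showing that the fiber measure is exactly $e^{-f_B(x)}\omega(dx)/\psi_0(\lambda)$. The natural tactic is to compute $\mathbb{E}[F(B_t(W))\, g(\Lambda)]$ for test functions, parameterise $\Bc(\lambda)$ by Lusztig/string coordinates so that the reference measure becomes the explicit $\prod_j dt_j/t_j$, and disintegrate. The Radon-Nikodym derivative against Wiener measure then splits as (Whittaker transition kernel for $\Lambda$) times (a Gibbs weight on the fiber), and the task is to recognise that fibrewise weight as $e^{-f_B(x)}$. The cleanest route exploits the invariances $f_B\circ\iota=f_B$ and $f_B(w\cdot x)=f_B(x)$ from Properties \ref{properties:superpotential_properties}, together with the explicit unipotent factor \eqref{eq:process_N_explicit} in the Gauss decomposition of the flow, to identify the $N$-coordinates of $B_t(W)$ with the string parameterisation of the fiber and to read off the superpotential directly from the Stratonovich SDE \eqref{lbl:process_B_sde_stratonovich}. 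Recognising the Berenstein-Kazhdan superpotential in this Jacobian---rather than some opaque integrand---is both the conceptual heart of the theorem and the justification for the phrase ``Landau-Ginzburg potentials for flag manifolds'' in the title.
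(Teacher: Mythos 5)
Your framing correctly isolates the difficulty, but the proposal stops exactly where the proof has to begin. The step ``disintegrate Wiener measure along the fibers of $RS$ and recognise the fibrewise weight as $e^{-f_B(x)}$'' is the entire theorem, and the tactic you sketch for it would not go through: the finite-time law of the hypoelliptic Brownian motion $B_t(W^{(\mu)})$ has no explicit density (Proposition \ref{proposition:inf_generator} only gives the generator), and the $RS$ map is a bijection between two infinite-dimensional path spaces, so there is no finite-dimensional Jacobian to ``read off the Stratonovich SDE.'' Likewise, dividing a joint law by the marginal of $\Lambda$ presupposes densities with respect to reference measures that are never exhibited. The invariances of $f_B$ and the formula \eqref{eq:process_N_explicit} do not by themselves produce the Gibbs weight.

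The paper's mechanism is different and worth contrasting. The only explicit law available is the \emph{time-infinity} exit law $N_\infty(W^{(\mu)})$ for $\mu\in C$, which in Lusztig coordinates is a product of gamma variables (eq. \eqref{eq:explicit_Gamma_mu}, \eqref{eq:gamma_mu_law_in_coordinates}); tilting it by $e^{-\chi^-(e^{\lambda}\,\cdot\,e^{-\lambda})}$ produces exactly $e^{\langle\mu,\wt(x)\rangle - f_B(x)}\omega(dx)$ via Proposition \ref{proposition:link_with_canonical} and the twist identities of Proposition \ref{proposition:semi_explicit_expressions} --- this is where the superpotential actually appears. One then proves a filtering identity (Theorem \ref{thm:markovian_filtering}) for the process started from a \emph{randomized} initial condition $C_\mu(x_0)$, using the Gauss-decomposition algebra to write $C_\mu(x_0)B_t(W^{(\mu)}) = b_x^T\circ\Theta^{-1}\bigl(N_\infty(X^{x_0}_{t+\cdot}-X^{x_0}_t)\bigr)$ and the Bayes formula under the change of measure $\P^v$ with $v=e^{-\chi^-}$. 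Your Girsanov tilting in $\mu$ does appear (it is the first half of that proof, passing from $\nu\in -C$ to general $\mu$), but it cannot reduce to $\mu=0$ alone, since the Poisson-boundary input requires a drift in a Weyl chamber for $N_\infty$ to exist. Finally the statement for $B_t(W^{(\mu)})$ itself is obtained by sending $x_0=\zeta-2M\rho^\vee\to -\infty$ and using the concentration of $C_\mu(x_0)$ at the minimizer of $f_B$ (Proposition \ref{proposition:convergence_in_proba}). None of these three ingredients --- the explicit exit law, the filtering identity for the randomized start, and the entrance-law limit --- is present in your proposal, and without them the identification of the conditional law remains unproved. Note also that invoking Theorem \ref{thm:highest_weight_is_markov} as an input is circular in the paper's logic: both theorems are consequences of the same filtering statement.
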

\begin{proof}
 See section \ref{section:intertwined_markov_kernels}.
\end{proof}

Therefore the measure
\begin{align}
\label{eq:canonical_measure_def}
e^{ - f_B(x) } \omega(dx) 
\end{align}
on $\Bc(\lambda)$ is the natural one induced by Brownian motion of geometric crystals, and the Archimedean Whittaker function appears as the geometric crystal's volume. A similar statement holds for Kirillov's orbital integral which encodes the volume of coadjoint orbits. The previous theorem is the ``{\it raison d'\^etre}'' of the following:
\begin{definition}
\label{def:canonical_probability_measure}
For a spectral parameter $\mu \in \afrak$, define $C_\mu(\lambda)$ as a $\Bc(\lambda)$-valued random variable whose distribution satisfies for every bounded measurable function on $\Bc(\lambda)$:
\begin{align}
\label{eq:canonical_measure_density}
\E\left( \varphi(C_\mu(\lambda)) \right) & = \frac{1}{\psi_\mu(\lambda)}  \int_{\Bc(\lambda)} \varphi(x) e^{ \langle \mu, \wt(x) \rangle - f_B(x) } \omega(dx)
\end{align}
We will refer to its law as the canonical probability measure on $\Bc(\lambda)$ with spectral parameter $\mu$. And $C_\mu(\lambda)$ will be referred to as a canonical random variable on $\Bc(\lambda)$ with spectral parameter $\mu$.
\end{definition}

\begin{properties}
\label{properties:canonical_probability_measure}
 For $\lambda \in \afrak$ and $\mu \in \afrak$:
 \begin{itemize}
  \item[(i)]   $$ S\left( C_\mu(\lambda) \right)     \stackrel{\Lc}{=} C_{w_0 \mu}(\lambda) $$
  \item[(ii)]  $$ \iota\left( C_\mu(\lambda) \right) \stackrel{\Lc}{=} C_{-\mu}(-w_0 \lambda) $$
  \item[(iii)] $W$-invariance:
               $$\forall w \in W, C_{w\mu}(\lambda)  \stackrel{\Lc}{=} C_\mu(\lambda)$$
 \end{itemize}
\end{properties}
\begin{proof}
Now for fixed $T>0$, consider a Brownian motion $\left(W_{t}^{(\mu)} \right)_{ 0 \leq t \leq T}$ in $\afrak$ with drift $\mu$, on the time interval $[0,T]$ :
\begin{itemize}
 \item[(i)] The Sch\"utzenberger involution acts at the path level as (see subsection 11.2 in \cite{bib:chh14a}):
$$ S\left( W^{(\mu)} \right)_t = - w_0 \left( W^{(\mu)}_T - W^{(\mu)}_{T-t} \right) ; 0 \leq t \leq T$$
which is also a Brownian motion, with drift $w_0 \mu$. Because the Sch\"utzenberger involution leaves the highest weights fixed (properties 11.5 in \cite{bib:chh14a}), we have equality for endpoints:
$$ \lambda = \Tc_{w_0}\left( W^{(\mu)} \right)_T = \Tc_{w_0} \circ S \left( W^{(\mu)} \right)_T$$
Hence, even if the filtrations generated by the paths $\Tc_{w_0}\left( W^{(\mu)} \right)$ and $\Tc_{w_0} \circ S \left( W^{(\mu)} \right)$ are different, we still have:
$$                   \left( B_T\left( W^{(\mu)}    \right) | \Tc_{w_0}\left( W^{(\mu)}    \right)_T = \lambda \right) 
   \stackrel{\Lc}{=} \left( B_T\left( S(W^{(\mu)}) \right) | \Tc_{w_0}\left( S(W^{(\mu)}) \right)_T = \lambda \right) $$
Using theorems 11.7 in \cite{bib:chh14a} and \ref{thm:canonical_measure}, we conclude.

 \item[(ii)] The proof is similar to (i) as the involution $\iota$ changes the drift $\mu$ to $-\mu$ when applied to a Brownian motion and $\iota\left( \Bc(\lambda) \right) = \Bc(-w_0 \lambda )$.

 \item[(iii)] It is a consequence of the invariance of $f_B$ w.r.t the action of $W$ on the crystal (Properties \ref{properties:superpotential_properties}) and the invariance of $\omega$ w.r.t. crystal actions (proved later in theorem \ref{thm:omega_invariance_on_crystal}). The weight map is also equivariant. 
\end{itemize}
\end{proof}

For classical discrete crystals \cite{bib:HongKang}, the measure of interest is the counting measure. For instance, the number of elements gives the dimension of the associated module. More precisely, for $\lambda \in P^+$ is a dominant weight, consider $\Bfrak(\lambda)$ the Kashiwara $G^\vee$-crystal with highest weight $\lambda$. The canonical measure is simply:
\begin{align}
\label{eq:canonical_measure_discrete}
& \sum_{b \in \Bfrak(\lambda)} \delta_{b} 
\end{align}
where $\delta_b$ stands for the Dirac measure at the element $b$. The character formula for the highest weight module $V\left(\lambda\right)$ reads:
\begin{align}
\label{eq:character_discrete}
\forall \mu \in \afrak, \ch V\left( \lambda \right)(\mu) & = \sum_{b \in \Bfrak(\lambda)} e^{\langle \mu, \wt(b)\rangle}
\end{align}
This allows to normalize the canonical measure \eqref{eq:canonical_measure_discrete} into a probability measure on $\Bfrak(\lambda)$ with spectral parameter $\mu$:
\begin{align}
\label{eq:canonical_probability_measure_discrete}
& \frac{1}{\ch V\left( \lambda \right)(\mu)}\sum_{b \in \Bfrak(\lambda)} e^{\langle \mu, wt(b)\rangle} \delta_{b} 
\end{align}
Let us mention that the characters can be seen as the Laplace transform of the image measure of \eqref{eq:canonical_measure_discrete} through the weight map. Completely analogous facts will hold for the Whittaker functions and Kirillov's orbital integral.

\subsection{The Archimedean Whittaker function as a character for geometric crystals}

Originally, the Duistermaat-Heckman measure was used to refer to the asymptotic weight multipliticities for a very large finite dimensional representation of a semisimple group (\cite{bib:Heckman82}, \cite{bib:GS90} section 33). It is also the image measure of the uniform measure on a continuous crystal under the weight map (\cite{bib:BBO2} section 5.3). One can use the Littelmann path model for very long paths to recover easily the Duistermaat-Heckman measure as asymptotic weight multiplicities (\cite{bib:BBO} remark 5.8). Now that we have identified a natural measure on geometric crystals, we will take virtually the same definition.
\begin{definition}
For $\lambda \in \afrak$, define the geometric Duistermaat-Heckman measure $DH^\lambda$ on $\mathfrak{a}$ as the image of the canonical measure \eqref{eq:canonical_measure_def} under the weight map $\wt$.
\end{definition}
One needs to think of $DH^{\lambda}$ as the measure encoding \emph{geometric} weight multiplicities and hence its Fourier-Laplace transform plays the role of character, analogously to equation \eqref{eq:character_discrete} in the discrete setting. In this geometric setting, we obtain a representation-theoretic definition of Whittaker functions that goes beyond the mere fact that $\psi_\mu\left( \lambda \right)$ is the volume of the geometric crystal $\Bc(\lambda)$. Moreover, we will see in theorem \ref{thm:intertwining_torus} that this measure intertwines the Laplacian on $\afrak$ and the quantum Toda Hamiltonian, or equivalently Brownian motion and the Whittaker process. 

\begin{definition}[Whittaker functions]
Whittaker functions are defined as the Laplace transform of the geometric Duistermaat-Heckman measure. For $\lambda \in \afrak$ and $\mu \in \hfrak$, it is given by:
\label{def:whittaker_functions}
\begin{align*}
\psi_\mu(\lambda) & = \int_\afrak e^{\langle \mu, k\rangle} DH^\lambda(dk)\\
                  & = \int_{\Bc(\lambda)} e^{ \langle \mu, \wt(x) \rangle - f_B(x) } \omega(dx)
\end{align*}
\end{definition}

Now, define $b: \hfrak \rightarrow \C$ as the meromorphic function
$$ b(\mu) := \prod_{\beta \in \Phi^+} \Gamma\left( \langle \beta^\vee, \mu \rangle \right)$$
It allows to define a natural normalization in our setting.

\begin{thm}
 \label{thm:whittaker_functions_properties}
 The Whittaker function satisfies the following:
 \begin{itemize}
  \item[(i)]   $\psi_\mu(\lambda)$ is an entire function in $\mu \in \hfrak = \afrak \otimes \C \approx \C^n$.
  \item[(ii)]  $\psi_\mu$ is invariant in $\mu$ under the Weyl group's action.
  \item[(iii)] For $\mu \in C$, the Weyl chamber, we have a probabilistic representation of the Whittaker function using $W^{(\mu)}$ a Brownian motion on $\afrak$ with drift $\mu$:
               $$ \psi_\mu\left( \lambda \right) = b(\mu) e^{ \langle \mu, \lambda \rangle }\E_\lambda\left( \exp\left( - \sum_{\alpha \in \Delta} \half \langle \alpha, \alpha \rangle \int_0^\infty ds \ e^{-\alpha(W_s^{(\mu)})} \right) \right)$$
               In this case, $\psi_\mu$ is the unique solution to the quantum Toda eigenequation:
               $$ \half \Delta \psi_\mu(x) - \sum_{\alpha \in \Delta } \half \langle \alpha, \alpha \rangle e^{-\alpha(x) } \psi_\mu(x) = \half \langle \mu, \mu \rangle \psi_\mu(x)$$
               such that $\psi_\mu(x) e^{-\langle \mu, x \rangle }$ is bounded with growth condition $\psi_\mu(x) e^{-\langle \mu, x \rangle } \stackrel{ x \rightarrow \infty, x \in C }{\longrightarrow} b(\mu)$
 \end{itemize}
\end{thm}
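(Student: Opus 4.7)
My plan is to handle each of parts (i)--(iii) in turn, working from the integral definition of $\psi_\mu(\lambda)$ in Definition \ref{def:whittaker_functions}.

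\textbf{Parts (i) and (ii).} Both would follow cleanly from working in Lusztig coordinates: choose a reduced word $\mathbf{i} \in R(w_0)$ and parametrize $\Bc(\lambda) = C^{w_0}_{>0} e^\lambda$ by $x_{-\mathbf{i}}(t_1,\dots,t_m) e^\lambda$ with $t_j > 0$. In these coordinates $\wt(x)$ is affine in $\lambda$ and in $(\log t_j)$, $f_B(x)$ becomes a Laurent polynomial in the $t_j$ with strictly positive coefficients (the tropicalisable superpotential), and $\omega = \prod_j dt_j / t_j$. After the substitution $y_j = \log t_j$, the integrand reads $\exp(L(\mu,\lambda,y) - P(\lambda,y))$ with $L$ linear and $P$ a sum of exponentials with positive coefficients. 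Since $P$ dominates any linear form in $y$ at infinity, the integral converges locally uniformly in $\mu \in \hfrak$ and differentiation under the integral shows that $\psi_\mu(\lambda)$ is entire in $\mu$, settling (i). For (ii), the $W$-action on $\Bc(\lambda)$ coming from the crystal operations is equivariant for $\wt$, preserves $f_B$ by Properties \ref{properties:superpotential_properties}, and preserves $\omega$ (the statement already invoked in the proof of Properties \ref{properties:canonical_probability_measure}(iii)). The change of variable $x = w \cdot y$ then immediately yields $\psi_\mu(\lambda) = \psi_{w^{-1} \mu}(\lambda)$ for every $w \in W$.

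\textbf{Part (iii).} Setting $V(x) = \sum_{\alpha \in \Delta} \half \langle \alpha, \alpha \rangle e^{-\alpha(x)}$, I define
\[
h_\mu(\lambda) := \E_\lambda\!\left[ \exp\!\left( -\int_0^\infty V(W^{(\mu)}_s)\, ds \right) \right].
\]
For $\mu \in C$ the drift pushes $W^{(\mu)}$ deep into $C$ almost surely, so the integral is finite a.s.\ and $h_\mu$ is a bounded positive function. Feynman--Kac gives $(\half \Delta + \langle \mu, \nabla \rangle - V) h_\mu = 0$, and the conjugation $\varphi_\mu(\lambda) := b(\mu) e^{\langle \mu, \lambda \rangle} h_\mu(\lambda)$ converts this identity into the quantum Toda eigenequation with eigenvalue $\half \langle \mu, \mu \rangle$, with boundary behavior $\varphi_\mu(\lambda) e^{-\langle \mu, \lambda \rangle} \to b(\mu)$ as $\lambda \to \infty$ inside $C$ (since $V \to 0$ along $W^{(\mu)}$-paths). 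The theorem's uniqueness clause then reduces to the identification $\psi_\mu = \varphi_\mu$.

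To carry out this identification, I would first invoke Theorem \ref{thm:highest_weight_is_markov}: the consistency of the Doob h-transform in the stated generator $\psi_\mu^{-1}\bigl(\half\Delta - V - \half\langle\mu,\mu\rangle\bigr) \psi_\mu$ forces $\psi_\mu$ to be a positive eigenfunction of $\half \Delta - V$ with eigenvalue $\half \langle \mu, \mu \rangle$. The main obstacle is then matching the leading constant, i.e.\ showing $\psi_\mu(\lambda) e^{-\langle \mu, \lambda \rangle} \to b(\mu)$ as $\lambda \to \infty$ deep inside $C$. I would address this by a Laplace-type asymptotic analysis of the integral over $\Bc(\lambda)$ in Lusztig coordinates: as $\lambda$ recedes into $C$, appropriate rescalings indexed by positive roots decouple the superpotential and split the integral into $|\Phi^+|$ independent one-dimensional integrals of the form $\int_0^\infty s^{\langle \beta^\vee, \mu \rangle - 1} e^{-s}\, ds = \Gamma(\langle \beta^\vee, \mu \rangle)$, whose product is precisely $b(\mu)$. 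A classical uniqueness statement for positive eigenfunctions of the quantum Toda operator with this prescribed asymptotic (in the spirit of Hashizume) then forces $\psi_\mu = \varphi_\mu$ and completes the theorem.
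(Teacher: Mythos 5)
Parts (i) and (ii) of your argument coincide with the paper's: convergence and holomorphy follow from the coordinate estimate on $f_B$ (Theorem \ref{thm:superpotential_estimate}) plus differentiation under the integral, and $W$-invariance in $\mu$ follows from invariance of $e^{-f_B}\omega$ and equivariance of $\wt$. (Minor slip: the chart $x_{-\mathbf i}$ is the Kashiwara, not the Lusztig, parametrization, and the Laurent-positivity of $f_B$ in Theorem \ref{thm:superpotential_structure} is stated in the \emph{twisted Lusztig} parameter $\varrho^T$; the estimate you need is Theorem \ref{thm:superpotential_estimate}, which is formulated in that chart.) Likewise, your Feynman--Kac construction of $\varphi_\mu$ and the reduction of the eigenequation to the drifted, potential-killed generator is exactly what the paper does; for uniqueness the paper gives a self-contained bounded-martingale argument rather than an appeal to Hashizume, but that is a cosmetic difference.

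The genuine gap is in your identification $\psi_\mu = \varphi_\mu$. You propose to invoke Theorem \ref{thm:highest_weight_is_markov} to conclude that the integral $\psi_\mu$ is a Toda eigenfunction, and then to pin down the constant by a Laplace asymptotic as $\lambda \to \infty$ in $C$. This is circular: the proof of Theorem \ref{thm:highest_weight_is_markov} runs through Theorem \ref{thm:whittaker_process_g}, whose proof uses the probabilistic expression \eqref{eq:whittaker_function_D_mu} --- i.e.\ precisely the representation of part (iii) --- as its definition of $\psi_\mu$; the content of (iii) is exactly the bridge between that expression and the integral over $\Bc(\lambda)$. The paper closes this bridge not by asymptotics but by an exact computation (Proposition \ref{proposition:link_with_canonical}): the exit law $\Gamma_\mu$ has the explicit density \eqref{eq:gamma_mu_law_in_coordinates} in Lusztig coordinates (independent gamma variables with parameters $\langle \beta_j^\vee,\mu\rangle$), and the two algebraic identities $\sum_j t_j + \chi^-(e^\lambda [u\bar w_0]_- e^{-\lambda}) = f_B(x)$ and $e^{\langle\mu,\lambda\rangle}\prod_j t_j^{\langle\beta_j^\vee,\mu\rangle} = e^{\langle w_0\mu,\wt(x)\rangle}$ reorganize that density term by term into $e^{\langle w_0\mu,\wt(x)\rangle - f_B(x)}\omega(dx)$; the factor $b(\mu)$ is just the product of Gamma-function normalizations of the gamma densities, and $W$-invariance removes the $w_0$. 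If you want to keep your asymptotic route, you must first establish \emph{independently of Theorem \ref{thm:highest_weight_is_markov}} that the integral satisfies the quantum Toda equation (essentially Givental--Rietsch), and then actually carry out the uniform Laplace analysis showing $\psi_\mu(\lambda)e^{-\langle\mu,\lambda\rangle}\to b(\mu)$; neither step is supplied, and the first is the hard one.
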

\begin{proof}
\begin{itemize}
 \item[(i)] In coordinates, thanks to the estimate in theorem \ref{thm:superpotential_estimate} and the weight map expression in theorem 4.22 in \cite{bib:chh14a}, we see that
$$ \phi(\mu, x) := \exp\left( \langle \mu, \wt(x) \rangle - f_B(x) \right) \omega(dx)$$
is holomorphic in $\mu \in \hfrak$ and integrable in the $x$ parameter uniformly for $\mu$ in a compact set. The same holds for partial derivatives w.r.t $\mu$. Thus, integration in the $x$ parameter will give a holomorphic function whose domain is all of $\hfrak$. Hence, $\psi_\mu(\lambda)$ is entire in the $\mu$ parameter.
 \item[(ii)] Invariance under the Weyl group action is a consequence of the invariance for $e^{-f_B(x)} \omega(dx)$, and equivariance for the weight map.
 \item[(iii)] In proposition \ref{proposition:link_with_canonical}, we prove that the probabilistic representation coincides indeed with the previous definition. For the characterization as the unique solution of the above PDE, we reproduce the martingale argument of \cite{bib:BOC09} corollary 2.3, for completeness. Write $\varphi_\mu = \psi_\mu e^{-\langle \mu, .\rangle}$, which needs to be the unique bounded solution to:
 $$ \half \Delta \varphi_\mu(x) + \langle \mu, \nabla \varphi_\mu \rangle - \sum_{\alpha \in \Delta } \half \langle \alpha, \alpha \rangle e^{-\alpha(x) } \varphi_\mu(x) = 0$$
 with growth condition $\varphi_\mu(x) \stackrel{ x \rightarrow \infty, x \in C }{\longrightarrow} b(\mu)$.

 As a consequence of the Feynman-Kac formula:
 $$ \varphi_\mu(x) = b(\mu) \E\left( \exp\left( - \sum_{\alpha \in \Delta} \half \langle \alpha, \alpha \rangle \int_0^\infty ds \ e^{-\alpha(x + W_s^{(\mu)})} \right) \right)$$
 solves the partial differential equation. For uniqueness, we use a martingale argument. If $\phi$ is a bounded solution such that:
 $$\phi(x) \stackrel{ x \rightarrow \infty, x \in C }{\longrightarrow} 0$$
 Then:
 $$ \phi(x + W_t^{(\mu)})\exp\left( - \sum_{\alpha \in \Delta} \half \langle \alpha, \alpha \rangle \int_0^t ds \ e^{-\alpha(x + W_s^{(\mu)})} \right)$$
 is bounded martingale going to zero as $t \rightarrow \infty$. Therefore, it must vanish identically. Hence uniqueness, by linearity.
\end{itemize}
\end{proof}

We will see in the next section that the integral is finite and that the Whittaker functions are well behaved. This semi-explicit integral formula given for Whittaker functions hints directly to the work of $\cite{bib:GLO1, bib:GLO2, bib:Givental}$. It is not so easy to link their formulae to ours, because of the multiple choices of coordinates. Notice however that our approach makes the choice of totally positive matrices a natural integration cycle.

We defined Whittaker functions $\psi_\mu$ as the Laplace transform of measure induced on weights by the canonical measure. Our definition is very different from Jacquet's original definition \cite{bib:Jacquet67} as an integral on the unipotent group $N$. In those circumstances, Whittaker functions are of special interest in number theory, as they appear in the Fourier expansion of Maass forms (see Goldfeld \cite{bib:Goldfeld}, chapter 5). Our approach has the advantage to define well-behaved functions using integrals that converge rapidly for all $\mu$. Moreover, the integrands are positive. Finally, a lot of structure is exhibited thanks to the underlying geometric crystals: Whittaker functions play the role of characters in the theory.

When comparing to the non-Archimedean setting where Whittaker functions are exactly proportional to characters of highest weight modules, one realizes that the function $b: \hfrak \rightarrow \C$ is an Archimedean analogue of the Weyl denominator. The reader is invited to look at the main results of \cite{bib:chh14-adic} and the discussion in subsection 3.2 which details these similarities.

\subsection{Outline of paper and strategy of proof}
We will work our way towards the proofs in four steps: 

\begin{itemize}
 \item Section \ref{section:markovian_points} is of probabilistic nature and analyzes the hypoelliptic Brownian motion on the group at hand. At this level, we will have to restrict our framework to $\mu \in C$, the Weyl chamber. We exhibit the Whittaker process starting at $x_0$ as a random path transform $T_g$ of Brownian motion. In fact, our result in that section is much stronger and of independent interest. We give a complete a characterization of the Poisson boundary and interpret the random path transform $T_g$ as a conditioning to hit the Poisson boundary at a certain point. The Whittaker process appears naturally as the only Markovian evolution which can appear from such a conditioning. 
 
 \item Section \ref{section:properties_LG} describes more precisely the parametrizations of geometric crystals and gives estimates on Landau-Ginzburg potentials. We have refrained from introducing these parametrizations until necessary. This allows to prove that the exiting laws corresponding to the Whittaker process are well-defined for all $\mu \in C$ and intimately related to the Landau-Ginzburg potentials.
 
 \item We extend the definition of the Whittaker process to arbitrary $\mu$ in section \ref{section:intertwined_markov_kernels}. We will use the Rogers-Pitman intertwining criterion (Theorem \ref{thm:rogerspitman_criterion}) in the framework of Markov functions.
 
 \item In section \ref{section:asymptotics}, we obtain the highest weight process and thus theorem \ref{thm:highest_weight_is_markov} by taking a starting point that is ``$x_0 = -\infty$''. To that endeavor, we will have to analyze the asymptotic behavior of the canonical measure as the highest weight goes to infinity in the opposite Weyl chamber.
 
 \item Finally, in section \ref{section:degenerations}, we describe the tropicalization procedure at the geometric and the probabilistic levels.
\end{itemize}

% --------------------------------------------------------------------
\section{Markovian points on the Poisson boundary}
\label{section:markovian_points}

While we are mainly interested in the Archimedean Whittaker functions, it is better to look at the larger picture. As we will see, the natural invariant process related to Whittaker functions is the hypoelliptic Brownian motion from eq. \eqref{eq:process_B_explicit}. By interpreting $N_{\geq 0}$ as a topological Furstenberg boundary where the process exits, every bounded harmonic function can be represented as an integral on $N_{\geq 0}$. We give in subsection \ref{subsection:poisson_boundary} a complete description of bounded harmonic functions - the Poisson boundary. It is identified to the set of functions:
\begin{align}
\label{eq:identification_poisson_boundary}
\Pc := & \left\{ v: N_{\geq 0} \rightarrow \R \ | \ v \textrm{ bounded } \right\}
\end{align}

This is relevant in relationship to the following. There is a path transform that appeared naturally in the Littelmann path model for geometric crystals. For $g \in U_{\geq 0}$, the transform:
$$
\begin{array}{cccc}
  T_g : & \Cc\left( \R_+, \afrak \right) & \longrightarrow & \Cc\left( \R_+, \afrak \right)
\end{array}
$$
is implicitly defined on a continuous path $X$ by
\begin{align}
\label{eq:def_path_transform} 
T_g X(t) := & \log \left[ g B_t(X)\right]_0 .
\end{align}
Its properties are detailed in section 5.4 of \cite{bib:chh14a}. 

\begin{thm}[Conditional representation theorem 2.12 \cite{bib:chh14_exit}]
\label{thm:conditional_representation}
If $\mu \in C$ and $W$ is a Brownian motion on $\afrak$, then:
 $$X^{x_0,(\mu)} := x_0 + T_{g } \left( W^{(w_0 \mu)} \right)$$
is a Brownian motion with drift $\mu \in C$, starting at $x_0$ and conditioned to 
$$ N_\infty(X^{0,(\mu)}) = \Theta(g) := \left[ g \bar{w}_0 \right]_0$$ 
\end{thm}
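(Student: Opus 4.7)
The plan is to recognize the random path transform $T_g$ as a Doob $h$-transform, where the harmonic function is the Poisson kernel associated with the singular exit functional $\{N_\infty = \Theta(g)\}$. The identification \eqref{eq:identification_poisson_boundary} of the Poisson boundary with $\Pc$ is precisely what makes this work: every bounded harmonic function of the hypoelliptic BM on $B$ factors through $N_\infty$, so there is a natural family of positive harmonic functions indexed by functionals on $N_{\geq 0}$, and point masses on $N_{\geq 0}$ yield the Poisson kernels we need.

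The first step is to derive the SDE satisfied by $Y_t := T_g(W^{(w_0\mu)})(t) = \log[\, g B_t(W^{(w_0\mu)})\,]_0$. Left multiplication by $g \in U_{\geq 0}$ is compatible with the right-invariant Stratonovich SDE \eqref{lbl:process_B_sde_stratonovich}, so $g B_t(W^{(w_0\mu)})$ solves the same flow equation started at $g$ instead of $\mathrm{Id}$. Applying It\^o's formula to the Gauss projection onto the torus component yields a decomposition of the form
\begin{align*}
dY_t \;=\; dW_t + \bigl( \mu + \nabla \log h_g(Y_t) \bigr)\, dt
\end{align*}
for a positive function $h_g$ extracted from the $N$-component series \eqref{eq:process_N_explicit} applied to $g B_t(W^{(w_0\mu)})$. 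The appearance of $\mu$ rather than $w_0\mu$ in the drift is the crucial phenomenon to track: it arises from the interaction of the pre-factor $g$ (whose Lie-theoretic content is $\Theta(g) = [g\bar w_0]_0$) with the exponential tails of the flow, and it is essentially the classical Doob drift-reversal for a transient diffusion conditioned on its exit distribution.

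The second step is to verify that $h_g$ is the Poisson kernel attached to the Dirac functional at $\Theta(g)$ for the driving BM with drift $\mu$, i.e.\ that $h_g$ is positive harmonic for $\tfrac{1}{2}\Delta + \langle \mu, \nabla\rangle$ and reproduces, after normalization, the conditional law of $X^{0,(\mu)}$ given $\{N_\infty = \Theta(g)\}$. Once this is established, standard Doob theory gives that the $h_g$-transform of drift-$\mu$ Brownian motion has generator $\tfrac{1}{2}\Delta + \langle \mu + \nabla \log h_g, \nabla\rangle$, which matches the SDE from the first step; uniqueness of the solution then identifies the laws. The translation by $x_0$ is immediate from the additive structure of $T_g$ evaluated at paths starting from $0$.

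The main obstacle is the algebraic identification in the first step: one must show that the It\^o drift coming from the nonlinear Gauss projection reassembles, against the $w_0\mu$-drift of the driving noise, into the clean form $\mu + \nabla \log h_g$. This requires the precise interplay between left multiplication by $g \in U_{\geq 0}$ and the $NHU$ decomposition of $g B_t$, together with the explicit series \eqref{eq:process_B_explicit}, and is carried out in detail in \cite{bib:chh14_exit}. Once that computation is in hand, the Poisson boundary framework of the current section supplies both the harmonicity of $h_g$ and its interpretation as a singular exit distribution, completing the argument.
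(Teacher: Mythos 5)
First, a point of order: the paper does not prove this statement at all --- it is imported verbatim as Theorem 2.12 of \cite{bib:chh14_exit}, so there is no internal proof to compare against. Your proposal must therefore stand on its own, and it does not: the key computation (that the drift reassembles from $w_0\mu$ into ``$\mu$ plus a logarithmic gradient'') is explicitly deferred to \cite{bib:chh14_exit}, which is precisely the source being quoted. That is circular, not a proof.

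More importantly, your Step 1 is false as stated. For a \emph{fixed} $g$, the process $Y_t = T_g(W^{(w_0\mu)})(t) = \log[gB_t(W^{(w_0\mu)})]_0$ does not satisfy a closed SDE whose drift is a function of $Y_t$ alone: writing $B_t = N_t e^{X_t}$ one has $Y_t = \log[gN_t]_0 + X_t$, and the drift of $Y$ depends on the full hypoelliptic state, not just its torus part. This is exactly what Corollary \ref{corollary:girsanov} records --- the Girsanov drift there is $\nabla_x\log\Phi_v\bigl(B_s(X^{x_0,(\mu)})\bigr)$, a function on $B$ --- and it is the entire point of Theorem \ref{thm:markovian_points}: the process becomes a Markov diffusion on $\afrak$ only for very special \emph{randomizations} of $g$. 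If your claimed SDE $dY_t = dW_t + (\mu + \nabla\log h_g(Y_t))\,dt$ held for deterministic $g$, that theorem would be vacuous. Consequently your Step 2 (``standard Doob theory'' applied to Euclidean BM on $\afrak$) rests on a false premise; the relevant $h$-transform lives on the group $B$, not on $\afrak$. Finally, conditioning on the null event $\{N_\infty = \Theta(g)\}$ is not delivered by the bounded-harmonic-function representation \eqref{eq:identification_poisson_boundary} / Proposition \ref{proposition:poisson_boundary_char}: a Dirac mass at a boundary point is not a bounded functional, and ``point masses yield the Poisson kernels we need'' is exactly the assertion requiring proof. One needs the existence of $N_\infty(X^{(\mu)})$ for $\mu\in C$, the explicit density of its law (eq. \eqref{eq:explicit_Gamma_mu}), and a disintegration of the Wiener measure along the map $\pi\mapsto(N_\infty(\pi),\,\cdot\,)$ showing that $T_g$ inverts this disintegration; none of this is supplied.
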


As explained in \cite{bib:chh14_exit}, the diffeomorphism $\Theta: U_{>0}^{w_0} \rightarrow N_{>0}^{w_0}$ is a group-theoretic analogue of the inverse map on $\R_+^*$. Moreover if $X^{(\mu)}$ is a Brownian motion on $\afrak$ with drift $\mu$, then the law of $\Theta^{-1}\left[ N_\infty(X^{(\mu)}) \right]$ is expressed in coordinates in term of gamma random variables. Hence:

\begin{definition}[Group-theoretic gamma and inverse gamma distributions - definition 2.19 in \cite{bib:chh14_exit} ]
\label{def:group_gamma_law}
For $X^{(\mu)}$ a Brownian motion on $\afrak$ with drift $\mu \in C$, define an inverse gamma random variable $D_\mu$ on $N$ as the distribution of $N_\infty(X^{(\mu)})$. The gamma random variable $\Gamma_\mu$ on $U$ is such that:
$$ D_\mu \stackrel{\mathcal{L}}{=} \Theta\left( \Gamma_\mu \right) \ .$$
Both are positive random variables in the sense of total positivity: almost surely $\Gamma_\mu \in U_{>0}^{w_0}$ and $D_\mu \in N_{>0}^{w_0}$.
\end{definition}

Now consider a positive function $v \in \Pc$. Revisiting theorem \ref{thm:conditional_representation}, if $g$ is taken as random with a distribution such that for every bounded function $\varphi$ on $U$:
$$ \E\left( \varphi \circ \Theta\left( g \right) \right)
 = \frac{ \E\left( \varphi(D_\mu) v( D_\mu ) \right) }
        { \E\left( v( D_\mu ) \right) }$$
then $X^{x_0,(\mu)}$ is distributed as a Brownian motion with drift $\mu$ conditioned to $N_\infty(X^{(\mu)})$ to follow a certain distribution depending on $v$. In terms of the potential-theoretic point of view given in the next subsection \ref{subsection:poisson_boundary}, we are conditioning the hypoelliptic Brownian motion $\left( B_t(X^{(\mu)}); t \geq 0 \right)$ to hit the Poisson boundary at the point $v \in \Pc$. Taking $v \equiv 1$ identically yields no conditioning. 

We ask the question ``Which $v \in \Pc$ force the process $\left( X^{x_0,(\mu)}_t ; t \geq 0 \right)$ to be Markovian in its own filtration?'' The answer is the main result of this section.
\begin{thm}[Characterization of Markovian points on the Poisson boundary]
\label{thm:markovian_points}
Assume $v \in \Pc$ differentiable. The process $\left( X^{x_0,(\mu)}_t ; t \geq 0 \right)$ is a Markovian diffusion if and only if $v$ is given in terms of unipotent characters by:
$$ v = \exp\left( - \sum_{\alpha \in \Delta} C_\alpha \chi^-_\alpha \right)$$
for constants $C_\alpha \geq 0$. The superscript $-$ refers to the same characters as in subsection \ref{subsection:LG_potentials}, only for the opposite unipotent group $N$.
\end{thm}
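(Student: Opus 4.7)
The plan is to reduce the Markov property of the $v$-conditioned process to a multiplicative factorisation of $v$ on $N$, and then to identify such factorisations with linear combinations of the additive unipotent characters $\chi^-_\alpha$. The first step is a multiplicative cocycle for $N_\infty$: by left-invariance of the SDE \eqref{lbl:process_B_sde_stratonovich} and the strong Markov property of $W^{(\mu)}$, for every $s>0$ one has $B_\infty(X^{(\mu)}) = B_s(X^{(\mu)}) \cdot \tilde B_\infty$, where $\tilde B_\infty$ is built from the future increments $X^{(\mu)}_{s+\cdot} - X^{(\mu)}_s$, hence independent of $\Fc^X_s$, with $\tilde N_\infty := [\tilde B_\infty]_- \stackrel{\Lc}{=} D_\mu$. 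Writing $B_s = N_s \, e^{X_s^{(\mu)}}$ and using that $H$ normalises $N$, one extracts
$$ N_\infty(X^{(\mu)}) \; = \; N_s(X^{(\mu)}) \cdot \big( e^{X_s^{(\mu)}} \, \tilde N_\infty \, e^{-X_s^{(\mu)}} \big). $$

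For the sufficient direction, suppose $v = \exp(-\sum_\alpha C_\alpha \chi^-_\alpha)$. Each $\chi^-_\alpha$ is additive on $N$ because it factors through $N/[N,N]$, and torus conjugation rescales it as $\chi^-_\alpha(e^{x} n e^{-x}) = e^{-\alpha(x)} \chi^-_\alpha(n)$. Plugging into the cocycle gives the product separation
$$ v(N_\infty) \; = \; v(N_s) \cdot \exp\Big( -\sum_\alpha C_\alpha \, e^{-\alpha(X^{(\mu)}_s)} \, \chi^-_\alpha(\tilde N_\infty) \Big). $$
In the Doob $h$-transform implementing the $v$-conditioning, the first factor is $\Fc^X_s$-measurable while the second depends only on $X^{(\mu)}_s$ and the independent future $\tilde N_\infty$; the $v(N_s)$ factor cancels between the Radon-Nikodym derivative and its conditional expectation, so the tilted law of the future given $\Fc^X_s$ depends only on $X^{(\mu)}_s$. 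This is the Markov property. The resulting diffusion is the $h$-transform of $W^{(\mu)}$ by $h(x) = \E\big[ \exp(-\sum_\alpha C_\alpha e^{-\alpha(x)} \chi^-_\alpha(D_\mu)) \big]$, which for $C_\alpha = \|\alpha\|^2/2$ is precisely (up to a gauge in $\mu$) the Whittaker-function $h$-transform underlying Theorem \ref{thm:highest_weight_is_markov}.

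For the necessary direction, suppose $X^{x_0,(\mu)}$ is Markovian. Under the original law, given $X^{(\mu)}_s = x$, the variables $N_s$ and the future path are independent. Markovianity of the tilted process forces the tilted density $v(N_s \cdot e^{x} \tilde N_\infty e^{-x})$ at fixed $x$ to factorise as a function of $N_s$ times a function of $\tilde N_\infty$. Setting $f = \log v$ with $f(\mathrm{Id})=0$ and manipulating this factorisation (fixing auxiliary values and isolating $f$), I extract the additivity relation $f(nm) = f(n) + f(m)$ on the joint support of $(N_s,\, e^{x} \tilde N_\infty e^{-x})$. As the path and $x$ vary, both factors sweep out $N_{>0}^{w_0}$, which is open and dense in $N$; differentiability of $v$ then extends $f$ to a continuous group homomorphism $N \to \R$. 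Such a homomorphism factors through the abelianisation $N/[N,N] \cong \bigoplus_{\alpha \in \Delta} \mathbb{G}_a$ whose coordinates are precisely the $\chi^-_\alpha$, yielding $f = -\sum_\alpha C_\alpha \chi^-_\alpha$. Finally, since each $\chi^-_\alpha$ is non-negative and unbounded on $N_{\geq 0}$, boundedness of $v \in \Pc$ forces $C_\alpha \geq 0$.

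The most delicate part will be this converse direction: one must pass rigorously from a pointwise-almost-surely factorisation of $v$ evaluated on the random pair $(N_s, \tilde N_\infty)$ to a genuine functional equation on $N \times N$. Here the total positivity of $D_\mu$ from Definition \ref{def:group_gamma_law} is decisive, as it guarantees that both factors in the cocycle explore the full open cell $N_{>0}^{w_0}$; the additivity relation for $f$ then propagates by continuity from the totally positive monoid $N_{\geq 0}$ to the group it generates.
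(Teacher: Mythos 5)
Your argument is correct, but your necessity direction takes a genuinely different route from the paper's. The paper argues infinitesimally: by Corollary \ref{corollary:girsanov} the conditioned process is a diffusion only if the Girsanov drift $\nabla_x \log \Phi_v(n e^x)$ depends on $x$ alone, forcing the splitting $\Phi_v(ne^x) = v(n) f_v(x)$ (the $n$-factor is identified with $v$ itself via the contraction Lemma \ref{lemma:limit_contraction}); then the harmonicity $\Dc^{(\mu)}\Phi_v = 0$ of the likelihood martingale, combined with the linear independence of the functions $x \mapsto e^{-\alpha(x)}$, forces $\Lc_{f_\alpha} \log v$ to be a constant $-C_\alpha$ for each simple $\alpha$, which integrates to the stated form. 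You instead work globally with the cocycle \eqref{eq:N_infty_decomposition} at the level of conditional laws, deduce that $v$ must be multiplicative on the totally nonnegative part of $N$, and classify such functions as exponentials of additive characters through the abelianisation. Your route is more structural: it explains \emph{why} characters are the answer with no PDE, and your sufficiency argument (cancellation of the $v(N_s)$ factor in the Doob transform) is a clean substitute for the paper's splitting of $\Phi_v$. What the paper's route buys is that all analytic delicacy is concentrated in applying the generator to a martingale, whereas your converse leans on the support facts you flag: that $D_\mu$ charges all of $N_{>0}^{w_0}$, that the conditional support of $N_s$ given $X_s = x$ accumulates at the identity so the rectangular factorisation $v(nm) = g(n)h(m)$ normalises to genuine additivity of $\log v$, and that every element of $N_{\geq 0}$ is a product of $e^{t f_\alpha}$ with $t \geq 0$, so the additive character is $-\sum_\alpha C_\alpha \chi^-_\alpha$ without any extension to the full group. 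These are all true, so the remaining work is detail rather than substance. One minor normalisation point: with the paper's conventions the factors $\langle \alpha, \alpha \rangle/2$ are already absorbed into $\chi^-_\alpha(N_\infty)$, so the Whittaker case is $C_\alpha = 1$; this is precisely the torus-conjugation gauge you allude to.
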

\begin{proof}
Subsection \ref{subsection:remarkable_deformation}.
\end{proof}

Leaving the degenerate cases where some of the $C_\alpha$ vanish in theorem \ref{thm:markovian_points}, we assume that $C_\alpha > 0$ for all $\alpha \in \Delta$. Under such an assumption, these constants can be absorbed in the characters as there is an $a \in \afrak$ such that $C_\alpha = e^{-\alpha(a)}$, and:
$$ \sum_{\alpha \in \Delta} C_\alpha \chi_\alpha^{-}(n) = \chi^-\left( e^a n e^{-a} \right).$$
There is no loss of generality in using the standard character $\chi^-$ on $N$. More general characters simply add a shift via conjugation by torus elements. Theorem \ref{thm:markovian_points} urges to define a deformation of the laws of $\Gamma_\mu$ and $D_\mu$ using the standard character.

\begin{definition}[Generalized gamma and inverse gamma]
\label{def:generalized_gamma}
 For $\mu \in C$ and $\lambda \in \afrak$, define $D_\mu\left( \lambda \right)$ as the $N_{\geq 0}$-valued random variable defined by:
$$ \forall \varphi \geq 0, \E\left( \varphi( D_\mu(\lambda) ) \right)
                  = \frac{ \E\left( \varphi(D_\mu) \exp\left( -\chi^-(e^{\lambda} D_\mu e^{-\lambda} ) \right) \right) }
                         { \E\left( \exp\left( -\chi^-(e^{\lambda} D_\mu e^{-\lambda} ) \right) \right) }
$$
and the $U$-valued random variable $\Gamma_\mu(\lambda)$ as:
$$ D_\mu(\lambda) = \Theta\left( \Gamma_\mu(\lambda) \right)$$
\end{definition}

\begin{rmk}
Because:
$$ \frac{\langle \alpha, \alpha \rangle}{2} \int_0^\infty e^{-\alpha( X^{(\mu)}_s )} ds = \chi^-_\alpha \left( N_\infty(X^{(\mu)}) \right)$$
and $D_\mu \eqlaw N_\infty(X^{(\mu)})$, the Whittaker function's expression in (iii) of theorem \ref{thm:whittaker_functions_properties} takes the form:
\begin{align}
\label{eq:whittaker_function_D_mu}
\psi_\mu(x) & = b(\mu) e^{\langle \mu, x \rangle}\E\left( \exp\left( -\chi^-(e^{x} D_\mu e^{-x} ) \right) \right)
\end{align}
\end{rmk}

Even though the definition assumes $\mu \in C$, the random variables from definition \ref{def:generalized_gamma} are in fact well-defined for all $\mu \in \afrak$ as we will see in proposition \ref{proposition:link_with_canonical}. In the rank one setting for example, we obtain the natural group-theoretic generalization of generalized inverse Gaussian laws used by Matsumoto and Yor \cite{bib:MY00-2}. Moreover, theorem 3 in \cite{bib:Bau02} is the particular case of our theorem \ref{thm:markovian_points} in the $A_1$-type.
\begin{example}[$A_1$ type - Example 2.17 in \cite{bib:chh14_exit} continued]
 In the $A_1$ type, if $D_\mu(\lambda) = \begin{pmatrix} 1 & 0\\ GIG_{\mu}(\lambda) & 1 \end{pmatrix}$ then $GIG_{\mu}(\lambda)$ is the generalized inverse gaussian law used in \cite{bib:MY00-2}:
$$\P\left( GIG_\mu(\lambda) \in dt \right) = \frac{e^{\lambda \mu}}{K_\mu(e^{-2 \lambda})} t^{\mu} e^{-t - \frac{e^{-2\lambda}}{t}} \frac{dt}{t}$$
Here $K_\mu$ is a Bessel function of the second kind, also known as the Macdonald function. It coincides with the $SL_2$-Whittaker function.
\end{example}

The main ideas were in fact already in \cite{bib:BOC09} where the Whittaker process was built out of a Brownian motion conditionned with respect to its exponential functionals. What Baudoin and O'Connell express as conditioning the exponential functionals to follow a certain law, we rephrase as conditioning the hypoelliptic Brownian motion to hit a specific point on the boundary. This change of perspective stresses the role of $\left( B_t(W^{(\mu)}); t \geq 0 \right)$, which is a process containing not only exponential functionals but also iterated integrals. Thus, we obtain the generalization of proposition 3.3 in \cite{bib:OConnell} to all Lie groups.

\begin{thm}
\label{thm:whittaker_process_g}
Consider a Brownian motion $W$, a drift $\mu \in C$ and an independent random variable $g \stackrel{\Lc}{=} \Gamma_{\mu}(x_0)$. Then the process $\left( X^{x_0,(\mu)}_t = x_0 + T_{g }\left( W^{(w_0 \mu)} \right)_t; t\geq0 \right)$ is Markovian with infinitesimal generator
$$\frac{1}{2} \Delta + \langle \nabla \log \psi_\mu , \nabla \rangle $$
\end{thm}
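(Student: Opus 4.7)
The plan is to identify the pathwise law of $X^{x_0,(\mu)}$ with that of a Doob $h$-transform of a killed Brownian motion, and then read off the generator from the $h$-transform formula.

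First, I would condition on the random $g$ and invoke Theorem \ref{thm:conditional_representation}: for each realization, $X^{x_0,(\mu)}$ is a Brownian motion with drift $\mu$ starting at $x_0$ conditioned on $N_\infty(X^{0,(\mu)}) = \Theta(g)$. Integrating against the prescribed law $\Theta(g) \sim D_\mu(x_0)$, and using Definition \ref{def:generalized_gamma} together with the two identities
$$ \chi^-_\alpha\bigl(N_\infty(W^{(\mu)})\bigr) = \tfrac{\|\alpha\|^2}{2}\int_0^\infty e^{-\alpha(W^{(\mu)}_s)}\,ds, \qquad \chi^-(e^{x_0} n e^{-x_0}) = \sum_{\alpha \in \Delta} e^{-\alpha(x_0)}\chi^-_\alpha(n), $$
a disintegration produces the Radon--Nikodym derivative of $X^{x_0,(\mu)}$ against a Brownian motion $W^{x_0,(\mu)}$ with drift $\mu$ starting at $x_0$:
$$ \frac{d\P_{X^{x_0,(\mu)}}}{d\P_{W^{x_0,(\mu)}}} \propto \exp\!\left(-\int_0^\infty V(W^{x_0,(\mu)}_s)\,ds\right),$$
where $V(x) = \sum_{\alpha \in \Delta} \tfrac{\|\alpha\|^2}{2} e^{-\alpha(x)}$ is the Toda potential.

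Second, this identifies $X^{x_0,(\mu)}$ with the sub-Markov Brownian motion of generator $\tfrac12 \Delta + \langle \mu, \nabla \rangle - V$ conditioned to survive, i.e., the Doob $h$-transform by the harmonic function
$$ \varphi_\mu(x) = \E_x\!\left(\exp\!\left(-\int_0^\infty V(W^{(\mu)}_s)\,ds\right)\right) = b(\mu)^{-1} e^{-\langle \mu, x\rangle}\psi_\mu(x),$$
the last equality being Theorem \ref{thm:whittaker_functions_properties}(iii). Since $\bigl(\tfrac12 \Delta + \langle \mu, \nabla\rangle - V\bigr)\varphi_\mu = 0$, the standard Doob transform computation yields
$$ \tfrac{1}{\varphi_\mu}\!\left(\tfrac12 \Delta + \langle \mu, \nabla\rangle - V\right)\!\bigl(\varphi_\mu \,\cdot\,\bigr) = \tfrac12\Delta + \bigl(\mu + \nabla\log\varphi_\mu\bigr)\cdot\nabla = \tfrac12\Delta + \nabla\log\psi_\mu\cdot\nabla,$$
using $\nabla\log\varphi_\mu = -\mu + \nabla\log\psi_\mu$. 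This is the claimed generator.

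The expected obstacle is a clean justification of the infinite-time disintegration underlying the Radon--Nikodym identity above. This is where the assumption $\mu \in C$ enters essentially: it guarantees $\varphi_\mu(x_0) \in (0, \infty)$ and the almost-sure finiteness of $\int_0^\infty V(W^{(\mu)}_s)\,ds$, both needed to make the conditioning well defined and the $h$-transform integrable on the infinite time horizon. As an independent sanity check, Markovianity alone can be read off from Theorem \ref{thm:markovian_points} applied to the boundary functional $v(n) = \exp\bigl(-\chi^-(e^{x_0}n e^{-x_0})\bigr) = \exp\bigl(-\sum_\alpha e^{-\alpha(x_0)}\chi^-_\alpha(n)\bigr)$, which is precisely of the admissible form with $C_\alpha = e^{-\alpha(x_0)} \geq 0$; this however does not by itself identify the generator, hence the detour through the explicit Doob transform.
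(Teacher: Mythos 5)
Your argument is correct and coincides in substance with the paper's: the infinite-horizon density $\exp\left(-\int_0^\infty V\right)$ against drifted Brownian motion is exactly the change of measure $\P^v$ with $v=e^{-\chi^-}$ from subsection \ref{subsection:condition_N_infty}, your harmonic function $\varphi_\mu$ is the likelihood $\Phi_v$ of Lemma \ref{lemma:likelihood} evaluated via the Feynman--Kac identity \eqref{eq:whittaker_function_D_mu}, and your Doob $h$-transform computation yields the same drift that the paper extracts from Girsanov in Corollary \ref{corollary:girsanov}. The only difference is presentational: you phrase the last step as an $h$-transform of the Toda-killed semigroup rather than as Girsanov applied to the likelihood martingale, which changes nothing of substance.
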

\begin{proof} 
Subsection \ref{subsection:remarkable_deformation}.
\end{proof}

\subsection{The hypoelliptic Brownian motion and its Poisson boundary}
\label{subsection:poisson_boundary}
As announced, we start by a complete description of the Poisson boundary.

\paragraph{Reformulation in term of an invariant process:} 
An analytic approach to the Archimedean Whittaker functions (cf. \cite{bib:Hashizume82} for instance) is to look at them as eigenfunctions of the quantum Toda Hamiltonian, a Schr\"odinger operator on $\afrak \approx \R^n$:
\begin{align*}
 H & = \half \Delta - \half \sum_{\alpha} \langle \alpha, \alpha \rangle e^{-\alpha(x)}
\end{align*}
They satisfy:
$$ H \psi_\mu = \half \langle \mu, \mu \rangle \psi_\mu$$

However, the analysis of this Schr\"odinger operator is not completely obvious. Morally, the only operators on a group that are easy to analyze are those that invariant under the group at hand. A way to turn the problem into an invariant problem is to look at the Euclidian $\afrak$ as part of a larger space, which is a group and on which the Whittaker functions are harmonic functions for an invariant process.

\begin{lemma}
\label{lemma:equivalence_toda_harmonicity}
A function $\psi_\mu: \afrak \rightarrow \C$ on $\afrak$ solves:
$$ H \psi_\mu = \half \langle \mu, \mu \rangle \psi_\mu$$
if and only if the function $\Phi_\mu: B\left( \R \right) \rightarrow \C$ defined by:
$$ \Phi_\mu(n e^x) := \exp\left( -\chi^-(n) \right) \psi_\mu(x) e^{-\langle \mu, x\rangle}$$
is harmonic for the hypoelliptic operator $\Dc^{(\mu)}$ defined in \ref{def:hypoelliptic_operator}.
\end{lemma}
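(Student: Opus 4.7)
The plan is to compute $\Dc^{(\mu)}\Phi_\mu$ directly on the product structure $B(\R) = N A$ and show that, at a point $b = n e^x$,
\[
\Dc^{(\mu)}\Phi_\mu(n e^x) = \exp(-\chi^-(n))\, e^{-\langle \mu, x\rangle}\left(H\psi_\mu(x) - \tfrac{1}{2}\langle \mu,\mu\rangle\, \psi_\mu(x)\right),
\]
from which the equivalence is immediate: the right-hand side vanishes identically in $(n,x)$ if and only if $H\psi_\mu = \tfrac{1}{2}\langle \mu,\mu\rangle \psi_\mu$.

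\textbf{Torus part.} First I would handle the Euclidean piece $\tfrac{1}{2}\Delta_\afrak + \langle \mu, \nabla\rangle$, which only sees the $\afrak$-variable $x$. Setting $\tilde{\psi}(x) := \psi_\mu(x) e^{-\langle \mu, x\rangle}$, a direct expansion of the gradient and Laplacian yields
\[
\left(\tfrac{1}{2}\Delta_\afrak + \langle \mu,\nabla\rangle\right)\tilde{\psi}(x) = e^{-\langle \mu, x\rangle}\left(\tfrac{1}{2}\Delta\psi_\mu(x) - \tfrac{1}{2}\langle \mu,\mu\rangle\, \psi_\mu(x)\right),
\]
where the cross terms $\pm\langle \mu, \nabla\psi_\mu\rangle$ cancel exactly. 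This produces the Laplacian and eigenvalue terms of $H$.

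\textbf{Unipotent derivations.} The subtlest step is the action of the left-invariant vector fields $\Lc_{f_\alpha}$ on $\Phi_\mu$. At $b = n e^x$,
\[
\Lc_{f_\alpha}\Phi_\mu(b) = \frac{d}{dt}\Phi_\mu(n e^x e^{t f_\alpha})\Big|_{t=0}.
\]
I would use $[\,x, f_\alpha] = -\alpha(x) f_\alpha$ to compute $\Ad(e^x) f_\alpha = e^{-\alpha(x)} f_\alpha$, so that
\[
n e^x e^{t f_\alpha} = n\, \exp\!\left( t e^{-\alpha(x)} f_\alpha\right) e^x.
\]
Since $\chi^-$ is an additive character on $N$ with $\chi^-_\beta(\exp(s f_\alpha)) = s\,\delta_{\alpha,\beta}$ for $\alpha,\beta \in \Delta$, we have $\chi^-(n \exp(t e^{-\alpha(x)} f_\alpha)) = \chi^-(n) + t e^{-\alpha(x)}$. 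Differentiating the exponential of $-\chi^-$ at $t=0$ gives
\[
\Lc_{f_\alpha}\Phi_\mu(n e^x) = -e^{-\alpha(x)}\, \Phi_\mu(n e^x),
\]
and summing against $\tfrac{1}{2}\langle \alpha,\alpha\rangle$ reproduces exactly the Toda potential $-\tfrac{1}{2}\sum_\alpha \langle \alpha,\alpha\rangle e^{-\alpha(x)}$ appearing in $H$.

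\textbf{Conclusion.} Adding the torus and unipotent contributions yields the displayed identity for $\Dc^{(\mu)}\Phi_\mu$. Since the prefactor $\exp(-\chi^-(n)) e^{-\langle \mu, x\rangle}$ is nowhere vanishing on $B(\R)$, harmonicity of $\Phi_\mu$ is equivalent to the quantum Toda eigenequation for $\psi_\mu$. The only real care is in the Adjoint computation and the additivity of $\chi^-$ on $N$; everything else is a routine application of the left-invariant Lie derivative and the definition of $\Dc^{(\mu)}$ in Definition~\ref{def:hypoelliptic_operator}.
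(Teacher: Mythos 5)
Your proposal is correct and follows essentially the same route as the paper's proof: compute $\Lc_{f_\alpha}\Phi_\mu(ne^x) = -e^{-\alpha(x)}\Phi_\mu(ne^x)$ via the adjoint action $e^x e^{tf_\alpha}e^{-x} = \exp(te^{-\alpha(x)}f_\alpha)$ and the additivity of $\chi^-$, then combine with the torus part and divide out the nowhere-vanishing prefactor. Your version merely makes the cancellation of the cross terms $\pm\langle\mu,\nabla\psi_\mu\rangle$ explicit, which the paper leaves implicit in its chain of equivalences.
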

\begin{proof}
Notice that for $t \in \R$ and for $\alpha \in \Delta$:
$$
\Phi_\mu(n e^x e^{t f_\alpha}) = \Phi_\mu\left(n \exp(t e^{-\alpha(x)} f_\alpha) e^x \right)
                               = \exp\left( - t e^{-\alpha(x)} \right) \Phi_\mu(n e^x)
$$
Hence:
$$
\Lc_{f_\alpha} \Phi_\mu(n e^x) = \frac{d}{dt}\left( \Phi_\mu(n e^x e^{t f_\alpha}) \right)_{|t=0}
                               = - e^{-\alpha(x)} \Phi_\mu(n e^x )
$$
Therefore, writing $\varphi_\mu = \psi_\mu e^{-\langle \mu, .\rangle}$, we have the following succession of equivalent statements:
\begin{align*}
                    \quad &
                    \Dc^{(\mu)} \Phi_\mu = 0\\
\Longleftrightarrow \quad &
\frac{1}{2} (\Delta \varphi_\mu) e^{-\chi^-} + \langle \mu, \nabla \varphi_\mu \rangle e^{-\chi^-} + \half \sum_{\alpha \in \Delta} \langle \alpha, \alpha \rangle \Lc_{f_\alpha} \Phi_\mu = 0\\
\Longleftrightarrow \quad &
\frac{1}{2} (\Delta \varphi_\mu) + \langle\mu, \nabla \varphi_\mu\rangle - \half \sum_{\alpha \in \Delta} \langle \alpha, \alpha \rangle e^{-\alpha(x)} \varphi_\mu = 0\\
\Longleftrightarrow \quad &
H \psi_\mu = \half \langle \mu, \mu \rangle \psi_\mu
\end{align*}
\end{proof}

\paragraph{Probabilistic integral representations on the boundary:}
By definition, the Poisson boundary of $\left( B_t(W^{(\mu)}); t \geq 0 \right)$ is the set of bounded function that are harmonic for this process. And the function $\Phi_\mu$ is in such a boundary as it is a bounded harmonic function. Following a standard idea in potential theory, we should be able to represent $\Phi_\mu$ as the integral of a function over the ``boundary'' of $B$. However, the Borel subgroup $B$ is not compact. But the notion of boundary needed is not the topological one. Furstenberg developed such a notion and a very good account of the theory of boundaries on Lie groups is \cite{bib:Bab02} in the case of random walks. The continuous case is, in a way, simpler.

For boundary, one has to consider a space with a $B\left( \R \right)$-action and a natural invariant measure. Restricting ourselves to the case where $\mu \in C$, it has been proven in \cite{bib:chh14_exit} that the $N$-part of the process $\left( B_t(W^{(\mu)}); t \geq 0 \right)$ (equation \eqref{eq:process_N_explicit}) and converges in $N$ when $t \rightarrow \infty$. Therefore, a natural choice for a boundary of $B\left( \R \right)$ is simply $N$ and the invariant measure is the law of $N_\infty(W^{(\mu)})$ when $\mu \in C$ - the inverse gamma distribution. The Borel subgroup $B$ acts on $N$ as:
$$ \forall b = na \in B, \forall n' \in N, (na) \cdot n' = n a n' a^{-1}$$

\begin{proposition}
\label{proposition:poisson_boundary_char}
For any bounded function $v$ on $N_{\geq 0}$, we obtain a harmonic function $\varphi$ for $\Dc^{(\mu)}$ by considering:
$$ \varphi: n a \mapsto \E\left( v(n a N_\infty(W^{(\mu)}) a^{-1} ) \right) $$
with $W$ a Brownian motion on $\afrak$. Reciprocally, any bounded harmonic function has this form.
\end{proposition}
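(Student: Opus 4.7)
Fix a bounded measurable $v$ on $N_{\geq 0}$ and set $\varphi(n_0 a_0) := \E\!\left[v\bigl(n_0 a_0 N_\infty(W^{(\mu)}) a_0^{-1}\bigr)\right]$. The left-invariant Stratonovich flow \eqref{lbl:process_B_sde_stratonovich} obeys the cocycle
$$ b_0 B_{t+s}(W^{(\mu)}) = \bigl(b_0 B_t(W^{(\mu)})\bigr) \cdot B_s(\theta_t W^{(\mu)}), $$
with $\theta_t W^{(\mu)}$ the time-shifted path, independent of $\Fc_t^W$ and distributed as $W^{(\mu)}$. Projecting on the $N$-component and letting $s \to \infty$ (a.s.\ convergent because $\mu \in C$), a standard Markov/tower argument gives $\E\!\left[v\bigl(N_\infty(b_0 B)\bigr) \bigm| \Fc_t^W\right] = \varphi\bigl(b_0 B_t(W^{(\mu)})\bigr)$. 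Hence $\varphi \circ (b_0 B_\cdot)$ is a bounded martingale, which is equivalent to $\Dc^{(\mu)} \varphi = 0$ by Proposition~\ref{proposition:inf_generator}.

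\textbf{Converse.} Let $\Phi$ be any bounded harmonic function for $\Dc^{(\mu)}$ and fix a starting point $b_0 = n_0 a_0$. The process $M_t := \Phi(b_0 B_t)$ is a bounded $\P$-martingale, hence converges $\P$-a.s.\ and in $L^1$ to a limit $M_\infty$, and $\Phi(b_0) = \E[M_\infty]$. The entire content is to show that $M_\infty$ is $\sigma(N_\infty(b_0 B))$-measurable modulo $\P$-null sets; for then $M_\infty = v(N_\infty(b_0 B))$ for some bounded measurable $v$ (independent of $b_0$, since $\Phi$ is), and the representation follows upon taking expectations. I would carry this out by identifying the tail $\sigma$-algebra $\bigcap_{t \geq 0}\sigma(b_0 B_s, s \geq t)$ with $\sigma\bigl(n_0 a_0 N_\infty a_0^{-1}\bigr)$ up to $\P$-null sets, using three inputs: (i) the a.s.\ convergence $N_t \to N_\infty$ in $N_{\geq 0}$ from \cite{bib:chh14_exit}, which gives $N_\infty(b_0 B) = n_0 a_0 N_\infty a_0^{-1}$; (ii) the strong law of large numbers $X^{(\mu)}_t/t \to \mu$, which forces the $A$-factor $a_0 e^{X^{(\mu)}_t}$ to escape to infinity along the deterministic direction $\mu \in C$; (iii) the triviality of the tail $\sigma$-algebra of the driving Brownian motion $W^{(\mu)}$ by Kolmogorov's $0$-$1$ law, which removes any residual asymptotic randomness of the $A$-factor beyond the deterministic direction of escape.

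\textbf{Main obstacle.} The delicate step is (iii): one must rule out that the asymptotics of $b_0 B_t$ carry genuine randomness beyond $N_\infty$, e.g.\ coming from Brownian fluctuations in the $A$-direction orthogonal to $\mu$. This is essentially a Poisson-boundary statement for the hypoelliptic Brownian motion on $B$, and the cleanest route is to invoke the exit-convergence framework of \cite{bib:chh14_exit} and combine it with Kolmogorov's $0$-$1$ law, rather than reconstructing the whole boundary theory by hand. The direct direction is then a one-paragraph tower-property computation; all the substance of the proposition lies in this tail-$\sigma$-algebra identification.
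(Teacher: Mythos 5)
Your direct direction is correct and is essentially the paper's argument: the cocycle identity for the flow plus the decomposition $N_\infty(b_0 B) = [b_0B_t]_{-}\,[b_0B_t]_0\, N_\infty(\theta_t W^{(\mu)})\,[b_0B_t]_0^{-1}$ and the tower property exhibit $\varphi(b_0 B_t(W^{(\mu)}))$ as a bounded closed martingale.

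The converse, however, has a genuine gap precisely at the step you flag as (iii), and the tool you invoke there does not do the job. Kolmogorov's $0$--$1$ law gives triviality of $\bigcap_t \sigma(W^{(\mu)}_s - W^{(\mu)}_t,\ s\geq t)$, the tail field of the \emph{increments} of the driving Brownian motion. But the shift-invariant field of the $B$-valued process is not contained in that field: $N_\infty(W^{(\mu)})$ itself depends on the whole path from time $0$ (via $N_\infty = N_t\, e^{X_t} N_\infty(\theta_t X) e^{-X_t}$), so it is invisible to the increment-tail field; if one could apply the $0$--$1$ law directly to the tail of $b_0B_\cdot$ one would wrongly conclude that this tail is trivial, contradicting the nontrivial dependence on $N_\infty$. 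What is actually needed is a \emph{conditional} statement: after accounting for $N_\infty$, no further invariant randomness remains. The paper's proof achieves this by the dévissage of \cite{bib:AT13}: writing the a.s.\ limit $M_\infty$ of the bounded martingale as a shift-invariant functional $M_\infty\bigl((x_s),(n_s)\bigr)$ of the joint trajectory, one substitutes $n_s \mapsto n\,N_\infty(x)^{-1} n_s$ to produce, for each fixed $n\in N_{\geq 0}$, a shift-invariant functional $M^n_\infty$ that no longer depends on $N_\infty$; its conditional expectation given $\Fc_t^X$ is then a bounded harmonic function of the Euclidean Brownian motion alone, hence constant $=: v(n)$ by triviality of the Euclidean Poisson boundary, and one concludes $M_\infty = v(N_\infty(x))$. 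Your ingredients (i)--(ii) are consistent with this, but without the substitution step (or an equivalent conditioning on $N_\infty$) the reduction to a trivial-boundary statement is not justified, and this reduction is the entire content of the converse.
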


Recalling that there is bijection between bounded harmonic functions and shift-invariant random variables. The proposition becomes intuitive upon realizing that shift-invariant random variables on the coordinate space of $\left( B_t(W^{(\mu)}); t \geq 0 \right)$ starting at $na$ must be function of only the convergent part, which is $n a N_\infty(W^{(\mu)}) a^{-1}$. The formal argument is the dévissage developed in \cite{bib:AT13}. More precisely, our proposition \ref{proposition:poisson_boundary_char} is a consequence of their theorem 1. We give a short proof after developing a few lemmas we will need in the sequel.

A useful fact is that the action of $A$ on $N$ via conjugation contracts space in the direction of the Weyl chamber:
\begin{lemma}
\label{lemma:limit_contraction}
For all $n \in N$:
$$ e^{x} n e^{-x} \stackrel{x \rightarrow \infty}{\longrightarrow} \Id $$
with $x$ staying inside a sector of the open Weyl chamber.
\end{lemma}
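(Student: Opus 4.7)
The statement is an infinitesimal/exponential fact about conjugation by torus elements on the lower unipotent group, so I would reduce it to a linear statement on the Lie algebra $\nfrak$ via $\Ad$ and then push it back to the group by continuity of $\exp$.

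\textbf{Main steps.} First, write $n = \exp(Y)$ with $Y \in \nfrak$, and use the root space decomposition $\nfrak = \bigoplus_{\beta \in \Phi^+} \gfrak_{-\beta}$ to decompose $Y = \sum_{\beta \in \Phi^+} Y_{-\beta}$, with each $Y_{-\beta} \in \gfrak_{-\beta}$ (this is finite since $\gfrak$ is finite-dimensional semisimple). Second, use the identity $e^x n e^{-x} = \exp(\Ad(e^x) Y)$ and the fact that on a root space $\Ad(e^x)$ acts as a scalar: for $Y_{-\beta} \in \gfrak_{-\beta}$, one has $\Ad(e^x) Y_{-\beta} = e^{-\beta(x)} Y_{-\beta}$. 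Hence
$$
\Ad(e^x) Y \;=\; \sum_{\beta \in \Phi^+} e^{-\beta(x)} Y_{-\beta}.
$$
Third, check that ``$x$ stays in a sector of the open Weyl chamber'' forces $\beta(x) \to +\infty$ for every $\beta \in \Phi^+$: a sector can be described as $\{x : \alpha(x) \geq \varepsilon \|x\|$ for all $\alpha \in \Delta\}$ for some $\varepsilon > 0$, and since each positive root $\beta$ is a nonnegative integer combination of simple roots with at least one positive coefficient, $\beta(x) \geq \varepsilon' \|x\|$ on the sector, so $e^{-\beta(x)} \to 0$ as $\|x\| \to \infty$. Therefore $\Ad(e^x) Y \to 0$ in $\nfrak$, and by continuity of $\exp: \nfrak \to N$ we conclude $e^x n e^{-x} \to \exp(0) = \Id$.

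\textbf{Potential subtleties.} The only nonroutine point is the notion of ``sector'', which the statement leaves slightly informal; one must choose a formulation ensuring uniform coercivity of every positive root on the sector, e.g.\ $\alpha(x) \geq \varepsilon \|x\|$ for all $\alpha \in \Delta$. With that in hand the argument is immediate, and convergence is even uniform on compact subsets of $N$ (because the coefficients $Y_{-\beta}$ depend continuously on $n$), which is actually what later applications will need.
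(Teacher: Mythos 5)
Your proof is correct, but it takes a different route from the paper's. The paper argues at the group level: it observes that the set of $n \in N$ for which the convergence holds is closed under multiplication and inversion (by continuity of the group operations, since $e^x (nn') e^{-x} = (e^x n e^{-x})(e^x n' e^{-x})$), hence is a closed subgroup of $N$; since $N$ is generated by the one-parameter subgroups $N_\alpha = \{e^{t f_\alpha}\}$ for $\alpha \in \Delta$, it suffices to check the claim on a single generator, where $e^x e^{t f_\alpha} e^{-x} = \exp(t e^{-\alpha(x)} f_\alpha) \to \Id$. You instead work at the Lie algebra level, writing $n = \exp(Y)$ (using that $\exp: \nfrak \to N$ is surjective for a unipotent group), decomposing $Y$ over the negative root spaces, and using that $\Ad(e^x)$ acts by the scalar $e^{-\beta(x)}$ on $\gfrak_{-\beta}$. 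Both arguments are valid and of comparable length; yours has the advantage of making the uniform convergence on compact subsets of $N$ transparent (the coefficients $Y_{-\beta}$ depend continuously on $n$), whereas the paper's generator argument avoids invoking the surjectivity of $\exp$ and the root space decomposition of $\nfrak$, needing only the simple root spaces. Your treatment of the sector condition — each positive root is a nonnegative integer combination of simple roots with at least one strictly positive coefficient, hence is coercive on a sector $\{\alpha(x) \geq \varepsilon\|x\| \ \forall \alpha \in \Delta\}$ — is the same coercivity the paper implicitly uses, and is the right way to make the informal statement precise.
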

\begin{proof}
The claim is stable via multiplication and inversion in $N$. Therefore, the set of $n \in N$ satisfying the lemma is a closed subgroup of $N$. Because the unipotent group $N$ is generated by the one-parameter subgroups $N_\alpha = \left( e^{t f_\alpha} \right)_{t \in \C}$ for $\alpha \in \Delta$, we only need to prove it for $n \in N_\alpha$ for a fixed $\alpha$. We have for $t \in \C$:
$$ e^{x} e^{t f_\alpha} e^{-x} = \exp\left( t e^{-\alpha(x)} f_\alpha \right) \stackrel{x \rightarrow \infty}{\longrightarrow} \Id$$ 
\end{proof}

Also, let us record that:
\begin{lemma}
Fix $t,s >0$. For any path $X$, we have:
\begin{align}
\label{eq:B_decomposition}
B_{t+s}( X ) = & B_t( X ) B_s( X_{t+.} - X_{t})
\end{align}
\begin{align}
\label{eq:N_decomposition}
N_{t+s}( X ) = & N_t( X ) e^{X_t} N_s( X_{t+.} - X_{t}) e^{-X_t} 
\end{align}
In particular, for a Brownian motion $X^{(\mu)}$ with drift $\mu \in C$:
\begin{align}
\label{eq:N_infty_decomposition}
N_\infty( X^{(\mu)} ) = & N_t( X^{(\mu)} ) e^{X^{(\mu)}_t} N_\infty( X^{(\mu)}_{t+.} - X^{(\mu)}_{t}) e^{-X^{(\mu)}_t} 
\end{align}
\end{lemma}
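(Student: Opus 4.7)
The plan is to prove \eqref{eq:B_decomposition} first, by invoking uniqueness of the solution to the defining Stratonovich SDE \eqref{lbl:process_B_sde_stratonovich}; then deduce \eqref{eq:N_decomposition} by projecting onto the Gauss decomposition $B = NA$; and finally obtain \eqref{eq:N_infty_decomposition} by passing to the limit $s\to\infty$. No essential obstacle is anticipated; the only subtlety is the normalization of $N$ by the torus in the second step, and the identification in law of the shifted Brownian motion in the third.

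For \eqref{eq:B_decomposition}, introduce the shifted path $\tilde X_s := X_{t+s} - X_t$ and consider the auxiliary process $Z_s := B_t(X)^{-1} B_{t+s}(X)$, which satisfies $Z_0 = \mathrm{Id}$. Differentiating in $s$ with $t$ fixed and using that $V_0 = \frac{1}{2}\sum_\alpha \langle\alpha,\alpha\rangle f_\alpha$ is a left-invariant vector field, the Stratonovich equation \eqref{lbl:process_B_sde_stratonovich} gives
$$dZ_s = Z_s\circ\left(\sum_{\alpha\in\Delta}\tfrac{1}{2}\langle\alpha,\alpha\rangle f_\alpha\,ds + d\tilde X_s\right),$$
which is the defining SDE for $s\mapsto B_s(\tilde X)$ with the same initial condition $\mathrm{Id}$. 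Uniqueness of the solution, as in Ikeda--Watanabe, yields $Z_s = B_s(\tilde X)$.

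For \eqref{eq:N_decomposition}, substitute $B_{t+s}(X) = N_{t+s}(X)\,e^{X_{t+s}}$, $B_t(X) = N_t(X)\,e^{X_t}$ and $B_s(\tilde X) = N_s(\tilde X)\,e^{\tilde X_s}$ into \eqref{eq:B_decomposition}. Because $\afrak$ is abelian, $e^{X_t}e^{\tilde X_s} = e^{X_{t+s}}$, and inserting $e^{-X_t}e^{X_t}$ we rewrite the right-hand side as
$$N_t(X)\bigl[e^{X_t} N_s(\tilde X) e^{-X_t}\bigr]\,e^{X_{t+s}}.$$
Conjugation by $e^{X_t}\in A$ preserves $N$, so $e^{X_t} N_s(\tilde X) e^{-X_t}\in N$ and the bracketed factor lies in $N$. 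Uniqueness of the Gauss decomposition on the opposite big Bruhat cell then identifies the $N$-parts, giving \eqref{eq:N_decomposition}.

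For \eqref{eq:N_infty_decomposition}, specialize to $X = X^{(\mu)}$ with $\mu\in C$. The shifted path $\tilde X^{(\mu)}_{\cdot} = X^{(\mu)}_{t+\cdot} - X^{(\mu)}_t$ is, by the Markov property of Brownian motion, again a Brownian motion on $\afrak$ with drift $\mu\in C$; in particular, the a.s.\ convergence $N_s(\tilde X^{(\mu)})\to N_\infty(\tilde X^{(\mu)})\in N_{>0}^{w_0}$ from \cite{bib:chh14_exit} applies to it. Letting $s\to\infty$ on both sides of \eqref{eq:N_decomposition} (the factor $e^{X_t}(\cdot)e^{-X_t}$ is continuous) yields \eqref{eq:N_infty_decomposition}.
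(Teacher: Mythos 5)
Your proof is correct and follows essentially the same route as the paper, which simply states that \eqref{eq:B_decomposition} follows from left-invariance of the flow, \eqref{eq:N_decomposition} from taking the $N$-part of the Gauss decomposition, and \eqref{eq:N_infty_decomposition} from letting $s\to\infty$; your SDE-uniqueness argument for $Z_s = B_t(X)^{-1}B_{t+s}(X)$ is just the detailed version of that left-invariance claim. The only minor caveat is that for a general (non-semimartingale) continuous path the flow property should be read off from the explicit iterated-integral formula \eqref{eq:process_B_explicit} rather than the SDE, but this is immaterial for the Brownian case actually used.
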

\begin{proof}
Equation \eqref{eq:B_decomposition} follows from the left-invariance in the definition of the flow $\left( B_t( \cdot ); t \geq 0 \right)$. Equation \eqref{eq:N_decomposition} is obtained by taking only the $N$-part. Taking $s \rightarrow \infty$ yields the third equation, in the case of a Brownian motion with drift $\mu \in C$.
\end{proof}

\begin{proof}[Proof of proposition \ref{proposition:poisson_boundary_char}]
Consider $v$, a bounded function on $N_{\geq 0}$ and set for $na \in B$:
$$ \varphi(na) = \E\left( v(n a N_\infty( \widetilde{X}^{(\mu)}) a^{-1} ) \right)$$
where $\widetilde{X}$ is a Brownian motion independent of $X$. Without loss of generality we can assume that $\widetilde{X}^{(\mu)} = X^{(\mu)}_{t+.} - X^{(\mu)}$ and use \eqref{eq:N_infty_decomposition}. Hence:
$$ \varphi\left( B_t(X^{(\mu)}) \right)
 = \E\left( v\left( B_t(X^{(\mu)}) N_\infty( \widetilde{X}^{(\mu)}) e^{-X^{(\mu)}_t } \right) | \Fc_t^X \right)
 = \E\left( v(N_\infty( X^{(\mu)}) ) | \Fc_t^X \right)$$
which is a backward martingale. We have proved that the bounded $\varphi$ is harmonic.

Reciprocally, if $\varphi$ is a bounded harmonic function, then for all $na \in B$, $M_t := \varphi\left( n a B_t(X^{(\mu)}) \right)$ is a martingale that converges almost surely to $M_\infty$. The random variable $M_\infty$ is necessarily a deterministic function of the entire trajectories $\left( X^{(\mu)}_s ; s \geq 0\right)$ and $\left( N(X^{(\mu)})_s ; s \geq 0\right)$, which we write as:
$$ M_\infty = M_\infty\left( (x_s)_{s \geq 0}, (n_s)_{s \geq 0} \right)$$
Moreover, $M_\infty$ has to be shift-invariant i.e for all $t \geq 0$:
$$ M_\infty\left( (x_s)_{s \geq 0}, (n_s)_{s \geq 0} \right)
 = M_\infty\left( (x_{s+t})_{s \geq 0}, (n_{s+t})_{s \geq 0} \right)$$
Following the dévissage argument, define for all $n \in N_{\geq 0}$:
$$ M^n_\infty := M_\infty\left( (x_s)_{s \geq 0}, ( n N\left( x \right)_\infty^{-1} n_s)_{s \geq 0} \right)$$
This is a shift invariant function which does not depend on the starting point of the path $(n_s)_{s \geq 0}$. Therefore $ \E\left( M^n_\infty | \Fc_{t}^X \right)$
is only a function of $X_t^{(\mu)}$ and harmonic. As the Poisson boundary of Euclidian Brownian motion is trivial, $M^n_\infty$ is a constant random variable which we denote by $v(n)$. We conclude by:
$$ M_\infty = M^{N_\infty(x)}_\infty = v\left( N_\infty(x) \right)$$
\end{proof}

The previous subsection tells us to take $v \in \Pc$ as a character of the unipotent subgroup $N$, in order to obtain Whittaker functions. In the larger picture however, the central object is the law of $N_\infty(W^{(\mu)})$, the exit law.

\paragraph{Exit law:} For later use, we record the explicit expression of the gamma distribution $\Gamma_\mu$. From \cite{bib:chh14_exit}, fix a reduced word ${\bf i} \in R(w_0)$ and $\left( \beta_j^{\bf i} \right)_{1 \leq j \leq m}$ the associated enumeration of positive roots:
\begin{align}
\label{eq:positive_root_enumeration} 
\forall \leq 1 \leq j \leq m, \ \beta_j^{\bf i} = & s_{i_1} s_{i_2} \dots s_{i_{j-1}} \alpha_{i_j}
\end{align}
Then, the law of $\Gamma_\mu$ is given by:
\begin{align}
\label{eq:explicit_Gamma_mu} 
\Gamma_\mu \eqlaw x_{ \bf i }\left( \gamma_{ \langle \beta_1^\vee, \mu \rangle}, \dots, \gamma_{\langle \beta_m^\vee, \mu \rangle} \right)
\end{align}
where the $\gamma_{.}$ denote independent gamma random variables on $\R_+$ with specified parameters.

\subsection{Hitting a specific point on the boundary}
\label{subsection:condition_N_infty}
Let $W$ be a Brownian motion on $\afrak$ and $\mu \in C$. Because of theorem \ref{thm:conditional_representation}, we know that:
 $$X^{x_0,(\mu)} := x_0 + T_{g } \left( W^{(w_0 \mu)} \right)$$
is a Brownian motion starting at $x_0$ with drift $\mu \in C$ at $x_0$ conditionned to $N_\infty(X^{0,(\mu)}) = \Theta(g)$. Denote its natural filtration by:
$$ \Fc_t := \sigma\left( X^{x_0, (\mu)}_s | 0 \leq s \leq t \right)$$

The purpose of this section is to take $g$ as random and independent of $W$ therefore conditionning $N_\infty(X^{(\mu)})$ to follow a certain specific law. Having in mind the description of the Poisson boundary in subsection \ref{subsection:poisson_boundary}, we are conditioning $B_t\left(X^{(\mu)}\right)$ to hit the Poisson boundary at a certain specific point. To that endeavor, fix a $v \in \Pc$. Let $\P$ be a probability under which:
\begin{align}
\label{eq:P_distributions}
g \eqlaw \Gamma_\mu 
& \Longleftrightarrow N_\infty(X^{(\mu)}) \eqlaw D_\mu
\end{align}
Under $\P$, $X^{x_0, (\mu)}$ is a Brownian motion in $\afrak$ with drift $\mu$. The function $v$ is used to define a deformed probability measure $\P^v$. More precisely, define $\P^v$ thanks to its Radon-Nikodym derivative with respect to $\P$:
$$ \frac{d\P^v}{d\P}
 = \frac{ v\left( N_\infty(X^{x_0, (\mu)}) \right) }
        { \E\left[ v\left( N_\infty(X^{x_0, (\mu)}) \right) \right] }
$$
Moreover, $\P^v$ can be interpreted as a probability measure under which $N_\infty(X^{x_0, (\mu)})$ is conditioned to follow a certain law. This law has a density $v$ with respect to the original law of $N_\infty(X^{(x_0, \mu)}) \eqlaw e^{x_0} D_\mu e^{-x_0}$. We are interested in describing the process $X^{x_0,(\mu)}$ under $\P^v$. A first step is to explicit the likelihood process:
$$ L_t := \frac{d \P^v}{d \P}_{| \Fc_t }$$

Equation \eqref{eq:N_infty_decomposition} gives a decomposition of $N_\infty(X^{(\mu)})$ in terms of $\mathcal{F}_t^{X}$-measurable variables and an independent variable $N_{\infty}( X^{(\mu)}_{t+.} - X^{(\mu)}_{t} )$ with same law. The same decomposition holds easily with $X^{x_0, (\mu)}$ instead of $X^{(\mu)}$:
\begin{align}
\label{eq:N_infty_decomposition_x_0}
N_{\infty}(X^{x_0, (\mu)}) & = N_t(X^{x_0, (\mu)}) e^{ X_t^{x_0, (\mu)} } N_{\infty}( X_{t+.}^{x_0, (\mu)} - X_{t}^{x_0, (\mu)} ) e^{ -X_t^{x_0, (\mu)} }
\end{align}

\begin{lemma}[Likelihood]
\label{lemma:likelihood}
$$ L_t := \frac{d \P^v}{d \P}_{| \Fc_t } = \Phi_v \left( B_t(X^{x_0, (\mu)}) \right)$$
where:
$$\Phi_v\left( n e^x \right) = \frac{ \E\left( v\left( n e^{ x } D_\mu  e^{-x  } \right) \right) }
                                    { \E\left( v\left( e^{ x_0 } D_\mu  e^{-x_0} \right) \right) }$$ 
\end{lemma}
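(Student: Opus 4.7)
The strategy is a direct conditional expectation computation based on the Doob/likelihood interpretation of $L_t$, using the multiplicative cocycle decomposition \eqref{eq:N_infty_decomposition_x_0} to separate the $\mathcal{F}_t$-measurable part of $N_\infty(X^{x_0,(\mu)})$ from an independent ``future'' factor distributed like $D_\mu$.

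First, I would unwind the definition: since $d\mathbb{P}^v/d\mathbb{P} = v(N_\infty(X^{x_0,(\mu)}))/\mathbb{E}[v(N_\infty(X^{x_0,(\mu)}))]$, one has
$$L_t = \frac{\mathbb{E}\!\left[\,v\bigl(N_\infty(X^{x_0,(\mu)})\bigr)\,\big|\,\mathcal{F}_t\,\right]}{\mathbb{E}\!\left[\,v\bigl(N_\infty(X^{x_0,(\mu)})\bigr)\,\right]}.$$
For the denominator, the shift-covariance of the flow formulas \eqref{eq:process_N_explicit}--\eqref{eq:process_A_explicit} together with $e^{x_0}f_\alpha e^{-x_0}=e^{-\alpha(x_0)}f_\alpha$ give $B_t(X^{x_0,(\mu)}) = e^{x_0}B_t(X^{(\mu)})$ and, upon letting $t\to\infty$, $N_\infty(X^{x_0,(\mu)}) = e^{x_0}N_\infty(X^{(\mu)})e^{-x_0} \stackrel{\mathcal{L}}{=} e^{x_0}D_\mu e^{-x_0}$. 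This produces exactly the denominator appearing in $\Phi_v$.

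Next, I would handle the numerator. Plug the identity \eqref{eq:N_infty_decomposition_x_0} into $v(N_\infty(X^{x_0,(\mu)}))$. The factors $N_t(X^{x_0,(\mu)})$ and $X^{x_0,(\mu)}_t$ are $\mathcal{F}_t$-measurable (they are read off $B_t(X^{x_0,(\mu)})$ through the Gauss decomposition), while the ``future'' Brownian increment $Y := X^{x_0,(\mu)}_{t+\cdot}-X^{x_0,(\mu)}_t$ is independent of $\mathcal{F}_t$ and is itself a drift-$\mu$ Brownian motion starting at $0$, so $N_\infty(Y)\stackrel{\mathcal{L}}{=} D_\mu$. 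Freezing the $\mathcal{F}_t$-measurable variables $n = N_t(X^{x_0,(\mu)}) = [B_t(X^{x_0,(\mu)})]_-$ and $x = X^{x_0,(\mu)}_t$ so that $e^x = [B_t(X^{x_0,(\mu)})]_0$, the independence gives
$$\mathbb{E}\!\left[\,v\bigl(N_\infty(X^{x_0,(\mu)})\bigr)\,\big|\,\mathcal{F}_t\,\right] = \left.\mathbb{E}\!\left[\,v\bigl(n\,e^{x}D_\mu e^{-x}\bigr)\,\right]\right|_{n=N_t(X^{x_0,(\mu)}),\,x=X^{x_0,(\mu)}_t}.$$
Dividing by the denominator yields $L_t = \Phi_v(B_t(X^{x_0,(\mu)}))$, as claimed.

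The only step requiring a little care is the measurability/independence argument justifying the freezing of $(n,x)$ in the last display. Concretely, it rests on the fact that the Brownian increments after time $t$ are independent of $\mathcal{F}_t$, and that the map $Y \mapsto N_\infty(Y)$ is a measurable functional of the post-$t$ trajectory with the same law as $D_\mu$. Everything else is algebraic bookkeeping using the Gauss decomposition of $B_t(X^{x_0,(\mu)})$ and the conjugation rule $e^x f_\alpha e^{-x} = e^{-\alpha(x)} f_\alpha$; no probabilistic subtleties beyond the Markov property are involved.
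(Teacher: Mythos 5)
Your proof is correct and follows essentially the same route as the paper: express $L_t$ as a conditional expectation, decompose $N_\infty(X^{x_0,(\mu)})$ via \eqref{eq:N_infty_decomposition_x_0} into an $\Fc_t$-measurable part and an independent factor distributed as $D_\mu$, and freeze the former. The only addition is your explicit derivation of $N_\infty(X^{x_0,(\mu)}) \stackrel{\Lc}{=} e^{x_0} D_\mu e^{-x_0}$ from the flow's covariance, which the paper simply records in the surrounding text.
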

\begin{proof}
We drop the superscript $x_0, (\mu)$ for convenience in $X^{x_0, (\mu)}$. From the likelihood's definition:
\begin{align*}
L_t := \quad & \frac{d \P^v}{d \P}_{| \Fc_t }\\
= \quad & \P\left( \frac{d \P^v}{d \P} | \Fc_t \right)\\
= \quad & \frac{ \P\left( v\left( N_\infty(X) \right) | \Fc_t \right) } 
               { \P\left[ v\left( N_\infty(X) \right) \right] }\\
\stackrel{eq. \eqref{eq:N_infty_decomposition_x_0}}{=}
        & \frac{ \P\left[ v\left( N_{t}(X) e^{ X_t } N_{\infty}( X_{t+.} - X_{t} ) e^{ -X_t } \right) | \Fc_t \right] }
               { \P\left[ v\left( N_{\infty}( X ) \right) \right] }
\end{align*}
Because $N_{\infty}( X_{t+.} - X_{t} ) \stackrel{ \Lc }{=} D_\mu$ under $\P$ (eq. \eqref{eq:P_distributions}) and is independent of $\Fc_t$, we have that on the set $\left\{ N_t(X) = n, X_t = x \right\}$:
$$ L_t = \frac{ \E\left( v\left( n e^{ x } D_\mu  e^{-x  } \right) \right) }
              { \E\left( v\left( e^{ x_0 } D_\mu  e^{-x_0} \right) \right) }$$
\end{proof}

\begin{corollary}
\label{corollary:girsanov}
$$ X^{x_0,(\mu)}_t - x_0 - \mu t - \int_0^t \nabla_x \log \Phi_v \left( B_s(X^{x_0,(\mu)}) \right) ds$$
is a $\P^v$ Brownian motion.
\end{corollary}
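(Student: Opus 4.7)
The strategy is the classical Girsanov argument: since $L_t = \Phi_v(B_t(X^{x_0,(\mu)}))$ is by Lemma \ref{lemma:likelihood} the Radon--Nikodym density process of $\P^v$ with respect to $\P$, it suffices to identify the stochastic logarithm $dL_t/L_t$ and read off the new drift.

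First, I would work under $\P$, where $X^{x_0,(\mu)}$ is a genuine Brownian motion in $\afrak$ with drift $\mu$ started at $x_0$; write $X^{x_0,(\mu)}_t = x_0 + W_t + \mu t$ for a standard Brownian motion $W$ on $\afrak$. Next I would apply It\^o's formula to the $B$-valued semimartingale $B_t := B_t(X^{x_0,(\mu)})$, which satisfies the Stratonovich SDE \eqref{lbl:process_B_sde_stratonovich}. Writing $\Phi_v$ as a function on $B\left(\R\right)\approx N\times A$ and using the notations of Definition \ref{def:hypoelliptic_operator}, the Stratonovich chain rule gives
\begin{equation*}
d\Phi_v(B_t) \;=\; \sum_{i=1}^n \Lc_{V_i}\Phi_v(B_t)\circ dW_t^i \;+\; \Lc_\mu \Phi_v(B_t)\,dt \;+\; \tfrac12 \sum_{\alpha\in\Delta}\langle\alpha,\alpha\rangle \Lc_{f_\alpha}\Phi_v(B_t)\,dt.
\end{equation*}
Converting to It\^o form, the bracket correction for the first sum is $\tfrac12\sum_i \Lc_{V_i}^2\Phi_v(B_t)\,dt$, so the total drift is $\Dc^{(\mu)}\Phi_v(B_t)\,dt$.

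The key input is the harmonicity $\Dc^{(\mu)}\Phi_v \equiv 0$. This follows from Proposition \ref{proposition:poisson_boundary_char}, since $\Phi_v$ is (up to the constant denominator) exactly the harmonic function associated to the boundary function $v$. Hence the drift vanishes and
\begin{equation*}
dL_t \;=\; \sum_{i=1}^n \Lc_{V_i}\Phi_v(B_t)\,dW_t^i.
\end{equation*}
Because each $V_i\in\afrak$ commutes with the $A$-factor ($ne^x e^{tV_i} = ne^{x+tV_i}$), the left-invariant derivative $\Lc_{V_i}\Phi_v$ coincides with the Euclidean partial derivative $\partial_{V_i}\Phi_v$ in the $x$-variable, so $\sum_i \Lc_{V_i}\Phi_v \, dW_t^i = \nabla_x\Phi_v(B_t)\cdot dW_t$. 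Dividing by $L_t = \Phi_v(B_t) > 0$ yields
\begin{equation*}
\frac{dL_t}{L_t} \;=\; \nabla_x \log \Phi_v(B_t)\cdot dW_t.
\end{equation*}

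Finally, Girsanov's theorem tells us that under $\P^v$ the process
\begin{equation*}
\widetilde W_t \;:=\; W_t \;-\; \int_0^t \nabla_x\log\Phi_v(B_s)\,ds
\end{equation*}
is a standard Brownian motion, and substituting $W_t = X^{x_0,(\mu)}_t - x_0 - \mu t$ gives the claimed identity. The only non-routine step is the vanishing of the drift, which rests on the identification of $\Phi_v$ as a Poisson integral for $\Dc^{(\mu)}$; once that is in hand the rest is bookkeeping. A minor technical point worth verifying, but not an obstacle, is that $\nabla_x \log \Phi_v$ is locally bounded enough along trajectories so that Girsanov applies without a Novikov check --- this follows from the fact that $L$ is a bona fide martingale by Lemma \ref{lemma:likelihood}.
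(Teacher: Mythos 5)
Your proof is correct and follows essentially the same route as the paper: both arguments are an application of Girsanov's theorem to the density process $L_t = \Phi_v(B_t(X^{x_0,(\mu)}))$, with the key computation being that the martingale part of $\log L$ only sees the $x$-derivative (the paper phrases this as $N_t(X^{x_0,(\mu)})$ having zero quadratic variation, you phrase it via the explicit SDE for $B_t$ and the identification $\Lc_{V_i}\Phi_v = \partial_{V_i}\Phi_v$). Your extra step of killing the drift of $L$ via the harmonicity $\Dc^{(\mu)}\Phi_v=0$ is a valid, slightly more explicit way of seeing what the paper gets for free from $L$ being a likelihood process, hence a martingale.
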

\begin{proof}
 By Girsanov's theorem (\cite{bib:RevuzYor} Chapter VIII):
$$X^{x_0,(\mu)}_t - x_0 - \mu t - \int_0^t d \langle \log L, X \rangle_s$$
is a $\P^v$ Brownian motion. Thanks to the previous lemma and to the fact that $N_t(X^{x_0})$ has zero quadratic variation, the bracket $d \langle \log L, X \rangle_s$ is indeed:
$$\nabla_x \log \Phi_v \left( B_s(X^{x_0,(\mu)}) \right) ds$$
\end{proof}

\subsection{Remarkable points and corresponding exit laws }
\label{subsection:remarkable_deformation}

\begin{proof}[Proof of theorem \ref{thm:markovian_points}]
The goal is to identify the remarkable laws that will force $X^{x_0, (\mu)}$ into becoming a Markov process. In the light of corollary \ref{corollary:girsanov}, we are aiming at identifying a bounded function $v \in \Pc$ such that the term $\nabla_x \log \Phi_v (n e^x)$ to only depends on $x$ for a certain $v$. Equivalently $\Phi_v( n e^x)$ has to break down into the product of two functions $g_v(n) f_v(x)$. For $x$ going to infinity inside the Weyl chamber, $\Phi_v(x, n)$ actually converges to:
$$g_v(n)f_v(\infty) = \lim_{x \rightarrow \infty, x\in C} \mathbb{E}\left( v\left( n e^{x} D_\mu e^{-x} \right) \right) = v\left( n \right)$$
thanks to lemma \ref{lemma:limit_contraction}. Then, $\Phi_v$ has in fact to breaks down as 
\begin{align}
\label{eq:Phi_v_splitting}
\forall n \in N^{w_0}_{>0}, \forall x \in \afrak, \ \Phi_v ( n e^x) & = v\left( n \right) f_v(x) 
\end{align}

By hypothesis, $f_v$ is bounded and $f_v(x) \stackrel{x \rightarrow \infty}{\longrightarrow} 1$. As $\left( L_t = \Phi_v \left( B_t(X^{x_0,(\mu)}) \right) ; t \geq 0 \right)$ is a real martingale and $\Dc^{(\mu)}$ is the infinitesimal generator of our hypoelliptic Brownian motion (proposition \ref{proposition:inf_generator}), we have that $\Dc^{(\mu)} \Phi_v = 0$.
First, we have that for all $t \in \R$:
$$ \Phi_v(n e^x e^{t f_\alpha}) = \Phi_v(n e^{t e^{-\alpha(x)} f_\alpha} e^x ) \stackrel{eq. \eqref{eq:Phi_v_splitting} }{=} v\left( n e^{ t e^{-\alpha(x)} f_\alpha }\right) f_v(x)$$
Hence, for $n \in N$ and $x \in \afrak$:
\begin{align*}
0 = & \left( \Dc^{(\mu)} \Phi_v \right) (n e^x) \\
  = & v(n) \left( \half \Delta_\afrak + \mu \right)(f_v)(x) + \sum_{\alpha \in \Delta} \frac{d}{dt}\left( \Phi_v(n e^x e^{t f_\alpha}) \right)_{|t=0}\\
  = & v(n) \left( \half \Delta_\afrak + \mu \right)(f_v)(x) + f_v(x) \sum_{\alpha \in \Delta} e^{-\alpha(x)} \Lc_{f_\alpha} \cdot v(n)
\end{align*}
Dividing the previous equation by $\Phi_v ( n e^x) = v\left( n \right) f_v(x) > 0$, we obtain:
$$ 0 = \frac{1}{f_v(x)}\left( \half \Delta_\afrak + \mu \right)(f_v)(x) + \sum_{\alpha \in \Delta} e^{-\alpha(x)} \frac{ \Lc_{f_\alpha} v(n)}{v(n)},$$
which indicates that for any fixed $n$ and $n'$ in $N$:
$$ \forall x \in \afrak, \ \sum_{\alpha \in \Delta} e^{-\alpha(x)} \frac{ \Lc_{f_\alpha} \cdot v(n )}{v(n )}
                         = \sum_{\alpha \in \Delta} e^{-\alpha(x)} \frac{ \Lc_{f_\alpha} \cdot v(n')}{v(n')}$$
Since the family of functions $\left( x \mapsto e^{-\alpha(x)} \right)_{\alpha \in \Delta}$ is free, there are constants $C_\alpha$ such that $\frac{ \Lc_{f_\alpha} v(n )}{v(n )} = -C_ \alpha$. Hence $v(n) = \exp\left( -\sum_{\alpha \in \Delta} C_\alpha \chi_\alpha^-(n) \right)$. In order for $v$ for to be bounded on $N_{\geq 0}$, these constants are non-negative.
\end{proof}

\begin{proof}[Proof of theorem \ref{thm:whittaker_process_g} for $\mu \in C$]
\label{proof:whittaker_process_g_mu_in_C}
If $g$ is taken to follow the law $\Gamma_\mu(x_0)$, we are in the situation where theorem \ref{thm:markovian_points} applies with $v = e^{-\chi^-}$. Thanks to the equation \eqref{eq:whittaker_function_D_mu}, the function $\Phi_v$ in lemma \ref{lemma:likelihood} takes the form:
$$ 
\Phi_v(n e^x) = e^{-\chi^-(n)} \frac{ \psi_\mu(x  ) e^{-\langle \mu, x   \rangle} }
                                    { \psi_\mu(x_0) e^{-\langle \mu, x_0 \rangle} }
$$
The reference measure is already of the form $\P^v$, with $\P$ the probability under which $g \stackrel{\Lc}{=} \Gamma_\mu$. As such, thanks to corollary \ref{corollary:girsanov}, there is a Brownian motion $\widetilde{X}$ such that:
\begin{align*}
    X^{x_0,(\mu)}_t
= & x_0 + \mu t + \int_0^t \nabla_x \log \left( \psi_\mu \left( X^{x_0,(\mu)}_s \right) e^{- \langle \mu, X^{x_0,(\mu)}_s \rangle } \right) ds + \widetilde{X}_t\\
= & x_0 + \int_0^t \nabla_x \log \psi_\mu \left( X^{x_0,(\mu)}_s \right) ds + \widetilde{X}_t
\end{align*}
\end{proof}

% --------------------------------------------------------------------
\section{Properties of Landau-Ginzburg potentials}
\label{section:properties_LG}

In order to detail the properties of Landau-Ginzburg potentials, we will need to be more explicit than the subsection \ref{subsection:LG_potentials} in the preliminaries. We will describe the reference measure $\omega$ and the superpotential $f_B$ in explicit charts, which we will now introduce. These were constructed and studied in \cite{bib:chh14a}.

At the level of the group picture, there are maps $\varrho^L$, $\varrho^K$ and $\varrho^T$ that associate to every element in $\Bc$ a group element in respectively $U^{w_0}_{>0}$, $C^{w_0}_{>0}$ and $U^{w_0}_{>0}$. If $b \in \Bc$, the totally positive elements $\varrho^L(b)$, $\varrho^K(b)$ and $\varrho^T(b)$ are respectively called the Lusztig, Kashiwara and twisted Lusztig parameter. The knowledge of the highest weight $\lambda = \hw\left( b \right)$ allows to recover $b$ from either of these parameters thanks to maps $b^L_\lambda$, $b^K_\lambda$, $b^T_\lambda$. Figure \eqref{fig:geom_parametrizations} summarizes the situation, with all arrows being diffeomorphisms with explicit expressions given in section 4 of \cite{bib:chh14a}.

\begin{figure}[htp!]
\centering
\begin{tikzpicture}[baseline=(current bounding box.center)]
\matrix(m)[matrix of math nodes, row sep=5em, column sep=6em, text height=3ex, text depth=1ex, scale=2]
{
                    & x \in \Bc(\lambda) &                    \\
 z \in U^{w_0}_{>0} & v \in C^{w_0}_{>0} & u \in U^{w_0}_{>0} \\
};
\path[->, font=\scriptsize] (m-1-2) edge node[above]{$\varrho^L$} (m-2-1);
\path[->, font=\scriptsize] (m-1-2) edge node[right]{$\varrho^K$} (m-2-2);
\path[->, font=\scriptsize] (m-1-2) edge node[above]{$\varrho^T$} (m-2-3);

\draw [->] (m-2-1) to [bend left=30]  node[above, swap]{$b^L_\lambda$} (m-1-2);
\draw [->] (m-2-2) to [bend left=45]  node[auto, swap]{$b^K_\lambda$} (m-1-2);
\draw [->] (m-2-3) to [bend right=30] node[above, swap]{$b^T_\lambda$} (m-1-2);

\draw [->] (m-2-1) to [bend left=0 ]  node[above, swap]{$\eta^{e, w_0}$} (m-2-2);
\draw [->] (m-2-2) to [bend left=30]  node[auto , swap]{$\eta^{w_0, e}$} (m-2-1);
\end{tikzpicture}
\caption{Charts for the highest weight geometric crystal $\Bc(\lambda)$}
\label{fig:geom_parametrizations}
\end{figure}
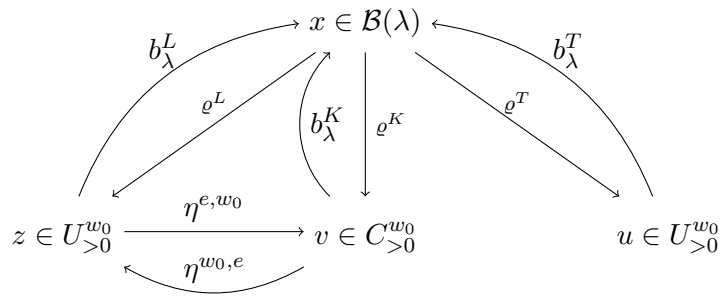

As stated in \cite{bib:chh14a}, such a diagram can be lifted at the level of the geometric path model. Once a choice of reduced word ${\bf i} \in R(w_0)$, it allows to read directly the geometric string coordinates $x_{-\bf i}^{-1} \circ \varrho^K$ (resp. the Lusztig coordinates $x_{\bf i}^{-1} \circ \varrho^L$) from a path, thanks to a map $\varrho_{\bf i}^K$ (resp. $\varrho_{\bf i}^L$). The projection $p$ present in figure \ref{fig:parametrizations_diagram} bridges between the path model and its group picture.

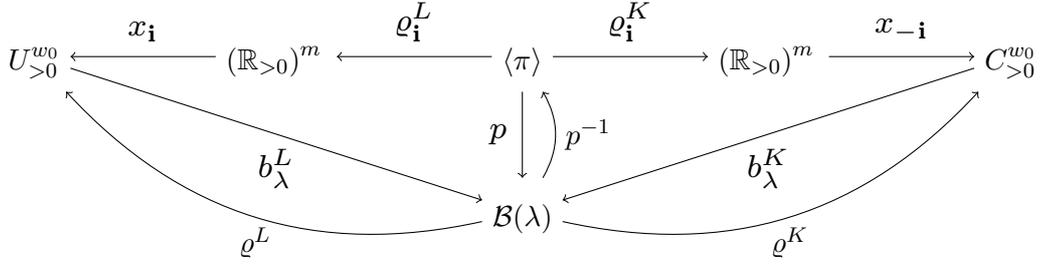
\begin{figure}[htp!]
\centering
\begin{tikzpicture}[baseline=(current bounding box.center)]
\matrix(m)[matrix of math nodes, row sep=3em, column sep=5em, text height=3ex, text depth=1ex, scale=1.2]
{ 
U_{>0}^{w_0} & \left( \R_{>0} \right)^m & \langle \pi \rangle    & \left( \R_{>0} \right)^m & C_{>0}^{w_0}\\
             &                          & \Bc(\lambda)\\
};
\path[->, font=\scriptsize] (m-1-3) edge node[above, scale=1.5]{$\varrho_{\bf i}^K$} (m-1-4);
\path[->, font=\scriptsize] (m-1-4) edge node[above, scale=1.5]{$x_{-\bf i}$} (m-1-5);
\path[->, font=\scriptsize] (m-1-3) edge node[above, scale=1.5]{$\varrho_{\bf i}^L$} (m-1-2);
\path[->, font=\scriptsize] (m-1-2) edge node[above, scale=1.5]{$x_{ \bf i}$} (m-1-1);

\path[->, font=\scriptsize] (m-1-3) edge node[left,  scale=1.5]{$p$} (m-2-3);
\draw[->] (m-2-3) to [bend right=30] node[right]{$p^{-1}$} (m-1-3);

\path[->, font=\scriptsize] (m-1-1) edge node[below, scale=1.5]{$b^L_\lambda$} (m-2-3);
\draw[->] (m-2-3) to [bend left=30] node[below]{$\varrho^L$} (m-1-1);
\path[->, font=\scriptsize] (m-1-5) edge node[below, scale=1.5]{$b^K_\lambda$} (m-2-3);
\draw[->] (m-2-3) to [bend right=30] node[below]{$\varrho^K$} (m-1-5);
\end{tikzpicture} 
\caption{Parametrizations for a connected crystal $\langle \pi \rangle$, with $\pi \in \Cc_0([0, T], \afrak)$ and $\lambda = \Tc_{w_0} \pi(T)$}
\label{fig:parametrizations_diagram}
\end{figure}

In this section, we start by giving explicit expressions of the Landau-Ginzburg potential in subsections \ref{subsection:toric_reference_measure} and \ref{subsection:superpotential} by presenting some surprising ingredients involved: the seemingly innocent toric reference measure $\omega$ and the superpotential map $f_B$. The latter has a rich algebraic structure we detail. Then we prove an estimate in coordinates which allows to prove that the Landau-Ginzburg potential is integrable on $\Bc\left( \lambda \right)$.

\subsection{Toric reference measure}
\label{subsection:toric_reference_measure}

Let us start with a simple object:
\begin{definition}[The measure $\omega_{toric}$]
 Define the measure $\omega_{toric}$ on $\R_{>0}^m$ by:
 $$ \omega_{toric} = \prod_{j=1}^m \frac{dt_j}{t_j}$$
\end{definition}
\begin{rmk}
 Notice that $\omega_{toric}$ is nothing but the flat measure in logarithmic coordinates, or the Haar measure on the multiplicative torus $\R_{>0}^m$.
\end{rmk}

$\omega_{toric}$ has the remarkable property that it is invariant under changes of parametrization for both the Lusztig and Kashiwara varieties.
\begin{thm}[Extension of \cite{bib:GLO1} lemma 3.1 and \cite{bib:Rietsch07} theorem 7.2]
 \label{thm:omega_invariance}
 For all reduced words ${\bf i}$ and ${\bf i'}$ in $R(w_0)$, the image measure of $\omega_{toric}$ through the map $x_{ \bf i'}^{-1} \circ x_{ \bf i}$ (resp. $x_{ -\bf i'}^{-1} \circ x_{ -\bf i}$) is itself. Meaning that if the following change of variables hold:
$$ \left( t_1', t_2', \dots, t_m' \right) = x_{ \bf i'}^{-1} \circ x_{ \bf i}\left(t_1, \dots, t_m \right)$$
$$ \left( c_1', c_2', \dots, c_m' \right) = x_{-\bf i'}^{-1} \circ x_{-\bf i}\left(c_1, \dots, c_m \right)$$
Then:
$$ \prod_{j=1}^m \frac{dt_j'}{t_j'} = \prod_{j=1}^m \frac{dt_j}{t_j}$$
$$ \prod_{j=1}^m \frac{dc_j'}{c_j'} = \prod_{j=1}^m \frac{dc_j}{c_j}$$
\end{thm}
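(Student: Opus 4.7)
The plan is to reduce the invariance of $\omega_{toric}$ to checking it on elementary braid moves that generate the equivalence relation on $R(w_0)$. By the Matsumoto/Tits lemma, any two reduced expressions for $w_0$ are connected by a finite sequence of braid relations: the 2-term commutation moves $s_i s_j = s_j s_i$ when $\langle \alpha_i, \alpha_j^\vee \rangle = 0$, and $m_{ij}$-term moves with $m_{ij} \in \{3,4,6\}$ corresponding to the rank-2 parabolic subsystems of types $A_2$, $B_2/C_2$, and $G_2$. Since the transition map $x_{\bf i'}^{-1} \circ x_{\bf i}$ associated to a single braid move acts as the identity on the coordinates at positions outside the move, the Jacobian factors as a block and it suffices to verify invariance in the rank-2 situation.

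The 2-move is trivial: the transition simply transposes two coordinates. For the $A_2$ 3-move, one uses the classical identity $x_i(a) x_j(b) x_i(c) = x_j(a') x_i(b') x_j(c')$ with
\begin{align*}
a' = \tfrac{bc}{a+c}, \quad b' = a+c, \quad c' = \tfrac{ab}{a+c}.
\end{align*}
Setting $u = \log a$, $v = \log b$, $w = \log c$ and $p = a/(a+c)$, $q = c/(a+c) = 1-p$, the Jacobian matrix of $(\log a', \log b', \log c')$ with respect to $(u,v,w)$ is
\begin{align*}
J = \begin{pmatrix} -p & 1 & p \\ p & 0 & q \\ q & 1 & -q \end{pmatrix},
\end{align*}
whose determinant evaluates to $p+q = 1$ by cofactor expansion along the middle column. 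Hence the log-coordinate volume is preserved. The $B_2/C_2$ 4-moves and the $G_2$ 6-moves are handled analogously using the corresponding explicit subtraction-free transition formulas of Berenstein--Fomin--Zelevinsky; the Jacobian computations are longer but structurally identical and still produce determinant $\pm 1$.

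For the Kashiwara parametrization $x_{-\bf i}$, exactly the same reduction to rank-2 applies: the transition formulas between reduced words are again subtraction-free rational maps obtained from the corresponding rank-2 identities in $U\bar{w}_0 U$, and their Jacobians in logarithmic coordinates again have determinant $\pm 1$. The main obstacle is the purely combinatorial bookkeeping for the 4-move and 6-move cases, but once the rank-2 identities are written down explicitly (as done in Berenstein--Fomin--Zelevinsky and in the geometric crystals literature), the determinant computation is mechanical. Thus the toric measure is preserved by all transitions, and the theorem follows.
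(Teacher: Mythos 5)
Your treatment of the Lusztig side is essentially the paper's primary argument: reduce to rank-$2$ braid moves via Matsumoto's lemma (the transition for a single move is the identity outside the affected window, so the log-Jacobian is block-diagonal) and check unimodularity of each block. Your $A_2$ computation is correct ($\det J = p+q = 1$), and the $B_2$ and $G_2$ verifications you defer to the Berenstein--Fomin--Zelevinsky formulas are precisely the computations the paper itself outsources to \cite{bib:GLO1} (lemma 3.1) and \cite{bib:Rietsch07} (theorem 7.2), so that deferral is acceptable. Two points of comparison. First, the paper also records a computation-free alternative that you may find worth knowing: the law of the exit variable $\Gamma_\mu$ is an intrinsic measure on $U^{w_0}_{>0}$ whose density in any ${\bf i}$-chart is $e^{-\sum_j t_j}\prod_j t_j^{\langle \mu,\beta_j^{\bf i}\rangle}\,\omega_{toric}$ (equation \eqref{eq:gamma_mu_law_in_coordinates}); equating the two chart expressions and cancelling the factors that depend only on $u$ (namely $\chi(u)=\sum_j t_j$ and the monomial attached to $u\bar w_0$) forces $\prod_j dt_j/t_j = \prod_j dt'_j/t'_j$ with no rank-$2$ case analysis at all. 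Second, and this is the one soft spot in your write-up: for the Kashiwara parametrization $x_{-\bf i}$ you assert that the transitions again reduce to rank-$2$ moves with unimodular log-Jacobians, which comes close to restating the claim to be proved --- the rank-$2$ transition formulas for $x_{-\bf i}$ are not the ones you quoted and you never exhibit them. The paper sidesteps this entirely by deducing the Kashiwara statement from the Lusztig one: writing $\left( x_{-i_1}(c_1)\cdots x_{-i_m}(c_m)\right)^T = c_1^{-\alpha_{i_1}^\vee}\cdots c_m^{-\alpha_{i_m}^\vee}\, x_{i_m}(t_m)\cdots x_{i_1}(t_1)$, the vectors $\left(\log t_j\right)_j$ and $\left(\log c_j\right)_j$ are related by a unit upper-triangular linear map (lemma 4.7 of \cite{bib:chh14a}), whose Jacobian is $1$, so invariance in one chart system is equivalent to invariance in the other. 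You should either supply the rank-$2$ Kashiwara transition formulas explicitly or substitute this linear-algebra reduction; as written, that half of your argument is an assertion rather than a proof.
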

\begin{proof}
Invariance with respect to changes of parametrizations in both coordinate systems are equivalent. Indeed, if we write:
$$\left( x_{-i_1}(c_1) \dots x_{-i_m}(c_m) \right)^T = c_1^{-\alpha_{i_1}^\vee} \dots c_j^{-\alpha_{i_m}^\vee} x_{i_m}(t_j) \dots x_{i_1}(t_1)$$
Then thanks to lemma 4.7 in \cite{bib:chh14a}, $\left(\log t_j\right)_{1 \leq j \leq m}$ and $\left(\log c_j\right)_{1 \leq j \leq m}$ are related to each other by a linear transformation with matrix $M$. The matrix $M$ is upper triangular with unit diagonal, therefore the transformation has a Jacobian equal to $1$.

We are only left with proving the invariance under $x_{ \bf i'}^{-1} \circ x_{ \bf i}$. By the Matsumoto's lemma, two reduced words ${\bf i}$ and ${\bf i'}$ can be obtained from each other by a sequence of braid moves. As such, it is sufficient to prove the statement for ${\bf i}$ and ${\bf i'}$ reduced words from a root system of rank $2$. This is exactly the computation made in \cite{bib:GLO1} lemma 3.1 for types $A_2$ and $B_2$ and in \cite{bib:Rietsch07} theorem 7.2 for all types. A proof without any computation comes as a side product of a previous result. Recall that the exit law of the hypoelliptic Brownian motion motion is given in terms of a $\Gamma_\mu$ random variable. We have thanks to equation \eqref{eq:explicit_Gamma_mu}:
\begin{align}
\label{eq:gamma_mu_law_in_coordinates}
  \P\left( x_{\bf i}^{-1}\left( \Gamma_\mu \right) \in {\bf dt}\right)
= & 
   \frac{1}{\prod_{\beta \in \Phi^+} \Gamma\left( \langle \beta^\vee, \mu \rangle \right) }
   e^{-\sum_{j=1}^m t_j }
   \prod_{j=1}^m t_j^{\langle \mu, \beta_j^{\bf i} \rangle}   
   \prod_{j=1}^m \frac{dt_j}{t_j}
\end{align}

Because we are dealing with an intrinsic object, the description in coordinates yields the same measures. Therefore, if $x_{\bf i}({\bf t}) = x_{\bf i'}({\bf t'}) = u$, by equating the previous probability measure for both parametrizations:
$$ e^{-\sum_{j=1}^m t_j }
   \prod_{j=1}^m t_j^{\langle \mu, \beta_j^{\bf i} \rangle}   
   \prod_{j=1}^m \frac{dt_j}{t_j}
 = 
   e^{-\sum_{j=1}^m t_j' }
   \prod_{j=1}^m (t_j')^{\langle \mu, \beta_j^{\bf i'} \rangle}   
   \prod_{j=1}^m \frac{dt_j'}{t_j'}
$$
We are done upon noticing that
$$ \chi(u) = \sum_{j=1}^m t_j = \sum_{j=1}^m t_j'$$
and 
$ [u \bar{w}_0]_- = t_j^{\beta_j^{\bf i}} = (t_j')^{\beta_j^{\bf i'}}$
\end{proof}

This allows us to define a measure on $\Bc(\lambda)$ that has virtually the same expression regardless of the chosen parametrization. It is nothing more than the image measure of $\omega_{toric}$ under any of the usual parametrizations of $\Bc(\lambda)$.
\begin{thm}
 \label{thm:omega_invariance_on_crystal}
 There is a unique measure on $\Bc(\lambda)$ denoted by $\omega_{\Bc(\lambda)}$ such that, for every reduced word ${\bf i} \in R(w_0)$, and for $x \in \Bc(\lambda)$ being given in either of the coordinates:
$$\varrho^K(x) = x_{\bf -i }\left( c_1 , \dots, c_m \right)$$
$$\varrho^L(x) = x_{\bf  i }\left( t_1,  \dots, t_m \right)$$
$$\varrho^T(x) = x_{\bf  i }\left( t_1', \dots, t_m'\right)$$
Then:
$$\omega(dx) = \prod_{j=1}^m \frac{dc_j}{c_j} = \prod_{j=1}^m \frac{dt_j}{t_j} = \prod_{j=1}^m \frac{dt_j'}{t_j'} $$

Moreover, $\omega$ is invariant with respect to crystal actions $e^._\alpha, \alpha \in \Delta$, meaning that:
$$ \forall c \in \R, \forall \alpha \in \Delta, \int_{\Bc(\lambda)} \varphi(x) \omega_{\Bc(\lambda)}( dx )
= \int_{\Bc(\lambda)} \varphi(e^c_\alpha \cdot x) \omega_{\Bc(\lambda)}( dx ) $$
\end{thm}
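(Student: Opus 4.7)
The plan is to define $\omega_{\Bc(\lambda)}$ as the pushforward of $\omega_{toric}$ through any one of the three parametrizations, then verify that the three choices agree and that the resulting measure is invariant under crystal actions.

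First I would fix a reduced word ${\bf i}_0 \in R(w_0)$ and set $\omega_{\Bc(\lambda)}$ to be the pushforward of $\omega_{toric}$ along the Lusztig chart $(t_1,\ldots,t_m) \mapsto b^L_\lambda(x_{{\bf i}_0}(t_1,\ldots,t_m))$. Independence from the choice of reduced word within the Lusztig family is immediate from Theorem \ref{thm:omega_invariance}, since the transition map $x_{\bf i}^{-1}\circ x_{{\bf i}_0}$ preserves $\omega_{toric}$. The same argument handles invariance within the Kashiwara family of charts $(c_1,\ldots,c_m) \mapsto b^K_\lambda(x_{-\bf i}(c_1,\ldots,c_m))$.

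Next I would verify that the three chart families produce the same measure on $\Bc(\lambda)$. For Lusztig versus Kashiwara, the key observation is already used inside the proof of Theorem \ref{thm:omega_invariance}: by Lemma 4.7 of \cite{bib:chh14a}, at fixed ${\bf i}$ and fixed $\lambda$, the change of coordinates $(\log t_j)\mapsto(\log c_j)$ is linear with an upper triangular matrix of unit diagonal. Its Jacobian is therefore $1$, and $\prod dc_j/c_j = \prod dt_j/t_j$ as measures on $\Bc(\lambda)$. For the twisted Lusztig coordinate $\varrho^T$, the relation with $\varrho^L$ factorises through the Sch\"utzenberger involution $S$, whose explicit action on $x_{\bf i}$ (recalled in the preliminaries) simply reverses and relabels entries via $*$; both operations preserve $\omega_{toric}$, so the pushforward via $b^T_\lambda\circ x_{\bf i}$ coincides with $\omega_{\Bc(\lambda)}$.

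Finally, invariance under the crystal actions $e^c_\alpha$ is best checked in string coordinates adapted to the root $\alpha$. I would pick a reduced word ${\bf i}=(i_1,\ldots,i_m)$ with $\alpha_{i_1}=\alpha$. With this choice, the Berenstein-Kazhdan action $e^c_\alpha$ acts on the associated Kashiwara coordinates by the multiplicative shift $c_1\mapsto c\,c_1$ while fixing $c_2,\ldots,c_m$. Since $d(c c_1)/(c c_1) = dc_1/c_1$, the measure $\omega_{toric}$ is preserved, and so is $\omega_{\Bc(\lambda)}$. The main obstacle is the cross-type compatibility: within a single parametrization family we can quote Theorem \ref{thm:omega_invariance}, but passing between Lusztig and Kashiwara genuinely uses the unit-triangular structure in log-coordinates from Lemma 4.7 of \cite{bib:chh14a}, and care must be taken in the twisted Lusztig case to confirm that every ingredient of the transformation either permutes coordinates or is already known to preserve $\omega_{toric}$.
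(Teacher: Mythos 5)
Your overall architecture (define $\omega_{\Bc(\lambda)}$ as a pushforward of $\omega_{toric}$, get reduced-word independence from Theorem \ref{thm:omega_invariance}, use the Sch\"utzenberger involution for the twisted chart, and check crystal invariance by a first-coordinate shift) matches the paper's, but your Lusztig-versus-Kashiwara step rests on a misapplication of Lemma 4.7 of \cite{bib:chh14a}. The unit-upper-triangular log-linear relation that lemma provides is the relation between the Kashiwara coordinates of $v=\varrho^K(x)$ and the Lusztig-type coordinates of $[v^T]_+$, i.e.\ (after the further monomial rescaling by $e^{\pm\lambda}$ in $u=e^{-\lambda}v^T[v^T]_0^{-1}e^\lambda$) between $\varrho^K(x)$ and the \emph{twisted} Lusztig parameter $\varrho^T(x)$. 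It is not the relation between $\varrho^K(x)$ and $\varrho^L(x)$: those two charts differ by the twist map $\eta^{w_0,e}(v)=[(\bar{w}_0 v^T)^{-1}]_+$, which is a genuinely non-monomial subtraction-free rational map (compare the rank-two formulas \eqref{eq:example_twist_A2}--\eqref{eq:example_twist_C2}), so its Jacobian cannot be read off from any triangularity argument, and your identification of $\prod dc_j/c_j$ with $\prod dt_j/t_j$ is unjustified as written.

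The gap is repairable, and the repair is exactly the paper's route: compare $\varrho^K$ with $\varrho^T$ via the monomial change of variables $u=e^{-\lambda}v^T[v^T]_0^{-1}e^\lambda$ (this is where the triangular unimodular structure genuinely lives), and compare $\varrho^L$ with $\varrho^T$ through the Sch\"utzenberger involution --- keeping in mind that $S$ not only reverses the order of the Lusztig coordinates but also replaces ${\bf i}$ by the reversed word on the starred indices, so Theorem \ref{thm:omega_invariance} must be invoked once more to return to the chart $x_{\bf i}$. Your crystal-action step is sound in substance, but the paper carries it out in Lusztig coordinates, where Proposition 8.18 of \cite{bib:chh14a} gives $(t_1,\dots,t_m)\mapsto(e^{c}t_1,t_2,\dots,t_m)$ for $\alpha=\alpha_{i_1}$; if you insist on string coordinates you need to cite or prove the analogous statement there, and the shift is by the factor $e^{c}$, not by $c$.
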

\begin{notation}
When the choice of highest weight crystal is clear from context, we will simply write $\omega$ instead of $\omega_{\Bc(\lambda)}$
\end{notation}
\begin{proof}
Fix a reduced word ${\bf i} \in R(w_0)$ and an element $x \in \Bc(\lambda)$. The toric reference measure on Lusztig coordinates is transported by the Sch\"utzenberger involution to the toric measure on twisted Lusztig coordinates as:
$$ \varrho^T\left( S(x) \right) = S\left( \varrho^L(x) \right)$$
giving the equality:
$$
\int_{\R_{>0}^m} \varphi \circ b_\lambda^{L} \circ x_{\bf i^*}\left( t_m, \dots, t_1 \right) \prod_{j=1}^m \frac{dt_j}{t_j}
=
\int_{\R_{>0}^m} \varphi \circ b_\lambda^{T} \circ x_{\bf i  }\left( t_1, \dots, t_m \right) \prod_{j=1}^m \frac{dt_j}{t_j}
$$
Notice that the change in order and reduced words. However, because of theorem \ref{thm:omega_invariance}, it does not matter hence the equality $\prod_{j=1}^m \frac{dt_j}{t_j} = \prod_{j=1}^m \frac{dt_j'}{t_j'}$.

Now, the parameters 
$$v = \varrho^K(x) = \circ x_{-\bf i^{op}}\left( c_m, \dots, c_1 \right)$$
$$u = \varrho^T(x) = \circ x_{ \bf i     }\left( t_1, \dots, t_m \right)$$
are linked by the transform (Section 4 of \cite{bib:chh14a})
$$ u = e^{-\lambda} v^T [v^T]_0^{-1} e^\lambda $$
which yields a monomial change of variable between $\left( c_1, \dots, c_m \right)$ and $\left( t_1, \dots, t_m \right)$ that preserves the toric measure. Hence:
$$
\int_{\R_{>0}^m} \varphi \circ b_\lambda^{K} \circ x_{-\bf i^{op}}\left( c_m, \dots, c_1 \right) \prod_{j=1}^m \frac{dc_j}{c_j}
=
\int_{\R_{>0}^m} \varphi \circ b_\lambda^{T} \circ x_{ \bf i}\left( t_1, \dots, t_m \right) \prod_{j=1}^m \frac{dt_j}{t_j}
$$

Invariance with respect to crystal actions comes from the fact that if $x \in \Bc(\lambda)$ has ${\bf i}$-Lusztig coordinates
$$ \left( t_1, t_2, \dots, t_m \right) $$
then $e^c_\alpha \cdot x$, with $\alpha = \alpha_{i_1}$, has Lusztig coordinates (proposition 8.18 in \cite{bib:chh14a}):
$$ \left( e^c t_1, t_2, \dots, t_m \right) $$
\end{proof}

\subsection{Algebraic structure of \texorpdfstring{$f_B$}{the superpotential}}
\label{subsection:superpotential}

\begin{notation}
Let $x$ be an element in $\Bc(\lambda)$ with $u = \varrho^T(x)$. Define the twist map as in \cite{bib:BZ97} by:
$$ \eta_{w_0}( u ) = [\bar{w}_0^{-1} u^T]_+$$
The main obstruction for $f_B$ to have a simple expression is this twist.
\end{notation}

Using the formulas in proposition 4.17 of \cite{bib:chh14a} we have the following semi-explicit expressions for the superpotential:
\begin{proposition}[Semi-explicit expressions in coordinates]
\label{proposition:semi_explicit_expressions}
Let $x \in \Bc(\lambda)$ and:
$$ v = \varrho^K(x) = x_{-\bf i}\left( c_1, \dots, c_m \right)$$
$$ u = \varrho^T(x) = x_{ \bf i}\left( t_1, \dots, t_m \right)$$
Then:
\begin{align*}
f_B(x) = & \chi \circ S \circ \iota \left( e^{-\lambda} [\bar{w}_0^{-1} u^T ]_+ e^{\lambda}\right) + \chi(u)
       =   \chi\left( e^{-\lambda} \eta_{w_0}(u) e^{\lambda}\right) + \sum_{j=1}^m t_j\\
       = & \chi\left( \eta^{w_0, e}(v) \right) + \chi\left(e^{-\lambda} v^T [v^T]_0^{-1} e^\lambda \right)
       =   \chi\left( \eta^{w_0, e}(v) \right) + \sum_{k=1}^m e^{-\alpha_{i_k}(\lambda)} c_k^{-1} \prod_{j=k+1}^m c_j^{-\alpha_{i_k}(\alpha_{i_j}^\vee)}
\end{align*}
\end{proposition}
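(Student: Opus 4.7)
The strategy is to unpack the definition $f_B(x) = \chi(z) + \chi(u)$ coming from the decomposition $x = z\,\bar{w}_0\,t\,u$ with $z,u \in U^{w_0}_{>0}$, $t \in H$, and then to substitute the expressions of $z$ and $u$ in terms of the parameters $\varrho^L$, $\varrho^K$, $\varrho^T$ using the transition formulas already established in Section 4 of \cite{bib:chh14a} (specifically its Proposition 4.17). Two identifications appear immediately: by construction $u = \varrho^T(x)$, and $z = \varrho^L(x)$ is related to $u$ by a Schützenberger--Kashiwara twist on the crystal factor composed with a conjugation by $e^\lambda$ that accounts for the highest weight. Proving the proposition amounts to combining these two identifications with two invariances of $\chi$ and a direct computation in coordinates.

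For the first equality (Lusztig side), Proposition 4.17 in \cite{bib:chh14a} yields $z = S \circ \iota \bigl(e^{-\lambda}\,\eta_{w_0}(u)\,e^{\lambda}\bigr)$ with $\eta_{w_0}(u) = [\bar{w}_0^{-1} u^T]_+$, which gives the first form. To collapse $\chi \circ S \circ \iota$ into $\chi$, I use two invariances recorded in the preliminaries: $\chi \circ \iota = \chi$, because the Kashiwara involution satisfies $x_i(t)^\iota = x_i(t)$; and $\chi \circ S = \chi$, because $S$ reverses the order of an $x_{\bf i}$-factorization and permutes the simple indices via $*$, which is immaterial once one sums $\chi = \sum_{\alpha \in \Delta} \chi_\alpha$ over all simple roots. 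Both identities extend from one-parameter subgroups to all of $U^{w_0}_{>0}$ because the $\chi_\alpha$ are additive on the abelianization of $U$. Finally, $u = x_{\bf i}(t_1, \dots, t_m)$ gives $\chi(u) = \sum_k t_k$ by definition of the standard character. This proves the first line.

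For the second equality (Kashiwara side), the transition maps in Figure \ref{fig:geom_parametrizations} identify $z = \eta^{w_0,e}(v)$ and $u = e^{-\lambda}\,v^T [v^T]_0^{-1}\,e^{\lambda}$, so substitution into $f_B$ is immediate. The only non-trivial piece is the explicit evaluation of $\chi\bigl(e^{-\lambda}\,v^T [v^T]_0^{-1}\,e^{\lambda}\bigr)$ in the coordinates $v = x_{-\bf i}(c_1, \dots, c_m)$. Expanding $x_{-i_k}(c_k)^T = c_k^{-h_{i_k}} x_{i_k}(c_k)$, I push all torus pieces to the right using $a\,x_i(t)\,a^{-1} = x_i(a^{\alpha_i} t)$, cancel them against $[v^T]_0^{-1}$, and then conjugate the remaining unipotent part by $e^{-\lambda}(\cdot)\,e^{\lambda}$. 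The coefficient of each $x_{i_k}(c_k)$ that survives contributes exactly $e^{-\alpha_{i_k}(\lambda)}\, c_k^{-1} \prod_{j=k+1}^m c_j^{-\alpha_{i_k}(\alpha_{i_j}^\vee)}$ to $\chi$, which yields the claimed sum.

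The main obstacle is the combinatorial bookkeeping in the last step: one has to carefully commute the torus factors $c_k^{-h_{i_k}}$ past the later $x_{i_j}(c_j)$ and track the exponents dictated by the pairings $\alpha_{i_j}(h_{i_k}) = \alpha_{i_j}(\alpha_{i_k}^\vee)$, and to check that after extracting $[v^T]_0^{-1}$ the resulting element still lies in $U^{w_0}_{>0}$ so that the character $\chi$ applies directly. Everything else is formal manipulation once Proposition 4.17 of \cite{bib:chh14a} and the invariances of $\chi$ under $\iota$ and $S$ are in hand.
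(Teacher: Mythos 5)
Your proposal is correct and follows essentially the same route as the paper, which simply cites Proposition 4.17 of \cite{bib:chh14a} for the identifications $z = S\circ\iota\left(e^{-\lambda}[\bar{w}_0^{-1}u^T]_+e^{\lambda}\right)$, $z=\eta^{w_0,e}(v)$ and $u = e^{-\lambda}v^T[v^T]_0^{-1}e^{\lambda}$, and then uses $\chi\circ S\circ\iota=\chi$ exactly as you do. Your explicit commutation of the torus factors $c_k^{-h_{i_k}}$ past the $x_{i_j}(c_j)$, producing the exponents $-\alpha_{i_k}(\alpha_{i_j}^\vee)$ for $j>k$ and the $c_k^{-1}=c_k\cdot c_k^{-2}$ from the $j=k$ term, correctly recovers the stated monomial formula, which the paper asserts without writing out.
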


The map $f_B$ has a nice and deep algebraic structure. A first flavour that is sufficient for our needs is:
\begin{thm}
\label{thm:superpotential_structure}
If $x \in \Bc(\lambda)$ with twisted Lusztig parameter
$$u = \rho^T(x) = x_{ \bf i}\left( t_1, \dots, t_m \right)$$
then $f_B(x)$ is a Laurent polynomial in the variables $\left( t_1, \dots, t_m \right)$ with positive coefficients.
\end{thm}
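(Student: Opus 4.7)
The starting point is the semi-explicit expression obtained in Proposition \ref{proposition:semi_explicit_expressions}: if $u = \varrho^T(x) = x_{\bf i}(t_1,\ldots,t_m)$, then
$$f_B(x) \;=\; \chi\!\left(e^{-\lambda}\,\eta_{w_0}(u)\,e^{\lambda}\right) + \sum_{j=1}^m t_j.$$
The second summand is already a positive polynomial in the $t_j$'s, so the whole problem reduces to the first summand, and hence to understanding the twist map $\eta_{w_0}: u \mapsto [\bar{w}_0^{-1} u^T]_+$ in Lusztig coordinates. Since $\mathrm{Ad}(e^{-\lambda}) e_\alpha = e^{-\alpha(\lambda)} e_\alpha$, writing any Chevalley factorization $\eta_{w_0}(u) = x_{\bf j}(s_1,\ldots,s_m)$ yields
$$\chi\!\left(e^{-\lambda}\,\eta_{w_0}(u)\,e^{\lambda}\right) \;=\; \sum_{k=1}^m e^{-\alpha_{j_k}(\lambda)}\, s_k(t_1,\ldots,t_m),$$
so the theorem reduces to showing that each $s_k$ is a positive Laurent polynomial in $(t_1,\ldots,t_m)$.

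To establish this, I would invoke the chamber-ansatz description of the Berenstein--Fomin--Zelevinsky twist: for $u \in U^{w_0}_{>0}$, each $s_k$ is given by a monomial expression in certain generalized minors $\Delta_{v\omega_i, w\omega_i}$ evaluated on $u$. Because the Lusztig parametrization $x_{\bf i}$ maps into totally positive elements, each such minor is a subtraction-free polynomial in the Lusztig coordinates $(t_j)$ (Lindstr\"om--Gessel--Viennot-type positivity for totally positive matrices). The combinatorial content of the chamber ansatz is precisely that the denominators in the monomial ratio cancel against monomial factors in the numerators, producing a genuine Laurent polynomial with non-negative coefficients rather than a mere subtraction-free rational function.

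The main obstacle is this last cancellation step, i.e.\ passing from ``subtraction-free rational'' to ``Laurent polynomial with positive coefficients''. If invoking the chamber ansatz directly is too technical, a cleaner alternative uses Kashiwara coordinates: Proposition \ref{proposition:semi_explicit_expressions} also gives
$$f_B(x) \;=\; \chi\!\left(\eta^{w_0,e}(v)\right) + \sum_{k=1}^m e^{-\alpha_{i_k}(\lambda)}\, c_k^{-1} \prod_{j=k+1}^m c_j^{-\alpha_{i_k}(\alpha_{i_j}^\vee)},$$
where the second summand is manifestly a sum of positive Laurent monomials in the $c_k$'s, and the first is a known positive Laurent polynomial in $(c_1,\ldots,c_m)$ by the same twist-type argument applied to $\eta^{w_0,e}$. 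One then pushes forward to twisted Lusztig coordinates through the transform $u = e^{-\lambda} v^T [v^T]_0^{-1} e^{\lambda}$ established in Section 4 of \cite{bib:chh14a}, which is a positive Laurent monomial change of variables (the Jacobian matrix between the logarithmic coordinates is unipotent upper triangular, cf.\ lemma 4.7 in \cite{bib:chh14a}). Since positive Laurent polynomials are preserved under positive Laurent monomial substitutions, the desired conclusion for $(t_1,\ldots,t_m)$ follows.
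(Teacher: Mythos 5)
Your reduction of the problem to the statement ``each factorization parameter $s_k$ of the twist $\eta_{w_0}(u)=x_{\bf j}(s_1,\dots,s_m)$ is a positive Laurent polynomial in $(t_1,\dots,t_m)$'' is where the argument breaks: that statement is false. In the paper's own $A_2$ example with $u=x_1(t_1)x_2(t_2)x_1(t_3)$ one has
$$\eta_{w_0}(u)=x_1\!\left(\tfrac{t_3}{t_1(t_1+t_3)}\right)x_2\!\left(\tfrac{1}{t_2}+\tfrac{t_1}{t_2t_3}\right)x_1\!\left(\tfrac{1}{t_1+t_3}\right),$$
so $s_1$ and $s_3$ are subtraction-free rational functions with the non-monomial factor $t_1+t_3$ in the denominator; only their \emph{sum} $s_1+s_3=\tfrac{1}{t_1}$ is a Laurent polynomial. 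The chamber-ansatz cancellation you hope for therefore does not occur factor by factor, and no amount of Lindstr\"om--Gessel--Viennot positivity for the individual minors will produce it. The quantity that does behave well is the character $\chi_\alpha\circ\eta_{w_0}(u)$, i.e.\ the sum of the $s_k$ with $\alpha_{j_k}=\alpha$: the paper's proof writes it as the single ratio $\Delta_{s_\alpha\omega_\alpha,w_0\omega_\alpha}(u)/\Delta_{\omega_\alpha,w_0\omega_\alpha}(u)$ and then uses the specific fact (corollary 9.5 of \cite{bib:BZ01}) that the denominator minor $\Delta_{\omega_\alpha,w_0\omega_\alpha}(u)$ consists of a \emph{single} monomial in the $t_j$, so the ratio is a positive polynomial divided by a monomial. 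That one structural input is the whole content of the theorem, and it is missing from your plan.

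Your fallback via Kashiwara coordinates does not repair this. The summand $\chi(\eta^{w_0,e}(v))$ that you declare to be ``a known positive Laurent polynomial in $(c_1,\dots,c_m)$ by the same twist-type argument'' is precisely the character of a twist of $v$, i.e.\ the same problem in the other chart (indeed $\chi(\eta^{w_0,e}(v))=\chi(e^{-\lambda}\eta_{w_0}(u)e^{\lambda})$, the hard summand you started from), so the step is circular and inherits the gap above. The final transfer step (positive Laurent polynomials are stable under the monomial change of variables between the two charts) is fine, but it is applied to an unproved premise. To close the proof you need either the paper's minor-ratio argument with the single-monomial denominator, or some other genuine input from total positivity or cluster theory guaranteeing Laurent positivity of $\chi_\alpha\circ\eta_{w_0}$ as a whole.
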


Before giving a proof, let us observe that $f_B$ has an expression in term of generalized minors:
\begin{lemma}[ Variant of corollary 1.25 in \cite{bib:BK06} ]
 For $x \in \Bc(\lambda)$, with twisted Lusztig parameter $u = \varrho^T(x)$, we have:
$$ f_B\left( x \right) = \sum_{\alpha \in \Delta} e^{-\alpha(\lambda)} \frac{ \Delta_{s_\alpha \omega_\alpha, w_0 \omega_\alpha}( u )}{\Delta_{\omega_\alpha, w_0 \omega_\alpha}( u )} + \Delta_{\omega_\alpha, s_\alpha \omega_\alpha}(u)$$ 
\end{lemma}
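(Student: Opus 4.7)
The plan is to reduce the identity to two well-known facts about unipotent characters and generalized minors, using the semi-explicit formula already at our disposal.

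The starting point is Proposition \ref{proposition:semi_explicit_expressions}, which gives the decomposition
$$ f_B(x) = \chi\bigl( e^{-\lambda} \eta_{w_0}(u) e^{\lambda} \bigr) + \chi(u). $$
Since $\chi = \sum_{\alpha \in \Delta} \chi_\alpha$ is linear and each $\chi_\alpha$ extracts the coefficient of a simple root space, the torus conjugation acts diagonally: for every $v \in U$,
$$ \chi_\alpha\bigl( e^{-\lambda} v e^{\lambda} \bigr) = e^{-\alpha(\lambda)} \chi_\alpha(v), $$
because $e^{-\lambda} e^{t e_\alpha} e^{\lambda} = e^{t \, e^{-\alpha(\lambda)} e_\alpha}$ while the non-simple root contributions of $v$ do not affect $\chi_\alpha$. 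Applying this to $v = \eta_{w_0}(u)$ splits the problem into controlling $\chi_\alpha(u)$ and $\chi_\alpha(\eta_{w_0}(u))$ separately, with the exponential factor $e^{-\alpha(\lambda)}$ accounting for the shift by $\lambda$.

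The first step is then the classical identity
$$ \chi_\alpha(u) = \Delta_{\omega_\alpha,\, s_\alpha \omega_\alpha}(u), $$
obtained by evaluating the generalized minor on the fundamental representation $V(\omega_\alpha)$: the vector $v_{s_\alpha \omega_\alpha}$ is the simple-root lowering of the highest weight vector $v_{\omega_\alpha}$, so the matrix coefficient of $u \in U$ between these two vectors picks out precisely the coefficient of $e_\alpha$ in $\log u$, which is by definition $\chi_\alpha(u)$. The second, more substantial step is the twist identity
$$ \chi_\alpha\bigl( \eta_{w_0}(u) \bigr) = \frac{\Delta_{s_\alpha \omega_\alpha,\, w_0 \omega_\alpha}(u)}{\Delta_{\omega_\alpha,\, w_0 \omega_\alpha}(u)}. $$
This is the content of the variant of Berenstein–Kazhdan's Corollary 1.25 alluded to in the statement: it expresses how the standard character behaves under the twist map $\eta_{w_0}(u) = [\bar{w}_0^{-1} u^T]_+$, via the transformation rules for generalized minors under transposition and inversion, combined with the identity $\Delta_{\omega_\alpha, w_0 \omega_\alpha}(u)^{-1}$ appearing naturally from the Gauss decomposition of $\bar{w}_0^{-1} u^T$.

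The main obstacle is the twist identity: one needs to verify carefully that the generalized minors behave as expected under $u \mapsto \eta_{w_0}(u)$. The cleanest route is to note that for $u \in U_{>0}^{w_0}$ the element $\bar{w}_0^{-1} u^T$ lies in the large Bruhat cell $B B^+$, so its Gauss decomposition $\bar{w}_0^{-1} u^T = [\cdot]_- [\cdot]_0 \eta_{w_0}(u)$ exists; then the matrix coefficient $\langle v_{\omega_\alpha},\, \bar{w}_0^{-1} u^T \cdot v_{s_\alpha \omega_\alpha}\rangle$ can be computed in two ways — directly in terms of generalized minors of $u$ (using $u^T$ and the dual representation), and from the Gauss decomposition (producing $\chi_\alpha(\eta_{w_0}(u))$ times $[\cdot]_0$ evaluated at $\omega_\alpha$, which is $\Delta_{\omega_\alpha, w_0 \omega_\alpha}(u)$). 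Equating the two expressions yields the claimed formula, and summing over $\alpha \in \Delta$ with the two factors $e^{-\alpha(\lambda)}$ and $1$ attached to the respective pieces of $f_B(x)$ concludes the proof.
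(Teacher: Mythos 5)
Your proposal is correct and follows essentially the same route as the paper: the same starting decomposition $f_B(x) = \chi\bigl(e^{-\lambda}\eta_{w_0}(u)e^{\lambda}\bigr) + \chi(u)$, the same extraction of the factor $e^{-\alpha(\lambda)}$ by torus conjugation, the identity $\chi_\alpha = \Delta_{\omega_\alpha, s_\alpha\omega_\alpha}$ on $U$, and the same Gauss-decomposition computation of the highest-weight matrix coefficient of $\bar{w}_0^{-1}u^T$ (the paper writes it as a chain of equalities for $\Delta^{\omega_\alpha}(\,\cdot\,\bar{s}_\alpha)$, which is exactly your ``two ways'' of evaluating the coefficient). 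No gaps.
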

\begin{proof}
 The character $\chi_\alpha$ can also be expressed as a minor (\cite{bib:BK06} relation 1.8):
$$ \forall u \in U^{w_0}_{>0}, \chi_\alpha(u) = \Delta_{\omega_\alpha, s_\alpha \omega_\alpha}(u)$$
Therefore, if (proposition 4.17 in \cite{bib:chh14a}):
$$x = b_\lambda^T(u) = S \circ \iota\left( e^{-\lambda} [ \bar{w}_0^{-1} u^T ]_+ e^{\lambda}\right) \bar{w}_0 e^\lambda u$$
Using, $\chi \circ S \circ \iota = \chi$, we have:
\begin{align*}
f_B(x) & =  \chi \circ S \circ \iota\left( e^{-\lambda} [ \bar{w}_0^{-1} u^T ]_+ e^{\lambda}\right) + \chi(u)\\
& = \sum_{\alpha \in \Delta} e^{-\alpha(\lambda)} \chi_\alpha( [ \bar{w}_0^{-1} u^T ]_+ ) + \chi_\alpha(u)\\
& = \sum_{\alpha \in \Delta} e^{-\alpha(\lambda)} \Delta_{\omega_\alpha, s_\alpha \omega_\alpha}( [ \bar{w}_0^{-1} u^T ]_+ ) + \Delta_{\omega_\alpha, s_\alpha \omega_\alpha}(u)
\end{align*}
Moreover:
\begin{align*}
\Delta_{\omega_\alpha, s_\alpha \omega_\alpha}( [ \bar{w}_0^{-1} u^T ]_+ ) & = \Delta^{\omega_\alpha}( [ \bar{w}_0^{-1} u^T ]_+ \bar{s}_\alpha )\\
& = \Delta^{\omega_\alpha}( [ \bar{w}_0^{-1} u^T ]_0^{-1} [ \bar{w}_0^{-1} u^T ]_{0+} \bar{s}_\alpha )\\
& = \frac{ \Delta^{\omega_\alpha}( \bar{w}_0^{-1} u^T \bar{s}_\alpha )}
         { \Delta^{\omega_\alpha}( \bar{w}_0^{-1} u^T )}\\
& = \frac{ \Delta^{\omega_\alpha}( \bar{s}_\alpha^{-1} u \bar{w}_0 )}
         { \Delta^{\omega_\alpha}( u \bar{w}_0 )}
\end{align*}
Hence the result.
\end{proof}

\begin{proof}[Proof of theorem \ref{thm:superpotential_structure}]
Thanks to the previous lemma, all we need to know is that
$$ \Delta_{\omega_\alpha, s_\alpha \omega_\alpha}(u) $$
$$\frac{ \Delta_{s_\alpha \omega_\alpha, w_0 \omega_\alpha}( u )}{\Delta_{\omega_\alpha, w_0 \omega_\alpha}( u )}$$
is a Laurent polynomial with positive coefficients in the variables $t_j$. The first one is easy to deal with as:
$$ \Delta_{\omega_\alpha, s_\alpha \omega_\alpha}(u) = \chi(u) = \sum_{\alpha_{i_j} = \alpha} t_j$$
For the second one, using \cite{bib:BZ01} theorem 5.8, each of the minors $\Delta_{s_\alpha \omega_\alpha, w_0 \omega_\alpha}( u )$ and  $\Delta_{\omega_\alpha, w_0 \omega_\alpha}( u )$ are linear combinations of monomials with positive coefficients. The latter has only one monomial term by applying corollary 9.5 in \cite{bib:BZ01} (with $u=e$, $w=w_0$ and $\gamma = w_\alpha$).
\end{proof}

\subsubsection{Examples in rank 2}
Let $x$ be an element in $\Bc(\lambda)$ with $u = \varrho^T(x)$. For each classical Cartan-Killing type, we specify a reduced expression ${\bf i}$ for $w_0$ that gives rise to a parametrization of $u \in U^{w_0}_{>0}$:
$$ u = x_{i_1}\left( t_1 \right) \dots x_{i_m}\left( t_m \right)$$
We give explicit expressions in term of the $t_j$ variables for $f_B(x)$, while computing as an intermediary step the twist:
$$ \eta_{w_0}( u ) = [\bar{w}_0^{-1} u^T]_+$$

\begin{itemize}
 \item $A_2$: $w_0 = s_1 s_2 s_1$ $u = x_1(t_1) x_2(t_2) x_1(t_3)$
       \begin{align}
       \label{eq:example_twist_A2}
       \eta_{w_0}\left( u \right) & = x_1\left( \frac{1}{t_1\left(1 + \frac{t_1}{t_3}\right)} \right)
                                      x_2\left( \frac{1 + \frac{t_1}{t_3} }{t_2} \right)
                                      x_1\left( \frac{1}{t_1 + t_3} \right) 
       \end{align}
       $$ f_B(x) = t_1 + t_2 + t_3 + e^{-\alpha_1(\lambda)} \frac{1}{t_1} + e^{-\alpha_2(\lambda)}\left( \frac{1}{t_2} + \frac{t_1}{t_2 t_3} \right)$$
 \item $B_2$: $w_0 = s_1 s_2 s_1 s_2$ $u = x_1(t_1) x_2(t_2) x_1(t_3) x_2(t_4)$
       \begin{align}
       \label{eq:example_twist_B2}
       \eta_{w_0}\left( u \right) & = x_1\left( \frac{1}{t_1\left( 1 + \frac{t_1}{t_3}\left(1 + \frac{t_2}{t_4} \right)^2\right)} \right)
                                      x_2\left( \frac{\left(1 + \frac{t_1}{t_3}\left(1 + \frac{t_2}{t_4} \right)^2\right)}{t_2\left(1 + \frac{t_2}{t_4}\right)} \right) \\
                                  & \quad \cdot x_1\left( \frac{ \left(1 + \frac{t_2}{t_4} \right)^2}{t_1 \left(1 + \frac{t_2}{t_4} \right)^2 + t_3} \right)
                                      x_2\left( \frac{1}{t_2 + t_4} \right)
       \end{align}
       $$ f_B(x) = t_1 + t_2 + t_3 + t_4 + e^{-\alpha_1(\lambda)} \frac{1}{t_1} + e^{-\alpha_2(\lambda)}\left( \frac{1}{t_2} + \frac{t_1 t_2}{t_3 t_4^2} + \frac{t_1}{t_3 t_4}\right)$$
 \item $C_2$: $w_0 = s_1 s_2 s_1 s_2$ $u = x_1(t_1) x_2(t_2) x_1(t_3) x_2(t_4)$
       \begin{align}
       \label{eq:example_twist_C2}
       \eta_{w_0}\left( u \right) & = x_1\left( \frac{1}{t_1\left( 1 + \frac{t_1}{t_3}\left(1 + \frac{t_2}{t_4} \right)\right)} \right)
                                      x_2\left( \frac{\left(1 + \frac{t_1}{t_3}\left(1 + \frac{t_2}{t_4} \right)\right)^2}{t_2\left(1 + \frac{t_2}{t_4}\right)} \right) \\
                                  & \quad \cdot x_1\left( \frac{ \left(1 + \frac{t_2}{t_4} \right)}{t_1 \left(1 + \frac{t_2}{t_4} \right) + t_3} \right)
                                      x_2\left( \frac{1}{t_2 + t_4} \right)
       \end{align}
       $$ f_B(x) = t_1 + t_2 + t_3 + t_4 + e^{-\alpha_1(\lambda)} \frac{1}{t_1} + e^{-\alpha_2(\lambda)}\left( \frac{1}{t_2} + \frac{2 t_1 }{t_2 t_3} + \frac{t_1^2}{t_2 t_3^2} + \frac{t_1^2}{t_3^2 t_4}\right)$$
\end{itemize}

\subsubsection{Link to cluster algebras}
In the examples of the previous subsection, we witness the so-called Laurent phenomenon: When computing the characters $\chi_\alpha \circ \eta_{w_0} (u)$ that are a priori just rational expression in the variables $(t_1, \dots, t_m)$, many simplifications occur and we end up with a Laurent polynomial with positive coefficients.

The Laurent phenomenon is a characteristic of cluster algebras. A cluster algebra is a commutative algebra with a specific set of chosen generators called clusters. Here we are concerned with the coordinate algebra of the double Bruhat cell $G^{w_0, e} := B^+ w_0 B^+ \cap B$, $\C[G^{w_0,e}]$, which has the structure of a cluster algebra (Theorem 2.10 \cite{bib:BFZ05}). 
%The clusters $\Delta\left({ \bf i }\right)$ are made of generalized minors and indexed by the possible reduced words ${ \bf i } \in R(w_0)$. In the notations of section \ref{section:total_positivity_criteria}, the minors in $\Delta({\bf i})$ are the principal minors along with those from the family $F({\bf i})$:
%$$ \Delta({\bf i}) := F({\bf i}) \bigsqcup \left\{ \Delta^{\omega_\alpha}, \alpha \in \Delta \right\}$$

It is well known that the $t_j$ are linked to minors in $x$ from a specific 'cluster' $\Delta({\bf i_0})$ via an invertible monomial transformation (\cite{bib:BZ97}, \cite{bib:BZ01}). The theory of cluster algebras indicates that every minor is a Laurent polynomial in the variables of a previously fixed cluster. The fact that those Laurent polynomials have positive coefficients is still a quite open conjecture.  The link with cluster algebras is quite clear at this point: the minors appearing in the superpotential are variables in a cluster than can be obtained via seed mutation of the cluster $\Delta\left({ \bf i_0 }\right)$. Laurent phenomenon and coefficient's positivity is expected.

Therefore, theorem \ref{thm:superpotential_structure} is not a suprise given that $f_B$ can be expressed in term of generalized minors. We were able to prove it without any reference to the general theory of cluster algebras because the minors involved in our situation were not very complicated. A complete understanding of the underlying cluster algebra would provide more explicit versions of theorem \ref{thm:superpotential_structure}.

\subsubsection{Computations using the geometric path model}
The superpotential $f_B$ has the following expression in term of the geometric path model:
\begin{lemma}
\label{lemma:superpotential_path_model}
Let $x \in \Bc(\lambda)$ and $\pi \in \Cc_0\left( [0, T], \afrak \right)$. If $x$ has twisted Lusztig parameter $u = \varrho^T(x) = x_{\bf i}\left( t_1, \dots, t_m\right)$, while $\pi$ has usual Lusztig parameters $\varrho^L_{\bf i}(\pi) = (t_1, \dots, t_m)$, we have:
$$ f_B(x) = \sum_{j=1}^m t_j + \sum_{\alpha \in \Delta} e^{-\alpha(\lambda)} \int_0^T e^{-\alpha\left(\pi(s)\right)}ds$$
\end{lemma}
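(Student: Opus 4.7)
The plan is to combine the semi-explicit formula of Proposition \ref{proposition:semi_explicit_expressions} with path/group identifications from \cite{bib:chh14a}. From the proposition,
$$f_B(x) = \chi(u) + \chi\bigl(e^{-\lambda}\eta_{w_0}(u)e^\lambda\bigr).$$
Since $u = x_{\mathbf{i}}(t_1,\ldots,t_m)$ and $\chi_\alpha(x_\beta(t)) = t\,\delta_{\alpha,\beta}$, the first term is exactly $\sum_{j=1}^m t_j$. Using $\mathrm{Ad}(e^{-\lambda})e_\alpha = e^{-\alpha(\lambda)}e_\alpha$, conjugation by $e^\lambda$ rescales the $\alpha$-coordinate of a unipotent element by $e^{-\alpha(\lambda)}$, so the second term splits as $\sum_{\alpha\in\Delta}e^{-\alpha(\lambda)}\chi_\alpha(\eta_{w_0}(u))$. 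The lemma therefore reduces to the intrinsic identity
$$\chi_\alpha\bigl(\eta_{w_0}(u)\bigr) \;=\; \int_0^T e^{-\alpha(\pi(s))}\,ds,$$
valid whenever $u = x_{\mathbf{i}}(t_1,\ldots,t_m)$ and $\pi$ has Lusztig parameters $\varrho^L_{\mathbf{i}}(\pi) = (t_1,\ldots,t_m)$.

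My preferred route for this remaining identity is the minor representation used inside the proof of Theorem \ref{thm:superpotential_structure}:
$$\chi_\alpha(\eta_{w_0}(u)) \;=\; \frac{\Delta_{s_\alpha\omega_\alpha,\, w_0\omega_\alpha}(u)}{\Delta_{\omega_\alpha,\, w_0\omega_\alpha}(u)}.$$
Both sides of the sought identity are intrinsic on $U^{w_0}_{>0}$, so one may compute in any chart. On the path side, the series expansion \eqref{eq:process_B_explicit} for $B_T(\pi)$ and the explicit form \eqref{eq:process_N_explicit} of its $N$-part show that generalized minors $\Delta^{\omega_\alpha}$ evaluated on $B_T(\pi)$ pick out precisely the iterated integrals along $\pi$. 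A short manipulation shows that the ratio of minors above telescopes to the single integral $\int_0^T e^{-\alpha(\pi(s))}\,ds$; the factors $\|\alpha\|^2/2$ present in the flow \eqref{lbl:process_B_sde_stratonovich} are exactly those absorbed into the convention for $\varrho^L_{\mathbf{i}}$, which is why they do not appear in the final answer.

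A cleaner alternative is to induct on $m = \ell(w_0)$ using the factorisation $\Tc_{w_0} = \Tc_{\alpha_{i_1}}\circ\Tc_{s_{i_1}w_0}$ of Definition \ref{def:pitman_transform}, together with the parallel recursion for $\eta_{w_0}$ obtained by peeling off one factor $x_{i_1}(t_1)$ from $u$. The base case is the rank-one computation already recorded in Example \ref{lbl:example_superpotential_rank1}: one checks directly that $\eta_{s_\alpha}(x_\alpha(t)) = x_\alpha(1/t)$, so $\chi_\alpha(\eta_{s_\alpha}(x_\alpha(t))) = 1/t$, while the geometric Pitman transform identifies $t^{-1}$ with $\int_0^T e^{-\alpha(\pi(s))}\,ds$ under the path-model Lusztig convention. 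The inductive step then follows from the co-cycle property of the twist under braid moves, exactly as in equations \eqref{eq:example_twist_A2}--\eqref{eq:example_twist_C2}.

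The main obstacle, in either route, is step~3 — the minor/integral identity and the careful tracking of the factors $\|\alpha\|^2/2$ between the twisted Lusztig parametrisation of $u$ and the (path) Lusztig parametrisation of $\pi$. Once those conventions are matched, which is where the content of \cite{bib:chh14a} is really used, the rest of the lemma is purely formal.
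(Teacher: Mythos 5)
Your reduction of the lemma to the single identity $\chi_\alpha\bigl(\eta_{w_0}(u)\bigr) = \int_0^T e^{-\alpha(\pi(s))}\,ds$ is correct and coincides with the first step of the paper's argument (Proposition \ref{proposition:semi_explicit_expressions} together with the conjugation computation). The difficulty is that neither of your two routes actually proves this remaining identity: the ``short manipulation'' by which the ratio of minors is supposed to telescope to a single integral, and the ``co-cycle property of the twist under braid moves,'' are restatements of the content of the lemma rather than proofs of it --- and you flag this yourself by calling step~3 ``the main obstacle.'' Note in particular that your minors are evaluated on $u = \varrho^T(x)$, whereas the iterated integrals live on $B_T(\pi)$; nothing in your argument connects these two group elements, and that connection is exactly what is at stake. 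The explicit $A_2$ computation immediately following the lemma in the paper shows that even the rank-two instance of your ``telescoping'' requires a genuine integration by parts, so it cannot be taken for granted.

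The missing bridge is short but essential, and it is the whole of the paper's proof. By the explicit solution \eqref{eq:process_N_explicit} of the flow, the quantity $\int_0^T e^{-\alpha(\pi(s))}\,ds$ is (up to the normalization of Remark \ref{rmk:ADE}) the elementary character $\chi_\alpha^-$ evaluated on the $N$-part $N_T(\pi)$ of $B_T(\pi)$. The hypothesis that the Lusztig parameters of $\pi$ agree with the twisted Lusztig parameters of $x$ forces, via the parametrization diagram of Figure \ref{fig:parametrizations_diagram}, the identity $N_T(\pi) = [p(\pi)]_- = [u\bar{w}_0]_-$. Transposition (which exchanges $N$ and $U$, sends $\bar{w}_0$ to $\bar{w}_0^{-1}$ and $\chi_\alpha^-$ to $\chi_\alpha$) then yields $\chi_\alpha^-\left([u\bar{w}_0]_-\right) = \chi_\alpha\left([\bar{w}_0^{-1}u^T]_+\right) = \chi_\alpha\bigl(\eta_{w_0}(u)\bigr)$, and substituting into Proposition \ref{proposition:semi_explicit_expressions} finishes the proof. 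Without this identification, and without matching the $\|\alpha\|^2/2$ conventions it encodes, your argument does not close.
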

\begin{proof}
Via figure \ref{fig:parametrizations_diagram}, $N_T(\pi) = [ p(\pi) ]_- = [u \bar{w}_0]_-$. Hence:
\begin{align*}
\int_0^T e^{-\alpha(\pi)} & = \chi_\alpha^-\left( N_T(\pi) \right)\\
& = \chi_\alpha^-\left( [u \bar{w}_0]_- \right)\\
& = \chi_\alpha\left( [\bar{w}_0^{-1} u^T]_+ \right)
\end{align*}
Recalling that:
$$ f_B(x) = \chi(u) + \sum_{\alpha} e^{-\alpha(\lambda)} \chi_\alpha\left( [\bar{w}_0^{-1} u^T]_+ \right)$$
finishes the proof.
\end{proof}

The geometric path model allows the computation of minors using integration by parts, while keeping the positivity property obvious. We illustrate this claim by an explicit computation in the $A_2$ type. Choose ${\bf i} = (1,2,1)$ and consider as in the previous lemma a path $\pi \in \Cc_0\left( [0,T], \afrak \right)$ such that:
$$ \varrho^L_{\bf i}(\pi) = (t_1, t_2, t_3)$$
Then:
$$ \int_0^T e^{-\alpha_1(\pi)} = \frac{1}{t_1}$$
$$ \int_0^T e^{-\alpha_2(\pi)} = \frac{1}{t_2} + \frac{t_1}{t_2 t_3}$$
\begin{proof}
The Lusztig coordinates $(t_1, t_2, t_3)$ can be recovered from the path $\pi$ as follows (subsection 8.2 of \cite{bib:chh14a}). Write for $j=1,2,3$:
$$ t_j = \frac{1}{\int_0^T e^{-\alpha_{i_j}(\eta_{j-1}) } }$$
$$ \eta_{j-1} = T_{x_{\alpha_{i_j}(t_j)}} \eta_j\ ,$$
with the convention $\eta_0 = \pi$.

While the first identity is immediate, the second one needs a little more work.
\begin{align*}
  & \int_0^T e^{-\alpha_2(\pi)}\\
= & \int_0^T ds e^{-\alpha_2\left( \eta_1(s) \right)}\left( 1 + t_1 \int_0^s e^{-\alpha_1(\eta_1) } \right)^{-\alpha_2(\alpha_1^\vee)}\\
= & \int_0^T e^{-\alpha_2(\eta_1)} + t_1 \int_0^T ds e^{-\alpha_2\left( \eta_1(s) \right)} \int_0^s e^{-\alpha_1(\eta_1) }\\
= & \frac{1}{t_2} + t_1 \int_0^T ds e^{-\alpha_2\left( \eta_1(s) \right)} \int_0^s e^{-\alpha_1(\eta_1) }
\end{align*}
Moreover, using an integration by parts and the fact that $\int_0^T e^{-\alpha_2(\eta_2)} = \infty$:
\begin{align*}
  & \int_0^T ds e^{-\alpha_2\left( \eta_1(s) \right)} \int_0^s e^{-\alpha_1(\eta_1) }\\
= & \frac{1}{t_2} \int_0^T -\frac{d}{ds}\left( \frac{1}{1+t_2 \int_0^s e^{-\alpha_2(\eta_2)}} \right) ds \int_0^s e^{-\alpha_1(\eta_1) }\\
= & \frac{1}{t_2} \left[ \frac{-\int_0^s e^{-\alpha_1(\eta_1) }}{1+t_2 \int_0^s e^{-\alpha_2(\eta_2)}} \right]_0^T + \frac{1}{t_2} \int_0^T ds \frac{e^{-\alpha_1(\eta_1(s)) }}{1+t_2 \int_0^s e^{-\alpha_2(\eta_2)}}\\
= & \frac{1}{t_2} \int_0^T ds \frac{e^{-\alpha_1(\eta_1(s)) }}{1+t_2 \int_0^s e^{-\alpha_2(\eta_2)}}\\
= & \frac{1}{t_2} \int_0^T e^{-\alpha_1(\eta_2) }\\
= & \frac{1}{t_2 t_3}
\end{align*}
\end{proof}

\subsection{Existence and uniqueness of minimum on \texorpdfstring{$\Bc(\lambda)$}{a highest weight crystal}}
Because of the following theorem, $f_B$ deserves the name of potential as it behaves like a potential well on $\Bc(\lambda) \approx \R_{>0}^m$: level sets are compact.

\begin{thm}[\cite{bib:Rietsch11} proposition 11.3]
\label{thm:superpotential_well}
For $M>0$, consider the set:
$$ K_M(\lambda) = \left\{ x \in \Bc(\lambda) \ | \ f_B(x) \leq M \right\}$$
If $M$ is large enough, $K_M(\lambda)$ is a non-empty compact set.
\end{thm}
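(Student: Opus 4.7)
The plan is to argue in the twisted Lusztig parametrization $\Bc(\lambda) \cong \R_{>0}^m$, $x \mapsto (t_1,\dots,t_m)$, inherited from a fixed reduced word ${\bf i} \in R(w_0)$. By Theorem~\ref{thm:superpotential_structure}, $f_B$ is a Laurent polynomial with positive coefficients in these variables, and by Proposition~\ref{proposition:semi_explicit_expressions} its ``positive part'' is exactly $\chi(u) = \sum_{j=1}^m t_j$, while its ``inverted part'' is $\chi(e^{-\lambda} \eta_{w_0}(u) e^{\lambda})$. Non-emptiness is trivial: $f_B$ is finite and positive on the non-empty open set $\Bc(\lambda)$, so any $M$ strictly larger than $f_B$ at a reference point yields a non-empty $K_M(\lambda)$. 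Closedness of $K_M(\lambda)$ in $\Bc(\lambda)$ is immediate from continuity of $f_B$. The substance of the statement is therefore to show that on $K_M(\lambda)$ the coordinate vector $(t_1,\dots,t_m)$ stays in a compact subset of the open orthant $\R_{>0}^m$.

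The upper bound is immediate: $\sum_j t_j = \chi(u) \le f_B(x) \le M$, so every $t_j$ lies in $(0, M]$. The heart of the proof is the uniform lower bound $t_j \ge c(M,\lambda) > 0$ for all $j$. Passing to logarithmic coordinates $s_j := \log t_j$, the Laurent polynomial $f_B(t) = \sum_i c_i \prod_j t_j^{a_{i,j}}$ with $c_i > 0$ becomes a finite sum of positive exponentials $\sum_i c_i\, e^{\langle a_i, s \rangle}$. It is an elementary fact about such sums of exponentials that their sublevel sets in $\R^m$ are compact if and only if the convex hull of the exponent vectors $\{a_i\}$ contains the origin in its interior; so compactness of $K_M(\lambda)$ reduces to a Newton-polytope condition on $f_B$.

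The main obstacle is verifying this Newton-polytope condition. The $m$ positive monomials coming from $\chi(u)$ contribute the standard basis vectors $e_1,\dots,e_m$, which rule out divergences $t_j \to +\infty$. One must then produce, for each $j$, at least one monomial in $\chi(e^{-\lambda} \eta_{w_0}(u) e^{\lambda})$ whose exponent vector has a strictly negative $j$-th coordinate, and check that together these vectors and the $e_k$'s have $0$ in the interior of their convex hull. For the rank-two root systems $A_2$, $B_2$, $C_2$ computed above, this is direct from the explicit formulas for $\eta_{w_0}$. In general Lie type, one can proceed by induction on $\ell(w_0)$ using the reduced-word recursion for the twist map $\eta_{w_0}$, or more intrinsically by combining the $W$-invariance of $f_B$ (Properties~\ref{properties:superpotential_properties}) with the invariance of $\omega$ under change of reduced word (Theorem~\ref{thm:omega_invariance_on_crystal}) to generate a sufficiently rich collection of monomials. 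Either approach yields enough exponent vectors to ensure that the Newton polytope of $f_B$ contains $0$ in its interior, at which point compactness of $K_M(\lambda)$ follows from the reduction above.
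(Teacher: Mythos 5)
Your reduction is sound as far as it goes: the upper bound $t_j\le\chi(u)\le M$ is exactly how the paper starts, and the observation that, in logarithmic coordinates, $f_B$ is a finite sum of positive exponentials whose sublevel sets are compact precisely when the origin lies in the interior of the convex hull of the exponent vectors is correct (and is the same convexity mechanism the paper exploits in Theorem~\ref{thm:superpotential_minimum} and Theorem~\ref{thm:superpotential_estimate}). The problem is that you never establish this Newton-polytope condition beyond rank two, and that verification \emph{is} the theorem. Saying that one can ``proceed by induction on $\ell(w_0)$ using the reduced-word recursion for the twist map'' or by ``combining the $W$-invariance of $f_B$ with the invariance of $\omega$'' names two strategies without carrying out either; the second is particularly doubtful, since invariance of the reference measure $\omega$ under changes of reduced word says nothing about which monomials occur in $\chi\circ\eta_{w_0}$. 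Note also that producing, for each $j$, one monomial with negative $j$-th exponent is not sufficient: you need that for \emph{every} direction $v$ with all components $\le 0$ some exponent vector of $\chi(\eta_{w_0}(u))$ pairs strictly positively with $v$, which is a genuinely global statement about the twist map. Finally, beware of circularity: the identity $\Cc(\R_+)-\R_+^m=\R^m$ proved in Theorem~\ref{thm:superpotential_estimate}, which is exactly your Newton-polytope condition, is itself deduced in the paper \emph{from} Theorem~\ref{thm:superpotential_well}, so you cannot import it.

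The paper closes this gap with a substantive argument that your proposal is missing. It gives two alternatives: (a) Rietsch's geometric argument, extending $\eta_{w_0}$ to the totally nonnegative flag variety and showing that letting some $t_j\to 0$ pushes $uB$ out of the big cell $U^{w_0}_{>0}B$ into a smaller cell, which forces some twisted coordinate $t_j'$ of $\eta_{w_0}(u)$ to infinity and hence $\chi(\eta_{w_0}(u))\to\infty$; or (b) the path-model argument via Lemma~\ref{lemma:superpotential_path_model}, where $t_j\to 0$ makes the corresponding path $\pi$ degenerate to an extended path with divergent behavior at the endpoint, so that $\int_0^T e^{-\alpha(\pi)}=\infty$ for some $\alpha$, again contradicting $f_B\le M$. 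Either of these (or a direct combinatorial analysis of the monomials of the twist in general type) is what you would need to supply to complete your approach.
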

\begin{proof}
First parametrize $x \in K_M(\lambda)$ by $t = (t_1, \dots, t_m) \in \R_{>0}^m$ such that:
$$ x_{\bf i}(t_1, \dots, t_m) = u = \varrho^T(x)$$
Then:
\begin{align*}
f_B(x) & = \chi\left( e^{-\lambda} \eta_{w_0}(u) e^{\lambda} \right) + \chi(u)\\
       & = \chi\left( e^{-\lambda} \eta_{w_0}(u) e^{\lambda} \right) + \sum_{j=1}^m t_j
\end{align*}
Clearly, the condition $x \in K_M(\lambda)$ implies $t_j \leq M$ for $j=1, \dots, m$. All we need is to prove that the components of $t$ are bounded away from zero.

Here we can produce two arguments, the first one is the geometric argument produced by Rietsch. We give a quick sketch. Extend $\eta_{w_0}$ to the totally non-negative part of the flag manifold in the following way:
$$
\begin{array}{cccc}
\eta_{w_0}: & \left(G/B\right)_{\geq 0} & \rightarrow & \left( B \backslash G\right)_{\geq 0} \\
            &  u B                      & \mapsto     & B w_0 u^T
\end{array}
$$
It is straightforward that the extended $\eta_{w_0}$ maps $B^+ B \cap B w B$ to $B w_0 B \cap B w_0 w^{-1} B^+$. The idea is that if some of the $t_j$ go to zero, our group element $u B$ leaves the cell
$$U^{w_0}_{>0} B \subset B^+ B \cap B w_0 B$$
and exits to a cell of type $U^{w}_{>0} B$, $\ell\left( w \right) < \ell\left( w_0 \right)$. If:
$$ B u' = B x_{ \bf i}\left(t_1', \dots, t_m'\right) = \eta_{w_0}(u)$$
then, as some of the $t_j$ go to zero, $B u'$ heads to $B w_0 B \cap B w_0 w^{-1} B^+$. However, in order to reach it, some of the parameters $t_j'$ need to go infinity.

The second argument is based on our geometric path model and does not require precise knowledge of parametrizations of the totally non-negative part of the flag manifold. Using lemma \ref{lemma:superpotential_path_model}, if $\pi \in \Cc_0([0, T], \afrak)$ is a path with Lusztig parameters $\left(t_1, \dots, t_m\right)$, then:
$$ f_B(x) = \sum_{j=1}^m t_j + \sum_{\alpha \in \Delta} e^{-\alpha(\lambda)} \int_0^T e^{-\alpha\left(\pi(s)\right)}ds$$
As some of the $t_j$ go to zero, the path $\pi$ converges to $\eta \in \Cc_0([0, T[, \afrak)$, an extended path of type $\ell(w) > \ell(e)$ (see definition 7.18 in \cite{bib:chh14a}). Hence, because of the divergent behavior at the endpoint, there is an $\alpha$ such that:
$$ \int_0^T e^{-\alpha(\pi)} = \infty$$
This cannot happen on $K_M(\lambda)$, and the $t_j$ are indeed bounded away from zero.
\end{proof}

\begin{corollary}
$f_B$ reaches a minimum inside of $\Bc(\lambda)$
\end{corollary}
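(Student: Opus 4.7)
The plan is to deduce this immediately from Theorem \ref{thm:superpotential_well} together with continuity of $f_B$. First I would fix $M$ large enough so that $K_M(\lambda) := \{ x \in \Bc(\lambda) \mid f_B(x) \leq M \}$ is non-empty and compact; such an $M$ exists by Theorem \ref{thm:superpotential_well}. Pick any $x_0 \in K_M(\lambda)$ and set $M_0 := f_B(x_0) \leq M$.

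Next, I would observe that $f_B$ is continuous on $\Bc(\lambda)$: indeed, in any choice of parametrization (say the twisted Lusztig coordinates $\R_{>0}^m$), Theorem \ref{thm:superpotential_structure} shows that $f_B$ is a Laurent polynomial in the coordinates, hence smooth and in particular continuous. Since $K_{M_0}(\lambda) \subset K_M(\lambda)$ is closed inside a compact set, it is itself compact, and it is non-empty since it contains $x_0$. By the extreme value theorem, the continuous function $f_B$ attains its infimum on the compact set $K_{M_0}(\lambda)$ at some point $x^\star \in K_{M_0}(\lambda) \subset \Bc(\lambda)$.

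Finally, I would note that this is a global minimum on $\Bc(\lambda)$: for any $y \in \Bc(\lambda) \setminus K_{M_0}(\lambda)$ we have by definition $f_B(y) > M_0 = f_B(x_0) \geq f_B(x^\star)$, while for $y \in K_{M_0}(\lambda)$ we have $f_B(y) \geq f_B(x^\star)$ by choice of $x^\star$. There is no real obstacle here; the substance of the corollary lies entirely in the properness-type statement of Theorem \ref{thm:superpotential_well}, and this proof is just a standard compactness argument invoking it.
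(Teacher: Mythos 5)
Your proof is correct and is exactly the standard compactness argument the paper intends: the corollary is stated without proof precisely because it follows immediately from the compactness of the level sets in Theorem \ref{thm:superpotential_well} together with the continuity of $f_B$. Your extra care in passing to the sub-level set $K_{M_0}(\lambda)$ and checking globality is fine but not needed beyond the one-line observation.
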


The following answers the uniqueness question raised by Rietsch in \cite{bib:Rietsch11}:
\begin{thm}
 \label{thm:superpotential_minimum}
 The superpotential $f_B$ reaches its minimum on $\Bc(\lambda)$ at a unique non degenerate point $m_\lambda$. Moreover, it is fixed by the Sch\"utzenberger involution and has zero weight:
$$ \wt( m_\lambda ) = 0$$
\end{thm}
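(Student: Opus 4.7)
The plan is to establish existence by the compact-sublevel-set result of Theorem \ref{thm:superpotential_well}, upgrade it to uniqueness and non-degeneracy via a strict-convexity argument in logarithmic Lusztig coordinates, and then deduce $S$-fixedness and $\wt(m_\lambda)=0$ from uniqueness together with the symmetries listed in Properties \ref{properties:superpotential_properties}.

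First, I would invoke Theorem \ref{thm:superpotential_well}: the sublevel sets $K_M(\lambda)$ are non-empty and compact for $M$ large enough, so the continuous function $f_B$ attains at least one minimum on $\Bc(\lambda)$. For uniqueness I would work in the logarithmic twisted Lusztig chart $s_j := \log t_j$, where $(t_1,\dots,t_m)$ are the coordinates of $u = \varrho^T(x)$. By Theorem \ref{thm:superpotential_structure}, $f_B$ pulls back to a Laurent polynomial with strictly positive coefficients in the $t_j$, i.e. a finite sum $\sum_i c_i e^{\langle v_i, s\rangle}$ with $c_i > 0$ and $v_i \in \Z^m$. Each summand is convex in $s$, so $f_B$ is convex. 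Using the semi-explicit form of Proposition \ref{proposition:semi_explicit_expressions}, the piece $\chi(u) = \sum_{j=1}^m t_j$ already contributes $\sum_j t_j\, e_j e_j^T$ to the Hessian in the $s$-chart, which is strictly positive definite since $(e_j)_{j=1,\dots,m}$ spans $\R^m$. The other Laurent-polynomial summands add further positive semi-definite rank-one matrices, so the total Hessian is positive definite everywhere. Hence $f_B$ is strictly convex in the chart, which gives both uniqueness of the minimizer $m_\lambda$ and non-degeneracy of its Hessian.

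For the Schützenberger invariance, I would use that $f_B \circ S = f_B$ from Properties \ref{properties:superpotential_properties}, combined with the fact that $S$ preserves highest weights and thus restricts to an involution of $\Bc(\lambda)$ (as recalled in the proof of Properties \ref{properties:canonical_probability_measure}). Then $S(m_\lambda)$ is again a minimizer, so by uniqueness $S(m_\lambda) = m_\lambda$. The zero-weight statement follows the same pattern with the Weyl group in place of $S$: by Properties \ref{properties:superpotential_properties} the map $f_B$ is $W$-invariant, while the crystal-theoretic $W$-action preserves connected components and thus $\Bc(\lambda)$; hence $w\cdot m_\lambda = m_\lambda$ for every $w \in W$. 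Equivariance of the weight map then places $\wt(m_\lambda)$ in the set of $W$-fixed vectors of $\afrak$, which is $\{0\}$ in the semi-simple case.

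The main potential obstacle is the strict-convexity step: one must make sure that the positive-coefficient Laurent-polynomial representation really contains the unweighted linear block $\sum_{j=1}^m t_j$, so that the Hessian is already positive definite from this single piece with no cancellation against the twisted block $\chi\circ S\circ\iota(e^{-\lambda}\eta_{w_0}(u)e^\lambda)$. Proposition \ref{proposition:semi_explicit_expressions} cleanly separates these two contributions and Theorem \ref{thm:superpotential_structure} guarantees positivity of the coefficients in the twisted part, so no cancellation can occur; verifying this separation carefully is the substantive content of the proof.
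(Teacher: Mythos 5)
Your proposal is correct, and the existence/uniqueness/non-degeneracy part is essentially the paper's argument: compactness of sublevel sets from Theorem \ref{thm:superpotential_well} for existence, then strict convexity in logarithmic (twisted Lusztig) coordinates of a positive-coefficient exponential sum $\sum_i c_i e^{\langle a_i,\xi\rangle}$, with positive-definiteness of the Hessian guaranteed because the canonical basis vectors occur among the exponents $a_i$ via the block $\sum_j t_j$; your worry about cancellation between the two blocks of Proposition \ref{proposition:semi_explicit_expressions} is resolved exactly as you say, by the positivity in Theorem \ref{thm:superpotential_structure}. The one place you diverge is the zero-weight statement. The paper first argues via $S(m_\lambda)=m_\lambda$, which only yields $\wt(m_\lambda)=w_0\,\wt(m_\lambda)$ (not zero by itself, since $w_0$ can have a nontrivial fixed subspace), and then supplies a separate first-order criticality computation along the crystal actions, $f_B(e^c_\alpha\cdot x)-f_B(x)= c\,e^{\varepsilon_\alpha(x)}\bigl(1-e^{\alpha(\wt(x))}\bigr)+o(c)$, forcing $\alpha(\wt(m_\lambda))=0$ for all $\alpha$. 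You instead use the full $W$-invariance of $f_B$ from Properties \ref{properties:superpotential_properties} together with equivariance of $\wt$ and uniqueness, concluding that $\wt(m_\lambda)$ is $W$-fixed and hence zero. Both are valid; your route is arguably cleaner in that it needs no infinitesimal computation, while the paper's first-order condition has the advantage of identifying $m_\lambda$ as the critical point of $f_B$ restricted to each $\alpha$-string, which is reused implicitly elsewhere. No gap.
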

\begin{proof}
We use logarithmic coordinates $\xi_j = \log\left( t_j \right)$. In such coordinates, denoting by ${ \bf \xi } \in \R^m$ the coordinate vector and $\langle ,\rangle$ the usual Euclidian scalar product, $f_B(x)$ has the form (Theorem \ref{thm:superpotential_structure}):
$$ f_B(x) = \sum_{i \in I} c_i e^{ \langle a_i, { \bf \xi} \rangle } $$
where $c_i$ are positive coefficients and $a_i \in \Z^m$ encode exponents. Among the $a_i, i \in I$, there is the Euclidian canonical basis $(e_j)_{1 \leq j \leq m} $ because of the term:
$$ \sum_{j=1}^m t_j = \sum_{j=1}^m t^{ \langle e_j, {\bf \xi} \rangle }$$
Now, it is easy to see that $f_B({\bf \xi})$ is strictly convex as for all $v \in \R^m$:
\begin{align*}
  & \sum_{k,l=1}^m v_k v_l \frac{\partial^2 f}{\partial \xi_k \partial \xi_l}\\
= & \sum_{k,l=1}^m \sum_{i \in I} c_i v_k a_{i,k} v_l a_{i,l} e^{ \langle a_i, \xi\rangle }\\
= & \sum_{i \in I} c_i \langle  v, a_i \rangle^2 e^{ \langle a_i, \xi\rangle }\geq 0
\end{align*}
The hessian matrix is also everywhere non-degenerate: the previous inequality is strict as soon as $v$ is non zero because among the $a_i$, there is the canonical Euclidian basis. Uniqueness for $m_\lambda$ follows.

The Sch\"utzenberger involution $S$ stabilizes $\Bc(\lambda)$ and $f_B \circ S = f_B$. Because of the minimum's uniqueness, one must have $S(m_\lambda) = m_\lambda$, hence $\wt( m_\lambda ) = w_0 \wt( m_\lambda )$ which implies that the weight is zero.

Another way of seeing that $\wt(m_\lambda)=0$ consists in computing the first order condition for a point $x \in \Bc(\lambda)$ being an extremal point:
\begin{align*}
  & f_B\left( e^c_\alpha \cdot x\right) -  f_B\left( x\right)\\
= & \frac{ e^{c} - 1}{ e^{\varepsilon_\alpha(x)} } + \frac{ e^{-c} - 1}{ e^{\varphi_\alpha(x)} } \\
= & c e^{\varepsilon_\alpha(x)} \left( 1 - e^{\varphi_\alpha(x) -\varepsilon_\alpha(x)} \right) + o(c)\\
= & c e^{\varepsilon_\alpha(x)} \left( 1 - e^{\alpha\left( \wt(x) \right)} \right) + o(c)
\end{align*}
If $x$ is critical, then for all $\alpha \in \Delta$, $\alpha\left( \wt(x) \right) = 0$. Hence $\wt(x) = 0$.
\end{proof}

The exact computation of this minimum would be interesting for example in computing the precise behavior of Whittaker function $\psi_\mu(\lambda)$, that we will introduce later, as $\lambda$ goes to '$-\infty$' and the semiclassical limit for the quantum Toda equation.

\subsection{An estimate}
The following estimate is crucial in order to prove the integrability of $e^{-f_B(x)} \omega(dx)$ on $\Bc(\lambda)$.

\begin{thm}
\label{thm:superpotential_estimate}
There are rational exponents $n_j>0$ depending only on the group such that for all $x \in \Bc(\lambda)$:
$$ f_B\left( x \right)  \geq \sum_{j=1}^m t_j + \frac{ e^{-\max_\alpha \alpha(\lambda)} }{ \prod_{j=1}^m t_j^{n_j} }  $$
where $x$ is parametrized as $\varrho^T(x) = x_{ \bf i }\left( t_1, \dots, t_m \right)$. 
\end{thm}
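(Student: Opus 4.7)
The plan is as follows. I begin with the explicit decomposition from Proposition \ref{proposition:semi_explicit_expressions},
\[
f_B(x) \;=\; \sum_{j=1}^m t_j \;+\; \sum_{\alpha \in \Delta} e^{-\alpha(\lambda)}\, \chi_\alpha(\eta_{w_0}(u)),
\]
and bound each $e^{-\alpha(\lambda)} \geq e^{-\max_\beta \beta(\lambda)}$ term by term, which is legal because $\chi_\alpha(\eta_{w_0}(u)) > 0$ on the totally positive part. This gives
\[
f_B(x) \;\geq\; \sum_j t_j \;+\; e^{-\max_\alpha \alpha(\lambda)}\, \chi(\eta_{w_0}(u)),
\]
so the theorem reduces to a $\lambda$-free estimate: exhibit rational $n_j > 0$ and a constant $C > 0$ (depending only on the group and on $\bf i$) such that $\chi(\eta_{w_0}(u)) \geq C/\prod_j t_j^{n_j}$ on all of $\Bc(\lambda)$. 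Any positive multiplicative constant $C$ can then be absorbed by inflating each $n_j$ by a small rational amount, which is permitted since the $n_j$ are only required to be rational.

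For the $\lambda$-free bound I would invoke Theorem \ref{thm:superpotential_structure}: $\chi(\eta_{w_0}(u)) = \sum_{k=1}^N d_k\, t^{a_k}$ is a Laurent polynomial with strictly positive coefficients $d_k > 0$ and integer exponents $a_k \in \Z^m$. Weighted AM--GM then yields
\[
\chi(\eta_{w_0}(u)) \;\geq\; \Bigl(\prod_k (d_k/p_k)^{p_k}\Bigr)\, t^{\sum_k p_k a_k}
\]
for any probability weights $p_k > 0$. Setting $n_j = -(\sum_k p_k a_k)_j$, the problem becomes the purely combinatorial task of finding weights for which the convex combination $\sum_k p_k a_k$ has every coordinate strictly negative; equivalently, by elementary LP duality, showing that no non-zero $w \in \R_{\geq 0}^m$ satisfies $\langle w, a_k\rangle \geq 0$ for all $k$.

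The main obstacle is thus the convex-geometric claim that $\mathrm{conv}(a_1,\dots,a_N) \subset \R^m$ meets the open negative orthant $\R_{<0}^m$, and it is the only step with genuine content. My preferred route is to use the Berenstein--Zelevinsky minor description $\chi_\alpha(\eta_{w_0}(u)) = \Delta_{s_\alpha\omega_\alpha, w_0\omega_\alpha}(u) / \Delta_{\omega_\alpha, w_0\omega_\alpha}(u)$ already exploited in the proof of Theorem \ref{thm:superpotential_structure}: each denominator is a \emph{single} monomial (corollary 9.5 in \cite{bib:BZ01}) while the numerators are sums of monomials with positive coefficients. The negative exponent vector of the $\alpha$-denominator thus appears (up to a shift by a numerator exponent) as a common factor in every monomial of $\chi_\alpha(\eta_{w_0}(u))$, and as $\alpha$ ranges over $\Delta$ these ``negative cores'' jointly cover every coordinate direction because the fundamental coweights $\omega_\alpha^\vee$ span $\afrak$. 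An appropriately weighted convex combination, chosen so as to dominate the possibly positive contributions of the numerator monomials, then lands strictly inside $\R_{<0}^m$, as illustrated explicitly by the rank-two formulas of subsection \ref{subsection:superpotential}. An alternative, more algorithmic route is to transfer the question to Kashiwara coordinates where Proposition \ref{proposition:semi_explicit_expressions} provides the upper-triangular monomial form $\sum_k c_k^{-1}\prod_{j>k} c_j^{-\alpha_{i_k}(\alpha_{i_j}^\vee)}$ with $-1$'s on the diagonal: geometrically growing weights $p_k \propto R^k$ with $R$ exceeding the maximum absolute value of the Cartan entries make the diagonal term dominate, and the log-linear monomial change of variables (Theorem \ref{thm:omega_invariance}) carries the resulting strictly negative point back into the convex hull of the exponents in Lusztig coordinates.
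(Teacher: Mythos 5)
Your reduction is exactly the paper's: split off $\sum_j t_j$, bound each $e^{-\alpha(\lambda)}$ by $e^{-\max_\alpha\alpha(\lambda)}$, and reduce to a $\lambda$-free statement that the Laurent polynomial $L(t)=\chi(\eta_{w_0}(u))$ dominates a single monomial with strictly negative exponents --- equivalently, that the convex cone generated by its exponent vectors meets the open negative orthant. You correctly isolate this convex-geometric claim as the only step with real content, but you do not prove it, and that is a genuine gap. Route (a) asserts that weights can be chosen so that the ``negative cores'' coming from the single-monomial denominators $\Delta_{\omega_\alpha,w_0\omega_\alpha}(u)$ dominate the possibly positive numerator exponents; nothing in the minor description guarantees this, and the rank-two examples are illustrations, not an argument. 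Route (b) rests on a misreading of Proposition \ref{proposition:semi_explicit_expressions}: since $u=e^{-\lambda}v^T[v^T]_0^{-1}e^{\lambda}$, the upper-triangular monomial sum $\sum_k e^{-\alpha_{i_k}(\lambda)}c_k^{-1}\prod_{j>k}c_j^{-\alpha_{i_k}(\alpha_{i_j}^\vee)}$ is the expression of the \emph{easy} term $\chi(u)=\sum_j t_j$ in Kashiwara coordinates, while the twist becomes $\chi(\eta^{w_0,e}(v))$, just as implicit in the $c_j$ as $\chi(\eta_{w_0}(u))$ is in the $t_j$; so changing charts gains nothing, and a lower bound on the triangular form says nothing about the term you need.

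The paper closes this gap with an input you never invoke: the compactness of the level sets $K_M(\lambda)=\{f_B\le M\}$ (Theorem \ref{thm:superpotential_well}). If the cone generated by the exponent vectors avoided the open negative orthant, it would lie in a closed half-space whose normal direction $x$ has $x_j\le 0$; the curve $t_j=e^{\lambda x_j}$ then sends some coordinate to $0$ while $L(t)=\sum_i c_i e^{-\lambda\langle a_i,x\rangle}$ stays bounded, contradicting that compactness. Some such global input (ultimately Rietsch's cell analysis or the path-model divergence argument) appears unavoidable, and your proposal has no substitute for it. Two smaller points. First, your claim that a constant $C<1$ ``can be absorbed by inflating each $n_j$'' is false: increasing exponents only helps where the $t_j$ are small (already for $m=1$, $C/t\ge 1/t^{2}$ fails at $t=1$ once $C<1$). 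It is also unnecessary: the coefficients of $L$ are positive \emph{integers}, so your AM--GM constant $\prod_k(d_k/p_k)^{p_k}$ is automatically $\ge 1$; the paper achieves the same by raising $L$ to the power $M$ and extracting the monomial $t^{-v}$ with coefficient at least one. Second, granting the cone statement, passing to probability weights is fine since the open negative orthant is stable under positive rescaling, so that part of your argument is sound.
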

\begin{proof}
If:
$$ u = \varrho^T(x) = x_{ \bf i }\left( t_1, \dots, t_m \right)$$
Then using proposition \ref{proposition:semi_explicit_expressions}:
\begin{align*}
  & f_B(x)\\
= & \chi\left( u\right) + \sum_{\alpha} e^{-\alpha(\lambda)} \chi_\alpha\left( [ \bar{w}_0^{-1} u^T ]_+ \right)\\
\geq & \sum_{j=1}^m t_j + e^{-\max_\alpha \alpha(\lambda)} \chi\left( [ \bar{w}_0^{-1} u^T ]_+ \right)
\end{align*}
Moreover, in terms of the variable $t = \left( t_1, \dots, t_m\right)$,
$$L(t) = \chi\left( [ \bar{w}_0^{-1} u^T ]_+ \right)$$
is a Laurent polynomial with positive integer coefficients (Theorem \ref{thm:superpotential_structure}). We write it as:
$$ L(t) = \sum_{i \in I} c_i \frac{1}{t^{a_i}}$$
Here $I$ is an index set, $a_i = \left( a_i^1, \dots, a_i^m \right) \in \Z^m$ for $i \in I$ are exponent vectors and $c_i \in \N^*, i \in I$ are the Laurent polynomial's coefficients. We use the notation $t^a := \prod_{j=1}^m t_j^{a^j}$ for $a \in \Z^m$.

In order to prove the theorem, we will focus on the lattice cone
$$\Cc(\N) := \left\{ \sum_{i \in I} \lambda_i a_i, \lambda_i \in \N \right\}$$
and prove that:
\begin{align}
\label{eq:lattice_cone_nonempty}
\Cc(\N) \cap \left(\N^*\right)^m & \neq \emptyset  
\end{align}
Once this result obtained, pick an $m$-tuple $v \in \Cc(\N) \cap \left(\N^*\right)^m$ and call $M = \sum v_j \in \N^*$ the sum of its components. Then, for $t \in \left(\R_+^*\right)^m$:
\begin{align*}
     & L(t)^M\\
=    & \sum_{ i_1, i_2, \dots, i_M } c_{i_1} \dots c_{i_M}  \frac{1}{t^{a_{i_1} \dots a_{i_M}}}\\
\geq & \sum_{ i_1, i_2, \dots, i_M } \frac{1}{t^{a_{i_1} + \dots + a_{i_M}}}\\
=    & \sum_{ a \in \Cc(\N), \sum_{j=1}^m a^j = M} \frac{1}{t^a}\\
\geq & \frac{1}{t^v}
\end{align*}
Letting $n_j = \frac{v_j}{M} \in \Q_+^*$ will finish the proof.

Now, let us go back to proving identity \eqref{eq:lattice_cone_nonempty}. For the purpose of using a density argument, define the convex cones:
$$\Cc(\Q_+) := \left\{ \sum_{i \in I} \lambda_i a_i, \ \lambda_i \in \Q_+ \right\}$$
$$\Cc(\R_+) := \left\{ \sum_{i \in I} \lambda_i a_i, \ \lambda_i \in \R_+ \right\}$$
The convex cone $\Cc(\R_+)-\R_+^m = \left\{ a-b, a \in \Cc(\R_+), \ b \in \R_+^m \right\}$ cannot entirely lie in a linear half-space. If it was the case, denote by $H$ such a half-space defined by a normal direction $x \in \R^m$:
$$ H := \left\{ y \in \R^m \ | \ \langle y , x \rangle \geq 0 \right\}$$
We have:
$$ \forall i \in I, \ \langle a_i, x \rangle \geq 0$$
And since $-\R_+^m \subset H$, we necessarily have $x_j \leq 0$ for $j=1, \dots, m$. Now, for $\lambda \in \R$:
\begin{align*}
    L\left( e^{\lambda x_1}, \dots, e^{\lambda x_m} \right)
= & \sum_{i \in I } c_i e^{ -\lambda \langle a_i, x \rangle}
\end{align*}
Because $x$ is non zero with $x_j \leq 0$, taking $\lambda \rightarrow \infty$ forces at least one of the components of $t = \left( e^{\lambda x_1}, \dots, e^{\lambda x_m} \right)$ to zero, while $L(t)$ stays bounded. This contradicts theorem \ref{thm:superpotential_well}. We have then proved indeed that the convex cone $\Cc(\R_+)-\R_+^m$ cannot entirely lie in a linear half-space. Moreover, it is well known that the only convex cone in $\R^m$ that is not included in a half-space is $\R^m$, forcing $\Cc(\R_+)-\R_+^m = \R^m$. Therefore, $\Cc(\Q_+)-\Q_+^m$ is dense in $\R^m$ and $\Cc(\Q_+) \cap \left(\Q_+^*\right)^m$ is not empty. This implies the identity \eqref{eq:lattice_cone_nonempty}.
\end{proof}

\subsection{Link between the remarkable exit laws and Landau-Ginzburg potentials}
The following proposition shows that the remarkable $\Gamma_\mu\left( \lambda \right)$ found in section \ref{section:markovian_points} has the same law as the twisted Lusztig parameter corresponding to a canonical random variable on $\Bc(\lambda)$ (see definition \ref{def:canonical_probability_measure}).

\begin{proposition}
\label{proposition:link_with_canonical}
For $\mu \in C$ and $\lambda \in \afrak$:
$$ \Gamma_\mu(\lambda) \stackrel{\Lc}{=} \varrho^{T}\left( C_{w_0 \mu}(\lambda) \right)$$
This extends the definition of $\Gamma_\mu(\lambda)$ to all $\mu \in \afrak$. Moreover, the Whittaker function has indeed the integral representation:
$$ \psi_\mu(\lambda) = \int_{\Bc(\lambda)} e^{ \langle \mu, \wt(x) \rangle - f_B(x) } \omega(dx) $$
\end{proposition}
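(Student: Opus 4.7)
The plan is to identify both sides as probability measures on $U^{w_0}_{>0}$ and match their densities in the Lusztig chart $(t_j) = x_\mathbf{i}^{-1}(\cdot)$ associated to a fixed reduced word $\mathbf{i} \in R(w_0)$. The key algebraic point, which makes the computation routine, is that the conditioning factor in definition \ref{def:generalized_gamma} is exactly the ``twist half'' of the superpotential via proposition \ref{proposition:semi_explicit_expressions}.

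Assume first that $\mu \in C$, so that $\Gamma_\mu(\lambda)$ is defined by Def.\ \ref{def:generalized_gamma}. Combining \eqref{eq:explicit_Gamma_mu}--\eqref{eq:gamma_mu_law_in_coordinates} with that definition, the density of $\Gamma_\mu(\lambda)$ in these coordinates is proportional to
\[
e^{-\sum_j t_j}\prod_j t_j^{\langle\mu,\beta_j^{\mathbf{i}}\rangle}\,\exp\!\big(-\chi^-(e^{\lambda}\Theta(x_\mathbf{i}(t))e^{-\lambda})\big)\prod_j\frac{dt_j}{t_j}.
\]
I would process the conditioning factor by two elementary identities. First, $\bar{w}_0^T = \bar{w}_0^{-1}$ (which follows from $\bar{s}_i^T = \bar{s}_i^{-1}$ together with the braid-relation definition of $\bar{w}_0$) gives $\Theta(u)^T = [u\bar{w}_0]_-^T = [\bar{w}_0^{-1}u^T]_+ = \eta_{w_0}(u)$; second, $\chi^-(n) = \chi(n^T)$ for $n \in N$ then produces $\chi^-(e^{\lambda}\Theta(u)e^{-\lambda}) = \chi(e^{-\lambda}\eta_{w_0}(u)e^{\lambda})$. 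For $x = b^T_\lambda(u) \in \Bc(\lambda)$, proposition \ref{proposition:semi_explicit_expressions} identifies this last expression with $f_B(x) - \sum_j t_j$, and the density of $\Gamma_\mu(\lambda)$ collapses to a constant times $e^{-f_B(x)}\prod_j t_j^{\langle\mu,\beta_j^{\mathbf{i}}\rangle}\prod_j dt_j/t_j$, with normalizing constant $b(\mu)\,\mathbb{E}(\exp(-\chi^-(e^{\lambda}D_\mu e^{-\lambda})))$.

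On the other side, theorem \ref{thm:omega_invariance_on_crystal} shows that the pushforward of $C_{w_0\mu}(\lambda)$ under $\varrho^T$ has density $\psi_{w_0\mu}(\lambda)^{-1}e^{\langle w_0\mu,\wt(x(t))\rangle - f_B(x(t))}\prod_j dt_j/t_j$ in these same coordinates, with $x(t) = b^T_\lambda(x_\mathbf{i}(t))$. Invoking the weight-map expression in twisted Lusztig coordinates (theorem 4.22 of \cite{bib:chh14a}) together with the Weyl-group isometry identity $\langle w_0\mu, w_0(\cdot)\rangle = \langle\mu,\cdot\rangle$, I would check that $e^{\langle w_0\mu,\wt(x(t))\rangle} = e^{\langle\mu,\lambda\rangle}\prod_j t_j^{\langle\mu,\beta_j^{\mathbf{i}}\rangle}$, so the two densities coincide up to a multiplicative constant. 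This establishes the equality in law for $\mu \in C$; the estimate of theorem \ref{thm:superpotential_estimate} ensures that the right-hand side remains a probability measure for every $\mu \in \afrak$, so that the definition of $\Gamma_\mu(\lambda)$ extends. Finally, matching normalizing constants yields $\int_{\Bc(\lambda)} e^{\langle w_0\mu,\wt(x)\rangle - f_B(x)}\omega(dx) = b(\mu)e^{\langle\mu,\lambda\rangle}\mathbb{E}(\exp(-\chi^-(e^{\lambda}D_\mu e^{-\lambda}))) = \psi_\mu(\lambda)$ by \eqref{eq:whittaker_function_D_mu}, and combined with the Weyl invariance $\psi_{w_0\mu} = \psi_\mu$ of theorem \ref{thm:whittaker_functions_properties}(ii) this is the announced integral representation. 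The expected main hurdle is the careful invocation of the weight-map identity from \cite{bib:chh14a}; granted it, the remainder is direct.
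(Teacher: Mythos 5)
Your proposal is correct and follows essentially the same route as the paper's proof: compute the density of $\Gamma_\mu(\lambda)$ in a fixed Lusztig chart from \eqref{eq:explicit_Gamma_mu}--\eqref{eq:gamma_mu_law_in_coordinates}, recognize via Proposition \ref{proposition:semi_explicit_expressions} that the conditioning factor combines with $e^{-\sum_j t_j}$ to give $e^{-f_B(x)}$, match the monomial $\prod_j t_j^{\langle\mu,\beta_j^{\bf i}\rangle}$ with $e^{\langle w_0\mu,\wt(x)\rangle - \langle\mu,\lambda\rangle}$ through the weight-map formula, and read off the Whittaker integral representation from the normalizing constants. The only cosmetic difference is that you make the identification $\chi^-(e^{\lambda}\Theta(u)e^{-\lambda}) = \chi(e^{-\lambda}\eta_{w_0}(u)e^{\lambda})$ explicit through the transpose identities $\bar{w}_0^T=\bar{w}_0^{-1}$ and $\chi^-(n)=\chi(n^T)$, where the paper routes the same algebra through $\chi\circ S\circ\iota=\chi$.
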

\begin{proof}
Let ${\bf i} = (i_1, \dots, i_m)$ be a reduced word for $w_0$ and let $(\beta_1^\vee, \dots, \beta_m^\vee)$ be the associated positive coroots enumeration (as in eq. \eqref{eq:positive_root_enumeration}). Let $\varphi$ be a positive measurable function on $U$ that will serve the purpose of test function, $\mu$ in the open Weyl chamber and $\lambda \in \afrak$. By definition \ref{def:generalized_gamma}:
\begin{align*}
          \E\left( \varphi\left(\Gamma_\mu\left( \lambda \right) \right) \right)
= \quad & \frac{b(\mu) e^{\langle \mu, \lambda \rangle} }{\psi_\mu(\lambda)} \E\left( \varphi(\Gamma_\mu) \exp\left( -\chi^-(e^{\lambda} \Theta\left( \Gamma_\mu  \right) e^{-\lambda} ) \right) \right)\\
\stackrel{eq. \eqref{eq:gamma_mu_law_in_coordinates} }{=}
        & \frac{ e^{\langle \mu, \lambda \rangle} }{\psi_\mu(\lambda)} \int_{(\R_{>0})^m} \prod_{j=1}^m \left( \frac{dt_j}{t_j} t_j^{\langle \beta_j^\vee, \mu \rangle} e^{-t_j} \right) \varphi(u) e^{-\chi^-(e^{\lambda} [u  \bar{w}_0]_{-} e^{-\lambda}) }
\end{align*}
where $ u = x_{i_1}(t_1) \dots x_{i_m}(t_m)$.

Now let us reorganize the terms in the previous integral. On the one hand, if $x = b_\lambda^T(u)$ or equivalently $u = \varrho^T(x)$, then thanks to proposition \ref{proposition:semi_explicit_expressions}, we notice that the term in the exponential is:
\begin{align*}
   \sum_{j=1}^m t_j + \chi^-\left( e^{\lambda} [u  \bar{w_0}]_{-} e^{-\lambda} \right)
= & \chi(u) + \chi\left( e^{-\lambda} [\bar{w_0}^{-1} u^T]_{+} e^{\lambda} \right)\\
= & \chi \circ S \circ \iota \left( e^{-\lambda} [\bar{w_0}^{-1} u^T]_{+} e^{\lambda}) \right) + \chi(u)\\
= & f_B\left( x \right)
\end{align*}
On the other hand, using the expression for the weight map on $x$ given in theorem 4.22 in \cite{bib:chh14a}:
\begin{align*}
    e^{\langle \mu, \lambda \rangle} \prod_{j=1}^m \left( t_j^{\langle \beta_j^\vee, \mu \rangle} \right)
= & \exp\left(\langle \mu, \lambda \rangle + \sum_{j=1}^m \langle \beta_j^\vee, \mu \rangle \log t_j \right)\\
= & \exp\left(\langle \mu, \lambda + \sum_{j=1}^m \log(t_j) \beta_j^\vee \rangle \right)\\
= & \exp\left(\langle w_0 \mu, w_0\left( \lambda - \sum_{j=1}^m \log(\frac{1}{t_j}) \beta_j^\vee \right) \rangle \right)\\
= & \exp\left(\langle w_0 \mu,  \wt( x ) \rangle \right)
\end{align*}
In the end, using the parametrizations introduced at the beginning of section \ref{section:properties_LG}:
\begin{align*}
    \E\left( \varphi\left(\Gamma_\mu\left( \lambda \right) \right) \right)
= & \frac{1}{\psi_{\mu}(\lambda)} \int_{\Bc(\lambda)} e^{ \langle w_0 \mu, \wt(x) \rangle - f_B(x) } \varphi \circ \varrho^{T}(x) \omega(dx)\\
= & \E\left( \varphi \circ \varrho^{T}\left( C_{w_0 \mu}(\lambda) \right) \right)
\end{align*}
\end{proof}

% --------------------------------------------------------------------
\section{Intertwined Markov operators}
\label{section:intertwined_markov_kernels}
With theorem \ref{thm:whittaker_process_g}, we proved that for $\mu \in C$, if $W$ is a standard Brownian motion in $\afrak$ and $\Theta\left( g \right)$ independent following the law of $D_\mu\left( x_0 \right)$, then
$$ X^{x_0}_t = x_0 + T_{g }\left( W^{ (w_0 \mu) } \right) ; t \geq 0$$
is Markovian, what we called the Whittaker process. The results of Rogers and Pitman in \cite{bib:RogersPitman} on Markov functions teach us that there should be an intertwining relation between the semi-groups of Brownian motion on the one hand, and the semi-group of the Whittaker process, using this remarkable law $D_\mu\left( x_0 \right)$. In fact, this is how the extensions of Pitman's theorem in \cite{bib:RogersPitman} and \cite{bib:OConnell} were proven. The only trick is that intertwining is easy to establish, once we know the answer. What we did so far is identifying the right objects. Using intertwining Markov operators, we will strenghten the previous result to all possible drifts $\mu$ and not only for $\mu$ in the Weyl chamber. 

\subsection{Markov functions }
Let us first quickly review the result of Pitman and Rogers on Markov function from \cite{bib:RogersPitman}. Let $S$ and $S_0$ be topological spaces. Let $\phi: S \rightarrow S_0$ be a continuous function. Consider a Markov process $(X_t)_{t \geq 0}$ with state space $S$ and define the process $Y_t=\phi(X_t)$. We are interested in sufficient conditions that insure the Markov property for $Y$.

Of course one can suppose that $\phi$ is surjective by setting $S_0 = \phi(S)$. And clearly, in most cases of interest where $\phi$ is not injective, the inclusion between filtrations $\Fc^{Y} \subset \Fc^{X}$ is strict. Meaning that the observation of $Y$ contains only partial information on $X$. And in order to quantify this information, we need to ``filter'' $X$ through $\Fc^{Y}$.

In the sequel, we denote by $P_t$ the semi-group for $X$, and $Q_t$ the semi-group for $Y$, when it exists. 
$\Phi: \Cc(S_0) \rightarrow \Cc(S)$ is the Markov operator from $S$ to $S_0$ given by $\Phi(f)(x) = f \circ \phi(x)$. It just transports measures on $S$ to their image measure on $S_0$.

A first answer would be Dynkin's criterion, for cases where $Y$ is Markovian for all initial laws of $X$:
\begin{thm}[Dynkin's criterion]
 If there exist a Markov operator $Q$ such that:
$$ \forall t \geq 0, P_t \circ \Phi = \Phi \circ Q_t $$
meaning, in terms of transporting measures, that the following diagram is commutative:
\begin{center}
\begin{tikzpicture}
\matrix(a)[matrix of math nodes, row sep=3em, column sep=3.5em,
text height=1.5ex, text depth=0.25ex]
{ S   & S  \\
  S_0 & S_0 \\ };
\path[->]
(a-1-1) edge node[auto] { $P_t$} (a-1-2)
(a-2-1) edge node[auto] { $Q_t$} (a-2-2)
(a-1-1) edge node[auto] { $\Phi$} (a-2-1)
(a-1-2) edge node[auto] { $\Phi$} (a-2-2);
\end{tikzpicture}
\end{center}
Then $Y$ is a Markov process and its semi-group is $Q_t$.
\end{thm}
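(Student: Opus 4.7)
The plan is to derive the Markov property of $Y$ directly from the intertwining relation and the Markov property of $X$ via the tower property; once the commutative diagram is available, the argument is essentially tautological.

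First, I would unpack the intertwining relation. Applied to any bounded measurable $f: S_0 \to \R$ at a point $x \in S$, both sides of $P_t \circ \Phi = \Phi \circ Q_t$ give
$$\E_x\left[ f(\phi(X_t)) \right] = (Q_t f)(\phi(x)).$$
In words: starting from $x$, the distribution of $Y_t = \phi(X_t)$ depends on $x$ only through $\phi(x)$, and its expectations are computed by $Q_t$.

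Next I would combine this identity with the Markov property of $X$. For $0 \leq s \leq s+t$,
$$\E\left[ f(Y_{t+s}) \mid \Fc^X_s \right] = \E_{X_s}\left[ f(\phi(X_t)) \right] = (Q_t f)(\phi(X_s)) = (Q_t f)(Y_s),$$
where the first equality uses the Markov property of $X$ at time $s$, and the second uses the intertwining identity at the (random) point $X_s$. Because the right-hand side is $\sigma(Y_s)$-measurable, hence $\Fc^Y_s$-measurable, projecting onto $\Fc^Y_s \subset \Fc^X_s$ via the tower property yields
$$\E\left[ f(Y_{t+s}) \mid \Fc^Y_s \right] = (Q_t f)(Y_s),$$
which is precisely the Markov property of $Y$ in its own filtration, and simultaneously identifies $(Q_t)$ as its transition semigroup.

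Finally, the semigroup property $Q_{t+s} = Q_t Q_s$ comes essentially for free by iterating the intertwining: $\Phi \circ Q_{t+s} = P_{t+s} \circ \Phi = P_t \circ P_s \circ \Phi = P_t \circ \Phi \circ Q_s = \Phi \circ Q_t \circ Q_s$, and injectivity of $\Phi$ (ensured by the assumed surjectivity of $\phi$ onto $S_0$) cancels the leading $\Phi$. No step is a real obstacle; the only conceptual subtlety is that one must condition on the larger filtration $\Fc^X_s$ first in order to invoke the Markov property of $X$, and only then project back onto the smaller filtration $\Fc^Y_s$.
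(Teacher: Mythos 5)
Your proof is correct and is exactly the standard argument the paper has in mind: the paper itself omits the proof, remarking only that the statement "seems quite trivial" once one notes that the intertwining says the law of $\phi(X_t)$ given $X_0=x$ depends only on $\phi(x)$, and your condition-on-$\Fc^X_s$-then-project-to-$\Fc^Y_s$ structure mirrors the paper's own proof of the Rogers--Pitman criterion (Theorem \ref{thm:rogerspitman_criterion}). Nothing is missing.
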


\begin{rmk}
In probabilistic terms, the condition that $P_t \circ \Phi(f)(x)$ only depends on $\phi(x)$ translates as saying that the law of $(\phi( X_t ) | X_0 = x)$ only depends of $\phi(x)$. The theorem seems then quite trivial. We wrote it that way to stress the intertwining.
\end{rmk}

Another solution has been formalized in \cite{bib:RogersPitman}. In some cases, if $Y$ starts at $y \in S_0$, it is Markovian only for specific entrance laws $\Kc(y, .)$ on $X$. In such a case, this initial law for $X$ is going to be the ``missing'' information from $\Fc^Y \subsetneq \Fc^X$.\\

Furthermore, at each time, we must ask the missing information to be stationnary in law, otherwise filtering $ X_t | \Fc_t^Y $ will give a fluctuating distribution and will not be able to extract the law of $X_t$ conditionnally to $\Fc^Y$, in such a way that it depends only on $Y_t$. One could speak of a ``Markovian stationary coupling'' or a ``Markovian filtering'' phenomenon, which brings the following equivalent definitions due to Rogers and Pitman:

\begin{thm}
\label{thm:intertwining}
Let $\Kc: \Cc(S) \rightarrow \Cc(S_0)$ be a Markov operator, $X$ a Markov process with semigroup $P_t$ and $Y = \phi(X)$. $Y$ is assumed to start at $y$. The following propositions are equivalent:
\begin{description}
 \item[$(i)$] ( Markovian filtering )
 $$\forall t \geq 0,  \forall f \in \Cc(S), \forall y \in S_0, \E_{ X_0 \sim \Kc(y,.) }\left( f( X_{t} ) | \Fc_{t}^Y \right) = \Kc(f)(Y_t) \quad a.s $$
where the subscript $X_0 \sim \Kc(y,.)$ indicates the initial law for $X$.
 \item[$(ii)$] ( Intertwining operators ) For all $t\geq 0$, $Q_t := \Kc \circ P_t \circ \Phi$ satisfies:
 $$ \Kc \circ \Phi = id_{S_0} $$
 $$ \Kc \circ P_t = Q_t \circ \Kc$$
Meaning, in terms of transporting measures, that the following diagram is commutative:
\begin{center}
\begin{tikzpicture}
\matrix(a)[matrix of math nodes, row sep=3em, column sep=3.5em,
text height=1.5ex, text depth=0.25ex]
{ S   & S   &  \\
  S_0 & S_0 & S_0 \\ };
\path[->]
(a-1-1) edge node[auto] { $P_t$} (a-1-2)
(a-2-1) edge node[auto] { $\Kc$} (a-1-1)
(a-2-2) edge node[auto] { $\Kc$} (a-1-2)
(a-2-1) edge node[auto] { $Q_t$} (a-2-2)
(a-2-2) edge node[auto] { $id$} (a-2-3)
(a-1-2) edge node[auto] { $\Phi$} (a-2-3);
\end{tikzpicture}
\end{center}

\end{description}
In both cases, $Q_t$ is a semi-group and is interpreted as:
$$ Q_t(f)(y) = \E_{ X_0 \sim \Kc(y,.)}\left( f( Y_t ) \right) $$
\end{thm}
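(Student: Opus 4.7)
The plan is to prove the equivalence in two directions, then derive the semigroup property and the interpretation of $Q_t$ from the intertwining. The hypothesis that $Y_0 = y$ a.s. whenever $X_0 \sim \Kc(y,\cdot)$ is tacit in the framing, and is equivalent to $\Kc(y,\cdot)$ being supported on $\phi^{-1}(y)$; this gives both $\Kc \circ \Phi = \mathrm{id}_{S_0}$ and, more importantly, the ``module identity'' $\Kc(\Phi(g)\cdot f)(y) = g(y)\Kc(f)(y)$ for all $f \in \Cc(S)$, $g \in \Cc(S_0)$. I would record this at the outset since it is used repeatedly.

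For the implication $(i) \Rightarrow (ii)$: the identity $\Kc \circ \Phi = \mathrm{id}$ is immediate from the support property above. For the intertwining relation, I would take total expectations in (i) applied to $f \in \Cc(S)$ with initial law $\Kc(y, \cdot)$. The left-hand side becomes $(\Kc P_t f)(y)$ by the Markov property of $X$, while the right-hand side, rewritten as $\E(\Phi(\Kc(f))(X_t))$, becomes $(\Kc P_t \Phi \Kc)(f)(y) = (Q_t \circ \Kc)(f)(y)$ by the very definition $Q_t := \Kc P_t \Phi$. Hence $\Kc P_t = Q_t \Kc$.

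For the harder implication $(ii) \Rightarrow (i)$: by a monotone class argument it suffices to verify, for $0 \leq t_1 < \cdots < t_n = t$ and bounded measurable $g_1, \dots, g_n$ on $S_0$ and $f$ on $S$,
\begin{equation*}
\E\bigl(g_1(Y_{t_1}) \cdots g_n(Y_{t_n}) f(X_t)\bigr) = \E\bigl(g_1(Y_{t_1}) \cdots g_n(Y_{t_n}) \Kc(f)(Y_t)\bigr),
\end{equation*}
starting from $X_0 \sim \Kc(y,\cdot)$. I would induct on $n$. For $n=1$, both sides equal $Q_t(g_1 \cdot \Kc(f))(y)$: the left is $\Kc P_t(\Phi(g_1) \cdot f)(y) = Q_t \Kc(\Phi(g_1) f)(y) = Q_t(g_1 \Kc(f))(y)$ by the intertwining relation and then the module identity, while the right gives the same expression directly. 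For the inductive step, I would condition on $\Fc^X_{t_{n-1}}$ and use the Markov property of $X$ to replace $\E(g_n(Y_t) f(X_t) | \Fc^X_{t_{n-1}})$ by $P_{t - t_{n-1}}(\Phi(g_n) \cdot f)(X_{t_{n-1}})$; applying the induction hypothesis to this new target function, then the intertwining $\Kc P_{t-t_{n-1}} = Q_{t-t_{n-1}} \Kc$, then the module identity, reorganizes the expression into the desired form. The main subtlety, and the step I expect to be the most delicate, is keeping track of which operators act on which variable in this cascade of substitutions.

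The semigroup property of $Q_t$ is then a one-line consequence:
\begin{equation*}
Q_t \circ Q_s \;=\; Q_t \circ \Kc \circ P_s \circ \Phi \;=\; \Kc \circ P_t \circ P_s \circ \Phi \;=\; \Kc \circ P_{t+s} \circ \Phi \;=\; Q_{t+s},
\end{equation*}
where we used $Q_t \Kc = \Kc P_t$ followed by the semigroup property of $P_t$. Finally, the probabilistic interpretation $Q_t(f)(y) = \E_{X_0 \sim \Kc(y,\cdot)}(f(Y_t))$ is read off by taking $g \equiv 1$ in the $n=1$ identity established during $(ii) \Rightarrow (i)$, or equivalently by unrolling $Q_t = \Kc P_t \Phi$ directly in terms of the underlying Markov semigroup.
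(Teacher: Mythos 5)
Your proposal is correct and follows essentially the same route as the paper: the direction $(i)\Rightarrow(ii)$ by taking total expectations, the direction $(ii)\Rightarrow(i)$ by reducing finite-dimensional distributions to operator compositions and commuting $\Kc$ through via $\Kc P_t = Q_t\Kc$ together with the module identity $\Kc(\Phi(g)\cdot f) = g\,\Kc(f)$ (which the paper uses implicitly in the step $\Kc\,\Phi(g_1)\cdots = g_1\,\Kc\cdots$), and the identical one-line semigroup computation. The only cosmetic difference is that you organize the multi-time identity as an induction on the number of time points, whereas the paper writes it as a single telescoping operator chain; your explicit isolation of the module identity is a small clarity gain over the paper's presentation.
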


\begin{proof}
The semi-group property of $Q_t$ is a consequence of $(ii)$:
\begin{align*}
    Q_{t+s}
= & Q_{t+s} \circ \Kc \circ \Phi\\
= & \Kc \circ P_{t+s} \circ \Phi\\
= & \Kc \circ P_t \circ P_s \circ \Phi\\
= & Q_t \circ \Kc \circ P_s \circ \Phi\\
= & Q_t \circ Q_s \circ \Kc \circ \Phi\\
= & Q_t \circ Q_s
\end{align*}
$(i) \Rightarrow (ii):$ Let $g \in \Cc(S_0)$. By taking $f=g\circ \phi$ in $(i)$ and $t=0$, one gets:
$$ \Kc \circ \Phi (g)(y) = \E_{ X_0 \sim \Kc(y,.) }\left( \Phi (g)( X_{0} ) \right) = \E_{ X_0 \sim \Kc(y,.) }\left( g( Y_{0} ) \right)= g(y)$$
Concerning the second one:
 \begin{align*}
    \Kc \circ P_t (f)(y)
= & \E_{ X_0 \sim \Kc(y, .) }\left( f( X_t ) \right)\\
= & \E_{ X_0 \sim \Kc(y, .) }\left( \E_{ X_0 \sim \Kc(y, .) }\left( f( X_t ) | \Fc_t^Y \right) \right)\\
= & \E_{ X_0 \sim \Kc(y, .) }\left( \Kc(f)(Y_t) \right)\\
= & \Kc \circ P_t \circ \Phi \circ \Kc (f)(y)\\
= & Q_t \circ \Kc (f)( y )
 \end{align*}

$(ii) \Rightarrow (i):$ Consider increasing times $0 \leq s_1 \leq s_2 \dots \leq s_n \leq t$ and test functions $g_1, \dots, g_n$ on $S_0$:
\begin{align*}
  & \E_{ X_0 \sim \Kc(y, .) }\left( g_1(Y_{s_1}) \dots g_n(Y_{s_n}) f(X_t) \right)\\
= & \Kc P_{s_1} \Phi(g_1) P_{s_2-s_1} \Phi(g_2) \dots P_{s_n-s_{n-1}} \Phi(g_n) P_{t-s_n} f (y)\\
= & Q_{s_1} \Kc \Phi(g_1) P_{s_2-s_1} \Phi(g_2) \dots P_{s_n-s_{n-1}} \Phi(g_n) P_{t-s_n} f (y)\\
= & Q_{s_1} g_1 Q_{s_2-s_1} g_2 \dots Q_{s_n-s_{n-1}} g_n \Kc P_{t-s_n} f (y)\\
= & Q_{s_1} g_1 Q_{s_2-s_1} g_2 \dots Q_{s_n-s_{n-1}} g_n Q_{t-s_n} \Kc f (y)\\
= & \E_{ X_0 \sim \Kc(y, .) }\left( g_1(Y_{s_1}) g_2( Y_{s_2} ) \dots g_n( Y_{s_n} ) \Kc(f)(Y_t) \right)
\end{align*}
This proves the Markovian filtering property.
\end{proof}

\begin{thm}[Pitman and Rogers criterion \cite{bib:RogersPitman}]
 \label{thm:rogerspitman_criterion}
 If the equivalent hypotheses previously cited are satisfied, take $X$ with initial law $\Kc(y, .)$ and $Y = \phi(X)$. Then $Y$ is a Markov process starting at $y$ and its semi-group is $Q_t$. 
\end{thm}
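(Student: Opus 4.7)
The plan is to derive this as a direct corollary of the equivalence in Theorem~\ref{thm:intertwining}. Starting $X$ with initial law $\Kc(y,\cdot)$, I need to verify (a) that $Y_0 = y$ almost surely, and (b) that $Y$ satisfies the Markov property with semigroup $Q_t = \Kc \circ P_t \circ \Phi$. The semigroup property of $Q_t$ itself was already established inside the proof of Theorem~\ref{thm:intertwining}, so there is nothing more to do on that front.

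For (a), I invoke the relation $\Kc \circ \Phi = \mathrm{id}_{S_0}$ (part of property (ii)). For any bounded measurable $g$ on $S_0$,
$$\E(g(Y_0)) = \int g(\phi(x)) \, \Kc(y, dx) = (\Kc \circ \Phi)(g)(y) = g(y),$$
so the image of $\Kc(y,\cdot)$ under $\phi$ is $\delta_y$, i.e.\ $Y_0 = y$ almost surely.

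For (b), fix $0 \leq s \leq t$ and a bounded measurable $g$ on $S_0$. Observing that $g(Y_t) = \Phi(g)(X_t)$, I chain together the tower property (legitimate since $\Fc_s^Y \subseteq \Fc_s^X$), the Markov property of $X$, the Markovian filtering property (i) of Theorem~\ref{thm:intertwining} applied at time $s$ under the initial law $\Kc(y,\cdot)$, and the very definition $Q_{t-s} = \Kc \circ P_{t-s} \circ \Phi$:
\begin{align*}
\E\bigl(g(Y_t) \mid \Fc_s^Y\bigr)
&= \E\Bigl(\E\bigl(\Phi(g)(X_t) \mid \Fc_s^X\bigr) \Bigm| \Fc_s^Y\Bigr) \\
&= \E\bigl(P_{t-s}(\Phi g)(X_s) \mid \Fc_s^Y\bigr) \\
&= \Kc\bigl(P_{t-s}(\Phi g)\bigr)(Y_s) \\
&= Q_{t-s}(g)(Y_s).
\end{align*}
This is exactly the Markov property for $Y$ with transition semigroup $Q_t$ and starting point $y$.

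There is no substantive obstacle: essentially all the work was already packaged into Theorem~\ref{thm:intertwining}. The one point worth flagging is that property (i) is a statement about $X$ started from the randomized initial condition $\Kc(y,\cdot)$, which is precisely what the hypothesis of the corollary supplies; without this specific initialization the conclusion would fail in general, since $Y$ need not be Markov for arbitrary initial distributions of $X$. The real content of Pitman and Rogers' criterion therefore lies not in the proof itself but in its interpretation: after a suitable randomization of the starting point, the non-injective image $Y = \phi(X)$ of a Markov process becomes itself Markov, with semigroup $Q_t = \Kc \circ P_t \circ \Phi$ obtained by pushing $P_t$ forward through $\phi$ and lifting back via the intertwining kernel $\Kc$.
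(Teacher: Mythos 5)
Your proof is correct and follows essentially the same route as the paper: condition on $\Fc_s^X$, apply the Markov property of $X$, then invoke the filtering property (i) of Theorem~\ref{thm:intertwining} and the definition $Q_{t-s}=\Kc\circ P_{t-s}\circ\Phi$. The only addition is your explicit check that $Y_0=y$ via $\Kc\circ\Phi=\mathrm{id}_{S_0}$, which the paper leaves implicit.
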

\begin{proof}
Let us prove the Markov property:
\begin{align*}
    \E_{X_0 \sim \Kc(y, .)}\left( f( Y_{t+s} ) | \Fc_{s}^Y \right)
= & \E_{X_0 \sim \Kc(y, .)}\left( \E\left( f \circ \phi(X_{t+s}) | \Fc_{s}^X \right)  | \Fc_{s}^Y \right)\\
= & \E_{X_0 \sim \Kc(y, .)}\left( P_t\left( f \circ \phi \right)(X_s)  | \Fc_{s}^Y \right)\\
= & \E_{X_0 \sim \Kc(y, .)}\left( P_t \circ \Phi \left( f \right)(X_s)  | \Fc_{s}^Y \right)
\end{align*}
Using the hypothesis (i) from theorem \ref{thm:intertwining}, we have:
\begin{align*}
    \E_{X_0 \sim \Kc(y, .)}\left( f( Y_{t+s} ) | \Fc_{s}^Y \right)
= & \Kc \circ P_t \circ \Phi \left( f \right)(Y_s)\\
= & Q_t \circ \Kc \circ \Phi \left( f \right)(Y_s)\\
= & Q_t (f)\left( Y_s \right) 
\end{align*}
\end{proof}

\subsection{The canonical measure intertwines the hypoelliptic BM and the highest weight process}

Now, let us specialize the Rogers-Pitman framework of Markov functions to our case. The semi-group for the hypoelliptic Brownian motion $(B_t(W^{(\mu)}), t\geq 0)$ is denoted by $P_t$:
$$ P_t := \exp\left( t \Dc^{(\mu)} \right)$$
Recall that the highest weight process is:
$$\Lambda_t := \hw( B_t(W^{(\mu)}) ) = \Tc_{w_0} W^{(\mu)}_t $$
Finally, define the Markov kernel $\Kc_\mu: \Cc\left( \Bc \right) \rightarrow \Cc(\afrak)$ from $\afrak$ to $\Bc$ by:
$$\forall \varphi \in \Cc( \Bc ),  \Kc_\mu(\varphi)(\lambda) := \E\left( \varphi\left( C_\mu(\lambda) \right) \right)$$
Since the random variable $C_\mu(\lambda)$ is $\Bc(\lambda)$ valued, it is clear that:
$$ \Kc_\mu \circ \hw = id_{\afrak}$$

The following ``Markovian filtering'' holds:
\begin{thm}
\label{thm:markovian_filtering}
Let $x_0 \in \afrak$, $\mu \in \afrak$, $W^{(\mu)}$ a BM in the Cartan subalgebra $\afrak$ and $C_\mu(x_0)$ an independent random variable whose distribution follows the canonical probability measure on $\Bc(x_0)$, with spectral parameter $\mu$.
If:
$$ X^{x_0}_t := \hw\left( C_\mu(x_0) B_t(W^{(\mu)}) \right)$$
and $f: \Bc \longrightarrow \R$ is a bounded function, then:
$$ \E\left( f\left( C_\mu(x_0) B_t(W^{(\mu)}) \right) | \Fc_t^{X^{x_0}}, X^{x_0}_t = x \right) = \E\left( f( C_\mu(x) ) \right)$$
\end{thm}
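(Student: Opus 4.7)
The plan is to reduce the Markovian filtering identity to theorem \ref{thm:canonical_measure} via an ``extension-and-conditioning'' argument. Theorem \ref{thm:canonical_measure} already characterizes the regular conditional distribution of $B_T(W^{(\mu)})$ given the entire highest-weight filtration at time $T$; the idea is to run the Brownian motion on the longer interval $[0, T+t]$ and condition at an intermediate time $T$. Doing so couples the pair $(\widetilde{Z}_T, \widetilde{W}^{(\mu)}_{T+\cdot} - \widetilde{W}^{(\mu)}_T)$, conditional on $\widetilde{\Lambda}_T = x_0$, with $(C_\mu(x_0), W^{(\mu)})$; applying theorem \ref{thm:canonical_measure} a second time at $T+t$ will then yield the claim.

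Concretely, fix any $T>0$ and let $\widetilde{W}^{(\mu)}$ be a Brownian motion on $[0, T+t]$ with drift $\mu$, starting at $0$. Set $\widetilde{\Lambda}_s := \Tc_{w_0}\widetilde{W}^{(\mu)}(s)$ and $\widetilde{Z}_s := B_s(\widetilde{W}^{(\mu)})$. Theorem \ref{thm:canonical_measure} at time $T$ gives that the regular conditional distribution of $\widetilde{Z}_T$ given $\widetilde{\Lambda}_T = x_0$ is the canonical probability measure on $\Bc(x_0)$ with spectral parameter $\mu$, i.e.\ the law of $C_\mu(x_0)$. Equation \eqref{eq:B_decomposition} rewrites $\widetilde{Z}_{T+s} = \widetilde{Z}_T \cdot B_s(U)$, where $U := \widetilde{W}^{(\mu)}_{T+\cdot} - \widetilde{W}^{(\mu)}_T$ is a Brownian motion with drift $\mu$, independent of $\Fc_T^{\widetilde{W}}$, and hence of both $\widetilde{\Lambda}_T$ and $\widetilde{Z}_T$. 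It follows that, under the regular conditional distribution given $\widetilde{\Lambda}_T = x_0$, the process $(\widetilde{Z}_{T+s})_{s\in [0,t]}$ has the same law as $(C_\mu(x_0) B_s(W^{(\mu)}))_{s\in [0,t]}$ with $C_\mu(x_0)$ independent of $W^{(\mu)}$; taking highest weights yields the analogous identification for $\widetilde{\Lambda}_{T+\cdot}$ and $X^{x_0}$.

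Now invoke theorem \ref{thm:canonical_measure} a second time, at time $T+t$: for every $x \in \afrak$,
$$\Lc\bigl(\widetilde{Z}_{T+t} \,\bigm|\, \Fc_{T+t}^{\widetilde{\Lambda}},\ \widetilde{\Lambda}_{T+t} = x\bigr) = \Lc\bigl(C_\mu(x)\bigr).$$
Both $\widetilde{\Lambda}_T$ and the entire trajectory $(\widetilde{\Lambda}_{T+s})_{s\in [0,t]}$ are $\Fc_{T+t}^{\widetilde{\Lambda}}$-measurable, so further conditioning on $\widetilde{\Lambda}_T = x_0$ and on that trajectory leaves the conclusion unchanged. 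Transporting the resulting identity back through the coupling of the previous paragraph gives precisely
$$\E\bigl[f(C_\mu(x_0) B_t(W^{(\mu)})) \,\bigm|\, \Fc_t^{X^{x_0}},\ X^{x_0}_t = x\bigr] = \E\bigl[f(C_\mu(x))\bigr],$$
which is the statement of the theorem. From the Rogers--Pitman standpoint, this computation verifies exactly the intertwining $\Kc_\mu\circ P_t = Q_t\circ \Kc_\mu$ required to identify $X^{x_0}$ as the Markov function of the hypoelliptic Brownian motion started from $C_\mu(x_0)$, with $Q_t = \Kc_\mu P_t \Phi$.

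The principal technical obstacle is the rigorous interpretation of the conditionings on the null event $\{\widetilde{\Lambda}_T = x_0\}$: they must be read as statements about regular conditional distributions. Operationally, one tests every identity against an arbitrary bounded measurable $h(x_0)$ and integrates against the marginal law of $\widetilde{\Lambda}_T$, thereby obtaining the desired identity for $p_T^{\widetilde{\Lambda}}$-a.e.\ $x_0$; the identity then extends to every $x_0 \in \afrak$ by continuity of the canonical kernel $\Kc_\mu(x_0, \cdot)$ in $x_0$, or by letting $T$ vary so that the supports of the marginals $p_T^{\widetilde{\Lambda}}$ exhaust $\afrak$.
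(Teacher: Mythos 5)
Your argument is circular within the logical architecture of the paper. Both of your invocations of Theorem \ref{thm:canonical_measure} (at time $T$ to identify the conditional law of $\widetilde{Z}_T$ given $\widetilde{\Lambda}_T = x_0$ as that of $C_\mu(x_0)$, and again at time $T+t$) use a result that the paper only establishes \emph{as a consequence of} Theorem \ref{thm:markovian_filtering}: the proof of Theorems \ref{thm:highest_weight_is_markov} and \ref{thm:canonical_measure} given in subsection \ref{subsection:entrance_point} consists precisely of taking $x_0 = \zeta - 2M\rho^\vee \to -\infty$ in the filtering identity of Theorem \ref{thm:markovian_filtering} and using Corollary \ref{corollary:convergence_in_proba}. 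There is no independent derivation of Theorem \ref{thm:canonical_measure} anywhere in the paper that you could appeal to, so your reduction proves nothing new. (To be fair, the conditional-independence mechanics of your ``extension-and-conditioning'' step are sound, and the scheme would be a clean way to deduce the finite-$x_0$ filtering statement \emph{if} one first had a self-contained proof of Theorem \ref{thm:canonical_measure}; but that is exactly the hard content that must be supplied, and it is what the paper's proof actually does by other means.)

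What is missing is the direct computation. The paper's proof proceeds in two steps: first, a Girsanov--Cameron--Martin change of measure together with the explicit density of $C_\mu(x_0)$ reduces general $\mu \in \afrak$ to $\mu \in -C$; second, for $\mu \in -C$ it writes $C_\mu(x_0) = z\bar{w}_0 e^{x_0} g$ with $g \eqlaw \Gamma_{w_0\mu}(x_0)$ (Proposition \ref{proposition:link_with_canonical}), manipulates the Gauss decomposition to obtain
$$ C_\mu(x_0)\, B_t(W^{(\mu)}) = b_x^T \circ \Theta^{-1}\left( N_\infty(X^{x_0}_{t+\cdot} - X^{x_0}_t) \right), $$
and then combines the conditional representation Theorem \ref{thm:conditional_representation}, the decomposition of $N_\infty$, and the Bayes formula for the likelihood $d\P^v/d\P$ to identify the conditional law as that of $b_x^T(\Gamma_{w_0\mu}(x)) \eqlaw C_\mu(x)$. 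None of this machinery appears in your proposal, and without it (or some substitute proving Theorem \ref{thm:canonical_measure} from scratch) the argument does not close.
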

\begin{proof}
For notational reason, we write $\Fc_t$ instead of $\Fc_t^{X^{x_0}}$. As a first step, let us prove that the theorem for general $\mu \in \afrak$ is a consequence of the case $\mu \in -C$ using a change of probability measure. Assume for now that the result is true for $\mu \in -C$. It is straightforward to check from the definition \ref{def:canonical_probability_measure} that for $\nu \in \afrak$:
$$ \E\left( f\left( C_\mu(x_0) \right) \right) = \frac{\psi_\nu(x_0)}{\psi_\mu(x_0)} \E\left( e^{\langle \wt\left( C_\nu(x_0) \right), \mu - \nu \rangle } f\left( C_\nu(x_0) \right) \right).$$
Using the Girsanov-Cameron-Martin theorem for Brownian motion (\cite{bib:KaratzasShreve91} theorem 5.1), for any functional $F$:
$$ \E\left( F\left( W^{(\mu)}_s; s \leq t \right) \right) = 
   \E\left( e^{ \langle W^{(\nu)}, \mu - \nu \rangle - \half (||\mu||^2 - ||\nu||^2) t }
            F\left( W^{(\nu)}_s; s \leq t \right) \right)$$
Hence, because $C_\mu(x_0)$ and $W^{(\mu)}$ are independent, in the following change of probability, the density is the product of the two previous densities:
$$ \frac{\psi_\nu(x_0)}{\psi_\mu(x_0)} \exp\left( \langle \wt\left( C_\nu(x_0) \right) + W^{(\nu)}, \mu - \nu \rangle - \half (||\mu||^2 - ||\nu||^2) t \right)$$
Using the Bayes formula, we have that, on the set $\{ X_t^{x_0} = x \}$:
\begin{align*}
  & \E\left( f\left( C_\mu(x_0) B_t(W^{(\mu)}) \right) | \Fc_t \right)\\
= & \frac{\E\left( e^{ \langle \wt\left( C_\nu(x_0) \right) + W^{(\nu)}, \mu - \nu \rangle - \half (||\mu||^2 - ||\nu||^2) t } f\left( C_\nu(x_0) B_t(W^{(\nu)}) \right) | \Fc_t \right)}
         {\E\left( e^{ \langle \wt\left( C_\nu(x_0) \right) + W^{(\nu)}, \mu - \nu \rangle - \half (||\mu||^2 - ||\nu||^2) t } | \Fc_t \right)}\\
= & \frac{\E\left( e^{ \langle \wt\left( C_\nu(x_0) B_t(W^{(\nu)}) \right), \mu - \nu \rangle }
                   f\left( C_\nu(x_0) B_t(W^{(\nu)}) \right) | \Fc_t \right)}
         {\E\left( e^{ \langle \wt\left( C_\nu(x_0) B_t(W^{(\nu)}) \right), \mu - \nu \rangle }
                   | \Fc_t \right)}
\end{align*}
Applying the result for $\nu \in -C$, one has:
$$
    \E\left( f\left( C_\mu(x_0) B_t(W^{(\mu)}) \right) | \Fc_t^{X^{x_0}}, X_t^{x_0} = x \right)
=   \frac{\E\left( e^{ \langle \wt\left( C_\nu(x) \right), \mu - \nu \rangle } f\left( C_\nu(x) \right) \right)}
         {\E\left( e^{ \langle \wt\left( C_\nu(x) \right), \mu - \nu \rangle } \right)}
=   \E\left( f\left( C_\mu(x) \right) \right),
$$
which concludes the first part of the proof.

Now, for the second part, let us prove the theorem when $\mu \in -C = w_0 C$. Using proposition \ref{proposition:link_with_canonical}, we write $C_\mu(x_0) = z \bar{w}_0 e^{x_0} g$
where $g \stackrel{\Lc}{=} \Gamma_{w_0 \mu}(x_0)$. Notice that:
\begin{align*}
X^{x_0}_t & = \hw\left( C_\mu(x_0) B_t(W^{(\mu)}) \right)\\
& = \log [ \bar{w}_0^{-1} z \bar{w}_0 e^{x_0} g  B_t(W^{(\mu)}) ]_0\\
& = x_0 + \log [ g  B_t(W^{(\mu)}) ]_0\\
& = x_0 + \log [ g  B_t(W^{(\mu)}) ]_0\\
& = x_0 + T_{g } \left( W^{(\mu)} \right)
\end{align*}
For shorter notations introduce $n= N_t\left( X^{x_0} \right)$ and $x = X_t^{x_0}$. Thus we have:
$$ n e^x = \left[ e^{x_0} g B_t(W^{(\mu)}) \right]_{-0}$$
Then, using the properties of the Gauss decomposition:
\begin{align*}
  & B_t(W^{(\mu)}) \\
= & \left(e^{x_0} g\right)^{-1} e^{x_0} g B_t(W^{(\mu)})\\
= & \left(e^{x_0} g\right)^{-1} n e^x \left[e^{x_0} g B_t(W^{(\mu)})\right]_+
\end{align*}
But since $B_t(W^{(\mu)}) \in B$, we have:
$$ B_t(W^{(\mu)}) = \left[ \left(e^{x_0} g\right)^{-1} n e^x \right]_{-0}$$
And the following decomposition holds:
\begin{align}
e^{x_0} g B_t(W^{(\mu)}) & = e^{x_0} g \left[ \left(e^{x_0} g\right)^{-1} n e^x \right]_{-0}\\
& = n e^x \left[ \left(e^{x_0} g\right)^{-1} n e^x \right]_{+}^{-1}\\
& = n e^x \left[ \left(e^{x_0} g e^{-x_0}\right)^{-1} n e^x \right]_{+}^{-1}\\
& = n e^x \left[ \left( e^{-x} n^{-1} e^{x_0} g e^{-x_0} e^x \right)^{-1} \right]_{+}^{-1}
\end{align}
Therefore:
\begin{align*}
  & C_\mu(x_0) B_t( W^{(\mu)} )\\
= & b_x^T\left( \left[ \bar{w}_0^{-1} C_\mu(x_0) B_t( W^{(\mu)} ) \right]_+ \right)\\
= & b_x^T\left( \left[ e^{x_0} g B_t( W^{(\mu)} ) \right]_+ \right)\\
= & b_x^T\left( \left[ \left( e^{-x} n^{-1} e^{x_0} g e^{-x_0} e^x \right)^{-1} \right]_{+}^{-1} \right)
\end{align*}

Thanks to the conditional representation theorem \ref{thm:conditional_representation}, we know that:
$$ N_\infty(X^{x_0}) = \Theta( e^{x_0} g e^{-x_0} ) $$
Moreover, equation \eqref{eq:N_infty_decomposition_x_0} tells us:
$$ N_\infty(X^{x_0}) = n e^{x} N_\infty(X^{x_0}_{t+.} - X^{x_0}_t) e^{-x}$$
Hence, since $\Theta(ng) = n \Theta(g)$ for $n \in N$:
$$ N_\infty(X^{x_0}_{t+.} - X^{x_0}_t) = \Theta( e^{-x} n^{-1} e^{x_0} g e^{-x_0} e^{x} )$$
And:
$$ C_\mu(x_0) B_t( W^{(\mu)} ) = b_x^T\left( \left[ \Theta^{-1}\left( N_\infty(X^{x_0}_{t+.} - X^{x_0}_t) \right)^{-1} \right]_{+}^{-1} \right) $$
As $ \Theta^{-1}\left( N_\infty(X^{x_0}_{t+.} - X^{x_0}_t) \right) \in U$, we have in the end:
\begin{align}
\label{eq:final_decomposition}
C_\mu(x_0) B_t( W^{(\mu)} ) & = b_x^T \circ \Theta^{-1}\left( N_\infty(X^{x_0}_{t+.} - X^{x_0}_t) \right)
\end{align}

Under our working probability measure $g \stackrel{\Lc}{=} \Gamma_\mu(x_0)$ and $X^{x_0}$ follows the Whittaker process (Theorem \ref{thm:whittaker_process_g}). In the context of subsection \ref{subsection:condition_N_infty}, our working probability measure can be considered of the form $\P^v$. Under the equivalent probability measure $\P$, $g$ has the same law as $\Gamma_\mu$ and $X^{x_0}$ is distributed as a BM with drift $w_0 \mu \in C$.
$$  \frac{d \P^v}{d \P}            = \exp\left( -\chi_-\left( N_\infty(X^{x_0}) \right) \right)
                                     \frac{ b(w_0 \mu) }
                                          { \psi_{w_0 \mu}(x_0) e^{- \langle w_0 \mu, x_0 \rangle }} $$

$$ \frac{ d \P^v}{d \P}_{| \Fc_t } = \exp\left( -\chi_-\left( n \right) \right)
                                     \frac{ \psi_{w_0 \mu}(x  ) e^{- \langle w_0 \mu, x   \rangle }} 
                                          { \psi_{w_0 \mu}(x_0) e^{- \langle w_0 \mu, x_0 \rangle }} $$
Thus, we get the simplification:
\begin{align*}
  & \frac{d \P^v}{d \P} / \frac{ d \P^v}{d \P}_{| \Fc_t }\\
= & \frac{ b(w_0\mu) }{ \psi_{w_0\mu}(x) e^{- \langle w_0 \mu, x \rangle }}
    \frac{ \exp\left( -\chi_-\left( n e^x N_\infty(X^{x_0}_{t+.}-X^{x_0}) e^{-x} \right) \right) }
         { \exp\left( -\chi_-\left( n \right) \right) }\\
= & \frac{ b(w_0\mu) }{ \psi_{w_0\mu}(x) e^{- \langle w_0 \mu, x \rangle }}
    \exp\left( -\chi_-\left( e^x N_\infty(X^{x_0}_{t+.}-X^{x_0}) e^{-x} \right) \right)
\end{align*}
Therefore, on the set $\{ X_t^{x_0} = x \}$, by equation \eqref{eq:final_decomposition} and using the fact that $\E = \P^v$:
\begin{align*}
  & \E\left( f\left( C_\mu(x_0) B_t(W^{(\mu)}) \right) | \Fc_t\right)\\
= & \E\left( f \circ b_x^T \circ \Theta^{-1}\left( N_\infty(X^{x_0}_{t+.} - X^{x_0}_t) \right)
           | \Fc_t \right)\\ 
= & \P^v\left( f \circ b_x^T \circ \Theta^{-1}\left( N_\infty(X^{x_0}_{t+.} - X^{x_0}_t) \right)
             | \Fc_t \right)
\end{align*}
By the Bayes formula:
\begin{align*}
  & \E\left( f\left( C_\mu(x_0) B_t(W^{(\mu)}) \right) | \Fc_t \right)\\
= & \frac{ \P\left( \frac{d \P^v}{d \P}
                    f \circ b_x^T \circ \Theta^{-1}\left( N_\infty(X^{x_0}_{t+.} - X^{x_0}_t) \right)
                    | \Fc_t \right)}
         { \frac{ d \P^v}{d \P}_{| \Fc_t } }\\
= & \frac{ b(w_0\mu) }{ \psi_{w_0\mu}(x) e^{- \langle w_0 \mu, x \rangle }}
    \P\left( e^{-\chi_-\left( e^x N_\infty(X^{x_0}_{t+.}-X^{x_0}) e^{-x} \right)}
             f \circ b_x^T \circ \Theta^{-1}\left( N_\infty(X^{x_0}_{t+.} - X^{x_0}_t) \right)
             | \Fc_t \right)
\end{align*}

Since under $\P$, $X^{x_0}$ is a Brownian motion with drift $w_0 \mu$, we know that $N_\infty\left( X^{x_0}_{t+.} - X^{x_0}_{t} \right)$ is independent from $\Fc_t$ and has the same law as $D_{w_0 \mu}$. In the end:
\begin{align*}
  & \E\left( f\left( C_\mu(x_0) B_t(W^{(\mu)}) \right) | \Fc_t^{X^{x_0}}, X^{x_0}_t = x \right)\\
= & \frac{ b(w_0\mu) }{ \psi_{w_0\mu}(x) e^{- \langle w_0 \mu, x \rangle }}
    \E\left( \exp\left( -\chi_-\left( e^x D_{w_0 \mu} e^{-x} \right) \right)
             f \circ b_x^T \circ \Theta^{-1}\left( D_{w_0 \mu} \right) \right)\\
= & \E\left( f \circ b_x^T \circ \Theta^{-1}\left( D_{w_0 \mu}(x) \right) \right)\\
= & \E\left( f \circ b_x^T \left( \Gamma_{w_0 \mu}(x) \right) \right)
\end{align*}
Proposition \ref{proposition:link_with_canonical} yields the result by giving:
$$ C_\mu(x) \stackrel{\Lc}{=} b_x^T \left( \Gamma_{w_0 \mu}(x) \right)$$
\end{proof}

As a consequence, the condition (i) of theorem \ref{thm:intertwining} is valid with an initial law for the hypoelliptic Brownian motion being $C_\mu(x_0)$. Moreover
$$ Q_t := \Kc_\mu \circ P_t \circ \hw$$
is a semi-group making the following diagram commutative.
\begin{center}
\begin{tikzpicture}
\matrix(a)[matrix of math nodes, row sep=3em, column sep=3.5em,
text height=1.5ex, text depth=0.25ex]
{ \Bc    & \Bc    &  \\
  \afrak & \afrak & \afrak \\ };
\path[->]
(a-1-1) edge node[auto] { $e^{t \Dc^{(\mu)}}$} (a-1-2)
(a-2-1) edge node[auto] { $\Kc_\mu$} (a-1-1)
(a-2-2) edge node[auto] { $\Kc_\mu$} (a-1-2)
(a-2-1) edge node[auto] { $Q_t$} (a-2-2)
(a-2-2) edge node[auto] { $id$} (a-2-3)
(a-1-2) edge node[auto] { $\hw$} (a-2-3);
\end{tikzpicture}
\end{center}

The Pitman-Rogers criterion \ref{thm:rogerspitman_criterion} is applicable and tells us that $X^{x_0}$ is Markov with semigroup $Q_t$. It can be easily identified:
\begin{proposition}
The semigroup $Q$ is generated by the Doob transform of the quantum Toda Hamiltonian:
$$ Q_t = \exp\left( t \Lc \right) $$
with:
$$ \Lc = \psi_\mu^{-1} (H - \half \langle \mu, \mu \rangle) \psi_\mu
       = \half \Delta_\afrak + \langle \nabla \log \psi_\mu, \nabla \rangle$$
\end{proposition}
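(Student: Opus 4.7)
The plan splits into two assertions: the equality between the two expressions for $\Lc$, and the identification $Q_t = \exp(t\Lc)$.

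First I would establish the Doob identity by a direct product-rule computation. For any smooth positive $\psi$,
$$\psi^{-1}\left(H - \half\langle\mu,\mu\rangle\right)(\psi g) = \half\Delta g + \langle\nabla\log\psi,\nabla g\rangle + \psi^{-1}\left(H\psi - \half\langle\mu,\mu\rangle\psi\right) g,$$
so the two expressions for $\Lc$ coincide precisely when $\psi_\mu$ solves the Toda eigenequation. This holds for $\mu\in C$ by Theorem \ref{thm:whittaker_functions_properties}(iii), and extends to all $\mu\in\hfrak$ by analytic continuation using that $\psi_\mu$ is entire in $\mu$ (Theorem \ref{thm:whittaker_functions_properties}(i)).

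Next, Theorem \ref{thm:markovian_filtering} combined with the Pitman-Rogers criterion \ref{thm:rogerspitman_criterion} already tells us that $X^{x_0}_t := \hw(C_\mu(x_0)B_t(W^{(\mu)}))$ is Markov with semigroup $Q_t$; only its generator remains to be identified. For $\mu\in w_0 C$ I would recycle the path-level representation obtained inside the proof of Theorem \ref{thm:markovian_filtering}: writing $C_\mu(x_0) = z\bar{w}_0 e^{x_0}g$ with $g\sim\Gamma_{w_0\mu}(x_0)$ produces $X^{x_0}_t = x_0 + T_g(W^{(\mu)})_t$, and Theorem \ref{thm:whittaker_process_g}, applied with its spectral parameter being $w_0\mu\in C$, identifies $X^{x_0}$ as a diffusion with generator $\half\Delta + \langle\nabla\log\psi_{w_0\mu},\nabla\rangle$, which equals $\Lc$ by the $W$-invariance of $\psi_\mu$ in its spectral parameter (Theorem \ref{thm:whittaker_functions_properties}(ii)).

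The main obstacle is the extension from $\mu\in w_0 C$ to arbitrary $\mu\in\afrak$. My strategy is to use the infinitesimal intertwining $\Kc_\mu\circ\Dc^{(\mu)} = \Lc^{(\mu)}\circ\Kc_\mu$, where $\Lc^{(\mu)}$ is the unknown generator of $Q_t$, obtained by differentiating at $t=0$ the semigroup intertwining in condition (ii) of Theorem \ref{thm:intertwining}. Applying both sides to a test function $g\circ\hw$ with $g\in C^\infty_c(\afrak)$ and using $\Kc_\mu(g\circ\hw)=g$ yields
$$\Lc^{(\mu)}(g)(\lambda) = \frac{1}{\psi_\mu(\lambda)}\int_{\Bc(\lambda)}\Dc^{(\mu)}(g\circ\hw)(x)\,e^{\langle\mu,\wt(x)\rangle - f_B(x)}\,\omega(dx).$$
Both sides depend holomorphically on $\mu\in\hfrak$ -- the right-hand side through $\psi_\mu(\lambda)^{-1}$, the affine dependence of $\Dc^{(\mu)}$ on $\mu$, and the factor $e^{\langle\mu,\wt(x)\rangle}$; the left-hand side $\Lc g = \half\Delta g + \langle\nabla\log\psi_\mu,\nabla g\rangle$ through $\psi_\mu$. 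Since the two coincide on the open cone $w_0 C\subset\afrak$ by the previous step, analytic continuation forces coincidence on all of $\hfrak$, and in particular on $\afrak$. The delicate point, and the real technical burden, is the justification of differentiation under the integral sign and uniform integrability in $\mu$ on compact sets, both of which follow from the exponential domination provided by Theorem \ref{thm:superpotential_estimate}.
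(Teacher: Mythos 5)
Your proposal is correct and follows essentially the same route as the paper: identify the generator for $\mu \in w_0 C$ via the path representation and Theorem \ref{thm:whittaker_process_g} together with the $W$-invariance of $\psi_\mu$, then extend to all $\mu$ by observing that $\Lc_\mu = \Kc_\mu \circ \Dc^{(\mu)} \circ \hw$ depends analytically on $\mu$. Your write-up merely spells out more explicitly the Doob-transform identity and the domination needed for the analytic continuation, both of which the paper leaves implicit.
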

\begin{proof}
When $\mu \in -C = w_0 C$, we are in the same situation as theorem \ref{thm:whittaker_process_g}, where we identified the infinitesimal generator as:
$$  \half \Delta_\afrak + \langle \nabla \log \psi_{w_0\mu}, \nabla \rangle$$
Hence the result as $\psi_{w_0\mu} = \psi_{\mu}$ (Property (ii) in theorem \ref{thm:whittaker_functions_properties}). For general $\mu \in \afrak$, we use the fact that:
$$ \forall t \geq 0, Q_t = \Kc_\mu \circ P_t \circ \hw$$
Therefore $Q_t$ has infinitesimal generator:
$$ \Lc_\mu = \Kc_\mu \circ \left( \half \Delta_\afrak + \langle \mu, \nabla_\afrak \rangle + \half \sum_{\alpha \Delta} \langle \alpha, \alpha \rangle f_\alpha \right) \circ \hw$$
Against a smooth function $f: \afrak \rightarrow \R$, at a point $x$, $\Lc_\mu(f)(x)$ is analytic in the parameter $\mu$ and equal to
$$ \half \Delta_\afrak + \langle \nabla \log \psi_{\mu}, \nabla \rangle$$
for $\mu \in -C$. The result holds by analytic extension.
\end{proof}

\subsection{Intertwining property at the torus level}
The geometric Duistermaat-Heckman measure intertwines Brownian motion and the quantum Toda Hamiltonian. Formally, introduce the Markov kernel $\hat{\Kc}_\mu: \Cc\left( \afrak \right) \rightarrow \Cc(\afrak)$ defined for every positive $\varphi \in \Cc( \afrak )$ by:
$$
 \hat{\Kc}_\mu(\varphi)(\lambda) 
 := \E\left( \varphi \circ \wt \left( C_\mu(\lambda) \right) \right)\\
  = \frac{1}{\psi_\mu(\lambda)} \int_\afrak e^{\langle \mu, k \rangle} \ \varphi(k) \ DH^\lambda(dk)
$$

\begin{thm}
\label{thm:intertwining_torus}
The following diagram is commutative.
\begin{center}
\begin{tikzpicture}
\matrix(a)[matrix of math nodes, row sep=3em, column sep=3.5em,
text height=1.5ex, text depth=0.25ex]
{ \afrak & \afrak \\
  \afrak & \afrak \\ };
\path[->]
(a-1-1) edge node[above] { $e^{ t\left( \half \Delta_\afrak + \langle \mu, \nabla_\afrak \rangle \right) }$} (a-1-2)
(a-2-1) edge node[left]  { $\hat{\Kc}_\mu$} (a-1-1)
(a-2-2) edge node[right] { $\hat{\Kc}_\mu$} (a-1-2)
(a-2-1) edge node[below] { $Q_t$} (a-2-2);
\end{tikzpicture}
\end{center}
\end{thm}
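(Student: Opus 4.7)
The plan is to obtain the intertwining as a two-step factorization through the hypoelliptic Brownian motion on $B$, reusing the intertwining already established in the previous subsection. Observe that $\hat{\Kc}_\mu = \Kc_\mu \circ \Phi_{\wt}$, where $\Phi_{\wt}$ denotes the Markov operator of pullback by the weight map $\wt: \Bc \to \afrak$, namely $\Phi_{\wt}(\varphi) = \varphi \circ \wt$. Indeed, by definition of $\hat{\Kc}_\mu$ and of the canonical random variable $C_\mu(\lambda)$,
$$ \Kc_\mu \circ \Phi_{\wt}(\varphi)(\lambda) = \E\left( \varphi \circ \wt\left( C_\mu(\lambda) \right) \right) = \hat{\Kc}_\mu(\varphi)(\lambda). $$

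The first key step is to verify a Dynkin-type identity, namely $P_t \circ \Phi_{\wt} = \Phi_{\wt} \circ P^{BM}_t$, where $P_t = e^{t\Dc^{(\mu)}}$ is the semigroup of the hypoelliptic Brownian motion on $B$ and $P^{BM}_t = e^{t(\half \Delta_\afrak + \langle \mu, \nabla_\afrak\rangle)}$ is the Euclidean Brownian semigroup. This is a mere infinitesimal check: for $b = na \in NH$ and $V \in \afrak$, the relation $b e^{tV} = n(ae^{tV})$ gives $\wt(be^{tV}) = \wt(b) + tV$, so the torus vector fields in $\Dc^{(\mu)}$ pass through $\Phi_{\wt}$ as the corresponding Euclidean operators on $\afrak$. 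Moreover, since $a e^{tf_\alpha} a^{-1} = e^{t e^{-\alpha(\log a)} f_\alpha} \in N$, one has $\wt(be^{tf_\alpha}) = \wt(b)$, so $\Lc_{f_\alpha} \Phi_{\wt}(\varphi) = 0$. Thus $\Dc^{(\mu)} \circ \Phi_{\wt} = \Phi_{\wt} \circ (\half \Delta_\afrak + \langle \mu, \nabla_\afrak \rangle)$, which exponentiates to the claimed identity of semigroups.

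The second key step is the intertwining $\Kc_\mu \circ P_t = Q_t \circ \Kc_\mu$, which is exactly the content of Theorem~\ref{thm:markovian_filtering} combined with Theorem~\ref{thm:intertwining}~(ii). Chaining the two intertwinings:
$$ \hat{\Kc}_\mu \circ P^{BM}_t = \Kc_\mu \circ \Phi_{\wt} \circ P^{BM}_t = \Kc_\mu \circ P_t \circ \Phi_{\wt} = Q_t \circ \Kc_\mu \circ \Phi_{\wt} = Q_t \circ \hat{\Kc}_\mu, $$
which is precisely the commutativity of the diagram. The main (minor) obstacle is the verification of the Gauss-decomposition computations on $\Lc_{f_\alpha}$ and the torus fields, but these are immediate from the $NH$ decomposition and the commutativity of the torus. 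Once this is in hand, there is nothing else to prove: the statement is essentially a re-reading of the Rogers--Pitman intertwining through the projection $\wt$, corresponding probabilistically to the fact that the weight map sends the hypoelliptic Brownian motion $B_t(W^{(\mu)})$ to the Euclidean Brownian motion $W^{(\mu)}_t$, while the highest weight map sends it to the Whittaker process.
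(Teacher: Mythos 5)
Your proof is correct and follows essentially the same route as the paper, which simply applies the intertwining $\Kc_\mu \circ P_t = Q_t \circ \Kc_\mu$ to functions of the form $\varphi \circ \wt$. The only difference is that you make explicit the step the paper leaves implicit, namely the Dynkin identity $P_t \circ \Phi_{\wt} = \Phi_{\wt} \circ e^{t(\half\Delta_\afrak + \langle \mu, \nabla_\afrak\rangle)}$ (equivalently, that the weight of the hypoelliptic Brownian motion is the Euclidean Brownian motion $W^{(\mu)}$), and your infinitesimal verification of it via $\Lc_{f_\alpha}(\varphi\circ\wt)=0$ is accurate.
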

\begin{proof}
Apply the earlier intertwining 
$\Kc_\mu \circ e^{t \Dc^{\mu}} = Q_t \circ \Kc_\mu$
to functions depending on the weight only.
\end{proof} 

% --------------------------------------------------------------------
\section{Asymptotics}
\label{section:asymptotics}
In this section, we investigate how the law of $C_\mu(\lambda)$ behaves as $\lambda$ goes to infinity in the opposite Weyl chamber. In subsection \ref{subsection:entrance_point}, this will be very important in order to complete the proofs of theorems \ref{thm:highest_weight_is_markov} and \ref{thm:canonical_measure}, by having the Whittaker process in theorem \ref{thm:whittaker_process_g} start at ``$x_0 = -\infty$''.

\subsection{Behavior of the canonical measure}

The following proposition is based on a weak version of the Laplace method. 
\begin{proposition}
\label{proposition:convergence_in_proba}
For given $\zeta \in \afrak$, $\mu \in \afrak$ and $M \rightarrow \infty$, we have convergence in probability for the $\Bc(\zeta)$-valued random variable:
$$ e^{-M \rho^\vee} C_\mu(\zeta - 2M \rho^\vee) e^{M \rho^\vee} \stackrel{\P}{\longrightarrow} m_\zeta$$
where $m_\zeta$ is the unique minimizer of the superpotential $f_B$ on $\Bc(\zeta)$.
\end{proposition}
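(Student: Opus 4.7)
The strategy is to change coordinates via conjugation, express the law of $y_M := e^{-M\rho^\vee} C_\mu(\zeta - 2M\rho^\vee) e^{M\rho^\vee}$ as a Laplace integral on $\Bc(\zeta)$ with effective parameter $e^M$, and invoke concentration near the unique minimizer $m_\zeta$ of $f_B$. First I will verify that conjugation by $e^{M\rho^\vee}$ gives a diffeomorphism
\[ \phi_M : \Bc(\zeta) \longrightarrow \Bc(\zeta - 2M\rho^\vee), \qquad y \longmapsto e^{M\rho^\vee} y e^{-M\rho^\vee}, \]
using the identity $\bar{w}_0 e^{M\rho^\vee} \bar{w}_0^{-1} = e^{-M\rho^\vee}$ that follows from $w_0\rho^\vee = -\rho^\vee$. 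Then $y_M = \phi_M^{-1}\bigl( C_\mu(\zeta - 2M\rho^\vee)\bigr)$ is a $\Bc(\zeta)$-valued random variable whose law is the pushforward of the canonical density under $\phi_M^{-1}$.

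The central computation is the scaling identity
\[ f_B \circ \phi_M \;=\; e^M \cdot f_B \qquad \text{on } \Bc(\zeta). \]
The key observation is that $\alpha(\rho^\vee) = 1$ for every simple root $\alpha$, so $\Ad(e^{M\rho^\vee})(e_\alpha) = e^M e_\alpha$. Consequently, if $\varrho^T(y) = x_{\mathbf{i}}(s_1, \ldots, s_m)$, then the twisted Lusztig parameter of $\phi_M(y)$ is $u_M = e^{M\rho^\vee} \varrho^T(y) e^{-M\rho^\vee} = x_{\mathbf{i}}(e^M s_1, \ldots, e^M s_m)$. In the semi-explicit formula of Proposition \ref{proposition:semi_explicit_expressions}, the linear part contributes $\chi(u_M) = e^M \chi(\varrho^T(y))$; for the twist part, a Gauss-decomposition computation gives $\eta_{w_0}(e^{M\rho^\vee} u' e^{-M\rho^\vee}) = e^{-M\rho^\vee} \eta_{w_0}(u') e^{M\rho^\vee}$, and combining this with the torus conjugation $e^{-\lambda}(\cdot) e^{\lambda}$ and with $e^{-\alpha(\zeta - 2M\rho^\vee)} = e^{2M} e^{-\alpha(\zeta)}$ yields a clean factor $e^M$ in each summand. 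Summing recovers $f_B(\phi_M(y)) = e^M f_B(y)$. The weight is invariant under torus conjugation, so $\wt \circ \phi_M = \wt$, and $\omega$ is invariant under the scaling $t_j \mapsto e^M t_j$ since $\prod dt_j/t_j$ is translation-invariant in logarithmic coordinates.

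These identities imply that the density of $y_M$ on $\Bc(\zeta)$ equals
\[ \frac{1}{\psi_\mu(\zeta - 2M\rho^\vee)}\, \exp\bigl( \langle \mu, \wt(y) \rangle - e^M f_B(y) \bigr)\, \omega(dy), \]
which is a Laplace integral with parameter $e^M \to \infty$. By Theorems \ref{thm:superpotential_minimum} and \ref{thm:superpotential_well}, $f_B$ is strictly convex in logarithmic Lusztig coordinates, has compact sub-level sets, and attains its unique non-degenerate minimum at $m_\zeta$. Given $\epsilon > 0$, I will choose $\delta > 0$ with $f_B(y) \geq f_B(m_\zeta) + \delta$ outside the $\epsilon$-neighborhood $K_\epsilon$ of $m_\zeta$, and then a smaller neighborhood $K_{\epsilon'}$ on which $f_B(y) \leq f_B(m_\zeta) + \delta/4$. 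The bound $e^{-e^M f_B(y)} \leq e^{-(e^M - 1)(f_B(m_\zeta) + \delta)} e^{-f_B(y)}$ outside $K_\epsilon$ controls the numerator by $\psi_\mu(\zeta)$ times an exponential factor, while the denominator is bounded from below by the contribution from $K_{\epsilon'}$, of order $e^{-e^M(f_B(m_\zeta) + \delta/4)}$; their ratio decays like $e^{-(3/4) e^M \delta}$, so $\P(|y_M - m_\zeta| > \epsilon) \to 0$. The main obstacle is the scaling identity $f_B \circ \phi_M = e^M f_B$, which demands careful bookkeeping of how conjugation by $e^{M\rho^\vee}$ interacts with the twist map $\eta_{w_0}$ and the parametrization $b_\lambda^T$ (notably via the identity $F(e^H) = e^{-w_0(H)}$ for $F = S \circ \iota$ on the torus); once granted, the rest is standard Laplace concentration, with $e^{\langle\mu,\wt(y)\rangle}$ contributing only bounded corrections.
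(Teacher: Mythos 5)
Your proposal is correct and follows essentially the same route as the paper: conjugation by $e^{M\rho^\vee}$ as a change of variables on $\Bc(\zeta-2M\rho^\vee)$, the scaling identity $f_B\circ\phi_M = e^M f_B$ together with the invariance of $\wt$ and $\omega$, and then standard Laplace concentration at the unique non-degenerate minimizer $m_\zeta$ using the compactness of sublevel sets. The only cosmetic difference is that you verify the scaling of $f_B$ through the semi-explicit coordinate formula and the twist map, whereas the paper reads it off directly from Definition \ref{def:f_B} applied to the conjugated decomposition $x = (e^{-M\rho^\vee}u'e^{M\rho^\vee})\,\bar{w}_0\,e^{\zeta}\,(e^{-M\rho^\vee}ue^{M\rho^\vee})$; both computations are valid.
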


\begin{corollary}
\label{corollary:convergence_in_proba}
With the same notations, as $M \rightarrow \infty$, the following limit holds in $B$:
$$ C_\mu(\zeta - 2M \rho^\vee) \stackrel{\P}{\longrightarrow} id$$
\end{corollary}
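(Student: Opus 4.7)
The plan is to combine the preceding Proposition \ref{proposition:convergence_in_proba} with Lemma \ref{lemma:limit_contraction}. Setting $Y_M := e^{-M\rho^\vee} C_\mu(\zeta - 2M\rho^\vee) e^{M\rho^\vee}$, the proposition already gives $Y_M \to m_\zeta$ in probability in $\Bc(\zeta)$. Since $C_\mu(\zeta - 2M\rho^\vee) = e^{M\rho^\vee} Y_M e^{-M\rho^\vee}$, the task reduces to understanding how conjugation by $e^{M\rho^\vee}$ behaves on elements close to $m_\zeta$.

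The decisive observation is that $m_\zeta$ actually lies in $N$, with trivial torus component. Write $m_\zeta = v_0 e^\zeta$ with $v_0 \in C_{>0}^{w_0} \subset B$, and decompose $v_0 = n_0 h_0$ in $B = NH$. Then the Gauss decomposition of $m_\zeta$ has $H$-part $[m_\zeta]_0 = h_0 e^\zeta$. By Theorem \ref{thm:superpotential_minimum}, $\wt(m_\zeta) = 0$, forcing $h_0 = e^{-\zeta}$ and hence $m_\zeta = n_0 \in N$. I would next decompose $C_\mu(\zeta - 2M\rho^\vee) = n_M a_M$ in $B = NH$. Since $H$ is abelian, $e^{\pm M\rho^\vee}$ commutes with $a_M$, so the $B = NH$ factorization of $Y_M$ is $(e^{-M\rho^\vee} n_M e^{M\rho^\vee}) \cdot a_M$. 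Continuity of this factorization, together with the convergence of $Y_M$ and $[m_\zeta]_0 = \Id$, yields in probability
$$a_M \longrightarrow \Id, \qquad \tilde n_M := e^{-M\rho^\vee} n_M e^{M\rho^\vee} \longrightarrow m_\zeta.$$

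It then suffices to show that $n_M = e^{M\rho^\vee} \tilde n_M e^{-M\rho^\vee} \to \Id$ in probability. The remaining ingredient is that $\Ad(e^{M\rho^\vee})$ scales each negative root space $\gfrak_{-\beta}$ by $e^{-M\beta(\rho^\vee)}$, and $\beta(\rho^\vee) = \height(\beta) \geq 1 > 0$ for every positive root $\beta$. Consequently, the conjugation map $n \mapsto e^{M\rho^\vee} n e^{-M\rho^\vee}$ converges to $\Id$ uniformly on compact subsets of $N$, which is the precise form of Lemma \ref{lemma:limit_contraction}.

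The main (and only) subtlety is combining this uniform contraction with a probabilistic input that is only convergence in probability. I would argue as follows: convergence in probability implies tightness of $(\tilde n_M)$, so for any $\varepsilon > 0$ and any neighbourhood $\Uc$ of $\Id$ in $N$, one picks a compact $K \subset N$ containing $m_\zeta$ with $\P(\tilde n_M \in K) \geq 1-\varepsilon$ for $M$ large; uniform contraction on $K$ then forces $e^{M\rho^\vee} K e^{-M\rho^\vee} \subset \Uc$ eventually, so $\P(n_M \in \Uc) \geq 1 - \varepsilon$. Together with $a_M \to \Id$, this produces $C_\mu(\zeta - 2M\rho^\vee) = n_M a_M \to \Id$ in probability in $B$, as required.
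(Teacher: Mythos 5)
Your argument is correct and follows the same route as the paper: use $\wt(m_\zeta)=0$ to place $m_\zeta$ in $N$, then undo the conjugation by $e^{M\rho^\vee}$ via the contraction of Lemma \ref{lemma:limit_contraction}. You in fact supply the two details the paper leaves implicit — that the contraction $n \mapsto e^{M\rho^\vee} n e^{-M\rho^\vee}$ is uniform on compacts of $N$, and the tightness argument needed to combine this with mere convergence in probability — so the write-up is, if anything, more complete than the original.
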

\begin{proof}
Thanks to theorem \ref{thm:superpotential_minimum}, $\wt(m_\zeta) = 0$ and therefore $m_\zeta \in N_{>0}^{w_0}$. The result follows from the previous proposition \ref{proposition:convergence_in_proba} and the lemma \ref{lemma:limit_contraction}.
\end{proof}

\begin{proof}[Proof of proposition \ref{proposition:convergence_in_proba}]
For easier notation write:
$$ X_M := e^{-M \rho^\vee} C_\mu(\zeta - 2M \rho^\vee) e^{M \rho^\vee}$$
Because the map $y \mapsto e^{-M \rho^\vee} y e^{M \rho^\vee}$ maps $\Bc(\zeta - 2M \rho^\vee)$ to $\Bc(\zeta)$, this random variable is indeed in $\Bc(\zeta)$. The latter claim is a consequence of (iii) Property 4.14 in \cite{bib:chh14a}:
\begin{align*}
    \hw( e^{-M \rho^\vee} y e^{M \rho^\vee} )
= & -w_0 M \rho^\vee + \hw\left( y \right) + M \rho^\vee\\
= & -w_0 M \rho^\vee + \left( \zeta - 2M \rho^\vee \right) + M \rho^\vee\\
= & \zeta
\end{align*}

Let $\varphi$ be a positive test function on $\Bc(\zeta)$.  By the canonical measure's definition:
\begin{align*}
    \E\left( \varphi( X_M )\right)
& = \E\left( \varphi( e^{-M \rho^\vee} C_\mu(\zeta-2M \rho^\vee) e^{M \rho^\vee} ) \right)\\
& = \frac{1}{\psi_\mu(\zeta - 2M\rho^\vee)} \int_{\Bc(\zeta - 2M \rho^\vee)} e^{ \langle \mu, \wt(y) \rangle - f_B(y)} \varphi\left( e^{-M \rho^\vee} y e^{M \rho^\vee} \right) \omega(dy) 
\end{align*}
Now, let us make the change of variables $x =  e^{-M \rho^\vee} y e^{M \rho^\vee}$. By definition, $\omega(dy)$ is the toric measure on the twisted Lusztig parameters of $y$ and:
$$ \varrho^T(x) = e^{-M \rho^\vee} \varrho^T(y) e^{M \rho^\vee}.$$
Hence the image measure for $\omega$ on $\Bc(\zeta-2M\rho^\vee)$ under this change of variable is again $\omega$ on $\Bc(\zeta)$:
\begin{align}
\label{eq:convergence_in_proba_1}
\omega(dy) = \omega(dx) 
\end{align}
Moreover:
\begin{align}
\label{eq:convergence_in_proba_2}
\wt(y) &= \wt(x)
\end{align}
\begin{align}
\label{eq:convergence_in_proba_3}
f_B(y) &=  e^M f_B\left( x \right)
\end{align}
For the latter claim, write:
$$ y = u' \bar{w}_0 e^{\zeta - 2M \rho^\vee} u$$
where $u' \in U^{w_0}_{>0}$, $u \in U^{w_0}_{>0}$. Hence:
$$ x = e^{-M \rho^\vee} u' e^{M \rho^\vee} \bar{w}_0 e^\zeta e^{-M \rho^\vee} u e^{M \rho^\vee}$$
Therefore from definition \ref{def:f_B}:
\begin{align*}
f_B(x) & = \chi\left( e^{-M \rho^\vee} u' e^{M \rho^\vee} \right) + \chi\left( e^{-M \rho^\vee} u e^{M \rho^\vee} \right)\\
& = e^{-M} \chi\left( u \right) + e^{-M} \chi\left( u' \right)\\
& = e^{-M} f_B(y)
\end{align*}
In the end, putting equations \eqref{eq:convergence_in_proba_1}, \eqref{eq:convergence_in_proba_2} and \eqref{eq:convergence_in_proba_3} together yields the appropriate formula for the Laplace method:
\begin{align}
\label{eq:convergence_in_proba_formula}
  \E\left( \varphi( X_M )\right)
& = \frac{ \int_{\Bc(\zeta)} e^{ \langle \mu, \wt(x) \rangle - e^M f_B(x)} \varphi\left( x \right) \omega(dx) } 
         { \int_{\Bc(\zeta)} e^{ \langle \mu, \wt(x) \rangle - e^M f_B(x)} \omega(dx) }
\end{align}

The following is quite standard. By theorem \ref{thm:superpotential_minimum}, $f_B$ has a unique minimizer on $\Bc(\zeta)$ denoted by $m_\zeta$. Consider $V$ a neighborhood of $m_\zeta$. Because $m_\zeta$ is a non-degenerate critical point, such a neighborhood contains a compact set of the form:
$$ K_\delta := \left\{  x \in \Bc(\zeta) \ | \ f_B(x) \leq f_B(m_\zeta) + \delta \right\}$$
for $\delta$ small enough. We denote by $V^c$ the complement of $V$. The theorem is proved once the following holds:
$$ \P\left( X_M \in V^c \right) \stackrel{M \rightarrow \infty}{\longrightarrow} 0 $$
We have:
\begin{align*}
       \P\left( X_M \in V^c \right)
\leq & \P\left( X_M \in K_\delta^c \right)\\
=    & \frac{ \int_{K_\delta^c} e^{ \langle \mu, \wt(x) \rangle - e^M f_B(x)} \omega(dx) }
            { \int_{\Bc(\zeta)} e^{ \langle \mu, \wt(x) \rangle - e^M f_B(x)} \omega(dx) }\\
=    & \frac{1}{1+
       \frac{\int_{K_\delta  } e^{ \langle \mu, \wt(x) \rangle - e^M (f_B(x)-f_B(m_\zeta)-\delta)} \omega(dx) }
            {\int_{K_\delta^c} e^{ \langle \mu, \wt(x) \rangle - e^M (f_B(x)-f_B(m_\zeta)-\delta)} \omega(dx) } }
\end{align*}
In the ratio of two integrals, the numerator goes to infinity as $M \rightarrow \infty$ because for instance of the contribution of $K_{\frac{\delta}{2}} \subset K_{\delta}$:
\begin{align*}
& \int_{K_\delta  } e^{ \langle \mu, \wt(x) \rangle - e^M (f_B(x)-f_B(m_\zeta)-\delta)} \omega(dx)\\
& \geq \int_{K_{\frac{\delta}{2}}} e^{ \langle \mu, \wt(x) \rangle - e^M (f_B(x)-f_B(m_\zeta)-\delta)} \omega(dx)\\
& \geq \int_{K_{\frac{\delta}{2}}} e^{ \langle \mu, \wt(x) \rangle - e^M \frac{\delta}{2} } \omega(dx)\\
& \rightarrow \infty
\end{align*}
The denominator decreases to zero as $M \rightarrow \infty$ using the dominated convergence theorem.
\end{proof}

As a consequence:
\begin{lemma}
\label{lemma:asymptotics_N_orbits}
Let $x_0 = \zeta - 2M \rho^\vee$ for any $\zeta \in \afrak$. Then as $M \rightarrow \infty$, in term of left $N$-orbits:
 $$ N e^{x_0} \Gamma_{\mu}(x_0) \stackrel{ \P }{\longrightarrow} N \bar{w}_0^{-1}$$
\end{lemma}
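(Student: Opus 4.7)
The plan is to reduce the statement to the already-established strong convergence of Corollary \ref{corollary:convergence_in_proba} via an explicit Bruhat-type factorization, then project down to left $N$-cosets.

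First, by Proposition \ref{proposition:link_with_canonical} I may couple the relevant random variables on a single probability space so that $\Gamma_\mu(x_0) = \varrho^T(C_{w_0 \mu}(x_0))$. Since convergence in probability depends only on the law, this coupling is harmless. With it in place, the twisted Lusztig parametrization of $C_{w_0 \mu}(x_0) \in \Bc(x_0)$ takes the form
$$C_{w_0 \mu}(x_0) = z \,\bar{w}_0\, e^{x_0}\, \Gamma_\mu(x_0)$$
for a unique $z \in U^{w_0}_{>0}$, exactly the decomposition invoked in the proof of Theorem \ref{thm:markovian_filtering}.

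The next step is to solve for $e^{x_0}\Gamma_\mu(x_0)$ modulo the left action of $N$. Isolating this factor gives
$$e^{x_0}\,\Gamma_\mu(x_0) = \bar{w}_0^{-1}\, z^{-1}\, C_{w_0 \mu}(x_0).$$
Since $w_0$ reverses every positive root, conjugation by $\bar{w}_0^{-1}$ maps $U$ isomorphically onto $N$; in particular there is some $n \in N$ with $\bar{w}_0^{-1} z^{-1} = n\,\bar{w}_0^{-1}$. Hence
$$e^{x_0}\,\Gamma_\mu(x_0) = n\,\bar{w}_0^{-1}\, C_{w_0 \mu}(x_0),$$
and after passing to left $N$-cosets the factor $n \in N$ is absorbed, leaving
$$N\,e^{x_0}\,\Gamma_\mu(x_0) = N\,\bar{w}_0^{-1}\, C_{w_0 \mu}(x_0).$$

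To finish, I apply Corollary \ref{corollary:convergence_in_proba} with spectral parameter $w_0 \mu$: as $M \to \infty$, $C_{w_0 \mu}(\zeta - 2M\rho^\vee) \to \mathrm{Id}$ in probability in $G$. Continuity of left multiplication by $\bar{w}_0^{-1}$ and of the projection $G \to N\backslash G$ then delivers $N\,\bar{w}_0^{-1}\,C_{w_0 \mu}(x_0) \to N\,\bar{w}_0^{-1}$ in probability, which is the desired conclusion. The only real substance is the coupling and the Bruhat manipulation above; the analytic work of Laplace asymptotics is already absorbed into Proposition \ref{proposition:convergence_in_proba}, so no further computation is required.
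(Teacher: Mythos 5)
Your proof is correct and follows essentially the same route as the paper: both arguments reduce the claim to the identity $N e^{x_0}\Gamma_\mu(x_0) = N\,\bar{w}_0^{-1}\,C_{w_0\mu}(x_0)$ of left $N$-cosets and then invoke Corollary \ref{corollary:convergence_in_proba}. The only difference is bookkeeping — you extract the identity from the factorization $C_{w_0\mu}(x_0) = z\,\bar{w}_0\,e^{x_0}\,\Gamma_\mu(x_0)$ and absorb $\bar{w}_0^{-1}z^{-1}\bar{w}_0 \in N$, whereas the paper manipulates Gauss components $[\cdot]_+$, $[\cdot]_{0}$ and torus conjugations by $e^{M\rho^\vee}$; your version is, if anything, slightly cleaner.
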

\begin{proof}
 Using proposition \ref{proposition:link_with_canonical}, we write:
$$ \Gamma_\mu(x_0) = \varrho^T\left( C_{w_0 \mu}(x_0) \right) = [\bar{w}_0^{-1} C_{w_0 \mu}(x_0)]_+$$
Let $X_M := e^{-M \rho^\vee } C_{w_0 \mu}(x_0) e^{M \rho^\vee }$. Thanks to proposition \ref{proposition:convergence_in_proba}, $X_M$ converges in probability to $m_\zeta$. Hence:
\begin{align*}
    N e^{x_0} \Gamma_{\mu}(x_0) 
= & N e^{x_0} [\bar{w}_0^{-1} e^{M \rho^\vee} X_M e^{-M \rho^\vee} ]_+ \\
= & N e^{x_0 + M \rho^\vee} [\bar{w}_0^{-1} X_M]_+ e^{-M \rho^\vee}\\
= & N e^{\zeta - M \rho^\vee} [\bar{w}_0^{-1} X_M]_{-0}^{-1} \bar{w}_0^{-1} X_M e^{-M \rho^\vee}\\
= & N e^{\zeta - M \rho^\vee} [\bar{w}_0^{-1} X_M]_{0}^{-1} \bar{w}_0^{-1} X_M e^{-M \rho^\vee }
\end{align*}
Moreover, $X_M \in \Bc(\zeta)$ and as such, $[\bar{w}_0^{-1} X_M]_{0}^{-1} = e^{-\zeta}$. Therefore:
$$  N e^{x_0} \Gamma_\mu(x_0)
=  N e^{- M \rho^\vee  } \bar{w}_0^{-1} X_M e^{-M \rho^\vee }
=  N \bar{w}_0^{-1} e^{M \rho^\vee } X_M e^{-M \rho^\vee }
$$
The fact that $e^{M \rho^\vee } X_M e^{-M \rho^\vee } \rightarrow \Id$ concludes the proof.
\end{proof}

\subsection{Entrance point at \texorpdfstring{``$-\infty$''}{minus infinity}}
\label{subsection:entrance_point}
In order to prove theorem \ref{thm:highest_weight_is_markov}, we need to take $x_0$ to ``$-\infty$'' in theorem \ref{thm:whittaker_process_g}. Indeed, thanks to the asymptotic analysis in the previous subsection, the following theorem holds.

\begin{thm}
\label{thm:measure_concentration}
Consider the family of random path transforms $x_0 + T_{\Gamma_{\mu}(x_0) }$ for $x_0 \in \afrak$. For $x_0 = -M \rho^\vee$, $M \rightarrow \infty$, we have the following convergence in probability for every continuous path $\pi \in \Cc( \R^+, \afrak)$:
$$ \forall t >0, x_0 + T_{\Gamma_{\mu}(x_0) }\pi(t) \stackrel{\P}{\longrightarrow} + \Tc_{w_0}\pi(t)$$ 
\end{thm}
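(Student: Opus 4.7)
The natural strategy is to reduce the statement to a deterministic continuity argument via the algebraic structure, combined with the concentration result of Lemma \ref{lemma:asymptotics_N_orbits}. First I would rewrite
$$ x_0 + T_g \pi(t) = x_0 + \log\bigl[ g B_t(\pi) \bigr]_0 = \log\bigl[ e^{x_0} g B_t(\pi) \bigr]_0, $$
using the obvious identity $\log[e^{x_0} y]_0 = x_0 + \log[y]_0$ on the open Bruhat cell (since $H$ normalizes $N$ and the Gauss $H$-component is $H$-equivariant). Setting $g = \Gamma_\mu(x_0)$, the target $\Tc_{w_0}\pi(t) = \log[\bar{w}_0^{-1} B_t(\pi)]_0$ has exactly the same shape with $\bar{w}_0^{-1}$ in place of $e^{x_0}\Gamma_\mu(x_0)$.

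Next I would invoke the fundamental $N$-invariance $[ny]_0 = [y]_0$ for $n \in N$, which reduces the question to the behaviour of $e^{x_0}\Gamma_\mu(x_0)$ modulo $N$ on the left. With the substitution $\zeta = 0$ and $M \leftrightarrow M/2$, Lemma \ref{lemma:asymptotics_N_orbits} asserts $Ne^{x_0}\Gamma_\mu(x_0) \xrightarrow{\P} N\bar{w}_0^{-1}$. More precisely, its proof exhibits an explicit decomposition
$$ e^{x_0}\Gamma_\mu(x_0) = \nu_M\, \bar{w}_0^{-1}\, \epsilon_M, \qquad \nu_M \in N, \quad \epsilon_M := e^{(M/2)\rho^\vee} X_{M/2}\, e^{-(M/2)\rho^\vee}, $$
with $\epsilon_M \xrightarrow{\P} \mathrm{Id}$ in $G$, the latter following from Proposition \ref{proposition:convergence_in_proba} (i.e. $X_{M/2} \xrightarrow{\P} m_0 \in N$, with $\wt(m_0) = 0$) combined with the contraction Lemma \ref{lemma:limit_contraction} applied to $\rho^\vee \in C$. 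Using the $N$-invariance, one obtains
$$ \log\bigl[ e^{x_0}\Gamma_\mu(x_0) B_t(\pi) \bigr]_0 = \log\bigl[ \bar{w}_0^{-1}\, \epsilon_M\, B_t(\pi) \bigr]_0. $$

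It then remains to let $M \to \infty$. By continuity of the Gauss decomposition on the open cell $BB^+$, and since $\epsilon_M \xrightarrow{\P} \mathrm{Id}$, the right-hand side converges in probability to $\log[\bar{w}_0^{-1} B_t(\pi)]_0 = \Tc_{w_0}\pi(t)$, provided the limiting element $\bar{w}_0^{-1} B_t(\pi)$ lies in $BB^+$. But this is precisely the content of the definition of $\Tc_{w_0}$ (Definition \ref{def:pitman_transform}), which is well-defined for every continuous path $\pi$ and every $t>0$. The main obstacle is bookkeeping: upgrading the abstract $N$-coset convergence of Lemma \ref{lemma:asymptotics_N_orbits} to a usable group-level decomposition with a controllable error $\epsilon_M$. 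This is already achieved inside the lemma's proof, so the argument reduces to threading together the above ingredients.
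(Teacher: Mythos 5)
Your proposal is correct and follows essentially the same route as the paper: reduce $x_0+T_{\Gamma_\mu(x_0)}\pi(t)=\log[e^{x_0}\Gamma_\mu(x_0)B_t(\pi)]_0$ modulo left multiplication by $N$, invoke Lemma \ref{lemma:asymptotics_N_orbits} (with $\zeta=0$ and $M\mapsto M/2$) to replace $e^{x_0}\Gamma_\mu(x_0)$ by $\bar{w}_0^{-1}$ up to a vanishing error, and conclude by continuity of the Gauss decomposition on the open cell. Your extraction of the explicit factor $\epsilon_M$ from inside the lemma's proof is just a more detailed rendering of the paper's ``there is a sequence $n_M\in N$'' step.
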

\begin{proof}
 Recall that the path transform $T_g \pi(t)$ on a path is defined for $t>0$ as:
$$ e^{T_g \pi(t)} = [ g B_t(\pi)]_0$$
Using lemma \ref{lemma:asymptotics_N_orbits}, there is a sequence $n_M \in N$ such that as $M \rightarrow \infty$:
$$n_M e^{x_0} \Gamma_\mu(x_0) \stackrel{\P}{\rightarrow} \bar{w}_0^{-1}$$
Hence:
\begin{align*}
    \exp\left( x_0 + T_{\Gamma_{\mu}(x_0) } \pi(t) \right)
= & [ e^{x_0} \Gamma_{\mu}(x_0) B_t(\pi)]_0 \\
= & [ n_M e^{x_0} \Gamma_{\mu}(x_0) B_t(\pi)]_0 \\
\rightarrow & \exp\left( \Tc_{w_0} \pi(t) \right)
\end{align*}
\end{proof}

We are now ready to complete the proofs of theorems \ref{thm:highest_weight_is_markov} and \ref{thm:canonical_measure}.

\begin{proof}[Proof of theorems \ref{thm:highest_weight_is_markov} and \ref{thm:canonical_measure}]
Take in theorem \ref{thm:markovian_filtering} $x_0 = \zeta - 2M \rho^\vee$ and as in corollary \ref{corollary:convergence_in_proba} take $M \rightarrow \infty$, giving:
$$ C_\mu(\zeta - 2M \rho^\vee) \stackrel{\P}{\longrightarrow} id$$
The Markov process $X^{x_0}_t = \hw\left( C_\mu(x_0) B_t(W^{(\mu)}) \right)$ will converge in probability to the highest weight process $\Lambda_t = \hw\left( B_t(W^{(\mu)}) \right)$. The filtering equation in theorem \ref{thm:markovian_filtering} degenerates to the relation in theorem \ref{thm:canonical_measure}.
\end{proof}

\section{Degeneration to the tropical case}
\label{section:degenerations}

Let $h>0$. In \cite{bib:chh14a} section 6, we have described a natural $h$-deformation of the geometric Littelmann path model that is given by rescaling paths and corresponding actions. This deformation was interpreted as a change of semi-fields and we will now describe the deformed structures. The $h \rightarrow 0$ limit makes sense, and is exactly the free version of the continuous Littelmann model given in \cite{bib:BBO2}. A cutting procedure is needed in order to ``prune'' such a free Kashiwara crystal, and obtain a polytope (Remark 6.6 in \cite{bib:chh14a}). While Berenstein and Kazhdan have used the superpotential function $f_B$ (\cite{bib:BK00, bib:BK04}) to encode this cutting procedure as in equation \ref{eq:BK_pruning}, there is not a clear reason why it should be that way. In our point of view, the superpotential $f_B$ appeared naturally in the canonical measure on geometric crystals.

While describing deformations, we will see that the $h$-deformations of theorem \ref{thm:highest_weight_is_markov} uses the operator $h \Tc_{w_0} h^{-1} \stackrel{h \rightarrow 0}{\longrightarrow} \Pc_{w_0}$. This recovers the crystalline generalisation of Pitman's theorem proved in \cite{bib:BBO} where $\Pc_{w_0} W$ is Brownian motion conditionned to never leave the Weyl chamber. In the $A_n$ type, this is a realization of Dyson's Brownian motion which gives a connection to Random Matrix theory.

Also, in this crystallization procedure, the canonical measure degenerates to the uniform measure on a polytope, which is nothing but the string polytope in the appropriate coordinates. This recovers previous results.

\subsection{Semifields and Maslov quantification}
\label{subsection:semifields_and_maslov}
This subsection mainly follows the presentation of Itenberg in \cite{bib:Itenberg}. A semifield $\left( S, \oplus, \odot \right)$ is defined as the next best thing to a field, as we weaken the assumption of invertibility for the law $\oplus$. 

\begin{definition}
 A semifield is an algebraic structure $\left( S, \oplus, \odot \right)$ such that:
\begin{itemize}
 \item $\left( S, \oplus\right)$ is a commutative semigroup.
 \item $\left( S, \odot \right)$ is a commutative group with neutral element $e$.
 \item Distributivity of $\odot$ over $\oplus$:
	$$\forall a, b, c \in S, \left( a \oplus b \right) \odot c =  (a \odot c) \oplus (b \odot c)$$
\end{itemize}
\end{definition}

The universal semifield we have been working with so far is $\left(\R_{>0}, + , . \right)$. On this semi-field, the natural counterpart of rational functions with $n$ indeterminates is the set of \emph{rational and substraction free} expressions $\R_{>0}\left(x_1, \dots, x_n\right) $. Plainly, elements in $\R_{>0}\left(x_1, \dots, x_n\right)$ are rational functions with indeterminates $\left( x_1, \dots, x_n \right)$, real positive coefficients and using only operations $+$, $\times$ and $/$. For example $f(x_1, x_2) = \frac{x_1^3 + x_2^3}{x_1 + x_2} = x_1^2 - x_1 x_2 + x_2^2 \in \R_{>0}(x_1, x_2)$. It is easy to check that if endowed with the same operations, rational subtraction free expressions also form a semi-field.\\

Another classical example is the \emph{tropical} semifield $\left(\R, \min, + \right)$ as one easily checks that $+$ is distributive over $\min$. Its importance in representation theory is related to Kashiwara's crystal basis, as changes of coordinates are rational functions on $S_0$. The study of algebraic curves on this field has given rise to tropical geometry, now a field of its own, where $\max$ usually replaces $\min$. The name ``tropical'' was coined by French computer scientists to honor their colleague Imre Simon for his work on the max-plus algebra. It has no intrinsic meaning aside from refering to the weather in Brazil. In fact, we will see later that this semifield can be viewed like the zero temperature limit of family of semifields $S_h$. This suggests the name of ``crystallized'' semifield, that fits better in name to the crystal basis. However, it is too late to reverse the trend, already solidly established.

Tropicalization (or crystallization) is a procedure that takes as input objects on the semi-field $\left(\R_{>0}, +, . \right)$ and gives objects on $\left(\R_{>0}, \min, + \right) $. As such, if $f \in \R_{>0}(x_1, \dots, x_n)$, a substraction free rational function, one obtains $[f]_{trop}$ a function in the variables $\left(x_1, \dots, x_n\right)$ applying the morphism of semi-fields $[\ \ ]_{trop}$. If $a$ and $b$ are elements in $\R_{>0}(x_1, \dots, x_n)$ then:

$$ \begin{array}{cccc}
 [\ \ ]_{trop}: & \left(\R_{>0}, + , . \right) & \longrightarrow & \left(\R, \min, + \right) \\
                & a + b                        & \mapsto	 & \min( [\ a \ ]_{trop}, [\ b \ ]_{trop})\\
                & a . b                        & \mapsto	 & [\ a \ ]_{trop} + [\ b \ ]_{trop}\\
                & a / b                        & \mapsto 	 & [\ a \ ]_{trop} - [\ b \ ]_{trop}\\
                & a \in \R_{>0}                & \mapsto 	 & 0
    \end{array}
$$

A less algebraic definition could be used, using a limit that always exists:
\begin{proposition}[Analytic tropicalization procedure - Proposition 3.1 in \cite{bib:chh14_exit}]
\label{proposition:analytic_tropicalization}
For $f$ a rational and substraction free expression in $k$ variables, we have for all $(x_1, \dots, x_k) \in \R^k$ and $h>0$:
\begin{align}
-h \log f\left( e^{-\frac{x_1}{h}}, \dots, e^{-\frac{x_k}{h}} \right) & = [f]_{trop}\left( x_1, \dots, x_k\right) + O(h)
\end{align}
where $O(h)$ is a quantity such that $\frac{O(h)}{h}$ is bounded as $h \rightarrow 0$, uniformly in the variables $(x_1, \dots, x_k)$.
\end{proposition}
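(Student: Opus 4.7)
My plan is to argue by structural induction on the construction of the subtraction-free rational expression $f$, tracking a quantitative uniform bound. Recall such an $f$ is built by finitely many steps from positive constants and the indeterminates $x_1,\dots,x_k$ using $+$, $\times$ and $/$. I will prove the stronger assertion that there exists a constant $C_f \geq 0$ (depending only on the expression tree of $f$, not on the point or on $h$) such that, for all $(x_1,\dots,x_k) \in \R^k$ and all $h \in (0,1]$,
\[
 \bigl| -h \log f\bigl( e^{-x_1/h},\dots,e^{-x_k/h}\bigr) - [f]_{trop}(x_1,\dots,x_k) \bigr| \leq C_f \, h.
\]
The statement in the proposition follows immediately.

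First I would dispatch the base cases. For $f = c \in \R_{>0}$ a constant, we have $-h \log c = -h \log c$ while $[c]_{trop} = 0$, so $C_c = |\log c|$ works. For $f = x_j$, the left-hand side is $x_j$ and $[x_j]_{trop} = x_j$, so $C_{x_j} = 0$. For the product and quotient steps, if $f = g \cdot h^{\pm 1}$ and the inductive bounds hold for $g$ and $h$, I simply use that $\log$ turns products into sums: the error for $f$ is the sum of the errors for $g$ and $h$, so $C_{g h^{\pm 1}} \leq C_g + C_h$. These cases are essentially formal.

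The one genuinely analytic step, and the main obstacle, is the sum case $f = g + h$. Here I would write $g(e^{-x/h}) = e^{-\phi_g(x,h)/h}$ and $h(e^{-x/h}) = e^{-\phi_h(x,h)/h}$, so that by the inductive hypothesis $\phi_g = G + R_g$ and $\phi_h = H + R_h$ with $G = [g]_{trop}$, $H = [h]_{trop}$ and $|R_g|, |R_h| \leq C_g h + C_h h$ uniformly. The identification of the tropical limit rests on the elementary two-sided bound
\[
\max(a,b) \;\leq\; a + b \;\leq\; 2 \max(a,b), \quad a,b>0.
\]
Applying $-h\log$ gives $\min(\phi_g, \phi_h) - h \log 2 \leq -h\log(g+h) \leq \min(\phi_g, \phi_h)$, and since $[g+h]_{trop} = \min(G,H)$, one readily gets
\[
\bigl|-h\log(g+h)(x) - \min(G,H)(x)\bigr| \leq \max(C_g, C_h)\, h + h \log 2.
\]
In particular $C_{g+h} \leq \max(C_g, C_h) + \log 2$, and the bound is uniform in $x$ because it never uses which of $G(x), H(x)$ is the smaller.

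Finally, I would observe that any subtraction-free rational expression has a finite parse tree, so the constant $C_f$ obtained by iterating the above bookkeeping is finite. This closes the induction and yields the uniform $O(h)$ estimate. The only subtle point is the sum case, where the $\log 2$ term is unavoidable but harmless since it is absorbed into $O(h)$; both the product/quotient steps and the uniformity in $x$ are formal.
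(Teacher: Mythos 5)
Your proof is correct, and it is essentially the standard argument (the one the paper defers to \cite{bib:chh14_exit}): structural induction on the expression tree, with the only analytic input being the bound $\max(a,b)\leq a+b\leq 2\max(a,b)$ at each addition node, yielding a uniform constant $C_f$ built from $\log 2$ per sum and $|\log c|$ per constant. The only blemishes are notational (you use $h$ both for the small parameter and for a sub-expression, and the inductive bound should read $|R_g|\leq C_g h$, $|R_h|\leq C_h h$ rather than their sum), neither of which affects the argument.
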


Such a limit suggests a continuous deformation from $\left(\R_{>0}, + , . \right)$ to $\left(\R, \min, + \right)$ called the Maslov quantification of real numbers. Define the continuous family of semifields 
$\left( S_h = \R, \oplus_h, \odot_h \right)$ for $h\geq 0$ with:
$$ a \oplus_h b = -h \log\left( e^{-\frac{a}{h}} + e^{-\frac{b}{h}}\right) $$
$$ a \odot_h b = a + b $$
At the limit, when $h$ goes to zero, we recover the previous example $\left(\R, \min, + \right)$. All the semifields $\left( S_h \right)_{h>0}$ are isomorphic to $\left(\R_{>0}, + , . \right)$ except for $h=0$. The isomorphism of semifields that transports structure is $\psi_h = -h \log: \left(\R_{>0}, + , . \right) \rightarrow \left( S_h = \R, \oplus_h, \odot_h \right)$. As such, $\psi_{h,h'} = \psi_{h'} \circ \psi_h^{-1}:\left( S_h = \R, \oplus_h, \odot_h \right) \rightarrow \left( S_{h'} = \R, \oplus_{h'}, \odot_{h'} \right)$ is a rescaling when identifying both semifields to $\R$: $\psi_{h,h'}\left( x \right) = \frac{h'}{h} x$.

\begin{notation}
A tilde will refer to quantified variables when there is the possibility of confusing them with variables in $\R_{>0}$. In $c = e^{-\frac{\tilde{c}}{h}}$, $c$ is seen as a variable in the usual semi-field $\R_{>0}$ while $\tilde{c}$ is in $S_h$.
\end{notation}

\subsection{A remark on integrals of semifield valued functions}
Let $f: [0, T] \rightarrow S_h$ be a (smooth) function with values in the semifield $S_h$. For readability purposes in this subsection, the subscript $h$ will be dropped when designating operations on $S_h$. The monoid of integers in $S_h$ is the monoid generated by the neutral element $0$. It is in fact given by all numbers $n_h = \mathop{\bigoplus}_{i=1}^n 0 = -h \log\left(n\right)$. 
As such, Riemann sums in $S_h$ take the form:
$$ 
  \left( 1 \oslash n_h \right) \odot \mathop{\bigoplus}_{i=1}^n  f(t_i)
= -h \log\left( \frac{1}{n} \sum_{i=1}^n e^{-\frac{f(t_i)}{h}} \right)
\mathop{\longrightarrow}_{n \rightarrow +\infty} -h \log\left( \int_0^T e^{-\frac{f}{h}} \right) 
$$
where $f$ is a continuous real function. Therefore, the natural candidate for integrals on the semifield $S_h$ are exponential functionals and the $h=0$ limit gives $ \inf_{0 \leq s \leq T} f(s)$ using the Laplace method. Hence, the appearance of integrals of exponentials whether in the geometric Littelmann path model \cite{bib:chh14a} or thanks to the Toda potential is no surprise. In the formalism of semi-fields, all crystal actions become in fact rational, in the sense of the semi-field $S_h$. Exponential integrals are simply semifield integrals that degenerate to infimums.

\subsection{Deformed structures}
Following the same idea as \cite{bib:BFZ96} section (2.2), one can define the Lusztig and Kashiwara varieties $U^{w_0}_{>0}$ and $C^{w_0}_{>0}$ by their parametrizations, by identifying $m$-tuples that give the same element. Since changes of parametrization $x_{\bf i'}^{-1} \circ x_{\bf i}$ are rational and substraction free, one can view them as rational for the semi-field $S_h$ and define:

\begin{definition}[ Lusztig and Kashiwara varieties on $S_h$ ]
  $$U_{>0}^{w_0}(S_h) := \left\{ ({\bf t}^{\bf i})_{{\bf i} \in R(w_0)} \in \left(S_h^m\right)^{R(w_0)} \ | \ \forall {\bf i}, {\bf i'} \in R(w_0), x_{\bf i'}^{-1} \circ x_{\bf i}( {\bf t^{\bf i}} ) = {\bf t^{\bf i'}} \right\}$$
  $$C_{>0}^{w_0}(S_h) := \left\{ ({\bf c}^{\bf i})_{{\bf i} \in R(w_0)} \in \left(S_h^m\right)^{R(w_0)} \ | \ \forall {\bf i}, {\bf i'} \in R(w_0), x_{\bf-i'}^{-1} \circ x_{\bf-i}( {\bf c^{\bf i}} ) = {\bf c^{\bf i'}} \right\}$$
\end{definition}

The $h \rightarrow 0$ limit gives the tropicalized version of the changes of parametrization. As such, by theorem 5.2 \cite{bib:BZ01}, the Lusztig variety $U_{>0}^{w_0}(S_0)$ really encodes the Lusztig parametrization of the $G^\vee$-canonical basis; while the tropical Kashiwara variety encodes the string parametrization.

On the side of the Littelmann path model, $h$-Littelmann models for different $h$ are equivalent, provided that we properly rescale the reals in the actions, and values taken by the structural maps (\cite{bib:chh14a} remark 6.5). In fact, the set of real numbers had to be considered as the semifield $S_h$, and this rescaling becomes natural as we also have to change the structure semifield. Now we are ready to list some $h$-deformation of the structure results given in \cite{bib:chh14a}. In order to distinguish between structures at $h = 1$ and for $h$, let $L = \left( \wt, \left( \varepsilon_\alpha, \varphi_\alpha, e^._\alpha \right)_{\alpha \in \Delta} \right)$ the path crystal structure for $h=1$ and $L_{h} = \left( \wt', \left( \varepsilon^{'}_{\alpha}, \varphi^{'}_{\alpha}, e^{'.}_{\alpha} \right)_{\alpha \in \Delta} \right)$ for generic $h$. We will use the subscript $h$ in $\langle \pi \rangle_h$ to indicate the crystal generated by $\pi$ using the $h$-deformed structure.
 
\paragraph{Generated crystal:} Let $\langle \pi \rangle_h$ be the $h$-Littelmann crystal generated by $\pi$. After transporting the structure to $h=1$ by rescaling, we have to consider the geometric crystal generated by $h^{-1}\pi $. In the end, in term of the geometric structure (h=1), we have:
$$ \langle \pi \rangle_h = h \langle \frac{\pi}{h} \rangle$$

\paragraph{Highest weight:} The natural invariant under crystal action, which plays the role of highest weight, is then 
\begin{align}
\label{eq:h_deformed_pitman}
\Tc_{w_0}^h := & h \Tc_{w_0} h^{-1}
\end{align}
It is natural because varying $h$ interpolates between different path models, and gives for each $h$ the highest weight path. The limit \eqref{eq:limit_pitman} is consistent with the continuous Littelmann path model considered in \cite{bib:BBO} and \cite{bib:BBO2} and recovers the Pitman operator.

\paragraph{Parametrizations:} Fix ${\bf i} \in R(w_0)$.\\
Transporting the semi-field structure from $\R_{>0}$ to $S_h$ and using the results about parametrizations of geometric crystals in section 8 of \cite{bib:chh14a}, we will now define $h$-deformed string coordinates of a path:
$$ \begin{array}{cccc}
\varrho_{\bf i}^{h, K}: & \Cc_0\left( [0, T], \afrak \right) & \longrightarrow & \left(S_h\right)^m \\
                        &         \pi                      & \mapsto	     & \left( c_1, \dots, c_m \right)\\
    \end{array}
$$
For $\pi \in \Cc_0\left( [0, T], \afrak \right)$, the $m$-tuple ${\bf c} = \left( c_1, \dots, c_m \right) = \varrho_{\bf i}^{h, K}(\pi)$ is defined recursively as:
$$ \forall 1 \leq k \leq m, c_k = h \log\int_0^T \exp\left( -h^{-1}\alpha_{i_k}\left( \Tc_{s_{i_1} \dots s_{i_{k-1}}}^h \pi \right) \right)$$
Clearly, as $h \rightarrow 0$, one recovers the definition of string parameters in the classical Littelmann path model (see \cite{bib:BBO2}):
$$ \forall 1 \leq k \leq m, h \log\int_0^T \exp\left( -h^{-1}\alpha_{i_k}\left( \Tc_{s_{i_1} \dots s_{i_{k-1}}}^h \pi \right) \right) \rightarrow - \inf_{0 \leq t \leq T} \alpha_{i_k}\left( \Pc_{s_{i_1} \dots s_{i_{k-1}}}(\pi) \right)$$
Finally, thanks to diagram \eqref{fig:parametrizations_diagram} and the morphism of semi-fields $\psi_h = -h \log$  we set:
\begin{align}
\label{eq:q_kashiwara_for_paths}
\forall \pi \in \Cc_0\left( [0, T], \afrak \right), \varrho_{\bf i}^{h, K}(\pi) := \psi_h \circ x_{\bf -i}^{-1} \circ \varrho^K\left( B_T(h^{-1} \pi) \right)
\end{align}
where we applied the semi-field morphism $\psi_h = -h \log$ on $\R_{>0}^m$ point-wise.\\

Similarly, $h$-deformed Lusztig parameters are constructed. Indeed, all elements of $\langle \pi \rangle_h$ can be projected on the lowest path $\eta = h e^{-\infty}_\alpha h^{-1} \pi$ and every single path can be recovered via:
$$ \pi = h T_g h^{-1} \eta$$
where 
\begin{itemize}
 \item $ g = x_{i_1}\left( e^{-\frac{t_1}{h}} \right) \dots x_{i_m}\left( e^{-\frac{t_1}{h}} \right) \in U^{w_0}_{>0}$
 \item $\eta_j = h e^{-\infty}_{s_{i_1} \dots s_{i_j}} \cdot \frac{\pi}{h} = h e^{-\infty}_{\alpha_{i_j}} \cdot \frac{\eta_{j-1}}{h}$
 \item $t_j = h \log\int_0^T \exp\left( -h^{-1}\alpha_{i_k}\left( h e_{s_{i_1} \dots s_{i_{k-1}}}^{-\infty} h^{-1} \pi \right) \right)$
\end{itemize}

We are aiming at understanding the law $\varrho_{\bf i}^{h, K}\left( \pi \right)$ when $\pi$ is taken as a Brownian motion. This is the natural $h$-deformation of the previously studied canonical measure, viewed in string coordinates.

\subsection{Brownian scaling and consequences}
In order to obtain the announced deformation of our probabilistic results, the tool we will use is the Brownian scaling property. For $W$ a Brownian motion in $\afrak$, $\mu \in \afrak$ and $c>0$, it is the equality in law between processes:
\begin{align}
\label{eq:brownian_scaling}
\left( W_{t}^{(\mu)} \ ; \ t \geq 0 \right) \stackrel{\Lc}{=} \left( c W_{t/c^2}^{(c\mu)} \ ; \ t \geq 0 \right)
\end{align}

Let us first examine the effect of scaling on the flow $B_.(.)$:
\begin{lemma}[Effect of accelerating a path $X$ on $B_.\left(X\right)$]
\label{lemma:accelerating_paths}
Given a continuous path $X$ in $\afrak$:
 $$ B_t\left( X_{./c^2} \right) = c^{-2\rho^\vee} B_{t/c^2}\left( X \right) c^{2\rho^\vee}$$
where $\rho^\vee$ is the Weyl covector \eqref{eq:def_weyl_covector}.
\end{lemma}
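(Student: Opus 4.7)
The plan is to prove the identity by direct term-by-term comparison using the explicit series expansion \eqref{eq:process_B_explicit} for $B_t(\,\cdot\,)$. Writing out $B_t(X_{./c^2})$ and performing the change of variables $s_j = t_j/c^2$ in each iterated integral, the domain $\{t \geq t_k \geq \cdots \geq t_1 \geq 0\}$ becomes $\{t/c^2 \geq s_k \geq \cdots \geq s_1 \geq 0\}$ and each differential $dt_j$ contributes a factor $c^2$. The $k$-th summand therefore picks up an overall scalar $c^{2k}$, while the Cartan factor $e^{X_{t/c^2}}$ and the $\alpha_{i_j}(X_{s_j})$ appearing in the exponentials are exactly those of $B_{t/c^2}(X)$.

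The second step is to absorb the $c^{2k}$ into conjugation by $c^{2\rho^\vee}$. Since $\rho^\vee$ is the Weyl co-vector, $\alpha_i(\rho^\vee) = 1$ for every simple root $\alpha_i$, so the adjoint action of the torus gives
$$c^{-2\rho^\vee} f_{\alpha_i} c^{2\rho^\vee} \;=\; c^{2\alpha_i(\rho^\vee)} f_{\alpha_i} \;=\; c^{2} f_{\alpha_i},$$
and consequently $c^{-2\rho^\vee} f_{i_1}\cdots f_{i_k} c^{2\rho^\vee} = c^{2k} f_{i_1}\cdots f_{i_k}$. The Cartan-valued factor $e^{X_{t/c^2}}$ is invariant under this conjugation. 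Inserting these identities into the expansion of $c^{-2\rho^\vee} B_{t/c^2}(X) c^{2\rho^\vee}$ reproduces exactly the series obtained in the first step for $B_t(X_{./c^2})$, which concludes the proof.

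There is essentially no obstacle: the only point requiring a moment of care is the normalisation $\alpha_i(\rho^\vee)=1$ for simple roots, which is what makes the two factors $c^{2k}$ match. One could alternatively argue via the defining Stratonovich SDE \eqref{lbl:process_B_sde_stratonovich} by noting that rescaling time $t \mapsto t/c^2$ multiplies the $dt$-part of the driver by $c^{-2}$ while leaving $dX$ unchanged, and then conjugating by $c^{2\rho^\vee}$ restores left-invariance and absorbs the $c^{-2}$ into the $f_\alpha$ directions thanks to the same identity $\alpha(\rho^\vee)=1$; but the series proof is more direct and avoids invoking uniqueness for the SDE for merely continuous (possibly deterministic) drivers.
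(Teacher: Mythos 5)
Your proof is correct and follows essentially the same route as the paper: the change of variables $t_j \mapsto t_j/c^2$ in the explicit series \eqref{eq:process_B_explicit} produces the factor $c^{2k}$ in the $k$-th term, which is then absorbed by conjugation with $c^{2\rho^\vee}$ via the identity $\alpha_i(\rho^\vee)=1$ for simple roots, the Cartan factor being unaffected. Nothing to add.
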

\begin{proof}
Start from equation \eqref{eq:process_B_explicit} and use the change of variable $u_j = t_j/c^2$:
 \begin{align*}
   & B_t\left( X_{\frac{.}{c^2}} \right) \\
 = & \left( \sum_{k \geq 0} \sum_{ i_1, \dots, i_k } \int_{ t \geq t_k \geq \dots \geq t_1 \geq 0} 
     f_{i_1} \dots f_{i_k}
     \prod_{j=1}^k dt_j \frac{|| \alpha_{i_j} ||^2}{2} e^{ -\alpha_{i_j}(X_{t_j/c^2}) }
     \right) e^{X_{\frac{t}{c^2}}}\\
 = & \left( \sum_{k \geq 0} \sum_{ i_1, \dots, i_k } \int_{ t \geq t_k \geq \dots \geq t_1 \geq 0} 
     f_{i_1} \dots f_{i_k}
     c^{2k} \prod_{j=1}^k dt_j \frac{|| \alpha_{i_j} ||^2}{2} e^{ -\alpha_{i_j}(X_{u_j}) }
     \right) e^{X_{\frac{t}{c^2}}}\\
 = & \left( \sum_{k \geq 0} \sum_{ i_1, \dots, i_k } \int_{ t \geq t_k \geq \dots \geq t_1 \geq 0} 
     c^{-2\rho^\vee} f_{i_1} \dots f_{i_k} c^{2\rho^\vee}
     \prod_{j=1}^k dt_j \frac{|| \alpha_{i_j} ||^2}{2} e^{ -\alpha_{i_j}(X_{u_j}) }
     \right) e^{X_{\frac{t}{c^2}}}\\
 = & c^{-2\rho^\vee} B_{\frac{t}{c^2}}\left( X \right) c^{2\rho^\vee}
 \end{align*}
\end{proof}
By applying the previous lemma to Brownian motion, we have thanks to the Brownian scaling \eqref{eq:brownian_scaling}:
\begin{corollary}
\label{corollary:B_scaling}
$$ \left( B_{t}\left( h^{-1} W^{(\mu)} \right) ; t \geq 0 \right) \eqlaw \left( h^{-2\rho^\vee}  B_{t/h^2}\left( W^{(h \mu)} \right) h^{2\rho^\vee}; t \geq 0 \right)$$
\end{corollary}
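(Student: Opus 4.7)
The plan is to chain together two facts: the Brownian scaling property \eqref{eq:brownian_scaling} and the previously established Lemma \ref{lemma:accelerating_paths}. The intuition is that the rescaling $h^{-1}W^{(\mu)}$ of the input path should be convertible, via Brownian scaling with $c=h$, into a time-accelerated Brownian motion with rescaled drift $h\mu$, and then Lemma \ref{lemma:accelerating_paths} precisely computes what happens to $B_t(\cdot)$ when its argument is time-accelerated.

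Concretely, I would proceed in two steps. First, apply Brownian scaling with $c = h$: as processes indexed by $t \geq 0$,
\[ \left( W^{(\mu)}_t \right)_{t \geq 0} \;\eqlaw\; \left( h\, W^{(h\mu)}_{t/h^2}\right)_{t \geq 0}, \]
which after multiplying by $h^{-1}$ gives
\[ \left( h^{-1} W^{(\mu)}_t\right)_{t \geq 0} \;\eqlaw\; \left( W^{(h\mu)}_{t/h^2}\right)_{t \geq 0} \;=\; \left( W^{(h\mu)} \right)_{\cdot/h^2}. \]
Because $B_t(\cdot)$ is defined pathwise from its continuous input by the explicit formula \eqref{eq:process_B_explicit}, applying the continuous map $\pi \mapsto (B_t(\pi))_{t \geq 0}$ to both sides preserves equality in law of processes:
\[ \bigl(B_t(h^{-1} W^{(\mu)})\bigr)_{t \geq 0} \;\eqlaw\; \bigl(B_t((W^{(h\mu)})_{\cdot/h^2})\bigr)_{t \geq 0}. \]

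Second, I would apply Lemma \ref{lemma:accelerating_paths} with the deterministic-looking substitution $X = W^{(h\mu)}$ and $c = h$ (the lemma is stated pathwise, hence holds for any sample path of $W^{(h\mu)}$):
\[ B_t\!\left( (W^{(h\mu)})_{\cdot/h^2}\right) \;=\; h^{-2\rho^\vee}\, B_{t/h^2}(W^{(h\mu)})\, h^{2\rho^\vee}. \]
Combining the two displays yields exactly the claimed identity in law between processes. There is essentially no obstacle here — the only point requiring minor care is to track that the scaling identity and the pathwise lemma are being composed at the level of processes (not just one-dimensional marginals), which is justified by the pathwise nature of both $t \mapsto B_t(\pi)$ and Brownian scaling.
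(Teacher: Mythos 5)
Your proposal is correct and follows exactly the paper's own argument: the corollary is obtained by combining the Brownian scaling identity \eqref{eq:brownian_scaling} with $c=h$ and the pathwise Lemma \ref{lemma:accelerating_paths}. You merely spell out the composition at the level of processes, which the paper leaves implicit.
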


Therefore, we can give a deformation of theorems \ref{thm:highest_weight_is_markov} and \ref{thm:canonical_measure}. Define the rescaled highest weight process for $t>0$ as:
$$ \Lambda_t^h := h \ \hw\left( B_t( h^{-1} W^{(\mu)} ) \right) =  \Tc_{w_0}^h(W^{(\mu)})_t$$
The properly rescaled Whittaker function on $\afrak$ is, with $m = \ell(w_0)$:
$$\forall \lambda \in \afrak, \psi_{h, \mu}(\lambda) = h^{m}\psi_{h\mu}\left( \frac{\lambda-4 h \log(h) \rho^\vee}{h}\right)$$
Using theorem \ref{thm:whittaker_functions_properties}, it is immediate that when $\mu \in C$, $\psi_{h, \mu}$ solves the eigenfunction equation:
\begin{align}
\label{eq:q_deformed_toda}
\frac{1}{2} \Delta \psi_{h, \mu} - \sum_{\alpha \in \Delta} \frac{1}{2} \langle \alpha, \alpha \rangle h^2 e^{-h^{-1} \alpha\left( . \right)}\psi_{h, \mu}  = & \frac{ \langle \mu, \mu \rangle }{2} \psi_{h, \mu} 
\end{align}
with $\psi_{h, \mu}(x) e^{-\langle \mu, x \rangle}$ being bounded and having growth condition:
$$ \lim_{x \rightarrow \infty, x \in C} \psi_{h, \mu}(x) e^{-\langle \mu, x \rangle} = h^{m} b(h \mu)$$

\begin{thm}[Markov property for rescaled highest weight]
\label{thm:q_highest_weight_is_markov}
The process $\Lambda^h$ is a diffusion with infinitesimal generator
$$ \psi_{h, \mu}^{-1}
   \left( \frac{1}{2} \Delta - \sum_{\alpha \in \Delta} \frac{1}{2} \langle \alpha, \alpha \rangle h^2 e^{-h^{-1} \alpha\left( x \right)} - \frac{ \langle \mu, \mu \rangle }{2} \right)
   \psi_{h, \mu}
 = \frac{1}{2} \Delta + \nabla \log\left(\psi_{h, \mu}\right) \cdot \nabla $$
\end{thm}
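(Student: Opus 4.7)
The plan is to deduce Theorem \ref{thm:q_highest_weight_is_markov} from Theorem \ref{thm:highest_weight_is_markov} via Brownian scaling, without redoing any of the Markovian filtering analysis of Section \ref{section:intertwined_markov_kernels}. The key ingredient is Corollary \ref{corollary:B_scaling}, which identifies $B_t(h^{-1}W^{(\mu)})$ in law with $h^{-2\rho^\vee} B_{t/h^2}(W^{(h\mu)}) h^{2\rho^\vee}$. I would first compute how the highest weight map $\hw(b)=\log[\bar{w}_0^{-1}b]_0$ behaves under conjugation by the torus element $h^{-2\rho^\vee}$. Using $w_0 \rho^\vee = -\rho^\vee$, one has $\bar{w}_0^{-1} h^{-2\rho^\vee} = h^{2\rho^\vee}\bar{w}_0^{-1}$, so extracting the $H$-part yields
\[
\hw\bigl( h^{-2\rho^\vee} b \, h^{2\rho^\vee} \bigr) = 4\log(h)\,\rho^\vee + \hw(b).
\]
Combining this with the Brownian scaling gives the identity in law of processes
\[
\Lambda^h_t \eqlaw 4h\log(h)\,\rho^\vee + h\,\Lambda^{(h\mu)}_{t/h^2},
\]
where $\Lambda^{(h\mu)}$ denotes the Whittaker process with drift $h\mu$ from Theorem \ref{thm:highest_weight_is_markov}.

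The next step is a purely diffusive rescaling argument. Theorem \ref{thm:highest_weight_is_markov} says that $\Lambda^{(h\mu)}$ is Markov with generator $\frac{1}{2}\Delta + \nabla\log\psi_{h\mu}\cdot\nabla$. If $\phi(x) := 4h\log(h)\rho^\vee + hx$, then for $Y_t = \phi(X_{t/h^2})$ with $X = \Lambda^{(h\mu)}$, the chain rule (together with the factor $1/h^2$ from the time change) gives the generator
\[
L^Y f(y) = \tfrac{1}{2}\Delta f(y) + \tfrac{1}{h}(\nabla\log\psi_{h\mu})\bigl(\phi^{-1}(y)\bigr)\cdot \nabla f(y).
\]
The factor $h^m$ in the definition of $\psi_{h,\mu}$ is a harmless multiplicative constant that disappears under $\nabla\log$, while the $1/h$ and the argument rescaling are precisely compensated by the chain rule applied to $\log\psi_{h,\mu}(\lambda)=m\log h + \log\psi_{h\mu}\bigl((\lambda-4h\log(h)\rho^\vee)/h\bigr)$. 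This identifies the drift as $\nabla\log\psi_{h,\mu}(y)$, yielding the generator in the stated form. Equivalently, one checks directly that $\psi_{h,\mu}$ solves the PDE \eqref{eq:q_deformed_toda}, which is the Doob-transform formulation of the same generator.

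The main (and only) obstacle is being careful with the normalization constants: one has to verify that the constant shift $4h\log(h)\rho^\vee$ appearing from the torus conjugation matches exactly the shift introduced in the definition of $\psi_{h,\mu}$, and that the factor $h^m$ arises naturally as the limiting growth constant $b(h\mu)$ modulated by the scaling. Once this bookkeeping is done, Markovianity of $\Lambda^h$ follows from that of $\Lambda^{(h\mu)}$ because a deterministic affine change of space together with a linear time change preserves the Markov property, and the two equivalent expressions of the generator (Doob-transformed Schr\"odinger form and gradient-drift form) are related by the straightforward computation $\psi^{-1}(\tfrac{1}{2}\Delta - V)\psi = \tfrac{1}{2}\Delta + \nabla\log\psi\cdot\nabla$ whenever $\psi$ satisfies $(\tfrac{1}{2}\Delta - V)\psi = E\psi$, which is exactly \eqref{eq:q_deformed_toda} with $E = \tfrac{1}{2}\langle\mu,\mu\rangle$.
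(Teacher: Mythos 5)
Your proposal is correct and follows essentially the same route as the paper: it reduces the statement to Theorem \ref{thm:highest_weight_is_markov} via Corollary \ref{corollary:B_scaling}, the conjugation identity $\hw(h^{-2\rho^\vee} b\, h^{2\rho^\vee}) = 4\log(h)\rho^\vee + \hw(b)$ (which the paper cites from properties 4.14 of \cite{bib:chh14a} and you rederive directly), and the standard generator transformation under an affine change of space combined with the time change $t \mapsto t/h^2$. The bookkeeping of the shift $4h\log(h)\rho^\vee$ and the factor $h^m$ in $\psi_{h,\mu}$ is exactly as the paper intends.
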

\begin{proof}
Thanks to corollary \ref{corollary:B_scaling} and properties 4.14 in \cite{bib:chh14a} of $\hw$:
$$ \Lambda_t^h ; t \geq 0 \stackrel{\Lc}{=} 4 h \log(h) \rho^\vee + h \ \hw\left(B_{t/h^2}\left( W^{(h \mu)} \right) \right) ; t \geq 0$$
The result is a consequence of theorem \ref{thm:highest_weight_is_markov} and the following general fact applied to the highest weight process. Consider an Euclidian space $V$ and $a \in V$. If $(X_t; t \geq 0)$ is a diffusion on $V$ with generator $\Lc$ then $(h X_{t/h^2} + a ; t \geq 0)$ is a diffusion with generator $\Gc$. For a smooth function $f: V \rightarrow \R$, we have $\Gc(f): x \mapsto h^{-2} \Lc\left( f(h. + a)\right)(\frac{x-a}{h})$. Here, one needs to take $V = \afrak$, $X$ is the highest weight process and $a = 4 h \log(h) \rho^\vee$.\\
\end{proof}

The deformation of theorem \ref{thm:canonical_measure} is:
\begin{thm}[Rescaled canonical measure]
\label{thm:q_canonical_measure}
 $$\forall t>0, \left( B_t\left( h^{-1} W^{(\mu)} \right) | \Lambda_t^h = \lambda \right)
\stackrel{\Lc}{=} h^{-2\rho^\vee} C_{h\mu}\left( \frac{\lambda - 4 q\log(h) \rho^\vee}{h} \right) h^{2\rho^\vee}
$$
\end{thm}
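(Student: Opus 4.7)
The strategy is to reduce the statement to Theorem \ref{thm:canonical_measure} through the Brownian scaling, exactly in the spirit of the proof of Theorem \ref{thm:q_highest_weight_is_markov}. The key inputs will be Corollary \ref{corollary:B_scaling}, Property 4.14 (iii) in \cite{bib:chh14a} (transformation of $\hw$ under conjugation by a torus element), and Theorem \ref{thm:canonical_measure} applied with drift $h\mu$ and time horizon $t/h^2$.

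First, by Corollary \ref{corollary:B_scaling}, we have the equality of processes in law
$$ \left( B_t(h^{-1} W^{(\mu)}) \ ; \ t \geq 0 \right)  \eqlaw  \left( h^{-2\rho^\vee} B_{t/h^2}(W^{(h\mu)}) h^{2\rho^\vee} \ ; \ t \geq 0 \right). $$
In particular, the joint law $\left( B_t(h^{-1}W^{(\mu)}), \Lambda_t^h \right)$ can be rewritten on the right-hand side, pushing the scaling simultaneously through both coordinates. Using Property 4.14 (iii) in \cite{bib:chh14a} — which gives $\hw(e^{-M\rho^\vee} y e^{M\rho^\vee}) = 2M\rho^\vee + \hw(y)$ upon using $w_0\rho^\vee = -\rho^\vee$ — and specializing to $M = 2\log h$, one obtains
$$ \Lambda_t^h = h \, \hw\bigl( B_t(h^{-1}W^{(\mu)}) \bigr) \eqlaw 4h \log(h)\,\rho^\vee + h \, \hw\bigl( B_{t/h^2}(W^{(h\mu)}) \bigr), $$
just as in the proof of Theorem \ref{thm:q_highest_weight_is_markov}. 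Consequently, the event $\{\Lambda_t^h = \lambda\}$ is equivalent, under this identification, to the event $\bigl\{\hw(B_{t/h^2}(W^{(h\mu)})) = \tfrac{\lambda - 4 h\log(h)\rho^\vee}{h}\bigr\}$.

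Since the joint law is preserved, the conditional law of $B_t(h^{-1}W^{(\mu)})$ given $\Lambda_t^h = \lambda$ is the pushforward, under the fixed deterministic map $y \mapsto h^{-2\rho^\vee} y h^{2\rho^\vee}$, of the conditional law of $B_{t/h^2}(W^{(h\mu)})$ given the highest weight at time $t/h^2$ takes the value $\tfrac{\lambda - 4 h\log(h)\rho^\vee}{h}$. Applying Theorem \ref{thm:canonical_measure} with drift parameter $h\mu$ at time $t/h^2$, the latter conditional law is that of $C_{h\mu}\!\left( \tfrac{\lambda - 4 h\log(h)\rho^\vee}{h} \right)$.

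Combining the two steps gives
$$ \left( B_t(h^{-1}W^{(\mu)}) \,\big|\, \Lambda_t^h = \lambda \right) \eqlaw h^{-2\rho^\vee} \, C_{h\mu}\!\left( \tfrac{\lambda - 4 h\log(h)\rho^\vee}{h} \right) h^{2\rho^\vee}, $$
which is the claim (the symbol $q$ in the statement being a typo for $h$). There is no real obstacle: the proof is essentially bookkeeping of the Brownian scaling applied to both sides of the RS correspondence at a fixed time, with the only subtle point being the correct accounting of the shift $4h\log(h)\rho^\vee$ coming from the conjugation of $\hw$ by $h^{-2\rho^\vee}$. This shift appears identically on both sides, so the formula is consistent.
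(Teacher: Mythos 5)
Your proof is correct and follows essentially the same route as the paper's: Corollary \ref{corollary:B_scaling} to transfer the joint law to $h^{-2\rho^\vee} B_{t/h^2}(W^{(h\mu)}) h^{2\rho^\vee}$, the conjugation property of $\hw$ to rewrite the conditioning event as $\hw(B_{t/h^2}(W^{(h\mu)})) = h^{-1}\lambda - 4\log(h)\rho^\vee$, and then Theorem \ref{thm:canonical_measure} with drift $h\mu$. You also correctly identify that the $q$ in the displayed statement is a typo for $h$.
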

\begin{proof}
Using corollary \ref{corollary:B_scaling}, we have for fixed $t>0$:
\begin{align*}
                  & \left( B_t\left( h^{-1} W^{(\mu)} \right) | \Lambda_t^h = \lambda \right)\\
\stackrel{\Lc}{=} & \left( h^{-2\rho^\vee} B_{t/h^2}\left( W^{(h \mu)} \right) h^{2\rho^\vee} |
                \hw\left( h^{-2\rho^\vee} B_{t/h^2}\left( W^{(h \mu)} \right) h^{2\rho^\vee} \right)= h^{-1} \lambda \right)\\
               =  & \left( h^{-2\rho^\vee} B_{t/h^2}\left( W^{(h \mu)} \right) h^{2\rho^\vee} |
                \hw\left( B_{t/h^2}\left( W^{(h \mu)} \right) \right)= h^{-1} \lambda - 4 \log h \rho^\vee\right)\\
\end{align*}
Combining this with theorem \ref{thm:canonical_measure} yields the result.
\end{proof}

\subsection{Explicit computation in string coordinates}

We are now able to give an integral formula for the law of $h$-deformed string parameters extracted from a finite Brownian path. We present it in a form that allows to compute the $h\rightarrow 0$ limit. It uses the map $\eta^{w_0, e}$ defined as:
$$ \forall v \in B \cap U \bar{w}_0 U, \eta^{w_0, e}\left( v \right) = [\left( \bar{w}_0 v^T\right)^{-1}]_+$$
Recall that $\eta^{w_0, e}$ restricts to a bijection from $C_{>0}^{w_0}$ to $U_{>0}^{w_0}$ (See figure \ref{fig:geom_parametrizations} or \cite{bib:BZ01}, corollary 5.6).

\begin{proposition}
\label{proposition:q_string_formula}
Let ${\bf i} \in R(w_0)$ and $t>0$. Consider in $\afrak$ a Brownian motion with drift $\mu$ up to time $t$, $\left( W^{(\mu)}_u; 0 \leq u \leq t \right)$. Then for any $\varphi: \R^m \rightarrow \R$ bounded measurable function:
\begin{align*}
  & \E\left[ \varphi\left( \varrho_{\bf i}^{h, K}\left( W^{(\mu)}_u; 0 \leq u \leq t \right) \right) \ | \ \Tc_{w_0}^h W^{(\mu)}_t = \lambda \right]\\
= &  \frac{1}{\psi_{h, \mu}\left(\lambda\right)}
     \int_{\R^m} {\bf dc} \ \varphi({\bf c}) \exp\left( \langle \mu, \lambda - \sum_{k=1}^m c_k \alpha_{i_k}^\vee \rangle- f_{B,h,\lambda}^{K, {\bf i}}({\bf c})\right)
\end{align*}
where ${\bf dc}$ is the Lebesgue measure on $\R^m$ and the deformed superpotential in string coordinates is given by:
\begin{align}
f_{B,h,\lambda}^{K, {\bf i}}({\bf c}) 
& := \sum_{\alpha \in \Delta} \frac{2 h^2}{\langle \alpha, \alpha \rangle} \chi_\alpha \circ \eta^{w_0, e} \circ x_{\bf-i}\left( e^{-h^{-1}c_1}, \dots, e^{-h^{-1}c_m} \right)\\
& \ \ + \sum_{j=1}^m \frac{2 h^2}{\langle \alpha_{i_j}, \alpha_{i_j} \rangle} \exp\left( -\frac{\lambda-\left(c_j+\sum_{k=j+1}^m c_k \alpha_{i_j}(\alpha_{i_k}^\vee)\right)}{h} \right)
\end{align}
\end{proposition}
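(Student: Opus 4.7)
The plan is to combine theorem \ref{thm:q_canonical_measure} with the explicit form of the canonical measure (Definition \ref{def:canonical_probability_measure}) in Kashiwara coordinates, using the semi-explicit formulae of Proposition \ref{proposition:semi_explicit_expressions}, and finally perform the Maslov substitution $\tilde c_k = e^{-c_k/h}$ corresponding to $\psi_h=-h\log$. By theorem \ref{thm:q_canonical_measure}, the conditional law of $B_t(h^{-1}W^{(\mu)})$ given $\Lambda_t^h=\lambda$ is that of $Y := h^{-2\rho^\vee}C_{h\mu}(\lambda')h^{2\rho^\vee}\in \Bc(h^{-1}\lambda)$ with $\lambda' = (\lambda-4h\log(h)\rho^\vee)/h$. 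Definition \eqref{eq:q_kashiwara_for_paths} then identifies $\varrho_{\bf i}^{h,K}(W^{(\mu)}) = \psi_h\circ x_{-\bf i}^{-1}\circ\varrho^K(Y)$, reducing the problem to computing the law of $(c_k)_{k=1}^m$ with $c_k=-h\log\tilde c_k$ and $\tilde c$ the Kashiwara coordinates of $Y$.

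Next I would push the canonical measure on $\Bc(\lambda')$ forward through the conjugation $x\mapsto Y$. Three facts about this diffeomorphism $\Bc(\lambda')\to\Bc(h^{-1}\lambda)$ drive the computation: (i) the torus part $[\cdot]_0$ commutes with $h^{\pm 2\rho^\vee}$, so $\wt(Y)=\wt(x)$; (ii) using definition \ref{def:f_B} of $f_B$ together with $\mathrm{Ad}(h^{-2\rho^\vee})e_\alpha = h^{-2}e_\alpha$ for simple $\alpha$, one obtains $f_B(x) = h^2 f_B(Y)$; (iii) the induced map on Kashiwara parameters is a monomial transformation whose absolute log-Jacobian equals $1$, so the toric reference measure $\omega$ is preserved (compatibly with theorem \ref{thm:omega_invariance_on_crystal}). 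Parametrizing then $Y=x_{-\bf i}(\tilde c)e^{h^{-1}\lambda}$ and applying the Maslov substitution sends $\omega(dY)=\prod d\tilde c_k/\tilde c_k$ to $h^{-m}\mathbf{dc}$, transforms the weight expansion into the quoted affine function of $(c_k,\lambda)$, and converts the Kashiwara-chart expression of $f_B(Y)$ from Proposition \ref{proposition:semi_explicit_expressions} term-by-term into the two sums forming $f_{B,h,\lambda}^{K,\bf i}({\bf c})$, once the $h^2$ prefactor from (ii) has been absorbed. The normalizing constant $\psi_{h\mu}(\lambda')$ is matched with $h^{-m}\psi_{h,\mu}(\lambda)$ by the definition of the rescaled Whittaker function, absorbing the Jacobian from the Maslov substitution.

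The main obstacle is essentially algebraic bookkeeping: one must check the unit-log-Jacobian assertion in point (iii) (either by direct verification in the $A_1$ rank-one model and lifting by the braid relations, or by invoking theorem \ref{thm:omega_invariance} for changes of parametrization and comparing the two resulting coordinate systems), track the coexisting rescalings $\lambda\leftrightarrow \lambda'$ and $\tilde c\leftrightarrow c$ without losing factors of $h$, and identify the $\frac{2h^2}{\langle\alpha,\alpha\rangle}$ coefficients by matching with the $\|\alpha\|^2/2$ normalization of the hypoelliptic flow present in the explicit expansion \eqref{eq:process_B_explicit}. Once these pieces are fitted together, the stated integral formula follows mechanically.
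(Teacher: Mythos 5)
Your proposal is correct and follows essentially the same route as the paper: apply Theorem \ref{thm:q_canonical_measure}, transport the canonical measure through the conjugation by $h^{\pm 2\rho^\vee}$ (under which the weight is unchanged, $f_B$ scales by $h^{2}$ since $\alpha(\rho^\vee)=1$ for simple $\alpha$, and $\omega$ is preserved), then express everything in the Kashiwara chart via Proposition \ref{proposition:semi_explicit_expressions} and the Maslov substitution $\tilde c_k=e^{-c_k/h}$, matching the Jacobian $h^m$ against the rescaled Whittaker normalization. The bookkeeping items you flag (unit log-Jacobian, the $\lambda\leftrightarrow\lambda'$ shift by $4h\log(h)\rho^\vee$, and the $\tfrac{2h^2}{\langle\alpha,\alpha\rangle}$ coefficients) are exactly the points the paper's proof verifies.
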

\begin{proof}
Using equation \eqref{eq:q_kashiwara_for_paths} and writing $f := \varphi \circ \psi_h \circ x_{- \bf i}^{-1} \circ \varrho^K$:
\begin{align*}
    \E\left[ \varphi\left( \varrho_{\bf i}^{h, K}\left( W^{(\mu)}_u; 0 \leq u \leq t \right) \right) \ | \ \Tc_{w_0}^h W^{(\mu)}_t = \lambda \right]
= & \E\left( f\left( B_t(W^{(\mu)}) \right) \ | \ \Tc_{w_0}^h W^{(\mu)}_t = \lambda \right)
\end{align*}
As a consequence of theorem \ref{thm:q_canonical_measure} and then the integral formula from equation \eqref{eq:canonical_measure_density}, we have:
\begin{align*}
  & \E\left( \varphi\left( \varrho_{\bf i}^{h, K}\left( W^{(\mu)}_u; 0 \leq u \leq t \right) \right) \ | \ \Tc_{w_0}^h W^{(\mu)}_t = \lambda \right)\\
= & \E\left( f\left( \rho^{-2\rho^\vee} C_{h\mu }(\frac{\lambda - 4 h \log h \rho^\vee }{h}) \rho^{2\rho^\vee} \right)  \right)\\
= & \frac{h^m}{\psi_{h, \mu}\left(\lambda\right)}
    \int_{\Bc\left( \frac{\lambda - 4 h \log h \rho^\vee }{h}\right)}
    f( \rho^{-2\rho^\vee} x \rho^{2\rho^\vee} ) e^{\langle h\mu, \wt(x)\rangle - f_B(x)} \omega(dx)
\end{align*}
Making the change of variable $y = \rho^{-2\rho^\vee} x \rho^{2\rho^\vee} $, which maps $\Bc\left( \frac{\lambda - 4 h \log h \rho^\vee }{h}\right)$ to $\Bc\left( \frac{\lambda}{h}\right)$:
\begin{align*}
  & \E\left( \varphi\left( \varrho_{\bf i}^{h, K}\left( W^{(\mu)}_u; 0 \leq u \leq t \right) \right) \ | \ \Tc_{w_0}^h W^{(\mu)}_t = \lambda \right)\\
= & \frac{h^m}{\psi_{h, \mu}\left(\lambda\right)}
    \int_{\Bc\left( h^{-1} \lambda \right)}
    f( y ) e^{\langle h \mu, \wt(y) - f_B(\rho^{2\rho^\vee} y \rho^{-2\rho^\vee} )} \omega(dy)
\end{align*}
And for:
$${\bf c} = \psi_h \circ x_{- \bf i}^{-1} \circ \varrho^K(y)$$
or equivalently
$$y = b_{h^{-1}\lambda}^K \circ x_{- \bf i}\left( e^{-\frac{c_1}{h}}, \dots, e^{-\frac{c_m}{h}}\right)$$
We explicit the previous integral in terms of the variable ${\bf c}$. Theorem \ref{thm:omega_invariance_on_crystal} leads to:
\begin{align}
\label{eq:explicit_measure}
\omega(dy) & = h^m \prod_{k=1}^m d c_k = h^m {\bf dc}
\end{align}
Theorem 4.22 in \cite{bib:chh14a} gives the explicit expression for the weight map:
\begin{align}
\label{eq:explicit_weight}
\wt(y) & = h^{-1} \left( \lambda - \sum_{k=1}^m c_k \alpha_{i_k}^\vee \right)
\end{align}
Finally, we claim:
\begin{align}
\label{eq:explicit_superpotential}
f_B(\rho^{2\rho^\vee} y \rho^{-2\rho^\vee} ) & = f_{B, h, \lambda}^{K, {\bf i}}({\bf c})
\end{align}
Putting together equations \eqref{eq:explicit_measure}, \eqref{eq:explicit_weight} and \eqref{eq:explicit_superpotential} yields the result. Now, we only need to prove the last equation. Recall that, by writing $v = x_{- \bf i}\left( e^{-\frac{c_1}{h}}, \dots, e^{-\frac{c_m}{h}}\right) \in C_{>0}^{w_0}$ and using proposition 4.17 in \cite{bib:chh14a}:
\begin{align*}
y = & b_{h^{-1}\lambda}^K \circ x_{- \bf i}\left( e^{-\frac{c_1}{h}}, \dots, e^{-\frac{c_m}{h}}\right)\\
  = & b_{h^{-1}\lambda}^K (v)\\
  = & \eta^{w_0, e}\left( v \right) \bar{w}_0 v^T [v^T]_0^{-1} e^{h^{-1} \lambda}
\end{align*}
Therefore:
\begin{align*}
  & f_B(\rho^{2\rho^\vee} y \rho^{-2\rho^\vee} )\\
= & f_B(\rho^{2\rho^\vee} \eta^{w_0, e}\left( v \right) \bar{w}_0 v^T [v^T]_0^{-1} e^{h^{-1} \lambda} \rho^{-2\rho^\vee} )\\
= & \chi\left( \rho^{2\rho^\vee} \eta^{w_0, e}\left( v \right) \rho^{-2\rho^\vee} \right)
  + \chi\left( \rho^{2\rho^\vee} e^{-h^{-1} \lambda} v^T [v^T]_0^{-1} e^{h^{-1} \lambda} \rho^{-2\rho^\vee} \right) \\
= & \sum_{\alpha \in \Delta} \frac{2 h^2}{\langle \alpha, \alpha \rangle} \chi_\alpha \circ \eta^{w_0, e}(v)
  + \sum_{j=1}^m \frac{2 h^2}{\langle \alpha_{i_j}, \alpha_{i_j} \rangle} \exp\left( -\frac{\lambda-\left(c_j+\sum_{k=j+1}^m c_k \alpha_{i_j}(\alpha_{i_k}^\vee)\right)}{h} \right)
\end{align*}
\end{proof}

\subsection{Crystallization}
As the following proposition indicates, the superpotential degenerates to an indicator function of a polytope. It is the string polytope for the Langlands dual $G^\vee$, thereby recovering where the string parameters sit for the usual highest weight Kashiwara crystals $\Bfrak(\lambda)$.  

\begin{proposition}
For every ${\bf i} \in R(w_0)$, there is a cone $\Cc_{\bf i}^\vee$ such that:
$$ \forall {\bf c} \in \R,
   \lim_{h \rightarrow 0} e^{-f_{B, h, \lambda}^{K, {\bf i}}({\bf c})}
 = \mathds{1}_{\left\{ {\bf c} \in \Cc_{\bf i}^\vee\right\}} \mathds{1}_{\left\{ c_j \leq \lambda-\sum_{k=j+1}^m c_k \alpha_{i_j}(\alpha_{i_k}^\vee)\right\}}$$
It is the string cone for the group $G^\vee$ and it is given for any choice of ${\bf i'} \in R(w_0)$ by:
$$ \Cc_{\bf i}^\vee = \left\{ {\bf c} \in \R^m \ | \ [x_{\bf i'}^{-1} \circ \eta^{w_0, e} \circ x_{\bf-i}]_{trop}({\bf c}) \in \R_+^m \right\} $$
\end{proposition}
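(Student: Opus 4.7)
The plan is to reduce the $h \to 0^+$ limit to a term-by-term application of the analytic tropicalization (Proposition \ref{proposition:analytic_tropicalization}), combined with the elementary fact that for $L \in \R$ and $h \to 0^+$:
$$\exp\!\bigl(-h^2 e^{-L/h}\bigr) \longrightarrow \mathds{1}_{\{L \geq 0\}}\,,$$
the prefactor $h^2$ ensuring that the boundary case $L=0$ still contributes a factor tending to $1$. Every summand in $f_{B,h,\lambda}^{K,{\bf i}}({\bf c})$ has the form $\frac{2h^2}{\|\alpha\|^2}\,G(e^{-c_1/h},\ldots,e^{-c_m/h})$ with $G$ a rational subtraction-free expression, so Proposition \ref{proposition:analytic_tropicalization} identifies its effective exponent as $-h^{-1}[G]_{trop}({\bf c})$ with an $O(1)$ remainder controlled uniformly on compact sets of ${\bf c}$; this triggers the dichotomy above.

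First I would handle the second sum: each summand is literally a monomial-times-$h^2$ of the above form with $[G]_{trop}({\bf c}) = \lambda - (c_j + \sum_{k>j} c_k\,\alpha_{i_j}(\alpha_{i_k}^\vee))$, producing the indicator of the halfspaces $\{c_j \leq \lambda - \sum_{k>j} c_k\,\alpha_{i_j}(\alpha_{i_k}^\vee)\}$. For the first sum, set $F_\alpha := \chi_\alpha \circ \eta^{w_0,e} \circ x_{-\bf i}$. This is a rational subtraction-free expression in $(t_1,\dots,t_m) \in \R_{>0}^m$, because $\eta^{w_0,e}$ restricts to a bijection $C^{w_0}_{>0} \to U^{w_0}_{>0}$ (Figure \ref{fig:geom_parametrizations}), $x_{-\bf i}$ positively parametrizes $C^{w_0}_{>0}$, and $\chi_\alpha$ is an additive positive character on $U^{w_0}_{>0}$. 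The same dichotomy yields, for every $\alpha \in \Delta$, the indicator of $\{[F_\alpha]_{trop}({\bf c}) \geq 0\}$, and the product of all indicators is the announced limit.

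The last step is the identification of the cone $\bigcap_{\alpha \in \Delta} \{[F_\alpha]_{trop}({\bf c}) \geq 0\}$ with $\Cc_{\bf i}^\vee$. Pick any ${\bf i'} \in R(w_0)$ and factor $\eta^{w_0,e} \circ x_{-\bf i}(t) = x_{\bf i'}(t')$, giving ${\bf c'} := [x_{\bf i'}^{-1} \circ \eta^{w_0,e} \circ x_{-\bf i}]_{trop}({\bf c})$. Using the identity $\chi_\alpha \circ x_{\bf i'}(t') = \sum_{j:\,i'_j=\alpha} t'_j$ (which comes from $\chi_\alpha(x_\beta(s)) = s\,\delta_{\alpha,\beta}$ and the additivity of $\chi_\alpha$ on $U$) and tropicalizing, one gets $[F_\alpha]_{trop}({\bf c}) = \min_{j:\,i'_j=\alpha} c'_j$. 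Since every $j \in \{1,\dots,m\}$ satisfies $i'_j \in \Delta$, requiring this minimum to be non-negative for every $\alpha \in \Delta$ is exactly requiring ${\bf c'} \in \R_+^m$, matching the definition of $\Cc_{\bf i}^\vee$. Independence from the choice of ${\bf i'}$ is automatic: different reduced words are related by subtraction-free monomial-positive transition maps on Lusztig parametrizations whose tropicalizations are bijections of $\R_+^m$.

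The main obstacle lies in the first sum at the boundary stratum $\{[F_\alpha]_{trop}({\bf c}) = 0\}$: there one must ensure that the $O(1)$ remainder in Proposition \ref{proposition:analytic_tropicalization} is dominated by the $h^2$ prefactor, so that the exponent of $\exp(-\cdot)$ vanishes in the limit; this is handled by the uniform nature of the bound in Proposition \ref{proposition:analytic_tropicalization}. The pointwise convergence stated in the proposition requires nothing more.
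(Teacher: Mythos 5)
Your proposal is correct and follows essentially the same route as the paper: the second sum is tropicalized term by term into the halfspace indicators, the first sum is reduced via a choice of ${\bf i'}$ and the additivity of $\chi_\alpha$ to the condition $[x_{\bf i'}^{-1}\circ\eta^{w_0,e}\circ x_{-\bf i}]_{trop}({\bf c})\in\R_+^m$, and the dichotomy $\exp(-h^2 e^{-L/h})\to\mathds{1}_{\{L\geq 0\}}$ together with the uniform $O(h)$ control of Proposition \ref{proposition:analytic_tropicalization} gives the pointwise limit. The only ingredient the paper adds that you leave implicit is the citation of \cite{bib:BZ01}, Theorem 5.7, to identify the resulting cone as the string cone for $G^\vee$.
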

Notice the appearance of exactly the same cutting condition of the string cone $\Cc_{\bf i}^\vee$ as the condition given in \cite{bib:Littelmann98} page 5, giving the string polytope associated to the highest weight $\lambda$.
\begin{proof}
Looking at proposition \ref{proposition:q_string_formula}, the deformed superpotential $e^{-f_{B, h, \lambda}^{K, {\bf i}}({\bf c})}$ is the product of two terms. Each one of them leads to an indicator function. The easier one to deal with is:
\begin{align*}
& \exp\left( - \sum_{j=1}^m \frac{2 h^2}{\langle \alpha_{i_j}, \alpha_{i_j} \rangle} \exp\left( -\frac{\lambda-\left(c_j+\sum_{k=j+1}^m c_k \alpha_{i_j}(\alpha_{i_k}^\vee)\right)}{h} \right) \right)\\
& \stackrel{h \rightarrow 0}{\rightarrow} \mathds{1}_{\left\{ c_j \leq \lambda-\sum_{k=j+1}^m c_k \alpha_{i_j}(\alpha_{i_k}^\vee)\right\}}
\end{align*}
For the other term:
$$ \exp\left( - \sum_{\alpha \in \Delta}^m \frac{2 h^2}{\langle \alpha, \alpha \rangle} \chi_\alpha \circ \eta^{w_0, e} \circ x_{\bf-i}\left( e^{-h^{-1}c_1}, \dots, e^{-h^{-1}c_m} \right) \right) $$
Start by choosing a reduced word ${\bf i'} \in R(w_0)$, independently of ${\bf i} \in R(w_0)$. This choice will not play any role. Let $f$ be the rational substraction free function given by:
$$ f = x_{\bf i'}^{-1} \circ \eta^{w_0, e} \circ x_{\bf-i}: \R_{>0}^m \rightarrow \R_{>0}^m $$
Component-wise, we write $f = \left( f_1, \dots, f_m \right)$. Then, after organizing that term and using the analytic tropicalization procedure given in proposition \ref{proposition:analytic_tropicalization}:
\begin{align*}
  & \exp\left( - \sum_{\alpha \in \Delta}^m \frac{2 h^2}{\langle \alpha, \alpha \rangle} \chi_\alpha \circ \eta^{w_0, e} \circ x_{\bf-i}\left( e^{-h^{-1}c_1}, \dots, e^{-h^{-1}c_m} \right) \right)\\
& = \exp\left( - \sum_{j=1}^m \frac{2 h^2}{\langle \alpha_{i_j'}, \alpha_{i_j'} \rangle} f_j\left( e^{-h^{-1}c_1}, \dots, e^{-h^{-1}c_m} \right) \right)\\
& = \exp\left( - \sum_{j=1}^m \frac{2 h^2}{\langle \alpha_{i_j'}, \alpha_{i_j'} \rangle} \exp\left( -\frac{-q\log f_j\left( e^{-h^{-1}c_1}, \dots, e^{-h^{-1}c_m} \right)}{h} \right) \right)\\
& = \exp\left( - \sum_{j=1}^m \frac{2 h^2}{\langle \alpha_{i_j'}, \alpha_{i_j'} \rangle} \exp\left( -\frac{[f_j]_{trop}({\bf c}) + \Oc(h)}{h} \right) \right)\\
& \stackrel{h \rightarrow 0}{\rightarrow} \mathds{1}_{\left\{ {\bf c} \in \R^m \ | \ \forall 1 \leq j\leq m, [f_j]_{trop}({\bf c}) \geq 0 \right\}}
\end{align*}
Finally:
$$ \left\{ {\bf c} \in \R^m \ | \ \forall 1 \leq j\leq m, f_j({\bf c}) \geq 0 \right\}
 = \left\{ {\bf c} \in \R^m \ | \ [x_{\bf i'}^{-1} \circ \eta^{w_0, e} \circ x_{\bf-i}]_{trop}({\bf c}) \in \R_+^m \right\}$$
is the set of ${\bf i}$-string parameters that are mapped to non-negative ${\bf i'}$-Lusztig parameters. It has to be exactly the string cone $\Cc_{{\bf i}}^\vee$ thanks to \cite{bib:BZ01}, theorem 5.7. Clearly, changing ${\bf i'}$ tantamounts to changing charts for the Lusztig parameters, and these charts are bijections of the positive orthant $\R_+^m$.
\end{proof}

As a consequence, the geometric Duistermaat-Heckman measure degenerates to the classical one, and its Laplace transform degenerates to the 'asymptotic' Schur functions (see \cite{bib:BBO2} theorem 5.5):
$$ \forall (\lambda, \mu) \in \afrak^2, 
   h_\mu(\lambda) := \frac{\sum_{w \in W} (-1)^{\ell(w)} e^{\langle \mu, w \lambda \rangle} }
                          {\prod_{\beta \in \Phi^+} \langle \beta^\vee, \mu \rangle }$$
\begin{proposition}
For $\lambda \in \afrak$:
$$\lim_{h \rightarrow 0} \psi_{h, \mu}\left( \lambda \right) = h_\mu(\lambda)$$
where:
$$h_\mu\left(\lambda\right) = \int_{\Cc_{\bf i}^\vee} {\bf dc} \varphi({\bf c}) \exp\left( \langle \mu, \lambda - \sum_{k=1}^m c_k \alpha_{i_k}^\vee \rangle \right) \mathds{1}_{\left\{ c_j \leq \lambda-\sum_{k=j+1}^m c_k \alpha_{i_j}(\alpha_{i_k}^\vee)\right\}}$$
Moreover, for $\mu \in C$, $h_\mu$ is a harmonic function on $C$ the Weyl chamber with Dirichlet boundary conditions and growth condition:
$$ \lim_{\lambda \rightarrow \infty, \lambda \in C} h_\mu(\lambda) e^{-\langle \mu, \lambda \rangle} =
   \frac{1}{\prod_{\beta \in \Phi^+} \langle \beta^\vee, \mu \rangle}$$
\end{proposition}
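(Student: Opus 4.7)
The starting point is the integral representation of $\psi_{h,\mu}$ obtained by setting the test function $\varphi \equiv 1$ in Proposition \ref{proposition:q_string_formula}. Since the conditional expectation of the constant $1$ is $1$, this reads
\[
\psi_{h,\mu}(\lambda) = \int_{\R^m} \exp\!\left(\langle \mu, \lambda - \textstyle\sum_{k=1}^m c_k \alpha_{i_k}^\vee\rangle - f_{B,h,\lambda}^{K,{\bf i}}({\bf c})\right) {\bf dc}.
\]
The preceding proposition provides pointwise convergence of the integrand to the integrand of $h_\mu(\lambda)$. The plan is therefore to exchange the limit $h \to 0$ with the integral via dominated convergence. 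For $\mu \in C$ the exponential weight $\prod_k e^{-\langle \mu, \alpha_{i_k}^\vee\rangle c_k}$ decays in every $c_k \to +\infty$ direction, while in the $c_k \to -\infty$ directions the second sum defining $f_{B,h,\lambda}^{K,{\bf i}}({\bf c})$ contains terms $\tfrac{2h^2}{\|\alpha_{i_j}\|^2} e^{-(\lambda - c_j - \cdots)/h}$ that blow up; the plan for the dominant is to exploit this blow-up outside a large compact set, combined with the non-negativity of the first sum (which is a positive Laurent polynomial in the variables $e^{-c_k/h}$ by Theorem \ref{thm:superpotential_structure}), to produce an integrable bound independent of $h$. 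For $\mu \notin C$, I would extend by analyticity using Theorem \ref{thm:whittaker_functions_properties}(i).

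For the remaining properties, assume $\mu \in C$. Harmonicity on the open Weyl chamber follows by passing to the limit in the deformed eigenequation \eqref{eq:q_deformed_toda}: the potential $\tfrac{1}{2} \sum_\alpha \|\alpha\|^2 h^2 e^{-\alpha(\lambda)/h}$ vanishes pointwise on $C$ as $h \to 0$, yielding
\[
\tfrac{1}{2}\Delta h_\mu = \tfrac{1}{2}\langle \mu,\mu\rangle h_\mu \quad \text{on } C.
\]
Differentiating under the integral sign in the integral representation above — justified by the same dominated-convergence arguments applied to derivatives — makes this passage legitimate in the classical sense rather than only distributional.

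To identify $h_\mu$ with the explicit Weyl-character expression $\sum_w (-1)^{\ell(w)} e^{\langle \mu, w\lambda\rangle}/\prod_\beta \langle \beta^\vee,\mu\rangle$, I would invoke uniqueness of the bounded solution of the above Laplace eigenvalue problem on $C$ with prescribed growth at infinity, in exact analogy with the martingale argument recalled in Theorem \ref{thm:whittaker_functions_properties}(iii). The candidate formula manifestly satisfies the eigenequation (by $W$-invariance of the Killing form), vanishes on every wall $\alpha(\lambda) = 0$ by the $w \leftrightarrow ws_\alpha$ cancellation, and for $\mu,\lambda \in C$ has leading behavior $e^{\langle \mu,\lambda\rangle}/\prod_\beta \langle \beta^\vee,\mu\rangle$ as $\lambda \to \infty$ in $C$, since $\langle w^{-1}\mu - \mu, \lambda\rangle < 0$ for $w \neq e$. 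On the $\psi_{h,\mu}$ side, the growth condition following \eqref{eq:q_deformed_toda} gives $\lim_{\lambda\to\infty} \psi_{h,\mu}(\lambda) e^{-\langle\mu,\lambda\rangle} = h^m \prod_\beta \Gamma(h\langle\beta^\vee,\mu\rangle)$, which by $\Gamma(\epsilon)\sim 1/\epsilon$ and $|\Phi^+| = m$ converges to $\prod_\beta \langle\beta^\vee,\mu\rangle^{-1}$. Matching growth and PDE pins down $h_\mu$ as the Weyl-character expression, which in turn transfers the Dirichlet boundary condition back to $h_\mu$.

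The main obstacle is the dominated-convergence step: producing an integrable envelope uniform in $h$ on all of $\R^m$ for the deformed superpotential is a genuine analytic estimate. The natural plan is to split $\R^m$ into a large compact region (where pointwise convergence and boundedness of the integrand give control directly) and its complement (where one exhibits a single term in $f_{B,h,\lambda}^{K,{\bf i}}$ that grows at least linearly in $\|{\bf c}\|$ uniformly in $h$, using the positivity structure of Theorem \ref{thm:superpotential_estimate}).
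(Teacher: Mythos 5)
Your treatment of the limit $\lim_{h\to 0}\psi_{h,\mu}=h_\mu$ and of harmonicity follows the same route as the paper: take $\varphi\equiv 1$ in Proposition \ref{proposition:q_string_formula} so that $\psi_{h,\mu}(\lambda)$ is the normalizing constant, pass to the limit in the integrand using the crystallization of $e^{-f^{K,\mathbf{i}}_{B,h,\lambda}}$, and let $h\to 0$ in the eigenequation \eqref{eq:q_deformed_toda}. You are in fact more explicit than the paper about the need for a dominated-convergence envelope (the paper simply asserts that ``the previous proposition yields the convergence''), and your plan for the dominant -- decay of $e^{-\sum_k c_k\langle\mu,\alpha_{i_k}^\vee\rangle}$ as $c_k\to+\infty$ and blow-up of the second sum in $f^{K,\mathbf{i}}_{B,h,\lambda}$ as $c_k\to-\infty$ -- is the right one, even though you leave it as a plan.

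The genuine gap is in the growth condition. Your uniqueness argument is circular: to pin down $h_\mu$ as the Weyl-character expression you must already know that $h_\mu$ satisfies the Dirichlet condition on $\partial C$ \emph{and} the growth condition at infinity, yet you propose to deduce the Dirichlet condition from the identification, and you obtain the growth of $h_\mu$ by combining $\lim_{\lambda\to\infty}\psi_{h,\mu}(\lambda)e^{-\langle\mu,\lambda\rangle}=h^m b(h\mu)$ with $\lim_{h\to 0}h^m b(h\mu)=\prod_\beta\langle\beta^\vee,\mu\rangle^{-1}$, which silently exchanges the limits $h\to 0$ and $\lambda\to\infty$. That exchange is precisely the nontrivial step; the paper justifies it by observing that $\lambda\mapsto\psi_{h,\mu}(\lambda)e^{-\langle\mu,\lambda\rangle}$ is monotone along rays in $C$, so that Dini's theorem gives convergence uniform in $h$. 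Note also that without the Dirichlet data, ``eigenequation plus growth'' does not determine the solution: for $w\neq e$ the function $e^{\langle w\mu,\cdot\rangle}$ satisfies $\tfrac12\Delta u=\tfrac12\langle\mu,\mu\rangle u$, and $e^{\langle w\mu-\mu,\cdot\rangle}$ is bounded on $C$ and tends to $0$ at infinity, so arbitrary multiples of it may be added without affecting either constraint. To repair the argument, either supply the monotonicity/Dini step, or work directly on the integral formula: as $\lambda\to\infty$ in $C$ the string polytope increases to the full cone $\Cc^\vee_{\mathbf{i}}$ and monotone convergence gives $h_\mu(\lambda)e^{-\langle\mu,\lambda\rangle}\to\int_{\Cc^\vee_{\mathbf{i}}}e^{-\sum_k c_k\langle\mu,\alpha_{i_k}^\vee\rangle}\,\mathbf{dc}$, which must then be evaluated as $\prod_\beta\langle\beta^\vee,\mu\rangle^{-1}$ -- itself a nontrivial identity about string cones.
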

\begin{proof}
The function $\psi_{h, \mu}\left( \lambda \right)$ plays the role of normalization constant in proposition \ref{proposition:q_string_formula}, hence:
$$\psi_{h, \mu}\left(\lambda\right) = \int_{\R^m} {\bf dc} \varphi({\bf c}) \exp\left( \langle \mu, \lambda - \sum_{k=1}^m c_k \alpha_{i_k}^\vee \rangle- f_{B,h,\lambda}^{K, {\bf i}}({\bf c})\right)$$
The previous proposition yields the convergence of $\psi_{h, \mu}$ to $h_\mu$.\\
Now consider $\mu \in C$. In order to see it is a harmonic function on the Weyl chamber with Dirichlet boundary conditions, one can look at equation \eqref{eq:q_deformed_toda} and notice that the potential goes to zero inside the Weyl chamber and $+\infty$ outside.\\
For the growth condition, since $\psi_{h, \mu}(\lambda)e^{-\langle \mu, \lambda \rangle}$ is monotonically increasing for any sequence $\lambda_n \in C, \lambda_n \rightarrow \infty$ along a ray, the convergence to $h^m b(h\mu)$ is uniform in $h$ by Dini's theorem. Therefore, we can obtain the limiting behavior of $h_\mu(\lambda) e^{-\langle \mu, \lambda \rangle}$ by inspecting:
$$ \lim_{h \rightarrow 0} h^m b(h\mu) = \lim_{h \rightarrow 0} \prod_{\beta \in \Phi^+ } h \Gamma(h \langle \beta^\vee, \mu \rangle)$$
Recalling that for all $z>0$, $\lim_{h\rightarrow 0} h \Gamma(hz) = \frac{1}{z}$ finishes the proof.
\end{proof}

Also, now we can recover the following results, already known to \cite{bib:BBO} and \cite{bib:BBO2}, as degenerations of theorems \ref{thm:q_highest_weight_is_markov} and \ref{thm:q_canonical_measure}.
\begin{thm}
For $W^{(\mu)}$ a Brownian motion in $\afrak$ with drift $\mu$, $\Pc_{w_0}( W^{(\mu)})$ is Brownian motion conditioned to stay in the Weyl chamber by a Doob transform. It has the infinitesimal generator:
$$ \half \Delta + \langle \log \nabla h_\mu, \nabla \rangle$$
And for every $\varphi$ bounded measurable function on $\R^m$ and $t>0$:
\begin{align}
\label{eq:zero_canonical_measure}
  & \E\left( \varphi\left( \varrho_{\bf i}^{h=0, K}\left( W^{(\mu)}_u; 0 \leq u \leq t \right) \right) \ | \ \Pc_{w_0}( W^{(\mu)})_t = \lambda \right)\\
= &  \frac{1}{h_\mu\left(\lambda\right)}
     \int_{\Cc_{\bf i}^\vee} {\bf dc} \varphi({\bf c}) \exp\left( \langle \mu, \lambda - \sum_{k=1}^m c_k \alpha_{i_k}^\vee \rangle \right) \mathds{1}_{\{ c_j \leq \lambda-\sum_{k=j+1}^m c_k \alpha_{i_j}(\alpha_{i_k}^\vee)\}}
\end{align}
\end{thm}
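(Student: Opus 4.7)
The plan is to obtain both claims by letting $h \to 0$ in the $h$-deformed statements already proved, namely Theorem \ref{thm:q_highest_weight_is_markov} and Proposition \ref{proposition:q_string_formula}, combined with the degeneration $\psi_{h,\mu} \to h_\mu$ and the convergence of the superpotential to an indicator of the string cone established in the preceding proposition.

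For the Markovian description of $\Pc_{w_0}(W^{(\mu)})$, the key input is the tropical limit \eqref{eq:limit_pitman}, which yields the pointwise convergence $\Lambda^h_t \to \Pc_{w_0}(W^{(\mu)})_t$ almost surely for every $t > 0$. Theorem \ref{thm:q_highest_weight_is_markov} presents the infinitesimal generator of $\Lambda^h$ in Doob form $\half \Delta + \nabla \log \psi_{h,\mu} \cdot \nabla$. The preceding proposition gives $\psi_{h,\mu} \to h_\mu$ pointwise on $C$, which can be upgraded to $C^1$-convergence on compact subsets of $C$ by differentiating the integral representation of $\psi_{h,\mu}$ under the integral sign and invoking Theorem \ref{thm:superpotential_estimate} to produce an integrable dominant. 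Passing to the limit in the generator yields $\half \Delta + \nabla \log h_\mu \cdot \nabla$, which is precisely the Doob $h$-transform of Brownian motion with drift $\mu$ by the positive $C$-harmonic function $h_\mu$, and therefore the generator of Brownian motion conditioned to remain in the Weyl chamber in the classical sense recalled in the introduction from \cite{bib:BBO}.

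For the conditional distribution in \eqref{eq:zero_canonical_measure}, the plan is to pass to the $h \to 0$ limit directly inside Proposition \ref{proposition:q_string_formula}. On the right-hand side, the preceding proposition identifies the pointwise limit of $e^{-f_{B,h,\lambda}^{K, {\bf i}}({\bf c})}$ as the product of two indicator functions, so the integrand converges pointwise to the integrand of \eqref{eq:zero_canonical_measure}; dominated convergence applies with a uniform-in-$h$ majorant obtained from the exponential factor $e^{\langle \mu, \lambda - \sum_k c_k \alpha_{i_k}^\vee\rangle}$ restricted to the half-space cut out by the second indicator, which is integrable once intersected with the string cone $\Cc_{\bf i}^\vee$. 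The normalization converges to $h_\mu(\lambda)$ by the previous proposition.

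The main obstacle will be making rigorous the convergence of the conditional laws on the left-hand side, since conditional expectation is generally not continuous in the random variable being conditioned upon. I would circumvent this by never conditioning: test both sides of Proposition \ref{proposition:q_string_formula} against a bounded continuous product functional $F(\Lambda^h)\, \varphi(\varrho_{\bf i}^{h, K})$ to obtain an identity of joint Wiener expectations, then let $h \to 0$ using the uniform-on-compacts convergence $\Lambda^h \to \Pc_{w_0}(W^{(\mu)})$ together with $\varrho_{\bf i}^{h, K} \to \varrho_{\bf i}^{h=0, K}$ (both Laplace-method consequences applied pathwise). The resulting identity of joint Wiener expectations disintegrates to \eqref{eq:zero_canonical_measure}, and the Markov property of the limit process follows a fortiori from the Markov property at level $h > 0$ combined with the same Feller-type continuity.
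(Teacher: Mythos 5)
Your proposal is correct and follows essentially the same route as the paper, whose proof is literally ``Immediate'': both obtain the theorem by letting $h \to 0$ in Theorem \ref{thm:q_highest_weight_is_markov} and Proposition \ref{proposition:q_string_formula}, using the convergence $\psi_{h,\mu} \to h_\mu$ and the crystallization of the deformed superpotential to the indicator of the string polytope. Your additional care with the non-continuity of conditioning (working with joint Wiener expectations and disintegrating) and with upgrading $\psi_{h,\mu}\to h_\mu$ to $C^1$-convergence fills in details the paper leaves implicit, and is a sound way to do so.
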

\begin{proof}
 Immediate.
\end{proof}

% --------------------------------------------------------------------
\bibliographystyle{halpha}
\bibliography{Bib_Thesis}

\end{document}